\numberwithin{equation}{section}
\definecolor{myred}{rgb}{0.75,0,0}
\definecolor{mygreen}{rgb}{0,0.5,0}
\definecolor{myblue}{rgb}{0,0,0.65}
\tikzset{%
  % Style for drawing Dynkin diagrams
  DynNode/.style={circle, inner sep=0pt, draw=black, fill=white,minimum size=20pt},
  Greater/.style={pos=0.65, inner sep=0mm, outer sep=0mm},
  % Style for highlighting submatrices in bigger matrices
  highlight/.style={rectangle,rounded corners,fill=red!15,draw=red,
    fill opacity=0.5,thick},
  % Styles for glublar representations of a 2-morphism in a 2-category
  bendBelow/.style={bend left=70, looseness=2},
  bendAbove/.style={bend right=70, looseness=2},
  object/.style={circle, fill, inner sep=1.5pt, outer sep=0mm},
  labeling/.style={outer sep=0mm, inner sep=0mm},
  1morph/.style={->, shorten >= 0.5pt, >=stealth'},
  2morph/.style={-implies,double,double equal sign distance,
                 shorten >=2pt, shorten <=3pt},
  % Styles for Diagrammatics of Soergel bimodules
  spot/.style={color=black, thin, dashed},
  sline/.style={color=blue, line width=2pt},
  tline/.style={color=red, line width=2pt},
  uline/.style={color=green, line width=2pt},
  line/.style={color=#1, line width=2pt},
  line/.default=blue,
  sdot/.style={color=blue, thin, fill},
  tdot/.style={color=red, thin, fill},
  udot/.style={color=green, thin, fill},
  dot/.style={color=#1, thin, fill},
  dot/.default=blue
}
  \def\hg{{\mathfrak h}}
    \def\ZM{{\mathbb{Z}}}
    \def\FC{{\mathcal{F}}}
    \def\HC{{\mathcal{H}}}
\newcommand{\nc}{\newcommand} \newcommand{\renc}{\renewcommand}
\def\a{\alpha}
\renc{\l}{\lambda}
\newcommand{\rdots}{\mathinner{ \mkern1mu\raise1pt\hbox{.}
    \mkern2mu\raise4pt\hbox{.}
    \mkern2mu\raise7pt\vbox{\kern7pt\hbox{.}}\mkern1mu}}
\DeclareMathOperator{\can}{can}
\def\un{\underline}
\def\to{\rightarrow}
\def\laongto{\laongrightarrow}
\nc{\triright}{\stackrel{[1]}{\to}}
\nc{\laongtriright}{\stackrel{[1]}{\laongto}}
\nc{\Hb}{H^\bullet}
\nc{\Br}{\mathcal{B}}
\nc{\HotRR}{{}_R\mathcal{K}_R}
\nc{\HotR}{\mathcal{K}_R}
\nc{\excise}[1]{}
\nc{\defect}{\text{df}}
\nc{\h}[1]{\underline{H}_{#1}}
\nc{\Ga}{\mathbb{G}_a} % additive group
\nc{\Gm}{\mathbb{G}_m} % multiplicative group
\nc{\Perv}{{\mathbf{P}}}
\nc{\IH}{{\mathrm{IH}}}
\nc{\ic}{\mathbf{IC}}
\nc{\gl}{{\mathfrak{gl}}}
\renc{\sl}{{\mathfrak{sl}}}
\renc{\sp}{{\mathfrak{sp}}}
\renc{\Im}{\textrm{Im}}
\nc{\HBM}{H^{BM}}
\DeclareMathOperator{\Hom}{Hom}
\DeclareMathOperator{\End}{End}
\DeclareMathOperator{\Rep}{\mathrm{Rep}}
\DeclareMathOperator{\id}{id}
\DeclareMathOperator{\std}{std}
\newtheorem{thm}{Theorem}[section]
\newtheorem{lem}[thm]{Lemma}
\newtheorem{prop}[thm]{Proposition}
\newtheorem{conj}[thm]{Conjecture}
\theoremstyle{definition}
\newtheorem{ex}[thm]{Example}
\newtheorem{defn}[thm]{Definition}
\theoremstyle{remark}
\newtheorem{remark}[thm]{Remark}
\newcommand{\ra}{\rightarrow}
\nc{\simto}{\stackrel{\sim}{\to}}
\nc{\ext}{\textrm{ext}}
\nc{\fW}{{}^f W}
\nc{\pdot}{ \bullet_p}
\nc{\wdot}{ \bullet}
\nc{\la}{\langle}
\renc{\ra}{\rangle}
\nc{\Wf}{W}
\nc{\Wa}{\mathcal{W}}
\nc{\Sa}{\mathcal{S}}
\nc{\Wae}{\mathcal{W}^{\mathrm{ext}}}
\nc{\Fr}{\mathrm{Fr}} % Frobenius twist
\nc{\Repp}{\Rep_0}
\nc{\Repe}{\mathrm{Rep}_0^{\mathrm{ext}}}
\nc{\AntiS}{\mathrm{AS}}
\nc{\CAS}{\mathcal{AS}}
\nc{\HCe}{\HC^{\mathrm{ext}}}
\nc{\Kar}{\mathrm{Kar}}
\nc{\bs}{\mathrm{BS}}
\nc{\Iw}{\mathrm{Iw}}
\nc{\ev}{\mathrm{ev}}
\nc{\SL}{\mathrm{SL}}
\nc{\GL}{\mathrm{GL}}
\nc{\Sp}{\mathrm{Sp}}
\nc{\Eeight}{\mathrm{E}_8}
\nc{\Gtwo}{\mathrm{G}_2}
\nc{\He}{\mathrm{H}} % affine Hecke algebra
\nc{\Hee}{\mathrm{H}^\mathrm{ext}} % extended affine Hecke algebra
\nc{\Zvv}{\mathbb{Z}[v]}
\nc{\Zv}{\mathbb{Z}[v^{\pm 1}]}
\nc{\HCat}{\mathcal{H}} % affine Hecke category
\nc{\HCate}{\mathcal{H}^\mathrm{ext}} % extended affine Hecke category
\nc{\pcan}{{}^p\un{h}}
\nc{\Loc}{\mathrm{Loc}}
\nc{\mult}{\mathrm{mult}}
\nc{\geom}{\mathrm{geom}}
\nc{\diag}{\mathrm{diag}}
\nc{\Dmix}{D^{mix}}
\nc{\ses}{\mathrm{Semis}}
\nc{\Par}{\mathrm{Par}}
\nc{\IF}{IF}
\nc{\comm}[1]{{\color{red}GW: #1}}
\nc{\bencomm}[1]{{\color{blue}BE: #1}}
\newcommand\leftdash{\!\rotatebox[origin=c]{-60}{$\dabar@\dabar@\dabar@$}\!}
\newcommand\rightdash{\!\rotatebox[origin=c]{60}{$\dabar@\dabar@\dabar@$}\!}
\newcommand{\ot}{\otimes}
\newcommand{\SBim}{\mathbb{S}\mathrm{Bim}}
\newcommand{\StdBim}{\mathrm{StdBim}}
\newcommand{\reds}{{\color{red} {s}}}
\newcommand{\bluet}{{\color{blue} {t}}}
\newcommand{\TTL}{2\mathcal{TL}}
\newcommand{\JWalt}{\mathcal{JW}}
\newcommand{\JWalttwo}{{}^2 \mathcal{JW}}
\newcommand{\base}{A'}
\DeclareMathOperator{\ptr}{pTr}
\DeclareMathOperator{\poly}{poly}
\newcommand{\Std}{Std}
\title{Localized calculus for the Hecke category}
\author{Ben Elias}
\author{Geordie Williamson}
\date{\today}
\begin{document}

\maketitle

\begin{abstract} We construct a functor from the Hecke category to a
  groupoid built from the underlying Coxeter group. This fixes a gap in an earlier work of the
  authors. This functor provides an abstract realization of the
  localization of the Hecke category at the field of
  fractions. Knowing explicit formulas for the localization is a key technical tool in software
  for computations with Soergel bimodules.
\end{abstract}

\section{Introduction}

Soergel bimodules provide a concrete incarnation of the Hecke
category associated to any Coxeter group. This category is a fundamental object in geometric representation theory. In
\cite{EW} the authors provided a diagrammatic (or ``generators and
relations'')  description
of Soergel bimodules, building on earlier work of Libedinsky
\cite{LLL, LibRA}, Elias-Khovanov \cite{EKh} and the first author
\cite{EDC}. This description has provided new impetus for the
algebraic study of the Hecke category, and has had significant
applications in modular and higher representation theory (see
e.g. \cite{RW,AMRW2,ELo,HeW,WT, BCH, EHFTDiag, GHyified}). Abe has
also introduced another more algebraic incarnation of the Hecke
category \cite{Abe}, which is equivalent to the diagrammatic category
in many situations, and has
already found important applications in representation theory \cite{Abe2,BR}.
The reader is referred to the
introduction to \cite{EW} and \cite{WParHecke} for a discussion of
the origins of the Hecke algebra and category, as well as its significance
in geometric and higher representation theory.

Soergel bimodules are a full monoidal subcategory of graded bimodules
over a polynomial ring $R$. A key tool in the theory is provided by
localization, which embeds Soergel bimodules inside a much simpler
monoidal category, a subcategory of bimodules over
the quotient field $Q$ of $R$ which we call the ``$Q$-groupoid'' associated to the underlying
Coxeter group. (In geometric settings, this
  technique is related to the localization theorem in equivariant
  cohomology.) After base change from $R$ to $Q$, any Soergel bimodule splits a direct sum of (grading shifts of) objects in the $Q$-groupoid.

In \cite{EW} this idea was imported into the diagrammatic study, and provided a key tool in the proofs. First, in \cite{EWdiag} we provided a diagrammatic
desription of this $Q$-groupoid by generators and relations, using the topology of the Coxeter complex to prove the correctness of our presentation. In \cite{EW} we provided a diagrammatic description of a ``mixed calculus''
which combined the diagrammatics of the Hecke category and the $Q$-groupoid. This mixed category is tailor-made to be simultaneously equivalent to both \begin{itemize} \item the
Karoubi envelope of the diagrammatic Hecke category, after base change from $R$ to $Q$, and \item the additive envelope of the diagrammatic $Q$-groupoid. \end{itemize} Combining these equivalences, we
obtain an implicit construction of a functor, which we denote $\Lambda$ in this paper, from the diagrammatic Hecke category to the additive envelope of the $Q$-groupoid. The main
goal of this paper is to make this ``localization functor'' $\Lambda$ explicit.

As kindly pointed out to us to Simon Riche, the treatment of localization in \cite{EW} contains a gap as written. We constructed a full and essentially surjective functor, denoted
$\Std$ in \cite{EW}, from the additive closure of the $Q$-groupoid to our mixed calculus, but the faithfulness of this functor was not properly justified. The faithfulness of
$\Std$ is what demonstrates that the mixed calculus, and hence the diagrammatic Hecke category, is nonzero. We gave two separate arguments in \cite{EW} that the functor $\Std$ is
faithful. The first (see the end of \cite[\S 5.4]{EW}) relies on the existence of a functor $\FC$ from the diagrammatic Hecke category to Soergel bimodules. We made a
separate error here (in \cite[Claim 5.13]{EW}), claiming to construct the functor $\FC$ in more generality than we actually did. Regardless, our intention was to study the
diagrammatic Hecke category in additional generality, where such a functor $\FC$ might not exist. Consequently we wrote \cite[\S 5.5, in particular Proposition 5.25]{EW}, where we
sketched a method for constructing the localization functor $\Lambda$ explicitly. In turn, the localization functor would induce a functor from the mixed calculus which would be
quasi-inverse to $\Std$, thus proving its faithfulness. Computer calculations of the second author verified this method for (most realizations of) those Coxeter groups which satisfy
$m_{st} \le 4$ for all simple reflections $s, t$. However, it was a major oversight to think that the general case would be trivial! This entire paper is a construction of
$\Lambda$, and thereby completes the sketch of \cite[Proposition 5.25]{EW}.

\begin{remark} Between the argument using $\FC$ and the computer calculations made, the proofs in \cite{EW} are sufficient to deal with many versions of the Hecke category (e.g.
coming from different realizations of the Coxeter group), and in particular, are sufficient for all of the applications to date. However, our intention in \cite{EW} was
to work in the broadest reasonable generality, and this paper allows us to. For more information on the errors in \cite{EW}, and a summary of the assumptions now required to quote
various theorems, see \S\ref{sec:error}. \end{remark}

\begin{remark} The target of the localization functor of \cite{EW} was to be the additive envelope of the diagrammatic version of the $Q$-groupoid, as constructed in \cite{EWdiag}
using the topology of the Coxeter complex. In this paper, the target of our localization functor $\Lambda$ is the algebraic $Q$-groupoid, defined below in \S\ref{subsec:Qgroupoid}.
Originally, it was the topological proof of \cite{EWdiag} that formed the bedrock upon which the well-definedness of the Hecke category was established. This paper removes the
dependency on topology. \end{remark}

\begin{remark}   Throughout this paper, we assume our Coxeter system has no parabolic
  subgroup of type $H_3$. The diagrammatic Hecke category is not
  defined in type $H_3$, because one crucial relation (the
  ``Zamolodchikov relation'') is missing. Further discussion on type $H_3$ can be found in \S\ref{subsec:H3}. \end{remark}

A secondary purpose of this paper is the following: when writing \cite{EW} the
authors viewed localization primarily as a theoretical,
rather than computational, tool. However subsequently there have
been several instances where it has been necessary to understand the
localization of the diagrammatic category in concrete terms. An
important example is software developed by the second author and Thorge Jensen for
efficient computation of the $p$-canonical basis of Hecke algebras (see
\cite{JensenW,GJW} for details, and \cite{LWbilliards} for an application of
this technology). A second instance is recent work of Libedinsky and
the second author \cite{LW} which uses localization to prove the positivity of
anti-spherical Kazhdan-Lusztig polynomials.

Initially, ad hoc formulas were written down for the
localization (and these were used to do various computer checks in
\cite{EW}). Subsequently, much cleaner and general formulations were found (see
\eqref{subeq:onecolor}, \eqref{fdef}, Lemma \ref{lem:E}). In this paper we provide
these formulas.

A third goal of this paper is to improve the state of the literature relating to (two-colored) Jones-Wenzl projectors, certain idempotents in the (two-colored) Temperley-Lieb
category. The two-colored Jones-Wenzl projectors were studied heavily in \cite{EDC}, where they were used to construct idempotents projecting to indecomposable Soergel bimodules
for the dihedral groups\footnote{The relationship between Soergel bimodules for dihedral groups and the two-colored Temperley-Lieb 2-category is an instance of ``quantum geometric
Satake at a root of unity'' in type $A_1$, see \cite{EQAGS}.}. In particular, for the diagrammatic Hecke category to be well-defined, it is extremely important whether or not a
given two-colored Jones-Wenzl projector is \emph{rotatable}, i.e. it is rotation-invariant up to scalar. Negligibility of the Jones-Wenzl projector is a necessary but not
a sufficient condition. Another important property (that we need for our construction of $\Lambda$) is that the ``polynomial evaluation'' of a particular Jones-Wenzl projector is
the product of the positive roots for the corresponding dihedral group. The discussion of these properties in \cite{EDC} was not done in sufficient generality for our needs in this
paper, and new ideas were needed to generalize them. Our results here
are new and should have independent interest even in the study of
ordinary Jones-Wenzl projectors. We
postpone further discussion of these aspects to the end of the introduction.

We now provide a more technical introduction to the results of this paper.

\subsection{The $Q$-groupoid} \label{subsec:Qgroupoid}

Let $W$ be a group acting by homomorphisms on a commutative ring
$Q$. From this data one can construct a monoidal category $\Omega_Q W$
(the ``$Q$-groupoid of $W$''), which provides a categorification of $W$. Its
definition is as follows. It has objects $\{ r_x \; | \; x \in W \}$ with
\begin{equation} \label{QSchur}
\Hom(r_x, r_y) = \begin{cases} Q & \text{if $x = y$,} \\ 0 &
  \text{otherwise}. \end{cases}
\end{equation}
The monoidal structure on objects is given by $r_xr_y := r_{xy}$. The tensor product
of morphisms is as follows: if $f, g \in Q$ are regarded as
elements of $\End(r_x)$ and $\End(r_y)$ respectively, then 
\begin{equation} \label{tensorpolys} f \otimes g := f x(g)\end{equation} as an element of $Q = \End(r_{xy})$.

If the $W$-action on $Q$ is faithful, $\Omega_Q W$ may be understood concretely as follows. For $x \in W$ one can define a $Q$-bimodule $Q_x$, which is isomorphic to $Q$ as a left
$Q$-module, and has right $Q$-action twisted by $x$: $q \cdot q' := x(q') \cdot q$ for $q \in Q_x$ and $q' \in Q$. Such bimodules are called \emph{standard bimodules}. There exists
a canonical isomorphism $Q_x \otimes_Q Q_y = Q_{xy}$, so standard bimodules form a monoidal category inside $Q$-bimodules. It is easy to construct a functor $\FC_{\std}$ from
$\Omega_Q W$ to the category of standard bimodules, sending $r_x$ to $Q_x$. When the $W$-action on $Q$ is faithful this functor is an equivalence.

However, when the $W$-action on $Q$ is not faithful, there are isomorphisms $Q_x \cong Q_y$ for $x \ne y$; such an isomorphism does not exist in the abstractly-defined category
$\Omega_Q W$. In this case $\FC_{\std}$ is faithful but not full; it induces an isomorphism from all \emph{nonzero} morphism spaces in $\Omega_Q W$ to the
corresponding morphism spaces between bimodules, but entirely misses some other morphism spaces between bimodules. One can think of $\Omega_Q W$ as a model for the category of
standard bimodules, but a model which does not detect degenerate behavior. The isomorphism classes in $\Omega_Q W$ are always in bijection with $W$, regardless of any surprises
that the bimodule category may have in store.

\begin{remark} \label{rmk:stdmorphismsarestd} We will view morphism spaces in $\Omega_Q W$ as being $Q$-bimodules, just as morphism spaces between standard bimodules are. The
identification of $\End(r_x)$ with $Q$ intertwines the left action of $Q$ on morphism spaces. Meanwhile, the right action of $Q$ on $\End(r_x)$ is twisted by $x$, so that
$\End(r_x)$ is actually isomorphic to $Q_x$ as a $Q$-bimodule, just as $\End(Q_x)$ is. With this convention, the functor $\FC_{\std}$ preserves the $Q$-bimodule structure on morphism spaces. \end{remark}

\subsection{The Hecke category and our main theorem}

Now fix a commutative domain $\Bbbk$, and choose a \emph{realization} of $(W,S)$ over $\Bbbk$, see \cite[Definition 3.1]{EW}. Roughly speaking, a realization is the data of a
representation of $W$ on a free $\Bbbk$-module $\hg^*$ for which $W$ acts by ``reflections,'' together with data on the reflecting hyperplanes (i.e. roots and coroots). Associated
to this realization one has a polynomial ring $R$, whose linear terms are given by $\hg^*$. Let $Q$ denote the fraction field of $R$. By slight abuse of language, we still refer to
elements of $Q$ as polynomials. Note that $W$ acts on $R$ and $Q$ by algebra homomorphisms. In a general realization the action of $W$ on $\hg^*$ (and hence on $R$ and $Q$) need
not be faithful.

Let $\HC$ denote the (diagrammatic) Hecke category associated to this realization, as defined in \cite{EW}. Morphism spaces in $\HC$ are $R$-bimodules. The Hecke category
$\HC$ is meant to encode the morphisms between certain $R$-bimodules known as Bott-Samelson bimodules (whose summands are called Soergel bimodules): for certain realizations there
is a functor $\FC$ from $\HC$ to the category of Bott-Samelson bimodules. Just like $\Omega_Q W$, $\HC$ is a model which does not detect any degenerate behavior which may appear in
the bimodule setting for special realizations.

\begin{remark} In order to define the diagrammatic Hecke category associated to a realization, one requires the existence and rotatability of certain two-colored
Jones-Wenzl projectors. This requirement is a (somewhat mysterious) condition on the Cartan matrix of the realization (i.e. the pairing of roots and coroots), and on
the base ring $\Bbbk$. We assume that these projectors exist until
\S\ref{sec:error}, and discuss the conditions for them to exist in
more detail in \S\ref{sec:JW}. \end{remark}

Let $B$ be a Bott-Samelson bimodule. After base change from $R$ to $Q$ on one side, it becomes possible to equip $B \ot_R Q$ with a left action of $Q$, making it into a
$Q$-bimodule. Then one can prove that $B \ot_R Q$ splits into a direct sum of standard bimodules $Q_x$. In fact, this splitting lifts to the diagrammatic models as well. Let $(\Omega_Q W)_{\oplus}$ denote the additive envelope of $\Omega_Q W$. The main result of this paper is the following theorem:

\begin{thm} \label{thm:main} There exists a monoidal functor $\Lambda : \HC \to (\Omega_Q W)_{\oplus}$. \end{thm}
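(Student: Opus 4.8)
The plan is to construct $\Lambda$ on generators and check relations, using the diagrammatic presentation of $\HC$ from \cite{EW}. The category $\HC$ is generated as a monoidal category by the objects $B_s$ ($s \in S$), and its morphisms are generated by the ``polynomial'' morphisms, the dots (unit and counit $B_s \to R$, $R \to B_s$), the trivalent vertices ($B_s \to B_s B_s$ and back), and the $2m_{st}$-valent vertices for each pair $s,t$ with $m_{st} < \infty$, subject to the one-color relations, the two-color relations (Frobenius, needle, etc.), and the Jones--Wenzl / Zamolodchikov relations encoding the dihedral subcategories. The first step is to define $\Lambda$ on objects: following the classical localization picture, each $B_s$ must go to $r_e \oplus r_s$ (with appropriate grading shifts), so a Bott--Samelson word $\underline{w} = (s_1, \dots, s_n)$ is sent to $\bigoplus_{x \le \underline{w}} r_x$ summed over subwords, reproducing the decomposition of $B \ot_R Q$ into standard bimodules.

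Next I would define $\Lambda$ on the generating morphisms. On polynomials this is forced: the inclusion $R \hookrightarrow Q$ composed with the identity on each summand $r_x$, twisted appropriately as in Remark \ref{rmk:stdmorphismsarestd}. For a dot and trivalent vertex on a single color $s$, the value of $\Lambda$ is a matrix of elements of $Q$ with entries built from $\alpha_s$ and $\alpha_s^{-1}$ — this is the content of formula \eqref{subeq:onecolor}. For the $2m_{st}$-valent vertex, $\Lambda$ should be the ``rotation-type'' morphism between the standard-bimodule decompositions of the two sides inside the dihedral parabolic, whose matrix entries are the explicit rational functions of \eqref{fdef} and Lemma \ref{lem:E}; here the product-of-positive-roots property of the relevant two-colored Jones--Wenzl projector (promised in the introduction) is exactly what makes these formulas well-defined and makes the vertex land in the $Q$-groupoid rather than becoming singular. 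So the definition of $\Lambda$ on generators reduces to assembling the formulas already advertised in the excerpt.

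The substance of the proof is then checking that $\Lambda$ respects all the defining relations of $\HC$. The one-color and generic two-color (Frobenius-type) relations are finite matrix identities over $Q$ and should be a direct, if tedious, verification. The main obstacle is the dihedral relations: the Jones--Wenzl relations and the Zamolodchikov (``$3$-color'') relations. For the dihedral relations one must show that the explicit $2m_{st}$-valent morphism, built from the Jones--Wenzl data, really is rotation-invariant up to the correct scalar and satisfies the barbell/needle identities — this is precisely where the inadequate generality of the treatment in \cite{EDC} caused the gap, and where the new results on two-colored Jones--Wenzl projectors (rotational eigenvalue, polynomial evaluation equal to the product of positive roots) must be invoked. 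I would isolate the dihedral case as a standalone proposition: the restriction of $\Lambda$ to the parabolic subcategory $\HC_{\{s,t\}}$ factors through the known localization of the dihedral Hecke category, using the root-of-unity/quantum Satake picture alluded to in the introduction. For the Zamolodchikov relations (three distinct colors), one reduces, via the naturality of the construction and the fact that all morphisms involved are determined on each $r_x$-summand, to checking equality of two matrices of rational functions; because every morphism in the $Q$-groupoid between fixed objects is at most $1$-dimensional, this amounts to comparing scalars on finitely many summands, which can be organized combinatorially by the subwords indexing the summands. Once all relations are verified, monoidality of $\Lambda$ on morphisms follows from the formula \eqref{tensorpolys} for the tensor product in $\Omega_Q W$ together with the way $\Lambda$ was defined summand-by-summand, completing the construction.
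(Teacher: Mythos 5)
Your overall strategy---define $\Lambda$ on generating objects and morphisms and verify the defining relations of $\HC$---matches the paper's, and you correctly identify the two load-bearing facts about the rotatable Jones--Wenzl projector (its rotational eigenvalue and the evaluation $\poly(JW_{m-1})=\pi_{s,t}$). But your plan for the dihedral relations is where the gap lies. You propose to dispose of them by observing that $\Lambda$ restricted to $\HC_{\{s,t\}}$ ``factors through the known localization of the dihedral Hecke category.'' That is precisely the shortcut the paper is written to avoid: the dihedral localization in \cite{EDC} is constructed via a functor to (singular) Soergel bimodules which requires extra hypotheses on the realization (Demazure surjectivity for the dihedral parabolic, condition $(\star)$), and in particular fails for non-faithful dihedral realizations. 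Quoting it in the generality needed here is exactly the error in \cite{EW} that this paper corrects. The paper instead verifies the Jones--Wenzl and two-color associativity relations purely diagrammatically, by a ``precomposition trick'': one isolates the column of the matrix indexed by the all-zero subexpression by precomposing with dots (reducing to a one-color computation), then relates any two columns indexed by subexpressions of the identity via precomposition with pitchforks (using death-by-pitchfork, which follows from the one-color relations alone), and finally handles the $(1,\dots,1)$ column via the explicit formula for the $2m$-valent vertex. Without some such direct argument your proof is circular.

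A second, smaller issue is your treatment of the Zamolodchikov relations. You claim the verification ``amounts to comparing scalars on finitely many summands, which can be organized combinatorially.'' In practice the $A_3$ and $B_3$ relations are equalities of thirteen-fold products of sparse $2^9\times 2^9$ matrices, which the paper checks by computer (the paper also sketches a precomposition-trick argument, but flags it as subtle). More importantly, the $H_3$ Zamolodchikov relation is not explicitly known, so it simply cannot be checked; the paper is upfront about this caveat, and any complete proof of the theorem has to state it.
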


\begin{remark} In the literature, the symbol $\HC_{BS}$ is often used to refer to the pre-additive category defined by generators and relations in \cite{EW}, while the symbol $\HC$ refers to the Karoubi envelope of its additive, graded closure. We only care about the category $\HC_{BS}$ in this paper, so we refer to it as $\HC$ for simplicity. Since $(\Omega_Q W)_{\oplus}$ is Karoubian, the functor $\Lambda$ automatically lifts from $\HC$ to its Karoubi envelope. \end{remark}

The functor $\Lambda \colon \HC \to (\Omega_Q W)_{\oplus}$ is meant to describe what happens to a morphism between Bott-Samelson bimodules after passage to the localization, where
this morphism becomes a matrix of morphisms between the standard summands. For some realizations, such as when the $W$-action is not faithful, neither the functor $\FC$ from $\HC$
to the category of Bott-Samelson bimodules, nor the functor $\FC_{\std}$ from $\Omega_Q W$ to the category of standard bimodules, is an equivalence. Nonetheless, one can show
abstractly that after base changing the morphisms from $R$ to $Q$ on one side, the objects in $\HC$ split as direct sums of objects in $\Omega_Q W$. The functor $\Lambda$ still
records what happens to morphisms after localization.

\begin{remark} Recently, Abe has proved that the
  formulas here define morphisms in his bimodule
  theoretic incarnation of the Hecke category, under a natural
  assumption on the realization \cite{Abe3}. (The key difficulty is to show that a
  morphism between localizations of bimodules is in fact the
  localization of a morphism.)
  This constructs a version of the
  functor $\FC$ from the Hecke category to Soergel bimodules in broad
  generality. We discuss the Abe's results in more detail in \S\ref{sec:existence}. 
\end{remark}

Morphisms between objects in $\Omega_Q W$ are relatively easy to understand, thanks to \eqref{QSchur}: they are polynomials in $Q$, or are zero. Morphisms in $(\Omega_Q
W)_{\oplus}$ between direct sums of objects in $\Omega_Q W$ are also easy to understand: they are matrices of morphisms in $\Omega_Q W$, and hence matrices whose entries are either
polynomials or zero. This is unlike morphisms in $\HC$, which are relatively complicated and difficult to compose.

One use of the functor $\Lambda$ is to prove that the double leaves basis (see \cite[\S 6]{EW}) of $\HC$ is, in fact, a basis. One proves that these morphisms span using a
complicated diagrammatic argument. To prove linear independence, one applies the functor $\Lambda$, and proves that their image is still linearly independent using an upper
triangularity argument. Consequently, double leaves are a basis, and the functor $\Lambda$ is therefore faithful. For this reason, the well-definedness of $\Lambda$ was crucial to
the results of \cite{EW}. See \cite[Propositions 6.6 and 6.9]{EW} for more details.

Another use of the functor $\Lambda$ is to enable computer
calculations. With current software, computers are not good at
diagrammatic algebra. On the other hand, computers are excellent at multiplying sparse matrices of polynomials. Since one can check that two morphisms are equal after applying $\Lambda$, this transforms a
complicated diagrammatic problem into matrix multiplication. To program this, one must know the matrix of polynomials associated to any generating morphism of $\HC$. For each pair
of simple reflections forming a finite dihedral subgroup of order $2m$, there is a generating morphism called the $2m$-valent vertex. For small values of $m$, the matrix associated
to the $2m$-valent vertex was already in use by the authors for their computer calculations, but was not in the literature. Here, we present a precise formula for all $m$.

\subsection{Organization of the paper}

In Chapter \S\ref{sec:defnoffunctor} we define the functor $\Lambda$ by defining it on generators. In \S\ref{sec:checkrelations} we check that the functor is well-defined by
checking the relations. Thus we prove Theorem \ref{thm:main}.

In \S\ref{sec:defnoffunctor} and \S\ref{sec:checkrelations} we provide and manipulate a whole lot of matrices with values in $Q$ (which describe the functor $\Lambda$ on
morphisms), without explaining where these matrices come from. The goal of chapter \S\ref{sec:heuristics} is to explain how we computed these matrices. Much of the authors'
intuition for the functor $\Lambda$ is gained from a diagrammatic presentation of $\Omega_Q W$, which we developed in \cite[\S 4]{EW}. This led to a combined diagrammatic calculus
for localized Soergel bimodules, found in \cite[\S 6]{EWdiag}, in which one can describe the heuristic method for computing $\Lambda$. We recall and use this technology in
\S\ref{sec:heuristics}, and we encourage the reader to briefly acquaint themselves with \cite[\S 4]{EW} and \cite[\S 6]{EWdiag}. Because these heuristics are not needed to follow
the rest of the paper, and because we wished to emphasize the logical independence of Theorem \ref{thm:main} from this additional diagrammatic technology, we chose to put the cart
in front of the horse and prove the theorem before motivating the construction.

In \S\ref{sec:error} we describe and rectify the error from our previous work, and elaborate on which assumptions one needs to use various results and constructions.

Our proofs in \S\ref{sec:checkrelations} rely on several technical properties of (two-colored) Jones-Wenzl projectors. These properties all appear in our
earlier works, but in varying degrees of explicitness and with varying levels of proof. Jones-Wenzl projectors are traditionally studied using recursive formulas, but these
recursive formulas might not apply in general (e.g. when we need to study the $n$-th Jones-Wenzl projector, but the $k$-th Jones-Wenzl projector is not defined for $k < n$). Some
of the proofs\footnote{In particular, see Lemma \ref{lem:evaluation} for a result where the proof in \cite{EDC} is insufficiently general.} in \cite{EDC} tacitly relied on these
recursive formulas, and so they do not apply in all cases (e.g. for non-faithful realizations of dihedral groups). Altogether, we felt it was necessary to provide a complete
discussion of Jones-Wenzl projectors and to carefully prove the required properties even when recursive formulas do not apply, which we do in \S\ref{sec:JW}. We give a general
argument that, whenever a Jones-Wenzl projector exists over a (sufficiently nice) base ring, then its coefficients ``agree'' with the coefficients over a generic field. Over the
generic field we can use the standard recursive formulas. This
argument is new and general, and may be of independent interest. In
\S\ref{subsec:specialcases} we describe Jones-Wenzl projectors in some
special cases of note, including types $B_2$ and $G_2$ in small
characteristic.

The diagrammatic calculus for $\HC$ has some additional complications
when the realization is \emph{odd-unbalanced}. We also describe the
localization functor $\Lambda$ in this 
setting. The additional technicalities are distracting, so we postpone
this case until the final chapter. Again, we also feel like the
accounts given of the unbalanced case in 
previous work are hasty and not easy for the reader to piece
together. Thus we have aimed to present a self-contained exposition in
\S \ref{sec-unbalanced}. 

\subsection*{Acknowledgements} We would like to thank Simon Riche for
initially pointing out the gap in \cite{EW}, which gave rise to this
paper, as well as several very interesting questions and observations
(particularly concerning Jones-Wenzl projectors in small
characteristic) which gave rise to \S\ref{subsec:specialcases}. We
would also like to thank the referee for a detailed reading, and
suggesting changes which have clarified the exposition. The first author was supported by NSF grants DMS-1553032 and DMS-1800498 and DMS-2201387. Part of this work was written while the authors were visiting the IAS, a visit supported by NSF grant DMS-1926686. Both authors wish to thank the IAS for their hospitality.

\section{Definition of the localization functor} \label{sec:defnoffunctor}

Both $\HC$ and $(\Omega_Q W)_{\oplus}$ are monoidal and $\HC$ is presented by generators and relations. The strategy to construct the functor $\Lambda \colon \HC \to (\Omega_Q
W)_{\oplus}$ is simple: we define the images of the generating objects and morphisms, and then check the relations. In this section we define the functor on objects and morphisms,
and in the subsequent sections we check the relations. We assume throughout that our realization is such that $\HC$ is well-defined, and elaborate further on what this entails when it becomes relevant. Starting in \S\ref{subsec:defntwocolor} we assume our realization is balanced for simplicity; the unbalanced case is treated in \S\ref{sec-unbalanced}.

\subsection{Definition on objects} \label{subsec:defnobjects}

The Hecke category $\HC$ is generated as a monoidal category by the
set $\{ B_s \; | \; s \in S\}$. 
On objects we set
\begin{equation}
  \label{eq:objects}
\Lambda:   B_s \mapsto r_{\id} \oplus r_s 
\end{equation}
If we think of $\id = s^0$ and $s = s^1$ inside $W$, then 
\begin{equation}
\Lambda(B_s) = \bigoplus_{e \in \{0,1\}} r_{s^e}.
\end{equation}
By definition, we have canonical inclusions and
  projections from $\Lambda(B_s)$ to its direct summands $r_{\id}$ and
  $r_s$.

A sequence $\un{w} = (s_1, \ldots, s_d)$ of simple reflections ($s_i
\in S$) is called an \emph{expression}, and it has \emph{length}
$d$. We remove the underline and write $w$ for the corresponding
element in $W$, i.e. $w  = s_1 \cdots s_d$. In $\Omega_Q W$ we have 
\begin{equation} r_w = r_{s_1} \otimes r_{s_2} \otimes \cdots \otimes r_{s_d}. \end{equation}
If $\un{w} = (s_1, \ldots, s_d)$ is an expression, a
\emph{subexpression} of $\un{w}$ is a sequence $\un{e} = (e_1, \ldots,
e_d)$ of zeroes and ones of the same length as $\un{w}$. We write
$\un{e} \subset \un{w}$. This subexpression is thought of as
representing the subword $s_1^{e_1} s_2^{e_2} \cdots s_d^{e_d}$ of
$\un{w}$, and we call the corresponding element of $W$ the
\emph{endpoint} of the subexpression. We employ the notation
\begin{equation} \un{w}^{\un{e}} := s_1^{e_1} \dots s_d^{e_d} \in W \end{equation}
and we have
\begin{equation} r_{\un{w}^{\un{e}}} = r_{s_1^{e_1}} \otimes \cdots \otimes r_{s_d^{e_d}}. \end{equation}

The objects of $\HC$ are tensor products of various objects $B_s$, and
thus can be identified with expressions. For an expression $\un{w} =
(s_1, \dots, s_d)$ we write $B_{\un{w}} := B_{s_1} \otimes \cdots
\otimes B_{s_d}$.  Because $\Lambda$ is monoidal, it follows that we have a canonical isomorphism
\begin{equation}
  \label{eq:can_decomp}
\Lambda( B_{\un{w}}) = (r_{\id} \oplus r_{s_1} ) \otimes \dots \otimes
(r_{\id} \oplus r_{s_d}) = \bigoplus_{\un{e} \subset \un{w}} r_{\un{w}^{\un{e}}}.
\end{equation}
In this way, subexpressions of $\un{w}$ identify the $2^d$ summands inside $\Lambda(B_{\un{w}})$. Given a subexpression $\un{e} \subset \un{w}$ with $\un{w}^{\un{e}} =
x \in W$ we denote by
\[
i(\un{e}) : r_x \to \Lambda( B_{\un{w}}) \quad \text{and} \quad
p(\un{e}) : \Lambda( B_{\un{w}}) \to r_x 
\]
the inclusion of and projection to the canonical summand of
\eqref{eq:can_decomp} indexed by $\un{e}$.

\subsection{Definition on one-color morphisms} 
Morphisms in additive categories can be described as matrices of
morphisms with respect to a fixed direct sum decomposition. Thus, we can specify a morphism
\[
  \psi \colon \Lambda(B_{\un{w}}) \to \Lambda(B_{\un{x}})
\]
  by encoding it as a matrix with elements in $Q$, whose columns are indexed by subexpressions
$\un{e} \subset \un{w}$ and whose rows are indexed by subexpressions $\un{f} \subset \un{x}$. The entry in column $\un{e}$ and row $\un{f}$ is the composition $p(\un{f})
\circ \psi \circ i(\un{e})$, which is a map $r_{\un{w}^{\un{e}}} \to r_{\un{x}^{\un{f}}}$, viewed as an element of $Q$.

The images of the one-color morphisms are as follows:
\begin{subequations} \label{subeq:onecolor}
\begin{gather}
  \label{eq:f}
\begin{array}{c}
\tikz[scale=0.35]{
%\draw[dashed] (3,0) circle (1cm);
\draw[dashed] (2,-1) to (4,-1); \draw[dashed] (2,1) to (4,1);
\node at (3,0) {$f$};}
\end{array}
\mapsto \begin{array}{c|c} & \emptyset \\ \hline\emptyset & f \end{array} \\ 
  \label{eq:idot}
\begin{array}{c}
\tikz[scale=0.35]{
%\draw[dashed] (3,0) circle (1cm);
\draw[dashed] (2,-1) to (4,-1); \draw[dashed] (2,1) to (4,1);
\draw[color=red] (3,-1) to (3,0);
\node[circle,fill,draw,inner sep=0mm,minimum size=1mm,color=red] at (3,0) {};}
\end{array}
\mapsto \begin{array}{c|cc}
& 0 & 1 \\
 \hline
\emptyset & {\color{red} \alpha} & 0 
\end{array} \\ 
  \label{eq:ddot}
\begin{array}{c}
\tikz[xscale=0.35,yscale=-0.35]{
%\draw[dashed] (3,0) circle (1cm);
\draw[dashed] (2,-1) to (4,-1); \draw[dashed] (2,1) to (4,1);
\draw[color=red] (3,-1) to (3,0);
\node[circle,fill,draw,inner sep=0mm,minimum size=1mm,color=red] at (3,0) {};}
\end{array} \mapsto 
\begin{array}{c|c}
& \emptyset \\
 \hline
0 & 1 \\
1 & 0 \end{array} \\
  \label{eq:peace}
\begin{array}{c}
\tikz[xscale=0.35,yscale=-0.35]{
%\draw[dashed] (3,0) circle (1cm);
\draw[dashed] (2,-1) to (4,-1); \draw[dashed] (2,1) to (4,1);
\draw[color=red] (2.3,-1) to[out=90,in=-150] (3,0);
\draw[color=red] (3.7,-1) to[out=90,in=-30] (3,0);
\draw[color=red] (3,0) to (3,1);
}
\end{array} 
\mapsto \begin{array}{c|cc}
& 0 & 1\\
\hline 00 & 1/{\color{red}\alpha} & 0 \\
01 & 0 & 1/{ \color{red}\alpha} \\
10 & 0 & -1/{ \color{red}\alpha} \\
11 & -1/{ \color{red}\alpha}  & 0 
\end{array}
\\
  \label{eq:mercedes}
\begin{array}{c}
\tikz[xscale=0.35,yscale=0.35]{
%\draw[dashed] (3,0) circle (1cm);
\draw[dashed] (2,-1) to (4,-1); \draw[dashed] (2,1) to (4,1);
\draw[color=red] (2.3,-1) to[out=90,in=-150] (3,0);
\draw[color=red] (3.7,-1) to[out=90,in=-30] (3,0);
\draw[color=red] (3,0) to (3,1);
}
\end{array} 
\mapsto \begin{array}{c|cccc}
  & 00 & 01 & 10 & 11 \\
 \hline 0 & 1 & 0 & 0 & 1 \\
1 & 0 & 1 & 1 & 0 
\end{array}
\end{gather}
\end{subequations}
If {\color{red} red} represents the simple reflection $s \in S$, then
{\color{red} $\alpha$} represents the corresponding simple root
$\alpha_s$.

\begin{remark} As a reminder of the diagrammatic conventions, morphisms are composed from bottom to top, and their corresponding matrices are composed right to left (in the usual
way). \end{remark}

\begin{remark}
Note that the degrees of the matrix entries can be
  determined as follows. The degree of any given matrix coefficient in
  $\Lambda(\phi)$ is the degree of $\phi$, plus the length of the
  source expression, minus the length of the target expression. Recall that
  $\deg({\color{red} \alpha}) = 2$.
  \end{remark}

Recall that in $\HC$, the cups and caps are obtained as compositions
of trivalent vertices and dots:
\begin{equation} \label{eq:cupcap}
\begin{array}{c}
\tikz[xscale=0.35,yscale=-0.35]{
%\draw[dashed] (3,0) circle (1cm);
\draw[dashed] (2,-1) to (4,-1); \draw[dashed] (2,1) to (4,1);
\draw[color=red] (2.3,-1) to[out=90,in=180] (3,.2);
\draw[color=red] (3.7,-1) to[out=90,in=0] (3,.2);
}
\end{array} :=
\begin{array}{c}
\tikz[xscale=0.35,yscale=-0.35]{
%\draw[dashed] (3,0) circle (1cm);
\draw[dashed] (2,-1) to (4,-1); \draw[dashed] (2,1) to (4,1);
\draw[color=red] (2.3,-1) to[out=90,in=-150] (3,0);
\draw[color=red] (3.7,-1) to[out=90,in=-30] (3,0);
\draw[color=red] (3,0) to (3,.5);
\node[circle,fill,draw,inner sep=0mm,minimum size=1mm,color=red] at (3,.5) {};
}
\end{array} 
\quad
\begin{array}{c}
\tikz[xscale=0.35,yscale=0.35]{
%\draw[dashed] (3,0) circle (1cm);
\draw[dashed] (2,-1) to (4,-1); \draw[dashed] (2,1) to (4,1);
\draw[color=red] (2.3,-1) to[out=90,in=180] (3,.2);
\draw[color=red] (3.7,-1) to[out=90,in=0] (3,.2);
}
\end{array} 
:=
\begin{array}{c}
\tikz[xscale=0.35,yscale=0.35]{
%\draw[dashed] (3,0) circle (1cm);
\draw[dashed] (2,-1) to (4,-1); \draw[dashed] (2,1) to (4,1);
\draw[color=red] (2.3,-1) to[out=90,in=-150] (3,0);
\draw[color=red] (3.7,-1) to[out=90,in=-30] (3,0);
  \draw[color=red] (3,0) to (3,.5);
  \node[circle,fill,draw,inner sep=0mm,minimum size=1mm,color=red] at (3,.5) {};
}
\end{array} 
  \end{equation}
Composing the corresponding matrices above gives the images of the
cups and caps under $\Lambda$:
\begin{gather}
  \label{eq:cup}
\begin{array}{c}
\tikz[xscale=0.35,yscale=-0.35]{
%\draw[dashed] (3,0) circle (1cm);
\draw[dashed] (2,-1) to (4,-1); \draw[dashed] (2,1) to (4,1);
\draw[color=red] (2.3,-1) to[out=90,in=180] (3,.2);
\draw[color=red] (3.7,-1) to[out=90,in=0] (3,.2);
}
\end{array} \mapsto
\begin{array}{c|c}
& \emptyset \\
\hline 00 & 1/{\color{red}\alpha} \\
01 & 0 \\
10 & 0 \\
11 & -1/ {\color{red}\alpha} 
\end{array}\\
  \label{eq:cap}
\begin{array}{c}
\tikz[xscale=0.35,yscale=0.35]{
%\draw[dashed] (3,0) circle (1cm);
\draw[dashed] (2,-1) to (4,-1); \draw[dashed] (2,1) to (4,1);
\draw[color=red] (2.3,-1) to[out=90,in=180] (3,.2);
\draw[color=red] (3.7,-1) to[out=90,in=0] (3,.2);
}
\end{array} 
\mapsto 
\begin{array}{c|cccc}
& 00 & 01 & 10 &  11\\
\hline \emptyset & {\color{red}\alpha} & 0 & 0 & {\color{red}\alpha} 
\end{array}
\end{gather}

Note that all the zero entries in the matrices above are forced to be zero, simply because $\Hom(r_x, r_y) = 0$ if $x \ne y$. When it saves space, we ignore the rows and columns in our matrices which are forced to be zero, and only write the interesting part. For example, instead of \eqref{eq:idot}, we could have simply written
\begin{equation}
 \label{eq:idot2}
\begin{array}{c}
\tikz[scale=0.35]{
%\draw[dashed] (3,0) circle (1cm);
\draw[dashed] (2,-1) to (4,-1); \draw[dashed] (2,1) to (4,1);
\draw[color=red] (3,-1) to (3,0);
\node[circle,fill,draw,inner sep=0mm,minimum size=.5mm,color=red] at (3,0) {};}
\end{array}
\mapsto \begin{array}{c|c}
& 0  \\
 \hline
\emptyset & {\color{red} \alpha} 
\end{array}.
\end{equation}

\subsection{Monoidal structure} For the sake of explicitness, we 
give some more detail and examples concerning the monoidal structure. Recall from Remark
\ref{rmk:stdmorphismsarestd} that the identification of $\End(r_x)$
with $Q$ uses the
left action of $Q$ on morphism spaces. Tensoring with (the image
  under $\Lambda$ of) $B_s$ on the
right does not affect this left action, but tensoring with (the image
  under $\Lambda$ of) $B_s$ on the left does.

\begin{remark}
In the diagrammatic language (see \S\ref{sec:heuristics}), one can
think that the matrices are recording polynomials which should be placed on the left of a diagram. When morphisms are concatenated horizontally, the left of a subdiagram is no
longer the left of the entire diagram, and the polynomial must be pushed to the left using the monoidal structure in $\Omega_Q W$, which was defined in \eqref{tensorpolys}.  
\end{remark}

\begin{ex}
  Multiplication by the polynomial $f$ is an endomorphism of the
  monoidal identity, and has the $1 \times 1$ matrix (see \eqref{eq:f})
\begin{equation}
  \begin{array}{c}
\tikz[scale=0.35]{
%\draw[dashed] (3,0) circle (1cm);
\draw[dashed] (2,-1) to (4,-1); \draw[dashed] (2,1) to (4,1);
\node at (3,0) {$f$};}
\end{array}
%\BE{geordie, please draw}
\mapsto \begin{array}{c|c}
& \emptyset  \\
 \hline
\emptyset & f 
\end{array}.
\end{equation}
Since $B_s \cong r_{\id} \oplus r_s$, we have
\begin{equation} \label{eq:polyactsilly}
  f \otimes \id_{B_s} = %\BE{please draw}
\begin{array}{c}
\tikz[scale=0.35]{
%\draw[dashed] (3,0) circle (1cm);
\draw[dashed] (2,-1) to (5,-1); \draw[dashed] (2,1) to (5,1);
  \draw[color=red] (4,-1) to (4,1);
  \node at (3,0) {$f$};}
%\node[circle,fill,draw,inner sep=0mm,minimum size=1mm,color=red] at (3,0) {};}
\end{array}
  \mapsto \begin{array}{c|cc}
  & 0 & 1 \\
 \hline 0 & f & 0 \\
1 & 0 & f  
\end{array}
\end{equation}
and
\begin{equation}
  \label{eq:polyact}
  \id_{B_s} \otimes f = %\BE{please draw}
\begin{array}{c}
\tikz[scale=0.35]{
%\draw[dashed] (3,0) circle (1cm);
\draw[dashed] (2,-1) to (5,-1); \draw[dashed] (2,1) to (5,1);
  \draw[color=red] (3,-1) to (3,1);
  \node at (4,0) {$f$};}
%\node[circle,fill,draw,inner sep=0mm,minimum size=1mm,color=red] at (3,0) {};}
\end{array}
  \mapsto \begin{array}{c|cc}
& 0 & 1 \\
\hline 0 & f & 0  \\ 1 & 0 & s(f) \end{array}.
\end{equation}
On the second row of the matrix of $\id_{B_s} \otimes f$, the
polynomial $f$ must be pushed left through $r_s$, so it is acted on by
$s$.
\end{ex}

Similarly, if we know the $m \times n$ matrix of $\Lambda(\phi)$ for some morphism $\phi$, then the matrix of $\Lambda(\phi \otimes \id_{B_s})$ (resp. $\Lambda(\id_{B_s} \otimes f)$) is a $2m \times 2n$ matrix, where each entry $f \in Q$ of the original matrix is replaced by a $2 \times 2$ matrix (indexed by subexpressions of $(s)$) according to
the matrix of \eqref{eq:polyactsilly} (resp. \eqref{eq:polyact}).

\begin{ex}
  The reader may use these rules to check that
\begin{equation}
\begin{array}{c}
\tikz[scale=0.45]{
%\draw[dashed] (3,0) circle (1cm);
\draw[dashed] (2,-1) to (6,-1); \draw[dashed] (2,1) to (6,1);
  \draw[color=red] (3,-1) to (3,1);
  \draw[color=red] (4,1) to[out=-90,in=180] (4.5,0) to[out=0,in=-90] (5,1);
%\node[circle,fill,draw,inner sep=0mm,minimum size=1mm,color=red] at
%(3,0) {};
  }
\end{array} \mapsto \begin{array}{c|cc}
& 0 & 1 \\ \hline
000 & 1/{\color{red} \alpha} & 0\\
011 & -1/{\color{red} \alpha} & 0\\
100 & 0 & s(1/{\color{red} \alpha})\\
111 & 0 & s(-1/{\color{red} \alpha}) \end{array} =
\begin{array}{c|cc}
& 0 & 1 \\ \hline
000 & 1/{\color{red} \alpha} & 0\\
011 & -1/{\color{red} \alpha} & 0\\
100 & 0 & -1/{\color{red} \alpha}\\
111 & 0 & 1/{\color{red} \alpha} \end{array}.
\end{equation}
We have omitted four rows (indexed by the subexpressions $001$,
$010$, $101$ and $110$) which are zero.
\end{ex}

\begin{remark} 
  \label{rmk:nicerpeacecup} 
Sometimes the decision to push all
  polynomials to the left of the diagram can obscure a nicer
  formula. For example, in a product $r_xr_y$, the fraction field $Q$
  acts on the left and on the right, but also in the middle, between
  the two tensor factors. The formulas for \eqref{eq:peace} and
  \eqref{eq:cup} can both be rewritten so that the polynomial
  $\frac{1}{{\color{red} \a}}$, with no sign, always appears in the middle of the target
  $\Lambda(B_s \ot B_s)$. When $\frac{1}{{\color{red} \alpha}}$ is
  pushed left, it picks up a sign based on whether the first $B_s$ is
  replaced with $r_{\id}$ or with $r_s$. \end{remark}

\subsection{Notation for positive roots} \label{subsec-pos-roots}

Our next goal is to provide the image of the $2m_{st}$-valent
vertex under $\Lambda$. This will require a little more notation first.

Let $\un{w} = (s_1, \ldots, s_d)$ be an expression of length $d \ge 1$. We define the corresponding element $\beta_{\un{w}} \in \hg^*$ by the formula
\begin{equation} \label{defn:beta} \beta_{\un{w}} = s_1 s_2 \cdots s_{d-1} (\alpha_{s_d}). \end{equation}
Similarly, if $\un{e} \subset \un{w}$ is a subexpression, we write
\begin{equation} \beta_{\un{e}} = s_1^{e_1} s_2^{e_2} \cdots s_{d-1}^{e_{d-1}} (\alpha_{s_d}). \end{equation}
Note that the value of $e_d \in \{0,1\}$ does not affect $\beta_{\un{e}}$.

Let $\un{x} = (s_1, \ldots, s_d)$ be any expression. We write $X_{\un{x}}$ for the set of \emph{leading subexpressions} of $\un{x}$. That is, 
\begin{equation} X_{\un{x}} = \{(s_1), (s_1, s_2), \cdots, (s_1, s_2, \ldots, s_{d-1}), \un{x} \}. \end{equation}
For leading subexpressions we do not use the zero-one notation $\un{e} \subset \un{x}$, as we did for arbitrary subexpressions; this is because we will be interested later in subexpressions of leading subexpressions!

For the geometric realization, it is well-known that one can enumerate the positive roots $\Phi^+$ of a finite Coxeter group using leading subexpressions of any given reduced expression for the longest element. If $\un{x}$ is a reduced expression for the longest element of $W$, then
\begin{equation} \label{eq:phiplusdefn} \Phi^+ = \{\beta_{\un{w}} \mid \un{w} \in X_{\un{x}} \}. \end{equation}
Both $\Phi^+$ and $X_{\un{x}}$ have size equal to the number of reflections in $W$, so this description is irredundant. What happens for other realizations will be discussed in Remark \ref{rmk:posrootsotherrealizations} below. Regardless, this discussion of positive roots is only for motivational purposes.

\begin{remark} It is worth noting that one can \emph{not} use tailing subexpressions to enumerate positive roots. For example, if $Y_{\un{x}} = \{(s_d), (s_{d-1}, s_d), \ldots, (s_2, \ldots, s_d), \un{x} \}$ then $\Phi^+$ need not equal $\{ \beta_{\un{w}} \mid \un{w} \in Y_{\un{x}}\}$. Every such $\beta_{\un{w}}$ will be in the $W$-orbit of $\alpha_{s_d}$, and not all positive roots need be in the same $W$-orbit (e.g. long and short roots are in two different orbits). \end{remark}

Specifically for finite dihedral groups, we can enumerate the positive roots of the geometric representation (without redundancy) in two standard ways. Let $X_s$ be the set of leading expressions of $(s,t,s,t,\ldots)$
\begin{equation} X_s = \{s, st, sts, \ldots, stst\cdots \} \end{equation}
where the final expression has length $m$, where $m$ denotes the order
of $st$. Similarly, let
\begin{equation} X_t = \{t, ts, tst, \ldots, tsts\cdots \}. \end{equation}
We define polynomials $\pi_{s,t}$ and $\pi_{t,s}$ by the following formulas:
\begin{equation} \label{eq:pist} \pi_{s,t} = \alpha_s \cdot s(\alpha_t) \cdot st(\alpha_s) \cdots = \prod_{\un{w} \in X_s} \beta_{\un{w}} \end{equation}
and
\begin{equation} \label{eq:pits} \pi_{t,s} = \alpha_t \cdot t(\alpha_s) \cdot ts(\alpha_t) \cdots = \prod_{\un{w} \in X_t} \beta_{\un{w}}. \end{equation}
This definition makes sense in any realization, even when the realization is not faithful or balanced.

For the geometric realization we have
\begin{equation} \label{eq:twoenumerations} \{ \beta_{\un{w}} \mid \un{w} \in X_s \} = \{ \beta_{\un{w}} \mid \un{w} \in X_t\}, \end{equation}
and this is the set of positive roots $\Phi^+_{s,t}$. In particular, if $\un{w}_t \in X_t$ is the longest expression, then $\beta_{\un{w}_t} = \alpha_s$, and vice versa. Now $\pi_{s,t}$ is the product of the positive roots in $\Phi^+_{s,t}$, and \eqref{eq:twoenumerations} gives us
\begin{equation} \label{eq:pistispits} \pi_{s,t} = \pi_{t,s}. \end{equation}
Moreover, $s$ preserves the set $\Phi^+_{s,t} \setminus \{\alpha_s\}$, and sends $\alpha_s \mapsto -\alpha_s$. Thus $\pi_{s,t}$ is $s$-antiinvariant:
\begin{equation} \label{eq:pistantiinvariantfirsttime} s(\pi_{s,t}) = - \pi_{s,t}. \end{equation}

When the realization is not faithful, the sets in \eqref{eq:twoenumerations} might not have size $m$, instead giving a redundant description of a smaller set of roots. When the
realization is not balanced, the sets in \eqref{eq:twoenumerations} may be non-equal. Indeed, if $\un{w}_t \in X_t$ is the longest expression, then $\beta_{\un{w}_t}$ is a non-trivial
scalar multiple of $\alpha_s$, and all the roots coming from $X_t$ are rescalings of roots coming from $X_s$. See \S\ref{subsec:rootspolys} for more details. In these cases it is a
technical question what the appropriate definition of ``positive roots'' should be, but this is not of importance for us. The definitions of $\pi_{s,t}$ and $\pi_{t,s}$ make sense as
above, and it is only important what properties they satisfy.

\begin{ex}
To follow this example one should be familiar with two-colored quantum numbers, see \S\ref{subsec:2quantum}. In the geometric realization of the symmetric group $S_3$ with simple reflections $s$ and $t$, we have
\[ ts(\alpha_t) = \alpha_s, \quad t(\alpha_s) = s(\alpha_t), \quad \alpha_t = st(\alpha_s), \]
\[ \pi_{s,t} = \pi_{t,s} = \alpha_s \alpha_t s(\alpha_t). \]
Meanwhile, for an arbitrary realization, one has $st(\alpha_s) = [3] \alpha_s + [2]_t \alpha_t$. For any realization $[3]=0$ and $[2]_s [2]_t =1$, but $[2]_t = 1$ if and only if the realization is balanced. For example, if $[2]_t = -5$ and $[2]_s = \frac{-1}{5}$ we have
\[ -5 ts(\alpha_t) = \alpha_s, \quad t(\alpha_s) = -5 s(\alpha_t), \quad -5 \alpha_t = st(\alpha_s), \]
\[ \pi_{s,t} = -5 \cdot \pi_{t,s}. \]
\end{ex}

For the remainder of this chapter we stick with the case where the realization is balanced (but not necessarily faithful). This is sufficient to imply that \eqref{eq:pistispits} and
\eqref{eq:pistantiinvariantfirsttime} hold, and these are the properties of $\pi_{s,t}$ that we use. We identify $\pi_{s,t}$ and $\pi_{t,s}$ in our formulas for simplicity. We deal
with the unbalanced case in \S\ref{sec-unbalanced}. The proofs found in \S\ref{sec-unbalanced} also suffice to prove the desired statements \eqref{eq:pistispits} and \eqref{eq:pistantiinvariantfirsttime} in the balanced case, regardless of faithfulness.

\begin{remark} The equality \eqref{eq:pistispits} is not really essential for our proofs, so long as one carefully distinguishes between $\pi_{s,t}$ and $\pi_{t,s}$ in the formulas, as we do in \S\ref{sec-unbalanced}. The anti-invariance of $\pi_{s,t}$, namely \eqref{eq:pistantiinvariantfirsttime}, is essential for our proofs, and holds only when the realization is even-balanced (see Definition \ref{defn:balanced}). This is one reason why even-balancedness is essential for the well-definedness of $\HC$. \end{remark}

\begin{remark} \label{rmk:posrootsotherrealizations} For arbitrary realizations of finite Coxeter groups, one could fix a reduced expression $\un{w}$ for the longest element, and
define positive roots (as a multiset) using \eqref{eq:phiplusdefn}. For faithful realizations, this multiset has no nontrivial multiplicities, a well-known fact for the geometric realization. If the realization is balanced, then the
multiset of positive roots does not depend on the choice of reduced expression. Otherwise, the positive roots will depend on the choice of reduced expression, though only up to
rescaling: the lines spanned by these roots will be independent of the choice of reduced expression. We do not know where to find proofs of these observations in the literature.
Briefly, one should prove these claims using the observation that $\beta_{\un{w}} = \beta_{\un{w}'}$ in the geometric realization if the two expressions are related by a sequence of transformations of the form \begin{itemize}
\item Coxeter relations applied to all but the last index in the expression, and
\item the equality $\beta_{\un{w}_t} = \alpha_s$, which we can use to transform the end of the expression.
\end{itemize}
These transformations continue to apply verbatim for any balanced realization, and apply up to scalar for unbalanced realizations. \end{remark}

\subsection{Definition on two-color morphisms} \label{subsec:defntwocolor}

Suppose that ${\color{red}s}, {\color{blue}t} \in
S$ are two distinct reflections, such that $st$ has order $m < \infty$ in
$W$. We need to specify the image of the $2m$-valent vertex in
$(\Omega_Q W)_{\oplus}$. First, we specify the image of the
morphism
\[
E_{s,t} :=  \begin{array}{c}
              \begin{tikzpicture}
                \draw[dashed] (-.5,0) to (3.5,0); \draw[dashed] (-0.5,1.7) to (3.5, 1.7);
\node [circle,draw,inner sep=0mm,minimum size=2mm] (c) at (1.5,1) {};
\draw [red] (0,0) to [out=90,in=110] (c);  
\draw [blue] (0.5,0) to [out=90,in=150] (c);  
\draw [red] (1,0) to [out=90,in=190] (c);
\node at (1.7,0.3) {$\dots$};
\draw [red] (2.5,0) to [out=90,in=20] (c);  
\draw [blue] (3,0) to [out=90,in=70] (c);  
\end{tikzpicture} \end{array}
\]
obtained by ``twisting all arms of the $2m$-valent vertex
down.'' Given that we already know the images of the cups and caps
under $\Lambda$, we can recover knowledge of the $2m_{st}$-valent
vertex from the knowledge of the image of $E_{s,t}$ under $\Lambda$. We do this in \S\ref{subsec:altform} below.

Let $\un{w} := (s, t, s, t, \dots)$ have length $2m$, so that $B_{\un{w}}$
is the source of $E_{s,t}$. Define
\[
f : \Lambda(B_{\un{w}}) \to \Lambda(\mathbb{1})
\]
by
\begin{equation} \label{fdef}
f_{\un{e}}^\emptyset := \begin{cases} \pi_{s,t} & \text{if
    $\un{w}^{\un{e}} = {\id}$,} \\
0 & \text{otherwise.} \end{cases}
\end{equation}
(Thus, in matrix notation, $f$ consists of a single row in which every
possible non-zero entry is $\pi_{s,t}$). We set:
\begin{equation}
  \label{eq:E}
  \Lambda : E_{s,t} \mapsto f.
\end{equation}

\subsection{An alternative formula} \label{subsec:altform}
Here we give a formula for the image of
the $2m$-valent vertex before ``twisting.'' Consider the following
explicit realization of the $2m$-valent vertex:
\[
G_{s,t} := 
\begin{array}{c}
  \tikz[xscale=0.45,yscale=0.45]{
                  \draw[dashed] (-.5,-3) to (5.5, -3); \draw[dashed] (-0.5,1.5) to (5.5, 1.5);
\draw (0,1) rectangle (3.5,-1);
\node at (1.75,0) {$E_{s,t}$};
\draw[red] (.5,-1) to (.5,-3);
\draw[blue] (1,-1) to (1,-3);
\draw[red] (1.5,-1) to (1.5,-3);
\draw[blue] (2,-1) to[out=-90,in=180] (3.5,-2.5) to[out=0,in=-90] (5,1.5);
\draw[red] (2.5,-1) to[out=-90,in=180] (3.5,-2) to[out=0,in=-90] (4.5,1.5);
\draw[blue] (3,-1) to[out=-90,in=180] (3.5,-1.5) to[out=0,in=-90] (4,1.5);
} \end{array}
\]
(We draw the $m = 3$ case, but the reader can guess what $G_{s,t}$ looks
like in general.)

When we express $\Lambda(G_{s,t})$ as a matrix, it will have a potential nonzero entry for each $\un{e} \subset (s,t,s,\ldots)$ and each $\un{e}' \subset (t,s,t,\ldots)$ with the same endpoint. Let the corresponding matrix entry be denoted $(G_{s,t})^{\un{e}'}_{\un{e}} \in Q$.

Recall that $X_t$ is the set of leading subexpressions of the reduced
expression $(t,s,t, \ldots)$. If $\un{w} \in
X_t$ has length $i$ and $\un{e}' \subset (t,s,t,\ldots)$ is a
subexpression, then $\un{e'}_{\subset \un{w}} := \un{e}'_{\le i} = (e'_1, e'_2, \ldots, e'_i)$ is a subexpression of $\un{w}$. Let
\begin{equation} \zeta(\un{e}') = \prod_{\un{w} \in X_t} \beta_{\un{e}'_{\subset \un{w}}} = \prod_{i = 1}^m s_1^{e_1'} s_2^{e_2'} \dots s_{i-1}^{e_{i-1}'}(\alpha_{s_{i}}). \end{equation}
Note that $\zeta(\un{e}')$ is a product of $m$ roots, and has the same degree as $\pi_{s,t}$, regardless of the choice of $\un{e}'$.

\begin{lem} \label{lem:E}
 If $\un{e} \subset (s,t,s,\ldots)$ and $\un{e}' \subset (t,s,t,\ldots)$ have the same endpoint, then
  \begin{equation}
    \label{eq:2}
    (G_{s,t})_{\un{e}}^{\un{e}'} = \frac{\pi_{s,t}}{\zeta(\un{e}')}.
  \end{equation}
\end{lem}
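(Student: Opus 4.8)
The strategy is to reduce $\Lambda(G_{s,t})$ to a composite of maps whose images under $\Lambda$ we already know. By construction, $G_{s,t}$ is obtained from $E_{s,t}$ by "untwisting" $m$ of its arms, i.e. by precomposing $E_{s,t} \otimes \id$ (with $\id$ the identity on the last $m$ strands of $\un{w}$, alternately colored $t,s,t,\ldots$) with a sequence of cups. More precisely, reading the picture, $G_{s,t}$ is the composite of $E_{s,t}$ (acting on the first $m$ strands $(s,t,s,\ldots)$) with $m$ nested cups, each cup joining one of the remaining $m$ strands up to one of the strands feeding $E_{s,t}$, so that the net source is $(t,s,t,\ldots)$ of length $m$ (the reversed word) and the target is $\mathbb{1}$. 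Since $\Lambda$ is monoidal and we know $\Lambda(E_{s,t}) = f$ from \eqref{eq:E}, $\Lambda(\text{cup}) $ from \eqref{eq:cup}, and how tensoring by $\id_{B_s}$ on either side transforms matrices (\eqref{eq:polyactsilly}, \eqref{eq:polyact}), the matrix $(G_{s,t})^{\un{e}'}_{\un{e}}$ is in principle computable as a product of the corresponding matrices.

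First I would set up the bookkeeping: fix the subexpression $\un{e}' = (e'_1,\ldots,e'_m) \subset (t,s,t,\ldots)$ on the source and track which summand $r_x$ of $\Lambda(B_{(t,s,t,\ldots)})$ it picks out, namely $x = \un{w}^{\un{e}'}$ for the reversed word. Then I would trace this summand through the $m$ cups one at a time. Each cup \eqref{eq:cup} contributes a factor $\pm 1/\alpha$ where the sign and the color of $\alpha$ depend on the subexpression value $e'_i$ and on the accumulated group element to its left — this is exactly the phenomenon described in Remark~\ref{rmk:nicerpeacecup} and in \eqref{eq:polyact}: when $1/\alpha$ is pushed past the group element $s_1^{e'_1}\cdots s_{i-1}^{e'_{i-1}}$ sitting to its left, it becomes $1/\bigl(s_1^{e'_1}\cdots s_{i-1}^{e'_{i-1}}(\alpha_{s_i})\bigr) = 1/\beta_{\un{e}'_{\subset \un{w}}}$ for the relevant leading subword $\un{w}$ of length $i$. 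Collecting the contributions of all $m$ cups yields the factor $1/\prod_{i=1}^m s_1^{e'_1}\cdots s_{i-1}^{e'_{i-1}}(\alpha_{s_i}) = 1/\zeta(\un{e}')$. (I'd need to check that the various signs from the cups all cancel — or better, use the "nicer" normalization of Remark~\ref{rmk:nicerpeacecup} where each cup contributes $1/\alpha$ with no sign when read from the appropriate side, so that no sign bookkeeping survives.) Finally, composing with $\Lambda(E_{s,t}) = f$ multiplies by $\pi_{s,t}$, since by \eqref{fdef} the only surviving entry of $f$ is $\pi_{s,t}$ along the subexpressions ending at $\id$, and the untwisting has arranged that $\un{e}$ and $\un{e}'$ have the same endpoint precisely when the corresponding composite path through $E_{s,t}$ lands in the $r_{\id}$-to-$r_{\id}$ slot after the cups are accounted for. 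This gives $(G_{s,t})^{\un{e}'}_{\un{e}} = \pi_{s,t}/\zeta(\un{e}')$ as claimed, and simultaneously confirms that the entry is zero unless $\un{e},\un{e}'$ share an endpoint.

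The main obstacle will be the sign and group-element bookkeeping in the previous paragraph: making precise exactly which group element sits to the left of the $i$-th cup after the earlier cups have been resolved, and verifying the signs cancel. This is the step where one must be careful that "untwisting" $m$ strands interleaves the cups with identity strands in a way that, under $\Lambda$, produces conjugated copies of the cup matrix \eqref{eq:cup}; the degree count at the end of \S\ref{subsec-pos-roots} (that $\zeta(\un{e}')$ is always a product of $m$ roots of total degree $2m$, independent of $\un{e}'$) is a useful sanity check that the answer has the right shape. I expect the cleanest writeup uses the reformulation of Remark~\ref{rmk:nicerpeacecup}: place each $1/\alpha$ "in the middle" of its $B_s \otimes B_s$, compose the cups there, and only at the very end push everything to the left, so the signs are manifestly absent and the left-push is a single clean application of \eqref{tensorpolys} producing exactly $\zeta(\un{e}')$ in the denominator.
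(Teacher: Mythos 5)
Your approach is a direct computation along the right lines, but the crucial claim in the middle paragraph—that collecting the cup contributions yields $1/\zeta(\un{e}')$ outright—is not what a literal application of the conventions produces, and it is precisely here that the real work lives. When you compute $\Lambda$ of the nested cups with the paper's ``push everything to the left'' convention, the $j$-th cup's factor $1/\alpha_{c_j}$ sits to the \emph{left} of all the inner cups; pushing it to the far left of the whole diagram conjugates it by the $E$-source positions (the arms of cups $j+1,\ldots,m$), not by the first $j-1$ letters of the target. The raw answer you get is $1/\prod_j c_m^{e'_m}\cdots c_{j+1}^{e'_{j+1}}(\alpha_{c_j})$, not $1/\zeta(\un{e}')$; it is then further conjugated by $\un{w}^{\un{e}}$ when you tensor with the identity on the source. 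Only after using the endpoint condition $\un{w}^{\un{e}}\cdot t^{e'_m}s^{e'_{m-1}}\cdots=\id$ (equivalently, that $\un{e}$ and $\un{e}'$ share an endpoint) to cancel, and after chasing the signs $(-1)^{e'_j}$ from each cup, does the whole expression collapse to $1/\zeta(\un{e}')$. That cancellation is a real computation that your proposal asserts rather than performs, and it is exactly the piece your closing sentence flags as a worry. Your suggestion to place each $1/\alpha$ ``in the middle'' can be made to produce $\zeta$ directly, but only if you place it between target strands $j-1$ and $j$ rather than just after the left arm of cup $j$—these are different positions once cups are nested, and picking the right one is not a free choice under the paper's conventions.

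The paper's proof takes a genuinely different route that sidesteps all of this. It introduces the auxiliary morphism $\widetilde{G}_{s,t}$, which is $G_{s,t}$ with the simple roots $\alpha_t,\alpha_s,\alpha_t,\ldots$ appended between the outgoing strands—precisely where the cup factors $1/\alpha$ live in the ``middle'' normalization—so the two cancel on the spot before any pushing is required, and every nonzero entry of $\Lambda(\widetilde{G}_{s,t})$ is manifestly $\pi_{s,t}$. Reading $\widetilde{G}_{s,t}$ the other way as ``$G_{s,t}$ followed by a diagonal matrix of roots'' shows its entries are $\zeta(\un{e}')\cdot(G_{s,t})_{\un{e}}^{\un{e}'}$; equating the two readings gives the lemma with no group-element or sign bookkeeping at all. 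Your approach buys explicitness about where each factor comes from; the paper's buys a proof that never touches the conjugation combinatorics. If you want to complete your version, the missing lemma to prove is that the naive cup product, conjugated by $\un{w}^{\un{e}}$ under the endpoint constraint, equals $1/\zeta(\un{e}')$—this is true, but it is not a triviality and must be written out.
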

We delay the proof of this lemma until the end of this section.

\begin{remark}
In the matrix of $\Lambda(G_{s,t})$, all potential nonzero entries are
nonzero, and the nonzero entries in each row are equal.  The lemma
also shows that the formula for the image of the $2m$-valent
  vertex $G_{s,t}$ is more complicated than its twisted cousin $E_{s,t}$.
\end{remark}

\begin{ex} \label{ex:m2} Let us consider the special case when $m=2$, so $s$ and $t$ commute. Then $\pi_{s,t} = \alpha_s \alpha_t$, and $\pi_{s,t} = \zeta(\un{e'})$ for all subexpressions $\un{e'} \subset (t,s)$.  Thus the matrix of $\Lambda(G_{s,t})$ is easy: all entries which could be nonzero are equal to $1$. \end{ex}

\begin{ex} Suppose that $m=3$. Then
	\[ (G_{s,t})_{(000)}^{(000)} = (G_{s,t})_{(101)}^{(000)} = \frac{\alpha_s \alpha_t s(\alpha_t)}{\alpha_t \alpha_s \alpha_t} = \frac{s(\alpha_t)}{\alpha_t}. \]
\end{ex}

Let us deduce two consequences of this lemma. If $\un{e} = (1,1,\dots
1)$ and $\un{e'} = (1,1,\dots, 1, 1)$ then
\[
\zeta(e') = \prod_{i = 1}^m s_1 s_2 \dots s_{i-1}(\alpha_{s_{i}}) = \pi_{s,t}
\]
and hence
\begin{equation}
  \label{eq:111}
  (G_{s,t})_{\un{e}}^{\un{e'}} = 1.
\end{equation} Similarly, if $\un{e} =
(0,1,\dots, 1,1)$ and $\un{e'} = (1,1,\dots, 1, 0)$ then
\begin{equation}
  \label{eq:110}
  (G_{s,t})_{\un{e}}^{\un{e'}} = 1.
\end{equation}

\begin{proof}[Proof of Lemma \ref{lem:E}]
Consider the
following morphism (again we draw the case $m = 3$, and leave it to
the reader to imagine the general case):
\[
\widetilde{G}_{s,t} := 
\begin{array}{c}
  \tikz[xscale=0.45,yscale=0.45]{
                    \draw[dashed] (-.5,-3) to (8, -3); \draw[dashed] (-0.5,2.5) to (8, 2.5);
\draw (0,1) rectangle (3.5,-1);
\node at (1.75,0) {$E_{s,t}$};
\draw[red] (.5,-1) to (.5,-3);
\draw[blue] (1,-1) to (1,-3);
\draw[red] (1.5,-1) to (1.5,-3);
\draw[blue] (2,-1) to[out=-90,in=180] (3.5,-2.5) to[out=0,in=-90] (7.5,2.5);
  \draw[red] (2.5,-1) to[out=-90,in=180] (3.5,-2) to[out=0,in=-90] (6,2.5);
\draw[blue] (3,-1) to[out=-90,in=180] (3.5,-1.5) to[out=0,in=-90]
  (4.5,2.5);
 \node[blue] at (4,1.7) {$\alpha_t$};
  \node[red] at (5.5,1.7) {$\alpha_s$};
   \node[blue] at (7,1.7) {$\alpha_t$};
  } \end{array}
\]
We compute the image of $\widetilde{G}_{s,t}$ both via \eqref{eq:E}
and via the lemma, and see that they agree.

If we compute the image of $\widetilde{G}_{s,t}$ using \eqref{eq:E}
and \eqref{eq:cup} then each of the simple roots appearing in the
formulas for the cup cancels those in the diagram for
$\widetilde{G}_{s,t}$, and we deduce that all potentially non-zero
entries are equal to $\pi_{s,t}$ (see Remark
\ref{rmk:nicerpeacecup}).

On the other hand, if we compute  image of $\widetilde{G}_{s,t}$ using
\eqref{eq:2} then, for each subexpression $\un{e}' \subset
(t,s,t,\dots)$, the simple roots along the top of $\widetilde{G}_{s,t}$ will pull to the left to become
$\zeta(\un{e}')$, and hence all potentially non-zero
entries are equal to $\pi_{s,t}$.
\end{proof}

\section{Checking the relations} \label{sec:checkrelations}

\subsection{One-color}

Checking the one-color relations is routine. We give some
sample computations.

The relation
\[
\begin{array}{c}
\tikz[xscale=0.35,yscale=0.35]{
\draw[dashed] (2,1.5) to (4,1.5); \draw[dashed] (2,-1) to (4,-1);
\draw[color=red] (3,-.5) to (3,1);
\node[circle,fill,draw,inner sep=0mm,minimum size=1mm,color=red] at (3,-.5) {};
\node[circle,fill,draw,inner sep=0mm,minimum size=1mm,color=red] at (3,1) {};
}
\end{array}
=
\begin{array}{c}
\tikz[xscale=0.35,yscale=0.35]{
\draw[dashed] (2,1.5) to (4,1.5); \draw[dashed] (2,-1) to (4,-1);
\node[color=red] at (3,0.25) {$\a$};
}
\end{array}
\]
follows from the identity
\begin{gather*}
\left ( \begin{matrix} \a & 0 \end{matrix}
\right )
  \left ( \begin{matrix} 1 \\ 0 \end{matrix}
\right ) = ( \a)
\end{gather*}
the first matrix on the left is \eqref{eq:idot}, the second \eqref{eq:ddot}.

The relation
\begin{equation} \label{eq:unit}
\begin{array}{c}
\tikz[xscale=0.35,yscale=0.35]{
\draw[dashed] (2,1.5) to (4,1.5); \draw[dashed] (2,-1) to (4,-1);
\draw[color=red] (2.3,1.5) to[out=-90,in=150] (3,0);
\draw[color=red] (3.7,1) to[out=-90,in=30] (3,0);
\draw[color=red] (3,0) to (3,-1);
\node[circle,fill,draw,inner sep=0mm,minimum size=1mm,color=red] at (3.7,1) {};
}
\end{array}
=
\begin{array}{c}
\tikz[xscale=0.35,yscale=0.35]{
\draw[dashed] (2,1.5) to (4,1.5); \draw[dashed] (2,-1) to (4,-1);
\draw[color=red] (3,1.5) to (3,-1);
}
\end{array}
\end{equation}
follows from the identity
\begin{gather*}
\left (
  \begin{matrix} \a & 0 & 0 & 0 \\ 0 & 0 & -\a & 0 \end{matrix}
\right )
\left (
  \begin{matrix} 1/\a & 0 \\ 0 & 1/\a \\ 0 & -1/\a \\ -1/\a & 0 \end{matrix}
\right)
= \left ( \begin{matrix} 1 & 0 \\ 0 & 1 \end{matrix} \right).
\end{gather*}
The second matrix on the left is \eqref{eq:peace}. The first matrix is obtained from \eqref{eq:idot}, by
applying \eqref{eq:polyact}.

The relation
\[
\begin{array}{c}
\tikz[xscale=0.35,yscale=-0.35]{
\draw[dashed] (2,1.5) to (4,1.5); \draw[dashed] (2,-1) to (4,-1);
\draw[color=red] (2.3,1.5) to[out=-90,in=150] (3,0);
\draw[color=red] (3.7,1) to[out=-90,in=30] (3,0);
\draw[color=red] (3,0) to (3,-1);
\node[circle,fill,draw,inner sep=0mm,minimum size=1mm,color=red] at (3.7,1) {};
}
\end{array}
=
\begin{array}{c}
\tikz[xscale=0.35,yscale=0.35]{
\draw[dashed] (2,1.5) to (4,1.5); \draw[dashed] (2,-1) to (4,-1);
\draw[color=red] (3,1.5) to (3,-1);
}
\end{array}
\]
follows from the identity
\begin{gather*}
\left (
  \begin{matrix} 1 & 0 & 0 & 1 \\ 0 & 1 & 1 & 0 \end{matrix}
\right )
\left (
  \begin{matrix} 1 & 0 \\ 0 & 0 \\ 0 & 1 \\ 0 & 0 \end{matrix}
\right)
= \left ( \begin{matrix} 1 & 0 \\ 0 & 1 \end{matrix} \right).
\end{gather*}
The first matrix on the left is \eqref{eq:mercedes}. The second matrix is obtained from \eqref{eq:ddot}, by
applying \eqref{eq:polyact}.

The relation
\[
\begin{array}{c}
\tikz[xscale=0.35,yscale=0.35]{
\draw[dashed] (2,1.5) to (4,1.5); \draw[dashed] (2,-1) to (4,-1);
\draw[color=red] (3,-1) to (3,-.5);
\draw[color=red] (3,1) to (3,1.5);
\draw[color=red] (3,-.5) to[out=150,in=-150] (3,1);
\draw[color=red] (3,-.5) to[out=30,in=-30] (3,1);
}
\end{array}
=
0
\]
follows from the identity
\begin{gather*}
\left (
  \begin{matrix} 1 & 0 & 0 & 1 \\ 0 & 1 & 1 & 0 \end{matrix}
\right )
\left ( \begin{matrix} 1/\a & 0 \\ 0 & 1/\a \\ 0 & -1/\a \\ -1/\a & 0 \end{matrix} \right) = 0.
\end{gather*}
The first matrix on the left is \eqref{eq:mercedes}, the second \eqref{eq:peace}.

We leave it to the reader to check the other one-color relations.

\subsection{Cyclicity} \label{subsec:cyclicityof2m} We begin to check the two-color relations, starting with the cyclicity of the $2m$-valent vertex.

When doing these computations below, if $\un{e} \subset \un{w}$, we
write $[\un{e}]$ for the identity endomorphism of the corresponding
summand of $\Lambda(B_{\un{w}})$.

\begin{ex}
For example, if we write
\[ \Lambda(\phi) \colon [\un{e}] \mapsto \alpha_s [\un{f}] + \alpha_t
  [\un{g}] \]
this means that the $\un{e}$ column of the matrix for $\Lambda(\phi)$
has only two nonzero entries, $\alpha_s$ in row $\un{f}$ and
$\alpha_t$ in row $\un{g}$.
  
\end{ex}

Cyclicity of the $2m$-valent vertex is equivalent to the following relation (together with its analogue after swapping $s$ and $t$):
\begin{equation} \label{eq:twistedcyclic}
\begin{array}{c}
  \tikz[xscale=0.45,yscale=0.45]{
  \draw[dashed] (-1,-2.5) to (4.5, -2.5); \draw[dashed] (-1,2) to (4.5,2);
\draw (0,1) rectangle (3.5,-1);
\node at (1.75,0) {$E_{s,t}$};
\draw[red] (.5,-1) to[out=-90,in=0] (0,-1.5) to[out=180,in=-90]
(-.5,2);
\draw[blue] (1,-1) to (1,-2.5);
\draw[red] (2.5,-1) to (2.5,-2.5);
\draw[blue] (3,-1) to (3,-2.5);
\node at (1.8,-1.75) {$\dots$};
} \end{array} = 
\begin{array}{c}
  \tikz[xscale=-0.45,yscale=0.45]{
    \draw[dashed] (-1,-2.5) to (4.5, -2.5); \draw[dashed] (-1,2) to (4.5,2);
\draw (0,1) rectangle (3.5,-1);
\node at (1.75,0) {$E_{t,s}$};
\draw[red] (.5,-1) to[out=-90,in=0] (0,-1.5) to[out=180,in=-90]
(-.5,2);
\draw[blue] (1,-1) to (1,-2.5);
\draw[red] (2.5,-1) to (2.5,-2.5);
\draw[blue] (3,-1) to (3,-2.5);
\node at (1.8,-1.75) {$\dots$};
} \end{array} .
\end{equation}

\begin{lem} $\Lambda$ preserves the relation
  \eqref{eq:twistedcyclic}. \label{lem:cyclicity} \end{lem}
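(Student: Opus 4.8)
The plan is to verify relation \eqref{eq:twistedcyclic} entrywise, using the explicit formulas \eqref{eq:E}, \eqref{fdef}, \eqref{eq:cup}, \eqref{eq:cap}, and the monoidal pushforward rules \eqref{eq:polyactsilly}--\eqref{eq:polyact}. The key observation is that $E_{s,t}$ has an extremely simple image under $\Lambda$: by \eqref{fdef} its matrix is a single row, with entry $\pi_{s,t}$ in every column indexed by a subexpression $\un{e}$ with endpoint $\id$, and $0$ elsewhere. So both sides of \eqref{eq:twistedcyclic} are morphisms from $\Lambda(B_{\un{w}'})$ (where $\un{w}' = (s,t,s,\dots)$ of length $2m-1$, the source after bending one red strand to the top) to $\Lambda(B_{(s)})= r_{\id}\oplus r_s$, hence a $2\times 2^{2m-1}$ matrix over $Q$. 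I would compute each side as a composite and check the two matrices agree.

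First I would analyze the left-hand side. Reading bottom to top: start with $\Lambda(B_{\un{w}'})$, tensor on the appropriate side with a red cup $r_{\id}\oplus r_s \to r_{\id}$-type data — more precisely, the bent strand contributes a cap-like map and then $E_{s,t}$ is applied to the original $2m$ strands while the bent strand runs to the top. Concretely the left side factors as $\Lambda$ of: (cup creating a pair of red strands at the bottom) followed by ($E_{s,t}\otimes \id_{\text{red}}$ appropriately placed). Using \eqref{eq:cup}, the cup contributes a column vector with entries $\pm 1/\alpha_s$ in the $00$ and $11$ slots; composing with $E_{s,t}$ — whose only nonzero entries are $\pi_{s,t}$ on endpoint-$\id$ subexpressions — the two $\alpha_s$'s cancel. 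The net effect is that the $[\un{e}]$ column of the left side has a single nonzero entry, landing in the row $[0]$ or $[1]$ of $r_{\id}\oplus r_s$ according to whether the retained strand is ``$\id$'' or ``$s$'', with value $\pm\pi_{s,t}$, precisely when the combined subexpression has endpoint $\id$ in $W$ (equivalently, $\un{e}$ together with the bent strand's value multiplies to the identity). I would track the sign carefully using Remark \ref{rmk:nicerpeacecup}: pushing $1/\alpha_s$ to the left through the first tensor factor introduces the sign $(-1)^{e_1}$ where $e_1$ is the value of the first (red) strand.

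Then I would compute the right-hand side in exactly the same way, now using $E_{t,s}$ in place of $E_{s,t}$, and note two things: $\pi_{s,t}=\pi_{t,s}$ by \eqref{eq:pistispits}, and the endpoint condition ``the subexpression spells $\id$'' is symmetric in $s,t$ because it is a condition in $W$. The only subtlety is bookkeeping of which summand of $r_{\id}\oplus r_s$ each column hits and the accompanying sign; here the $x$-shaped reflection of the diagram in \eqref{eq:twistedcyclic} (the second picture is drawn with \texttt{xscale=-0.45}) relabels the strands consistently so that the signs match. The main obstacle, and essentially the only real content, is this sign/indexing verification: making sure that after pushing the $1/\alpha$ from the cup to the left through the various standard bimodule twists on both sides, the two single-entry columns carry the same sign. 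Once the endpoint condition and the sign are matched, equality of the two $2\times 2^{2m-1}$ matrices is immediate, and (together with the $s\leftrightarrow t$ swapped version, which follows by the identical argument) this proves that $\Lambda$ preserves \eqref{eq:twistedcyclic}.
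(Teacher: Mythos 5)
Your overall strategy — compute both sides on each column $[\un{e}]$ using the explicit matrices for the cup and for $E_{s,t}$, then match — is exactly the approach the paper takes; the paper's proof is precisely such a column-by-column computation, split into two cases according to whether $\un{e}$ has endpoint $\id$ or $s$. You also correctly identify $\pi_{s,t}=\pi_{t,s}$ as one of the two algebraic facts needed. But the step you yourself flag as ``the main obstacle, and essentially the only real content'' — the sign check — is precisely the step you do not carry out, and your appeal to the $x$-reflection of the diagram does not substitute for it.

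Here is the concrete gap. When $\un{e}$ has endpoint $s$ (Case 2 of the paper), the surviving term on the left-hand side is
\[ [\un{e}] \;\longmapsto\; -\tfrac{1}{\a_s}[11\un{e}] \;\longmapsto\; -\tfrac{s(\pi_{s,t})}{\a_s}[1], \]
where $s$ appears because $\pi_{s,t}$ must be pushed left through the $r_s$ contributed by $\id_{B_s}$ (via \eqref{eq:polyact}). On the right-hand side, the cup is on the far right, so instead it is the $\tfrac{1}{\a_s}$ that gets pushed left through $r_s$, giving $\tfrac{1}{s(\a_s)} = -\tfrac{1}{\a_s}$, and the surviving term is $+\tfrac{\pi_{t,s}}{\a_s}[1]$. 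Matching these requires $-s(\pi_{s,t})=\pi_{t,s}$, i.e.\ the anti-invariance $s(\pi_{s,t})=-\pi_{s,t}$ established in \S\ref{subsec-pos-roots}. You never invoke this; your sign discussion only mentions pushing $1/\a_s$ through twists, which accounts for $s(\a_s)=-\a_s$ but not for $s(\pi_{s,t})$. ``Relabeling strands'' under the $x$-reflection does not produce this identity for free: if you tried to make that argument precise, you would need to show $\Lambda$ intertwines the horizontal-flip anti-automorphisms, and then the condition that $\Lambda$ of the left-hand side is fixed by the flip on $\Omega_Q W$ comes down to $s\!\left(\tfrac{\pi_{s,t}}{\a_s}\right)=\tfrac{\pi_{s,t}}{\a_s}$, which is again exactly anti-invariance. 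A smaller point: the entry is $\tfrac{\pi_{s,t}}{\a_s}$, not $\pm\pi_{s,t}$; the $\tfrac{1}{\a_s}$ from the cup does not cancel (there is no extra $\a_s$ in the diagram \eqref{eq:twistedcyclic}, unlike in $\widetilde{G}_{s,t}$ in the proof of Lemma~\ref{lem:E}). This does not break the argument since the same factor appears on both sides, but it signals that the bookkeeping you need to do carefully is not being done carefully.
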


\begin{proof} Let $\un{e}$ be a subexpression of the source, and consider the image of $[\un{e}]$ under the morphisms on
  the left and right hand side. Because the image of these morphisms
  is $\Lambda(B_s) = r_{\id} \oplus r_s$, $[\un{e}]$ is mapped to zero
  unless it has endpoint $\id$ or $s$.

\emph{Case 1:} $\un{e}$ is a subexpression for the identity. Then on the
left-hand side, $[\un{e}]$ is mapped as follows:
\[
[\un{e}] \mapsto \frac{1}{\a_s}[00\un{e}] - \frac{1}{\a_s}[11\un{e}]
\mapsto \frac{\pi_{s,t}}{\a_s} [0].
\]
(This is a composition of two maps, the red cup followed by $\id_{B_s}
\otimes E_{s,t}$. Applying the red cup yields the expression in the
middle by \eqref{eq:cup}. Afterwards, only the first term survives $\id_{B_s}
\otimes E_{s,t}$, since the second term applies $E_{s,t}$ to a
subexpression for $s$ rather than for $\id$.) Similarly, on the right-hand side we find:
\[
[\un{e}] \mapsto \frac{1}{\a_s}[\un{e}00] -
\frac{1}{\a_s}[\un{e}11] \mapsto \frac{\pi_{t,s}}{\a_s}[0].
\]
Thus $[\un{e}]$ has the same image on both sides, as $\pi_{s,t}= \pi_{t,s}$ (see \eqref{eq:pistispits}).

\emph{Case 2:} $\un{e}$ is a subexpression for $s$. On the left hand
side, $[\un{e}]$ is mapped as follows:
\[
[\un{e}] \mapsto \frac{1}{\a_s}[00\un{e}] - \frac{1}{\a_s}[11\un{e}]
\mapsto -\frac{s(\pi_{s,t})}{\a_s} [1].
\]
The $s$ in the numerator came from \eqref{eq:polyact}.
On the right-hand side we find:
\[
[\un{e}] \mapsto \frac{1}{s(\a_s)}[\un{e}00] -
\frac{1}{s(\a_s)}[\un{e}11] =  - \frac{1}{\a_s}[\un{e}00] +
\frac{1}{\a_s}[\un{e}11] \mapsto \frac{\pi_{t,s}}{\a_s}[1].
\]
Thus $[\un{e}]$ has the same image on both sides since
$-s(\pi_{s,t}) = \pi_{t,s}.$ \end{proof}

\subsection{Properties of the Jones-Wenzl projector} \label{subsec:JWmain}

We now turn to the Jones-Wenzl relation: 
\begin{equation} \label{Eq:JWreln}
\begin{array}{c}
  \tikz[xscale=0.45,yscale=0.45]{
      \draw[dashed] (-1,-2.5) to (4.5, -2.5); \draw[dashed] (-1,1.5) to (4.5,1.5);
\draw (0,1) rectangle (4,-1);
\node at (1.5,0) {$E_{s,t}$};
\draw[red] (.5,-1) to (.5,-2.5);
\draw[blue] (1,-1) to (1,-2);
  \draw[red] (1.5,-1) to (1.5,-2.5);
\draw[blue] (2,-1) to (2,-2.5);
%  \draw[red] (2.5,-1) to (2.5,-2.5);
\node[circle,fill,draw,inner sep=0mm,minimum size=1mm,color=blue] at (1,-2) {};
\node at (3,-1.75) {$\dots$};
  } \end{array}
=
\begin{array}{c}
  \tikz[xscale=0.45,yscale=0.45]{
    \draw[dashed] (-1,-2.5) to (4.5, -2.5); \draw[dashed] (-1,1.5) to  (4.5,1.5); % framing
\draw (0,1) rectangle (3.5,-1);
\node at (1.75,0) {$JW_{s,t}$};
\draw[red] (.5,-2.5) to[out=90,in=-120] (1,-1.5);
%\draw[blue] (1,-1) to (1,-2);
  \draw[red] (1.5,-2.5) to[out=90,in=-60] (1,-1.5);
  \draw[red] (1,-1.5) to (1,-1);
\draw[blue] (2,-1) to (2,-2.5);
%  \draw[red] (2.5,-1) to (2.5,-2.5);
%\node[circle,fill,draw,inner sep=0mm,minimum size=1mm,color=blue] at (1,-2) {};
\node at (3,-1.75) {$\dots$};
} \end{array}
\end{equation}
Here, $JW_{s,t}$ is a particular linear combination of degree $2$ diagrams consisting entirely of
univalent and trivalent vertices. We will need a number of basic facts about $JW_{s,t}$. Let us recall the construction of $JW_{s,t}$ so we may justify these
facts.

For this discussion we assume the reader is familiar with the two-colored Temperley-Lieb category $\TTL$ and the deformation retract functor $\Sigma \colon \TTL \to \HC$. The
best exposition for this can be found in the recent book \cite[\S
9.2]{SBook}; an older exposition can be found in \cite[\S 4.3 and
\S 5.3.2]{EDC}.

The functor $\Sigma$ sends a cup in $\TTL$ to a morphism in $\HC$ which we call a \emph{pitchfork}:
\begin{equation} \label{eq:redpitchfork}
            \begin{array}{c}
              \begin{tikzpicture}[xscale=0.5,yscale=.5]
          \draw[fill,red!20!white] (1,-1.5) rectangle (5.5,0);      
  \begin{scope}
   \clip (1,-1.5) rectangle (5.5,0);
    \draw[fill=blue!20!white] (3.25,0) circle (1cm);
 \end{scope}
 \draw[dashed] (1,-1.5) to (5.5, -1.5);
 \draw[dashed] (1,0) to (5.5,0);
  \end{tikzpicture}
  \end{array} \mapsto
            \begin{array}{c}
                  \begin{tikzpicture}[scale=0.5]
                  \draw[dashed] (1,-1.5) to (5.5, -1.5); \draw[dashed] (1,0) to (5.5,0);
                  \def\x{3.25}; \def\y{-1};
                  \draw[red!50!white] (2.5,0) to[out=-90,in=180] (\x,\y) to[out=0,in=-90] (4,0);
                  \draw[red!50!white] (\x,\y) -- (\x,-1.5);
                  \draw[blue] (\x,0) -- (\x,-0.5);
                  \node[circle,fill,draw,inner sep=0mm,minimum size=1mm,color=blue] at (\x,-0.5) {};
                  \end{tikzpicture}
            \end{array}
          \end{equation}  
The image of a cap we call an \emph{inverted pitchfork}:
\begin{equation}
            \begin{array}{c}
              \begin{tikzpicture}[xscale=0.5,yscale=-.5]
          \draw[fill,red!20!white] (1,-1.5) rectangle (5.5,0);      
  \begin{scope}
   \clip (1,-1.5) rectangle (5.5,0);
    \draw[fill=blue!20!white] (3.25,0) circle (1cm);
 \end{scope}
 \draw[dashed] (1,-1.5) to (5.5, -1.5);
 \draw[dashed] (1,0) to (5.5,0);
  \end{tikzpicture}
  \end{array} \mapsto
            \begin{array}{c}
                  \begin{tikzpicture}[xscale=0.5,yscale=-.5]
                  \draw[dashed] (1,-1.5) to (5.5, -1.5); \draw[dashed] (1,0) to (5.5,0);
                  \def\x{3.25}; \def\y{-1};
                  \draw[red!50!white] (2.5,0) to[out=-90,in=180] (\x,\y) to[out=0,in=-90] (4,0);
                  \draw[red!50!white] (\x,\y) -- (\x,-1.5);
                  \draw[blue] (\x,0) -- (\x,-0.5);
                  \node[circle,fill,draw,inner sep=0mm,minimum size=1mm,color=blue] at (\x,-0.5) {};
                  \end{tikzpicture}
            \end{array}
\end{equation}
All morphisms in the image of $\Sigma$ have degree $0$. The functor
$\Sigma$ is well-defined as long as the blue circle inside a red
region in $\TTL$ evaluates to $\a_{\reds}^{\vee}(\a_{\bluet})$, and
the red circle inside a blue region evaluates to
$\a_{\bluet}^{\vee}(\a_{\reds})$.
 
There is exactly one $(n,n)$ crossingless matching with no cups or caps: the identity matching. This is the unique crossingless matching which matches each of the $n$ bottom
boundary points to a top boundary point. Any other matching pairs two bottom points and two top points, and has both a cap on bottom and a cup on top.
Consequently, the image under $\Sigma$ of any non-identity two-colored crossingless matching has a pitchfork on top and an inverted pitchfork on bottom.

\begin{remark} Cups on top and caps on the bottom are just
  subdiagrams of a larger crossingless matching, and after applying
  $\Sigma$, the same can be said of pitchforks. When choosing a
  deformation retract of a crossingless matching to define $\Sigma$,
  one often chooses a retract where the pitchfork does not appear
  precisely as a subdiagram, but some slight modification thereof. For
  example, when taking the deformation retract of
  \[
            \begin{array}{c}
              \begin{tikzpicture}[xscale=0.5,yscale=.5]
          \draw[fill,blue!20!white] (1,-1.5) rectangle (5.5,0);      
  \begin{scope}
   \clip (1,-1.5) rectangle (5.5,0);
    \draw[fill=red!20!white] (3.25,0) circle (1.2cm);
   \draw[fill=blue!20!white] (3.25,0) circle (.6cm);
  \end{scope}
 \draw[dashed] (1,-1.5) to (5.5, -1.5);
 \draw[dashed] (1,0) to (5.5,0);
  \end{tikzpicture}
            \end{array}
            \]
            one obtains the diagram 
            \begin{equation} \label{dottedpitchfork} \begin{array}{c}
    \begin{tikzpicture}[scale=0.5] 
                  \draw[dashed] (1,-2) to (5.5, -2); \draw[dashed] (1,0) to (5.5,0);
% bottom dots
%      \foreach \x in {1.5,5} \node at (\x,-0.75) {$\dots$};
% % pitchfork:
       \def\x{3.25}; \def\y{-1};
       \draw[red!50!white] (2.5,0) to[out=-90,in=180] (\x,\y) to[out=0,in=-90] (4,0);
       \draw[white] (\x,\y) -- (\x,-1.5);
       \draw[blue] (\x,0) -- (\x,-0.5);
 \node[circle,fill,draw,inner
 sep=0mm,minimum size=1mm,color=blue] at (\x,-0.5) {};
  \draw[blue] (\x,-1.5) to (\x, -2);
  \draw[blue] (2,0) to[out=-90,in=150] (\x,-1.5) to[out=30,in=-90] (4.5,0);
    \end{tikzpicture} \end{array}. \end{equation}
The cap on top no longer gives rise to a subdiagram identical to the red pitchfork from \eqref{eq:redpitchfork}, but instead one finds a red pitchfork without a handle.

Of course, the red pitchfork without a handle is just the composition of a red pitchfork with a red univalent vertex, thanks to the unit relation \eqref{eq:unit}, so this diagram factors through the pitchfork. Similarly, another deformation retract might have several pitchforks which are grafted to the same handle, which can be accounted for by the one-color associativity relation.
Below, when we say that a diagram representing a morphism in $\HC$ has a \emph{pitchfork on top}, we mean that it factors as the composition of a pitchfork and another diagram. For
example, \eqref{dottedpitchfork} has a pitchfork on top in this sense. \end{remark}

Let us consider two-colored crossingless matchings where the leftmost region is colored $\reds$. The two-colored Jones-Wenzl projector $JW_{{}_{\reds} n}$, if it exists, is a particular linear combination of $(n,n)$ crossingless matchings. It is uniquely determined by the following properties in $\TTL$:
\begin{itemize} \item It becomes zero when any cap is placed on top (we say it is \emph{killed by caps}).
	\item It becomes zero when any cup is placed on bottom (we say it is \emph{killed by cups}).
	\item The coefficient of the identity crossingless matching is $1$.
\end{itemize}
The existence of such a linear combination is not guaranteed, and relies upon properties of the scalars $\a_{\reds}^{\vee}(\a_{\bluet})$ and $\a_{\bluet}^{\vee}(\a_{\reds})$ in the base ring $\Bbbk$. Similarly, one can discuss $JW_{n_{\bluet}}$. Jones-Wenzl projectors are discussed at great length in \S\ref{sec:JW}.

We let $\JWalt_{n,s,t}$  denote the image of $JW_{{}_{\reds} n}$ (if it exists) under $\Sigma$. Then $\JWalt_{n,s,t}$ is an endomorphism of
the alternating length $n+1$ object $B_s B_t B_s \cdots$ in $\HC$. The fact that Jones-Wenzl projectors are killed by cups
implies after applying $\Sigma$ that $\JWalt_{n,s,t}$ is \emph{killed
  by pitchforks on bottom}. That is
\begin{equation}
  \label{eq:pitchfork}
\begin{array}{c}
 \tikz[xscale=0.45,yscale=0.45]{
\draw (0,1) rectangle (3,-1);
\node at (1.5,0) {$\JWalt_{n,s,t}$};
\draw[red] (.5,-1) to (.5,-1.5);
\draw[blue] (1,-1) to (1,-1.5);
\draw[red] (2.5,-1) to (2.5,-1.5);
\node at (1.8,-1.25) {$\dots$};
\draw (0,-1.5) rectangle (3,-3.5);
\node at (1.5,-2.5) {$P$};
} \end{array} = 0
\end{equation}
whenever $P$ has a pitchfork on top.
Similarly, $\JWalt_{n,s,t}$ is \emph{killed by (inverted) pitchforks on top}; when postcomposing with a morphism that has an inverted pitchfork on bottom, you get zero. We call these properties \emph{death by pitchfork}.

The fact that Jones-Wenzl projectors have coefficient $1$ of the
identity, and all other diagrams have cups and caps, implies that
\begin{equation}
  \label{eq:3}
  \begin{array}{c}
  \begin{tikzpicture}[xscale=.5,yscale=.7]
     \draw[dashed] (-1.6,-1) to (1.6, -1);
     \draw[dashed] (-1.6,1) to (1.6,1);
     \draw[red] (-1.2,-1) to (-1.2,1);
     \draw[blue] (-.5,-1) to (-.5,1);
     \node at (.5,-.8) {$\dots$};
     \node at (.5,.8) {$\dots$};
     \draw[red] (1.2,-1) to (1.2,1);
     \draw[fill=white] (-1.4,-.6) rectangle (1.4,.6);
     \node at (0,0) {$\JWalt_{n,s,t}$};
\end{tikzpicture}
  \end{array}
  =
    \begin{array}{c}
  \begin{tikzpicture}[xscale=.5,yscale=.7]
     \draw[dashed] (-1.6,-1) to (1.6, -1);
     \draw[dashed] (-1.6,1) to (1.6,1);
     \draw[red] (-1.2,-1) to (-1.2,1);
     \draw[blue] (-.5,-1) to (-.5,1);
     \node at (.5,0) {$\dots$};
     \draw[red] (1.2,-1) to (1.2,1);
\end{tikzpicture}
    \end{array}
    +
  \begin{array}{c}
  \begin{tikzpicture}[xscale=.5,yscale=.7]
     \draw[dashed] (-1.6,-1) to (1.6, -1);
     \draw[dashed] (-1.6,1) to (1.6,1);
     \draw[red] (-1.2,-1) to (-1.2,1);
     \draw[blue] (-.5,-1) to (-.5,1);
     \node at (.5,-.8) {$\dots$};
     \node at (.5,.8) {$\dots$};
     \draw[red] (1.2,-1) to (1.2,1);
     \draw[fill=white] (-1.4,-.6) rectangle (1.4,.6);
     \node at (0,0) {$Q$};
\end{tikzpicture}
  \end{array}
\end{equation}
where $Q$ represents a linear combination of diagrams, and every diagram in $Q$ has both a pitchfork on top and an inverted pitchfork on bottom. This, combined with death by pitchfork, is the easiest way to observe that $\JWalt_{n,s,t}$ is an idempotent.

\begin{remark} \label{remark:colorparity} The pictures above are drawn
  as though $n$ were even. By definition, the source and target of
  $\JWalt_{n,s,t}$ should alternate {\color{red} red}, {\color{blue}
    blue}, etc. Whether the final strand is red or blue depends on the parity of $n$. \end{remark}
	
\begin{remark} Any Jones-Wenzl projector is fixed by the contravariant equivalence which applies a vertical flip to all diagrams. After all, the defining properties of the
Jones-Wenzl projector in $\TTL$ are invariant under applying the vertical flip. \end{remark}

As for any morphism in the image of the functor $\Sigma$, the
Jones-Wenzl projector commutes with trivalent vertices on the side, in
the sense that
\begin{equation} \label{JWcommutetri}         \begin{array}{c}
   \begin{tikzpicture}[xscale=.5,yscale=1]
     \draw[dashed] (-2,-1) to (1.6, -1);
     \draw[dashed] (-2,1) to (1.6,1);
     \draw[red] (-1.2,-1) to (-1.2,1);
     \draw[blue] (-.5,-1) to (-.5,1);
     \node at (.5,-.8) {$\dots$};
     \node at (.5,.8) {$\dots$};
     \draw[red] (1.2,-1) to (1.2,1);
     \draw[fill=white] (-1.4,-.6) rectangle (1.4,.6);
     \node at (0,0) {$\JWalt_{n,s,t}$};
     \draw[red] (-1.2,-.8) to[out=180,in=-90] (-1.8,1);
\end{tikzpicture}
        \end{array}
=
                \begin{array}{c}
  \begin{tikzpicture}[xscale=.5,yscale=1]
     \draw[dashed] (-2,-1) to (1.6, -1);
     \draw[dashed] (-2,1) to (1.6,1);
     \draw[red] (-1.2,-1) to (-1.2,1);
     \draw[blue] (-.5,-1) to (-.5,1);
     \node at (.5,-.8) {$\dots$};
     \node at (.5,.8) {$\dots$};
     \draw[red] (1.2,-1) to (1.2,1);
     \draw[fill=white] (-1.4,-.6) rectangle (1.4,.6);
     \node at (0,0) {$\JWalt_{n,s,t}$};
          \draw[red] (-1.2,.8) to[out=180,in=-90] (-1.8,1);
\end{tikzpicture}
        \end{array}\end{equation}
As a consequence of \eqref{JWcommutetri} and \eqref{eq:unit} we have
\begin{equation} \label{unitaroundJW}
        \begin{array}{c}
   \begin{tikzpicture}[xscale=.5,yscale=1]
     \draw[dashed] (-2,-1) to (1.6, -1);
     \draw[dashed] (-2,1) to (1.6,1);
     \draw[red] (-1.2,-1) to (-1.2,.8);
     \draw[blue] (-.5,-1) to (-.5,1);
     \node at (.5,-.8) {$\dots$};
     \node at (.5,.8) {$\dots$};
     \draw[red] (1.2,-1) to (1.2,1);
     \draw[fill=white] (-1.4,-.6) rectangle (1.4,.6);
     \node at (0,0) {$\JWalt_{n,s,t}$};
     \draw[red] (-1.2,-.8) to[out=180,in=-90] (-1.8,1);
           \node[circle,fill,draw,inner
 sep=0mm,minimum size=1mm,color=red] at (-1.2,.8) {};
\end{tikzpicture}
        \end{array}
=
                \begin{array}{c}
  \begin{tikzpicture}[xscale=.5,yscale=1]
     \draw[dashed] (-2,-1) to (1.6, -1);
     \draw[dashed] (-2,1) to (1.6,1);
     \draw[red] (-1.2,-1) to (-1.2,1);
     \draw[blue] (-.5,-1) to (-.5,1);
     \node at (.5,-.8) {$\dots$};
     \node at (.5,.8) {$\dots$};
     \draw[red] (1.2,-1) to (1.2,1);
     \draw[fill=white] (-1.4,-.6) rectangle (1.4,.6);
     \node at (0,0) {$\JWalt_{n,s,t}$};
\end{tikzpicture}
        \end{array}
\end{equation}

In this paper we are only interested in a particular Jones-Wenzl
morphism, where $n = m-1$ and $m = m_{s,t} < \infty$. One of the required assumptions for the category $\HC$ to be well-defined is that $JW_{{}_{\reds} (m-1)}$ and $JW_{{}_{\bluet} (m-1)}$ both exist. A second assumption is that these Jones-Wenzl projectors are \emph{rotatable}. We wish to spell out the rotatable property in the context of $\HC$, for the morphism
\[
\JWalt_{s,t} := \JWalt_{m_{st}-1,s,t}.
\]
However, $\Sigma$ is not a monoidal functor, and consequently rotation in $\TTL$ does not immediately translate to rotation in $\HC$. To discuss rotation more easily, we need to focus our attention on morphisms of degree $+2$ rather than degree $0$.

Define
\begin{equation}
    \begin{array}{c}
  \begin{tikzpicture}[xscale=.5,yscale=.7]
     \draw[dashed] (-1.6,-1) to (1.6, -1);
     \draw[dashed] (-1.6,1) to (1.6,1);
     \draw[red] (-1.2,-1) to (-1.2,0);
     \draw[blue] (-.7,-1) to (-.7,1);
     \node at (0,-.8) {$\dots$};
          \node at (0,.8) {$\dots$};
     \draw[blue] (.7,-1) to (.7,1);
     \draw[red] (1.2,0) to (1.2,1);
     \draw[fill=white] (-1.4,-.6) rectangle (1.4,.6);
     \node at (0,0) {$\JWalttwo_{s,t}$};
\end{tikzpicture}
    \end{array}
    =
    \begin{array}{c}
  \begin{tikzpicture}[xscale=.5,yscale=.7]
     \draw[dashed] (-1.6,-1) to (1.6, -1);
     \draw[dashed] (-1.6,1) to (1.6,1);
     \draw[red] (-1.2,-1) to (-1.2,.8);
     \draw[blue] (-.7,-1) to (-.7,1);
     \node at (0,-.8) {$\dots$};
          \node at (0,.8) {$\dots$};
     \draw[blue] (.7,-1) to (.7,1);
     \draw[red] (1.2,-.8) to (1.2,1);
     \draw[fill=white] (-1.4,-.6) rectangle (1.4,.6);
     \node at (0,0) {$\JWalt_{s,t}$};
      \node[circle,fill,draw,inner
      sep=0mm,minimum size=1mm,color=red] at (-1.2,.8) {};
      \node[circle,fill,draw,inner
      sep=0mm,minimum size=1mm,color=red] at (1.2,-.8) {};
    \end{tikzpicture}
  \end{array}
\end{equation}
Then $\JWalttwo_{s,t}$ is a degree $2$ map from $B_s B_t \cdots$ to $B_t B_s \cdots$, where these are the two reduced expressions of length $m-1$. Using \eqref{unitaroundJW} we also have
\begin{equation}
  \label{eq:4}
    \begin{array}{c}
  \begin{tikzpicture}[xscale=.5,yscale=.9]
     \draw[dashed] (-1.6,-1) to (1.6, -1);
     \draw[dashed] (-1.6,1) to (1.6,1);
     \draw[red] (-1.2,-1) to (-1.2,1);
     \draw[blue] (-.5,-1) to (-.5,1);
     \node at (.5,-.8) {$\dots$};
     \node at (.5,.8) {$\dots$};
     \draw[red] (1.2,-1) to (1.2,1);
     \draw[fill=white] (-1.4,-.6) rectangle (1.4,.6);
     \node at (0,0) {$\JWalt_{s,t}$};
\end{tikzpicture}
  \end{array}
      =
     \begin{array}{c}
  \begin{tikzpicture}[xscale=.5,yscale=.9]
     \draw[dashed] (-2,-1) to (2, -1);
     \draw[dashed] (-2,1) to (2,1);
     \draw[red] (-1.2,-1) to (-1.2,0);
     \draw[blue] (-.7,-1) to (-.7,1);
     \node at (0,-.8) {$\dots$};
          \node at (0,.8) {$\dots$};
     \draw[blue] (.7,-1) to (.7,1);
     \draw[red] (1.2,0) to (1.2,1);
     \draw[fill=white] (-1.4,-.6) rectangle (1.4,.6);
     \node at (0,0) {$\JWalttwo_{s,t}$};
     \draw[red] (1.2,.8) to[out=0,in=90] (1.8,.6) to (1.8,-1);
     \draw[red] (-1.2,-.8) to[out=180,in=-90] (-1.8,-.6) to (-1.8,1);
   \end{tikzpicture}
    \end{array}
\end{equation}

\begin{remark} One should think that there are two different ways to
  apply a deformation retract to the same linear combination of
  crossingless 
matchings. One produces a degree zero morphism on the planar strip, while the other produces a degree $+2$ morphism on the planar disk with a shorter boundary. The first process is
functorial, hence the functor $\Sigma$, while the second process is rotationally equivariant. See \cite[Equations (9.14) and (9.15)]{SBook} for more details. Hence $\JWalt_{s,t}$ and
$\JWalttwo_{s,t}$ are just two different kinds of deformation retract of the Jones-Wenzl projector. \end{remark}

The Jones-Wenzl projector $JW_{m-1}$ in $\TTL$ is \emph{rotatable} if its rotation is still orthogonal to all cups and caps, and thus is a scalar multiple of the Jones-Wenzl projector. If so, applying the degree $2$ deformation retract, we directly obtain
    \begin{equation}
      \label{eq:5}
           \begin{array}{c}
  \begin{tikzpicture}[xscale=.5,yscale=.9]
     \draw[dashed] (-2,-1) to (2, -1);
     \draw[dashed] (-2,1) to (2,1);
     \draw[blue] (-.7,-1) to (-.7,1);
     \node at (0,-.8) {$\dots$};
          \node at (0,.8) {$\dots$};
     \draw[blue] (.7,-1) to (.7,1);
     \draw[fill=white] (-1.4,-.6) rectangle (1.4,.6);
     \node at (0,0) {$\JWalttwo_{s,t}$};
     \draw[red] (1.2,.6) to[out=90, in=180] (1.5,.8) to[out=0,in=90] (1.8,.6) to (1.8,-1);
     \draw[red](-1.2,-.6) to[out=-90, in=0] (-1.5,-.8) to[out=180,in=-90] (-1.8,-.6) to (-1.8,1);
   \end{tikzpicture}
             \end{array}
             =
             \lambda_{s,t}
                 \begin{array}{c}
  \begin{tikzpicture}[xscale=.5,yscale=.9]
     \draw[dashed] (-1.6,-1) to (1.6, -1);
     \draw[dashed] (-1.6,1) to (1.6,1);
     \draw[blue] (-1.2,-1) to (-1.2,0);
     \draw[red] (-.7,-1) to (-.7,1);
     \node at (0,-.8) {$\dots$};
          \node at (0,.8) {$\dots$};
     \draw[red] (.7,-1) to (.7,1);
     \draw[blue] (1.2,0) to (1.2,1);
     \draw[fill=white] (-1.4,-.6) rectangle (1.4,.6);
     \node at (0,0) {$\JWalttwo_{t,s}$};
\end{tikzpicture}
    \end{array}
    \end{equation}
    for some scalar $\lambda_{s,t}$. One can prove that $\lambda_{s,t} = \lambda_{t,s}^{-1}$, so that
        \begin{equation}
      \label{eq:5.5}
           \begin{array}{c}
  \begin{tikzpicture}[xscale=.5,yscale=.9]
     \draw[dashed] (-2.5,-1.2) to (2.5, -1.2);
     \draw[dashed] (-2.5,1.2) to (2.5,1.2);
     \draw[blue] (-.7,0) to (-.7,1.2);
     \node at (0,-.8) {$\dots$};
          \node at (0,.8) {$\dots$};
     \draw[blue] (.7,-1.2) to (.7,0);
     \draw[fill=white] (-1.4,-.6) rectangle (1.4,.6);
     \node at (0,0) {$\JWalttwo_{s,t}$};
     \draw[red] (1.2,.6) to[out=90, in=180] (1.5,.8) to[out=0,in=90] (1.8,.6) to (1.8,-1.2);
     \draw[blue] (.7,.6) to[out=90, in=180] (1.5,1) to[out=0,in=90] (2.2,.6) to (2.2,-1.2);
     \draw[red](-1.2,-.6) to[out=-90, in=0] (-1.5,-.8) to[out=180,in=-90] (-1.8,-.6) to (-1.8,1.2); 
     \draw[blue] (-.7,-.6) to[out=-90, in=0] (-1.5,-1) to[out=180,in=-90] (-2.2,-.6) to (-2.2,1.2);
   \end{tikzpicture}
             \end{array}
             =
                 \begin{array}{c}
  \begin{tikzpicture}[xscale=.5,yscale=1.1]
     \draw[dashed] (-1.6,-1) to (1.6, -1);
     \draw[dashed] (-1.6,1) to (1.6,1);
     \draw[red] (-1.2,-1) to (-1.2,0);
     \draw[blue] (-.7,-1) to (-.7,1);
     \node at (0,-.8) {$\dots$};
          \node at (0,.8) {$\dots$};
     \draw[blue] (.7,-1) to (.7,1);
     \draw[red] (1.2,0) to (1.2,1);
     \draw[fill=white] (-1.4,-.6) rectangle (1.4,.6);
     \node at (0,0) {$\JWalttwo_{s,t}$};
\end{tikzpicture}
    \end{array}
  \end{equation}
  In this case, we say that
  the Jones-Wenzl projector is \emph{invariant under rotation by two
    strands}. See \S\ref{subsec:rotinvce} for the proofs.

One can compute that whenever \eqref{eq:5} holds for some scalar $\lambda_{s,t}$, that scalar must be a particular invertible two-colored quantum number, see Lemma
\ref{lem:rotinvce} for details. By definition, $\lambda_{s,t} = 1$ when the realization is \emph{balanced}, c.f. Definition \ref{defn:balanced}. In the balanced case, we say that the Jones-Wenzl projector is \emph{invariant under rotation by one strand with color swap}. Rotation
invariance of the Jones-Wenzl projector is a crucial compatibility between \eqref{Eq:JWreln} and the cyclicity of the $2m$-valent vertex. For the rest of this chapter we assume the realization is balanced. We treat the unbalanced case in \S\ref{sec-unbalanced}.

Combining rotation invariance with death by pitchfork, we also see that $\JWalttwo_{s,t}$ is killed by placing pitchforks on the ``side,'' for example
\begin{equation}
  \begin{array}{c}
    \begin{tikzpicture}[xscale=.7,yscale=.9]
      \begin{scope}
          \clip (.3,0) circle (2.2);
     % \draw[dashed] (-1.6,-1) to (1.6, -1);
     % \draw[dashed] (-1.6,1) to (1.6,1);
     \draw[red] (-1.2,-3) to (-1.2,0);
     \draw[blue] (-.7,-3) to (-.7,3);
     \draw[red] (.2,-3) to (.2,3);
     \node at (-.2,-.8) {$\dots$};
          \node at (-.2,.8) {$\dots$};
     \draw[blue] (.7,-.6) to[out=-90,in=180] (1.4,-1.2) to[out=0,in=-90] (2,0) to[out=90,in=0] (1.4,1.2) to[out=180,in=90] (.7,.6);
     \draw[red] (1.2,0) to (1.2,.8);
     \draw[fill=white] (-1.4,-.6) rectangle (1.4,.6);
     \node at (0,0) {$\JWalttwo_{s,t}$};
           \node[circle,fill,draw,inner
           sep=0mm,minimum size=1mm,color=red] at (1.2,.8) {};
           \draw[blue] (2,0) to (4,0);
         \end{scope}
         \draw[dashed] (.3,0) circle (2.2);
\end{tikzpicture}
    \end{array}
    =
    0
  \end{equation}
  
Another crucial property of this special Jones-Wenzl projector is that
  \begin{equation}
    \label{eq:6}
        \begin{array}{c}
  \begin{tikzpicture}[xscale=.5,yscale=1]
     \draw[dashed] (-1.6,-1) to (1.6, -1);
     \draw[dashed] (-1.6, 1) to (1.6, 1);
     \draw[red] (-1.2,-.8) to (-1.2,.8);
     \draw[blue] (-.7,-.8) to (-.7, .8);
     \node at (0,-.7) {$\dots$};
          \node at (0,.7) {$\dots$};
     \draw[blue] (.7,-.8) to (.7, .8);
     \draw[red] (1.2,-.8) to (1.2, .8);
     \draw[fill=white] (-1.4,-.6) rectangle (1.4,.6);
     \node at (0,0) {$\JWalt_{s,t}$};
      \node[circle,fill,draw,inner
      sep=0mm,minimum size=1mm,color=red] at (-1.2,.8) {};
      \node[circle,fill,draw,inner
      sep=0mm,minimum size=1mm,color=red] at (1.2,.8) {};
      \node[circle,fill,draw,inner
      sep=0mm,minimum size=1mm,color=blue] at (-.7,.8) {};
            \node[circle,fill,draw,inner
            sep=0mm,minimum size=1mm,color=blue] at (.7,.8) {};
                  \node[circle,fill,draw,inner
      sep=0mm,minimum size=1mm,color=blue] at (-.7,-.8) {};
            \node[circle,fill,draw,inner
      sep=0mm,minimum size=1mm,color=blue] at (.7,-.8) {};
      \node[circle,fill,draw,inner
      sep=0mm,minimum size=1mm,color=red] at (1.2,-.8) {};
            \node[circle,fill,draw,inner
      sep=0mm,minimum size=1mm,color=red] at (-1.2,-.8) {};
    \end{tikzpicture}
        \end{array}
        =
        \begin{array}{c}
  \begin{tikzpicture}[xscale=.5,yscale=1]
     \draw[dashed] (-1.6,-1) to (1.6, -1);
     \draw[dashed] (-1.6,1) to (1.6,1);
     \draw[red] (-1.2,-.8) to (-1.2,0);
     \draw[blue] (-.7,-.8) to (-.7, .8);
     \node at (0,-.7) {$\dots$};
          \node at (0,.7) {$\dots$};
     \draw[blue] (.7,-.8) to (.7, .8);
     \draw[red] (1.2,0) to (1.2, .8);
     \draw[fill=white] (-1.4,-.6) rectangle (1.4,.6);
     \node at (0,0) {$\JWalttwo_{s,t}$};
                       \node[circle,fill,draw,inner
      sep=0mm,minimum size=1mm,color=blue] at (-.7,-.8) {};
            \node[circle,fill,draw,inner
      sep=0mm,minimum size=1mm,color=blue] at (.7,-.8) {};
      \node[circle,fill,draw,inner
      sep=0mm,minimum size=1mm,color=red] at (-1.2,-.8) {};
                             \node[circle,fill,draw,inner
      sep=0mm,minimum size=1mm,color=blue] at (-.7,.8) {};
            \node[circle,fill,draw,inner
      sep=0mm,minimum size=1mm,color=blue] at (.7,.8) {};
      \node[circle,fill,draw,inner
      sep=0mm,minimum size=1mm,color=red] at (1.2,.8) {};
\end{tikzpicture}
        \end{array}
        = \pi_{s,t}
      \end{equation}
(the notation $\pi_{s,t}$ was introduced in \S\ref{subsec-pos-roots}). This was proven in \cite[Corollary 4.15]{EDC} under certain assumptions, and we prove it more generally in Lemma \ref{lem:evaluation} below.

\begin{remark} \label{rmk:onecolorforalldots} The proof of this result comes from counting the red and the blue regions in each crossingless matching. After
applying $\Sigma$ and composing with many dots as above, each region is transformed to a barbell, and evaluates to the polynomial $\alpha_{{\color{red} s}}$ or $\alpha_{{\color{blue} t}}$. In
particular, the relation \eqref{eq:6} is a consequence only of the one color relations. Death by pitchfork is also a consequence only of the one-color relations. \end{remark}

Finally, to apply the functor $\Lambda$ most efficiently to Jones-Wenzl projectors, we will twist them using adjunction to consider maps whose target is the monoidal identity. Let
\begin{equation} JW_{s,t} =            \begin{array}{c}
  \begin{tikzpicture}[xscale=.5,yscale=.9]
     \draw[dashed] (-1.8,-1.2) to (3.1, -1.2);
     \draw[dashed] (-1.8,1.6) to (3.1,1.6);
     \node at (0,-.8) {$\dots$};
          \node at (0,.8) {$\dots$};
          \draw[blue] (.7,-1.2) to (.7,0);
          \draw[red] (-1.2,0) to (-1.2,-1.2);
          \draw[blue] (-.7,0) to (-.7,-1.2);
     \draw[fill=white] (-1.4,-.6) rectangle (1.2,.6);
     \node at (0,0) {$\JWalttwo_{s,t}$};
          \draw[blue] (-.7,.6) to[out=90,in=180] (1.5,1.3) to[out=0,in=90] (2.8,.6) to (2.8,-1.2);
     \draw[red] (.9,.6) to[out=90, in=180] (1.2,.8) to[out=0,in=90] (1.5,.6) to (1.5,-1.2);
     \draw[blue] (.4,.6) to[out=90, in=180] (1.2,1) to[out=0,in=90] (1.9,.6) to (1.9,-1.2);
     \node at (2.4,0) {$\dots$};
     % \draw[red](-1.2,-.6) to[out=-90, in=0] (-1.5,-.8) to[out=180,in=-90] (-1.8,-.6) to (-1.8,1.2); 
     % \draw[blue] (-.7,-.6) to[out=-90, in=0] (-1.5,-1) to[out=180,in=-90] (-2.2,-.6) to (-2.2,1.2);
   \end{tikzpicture}
             \end{array}
       \end{equation}
       and
       \begin{equation} \label{eq:JWprimedefn}
  \begin{array}{c}
  \tikz[xscale=0.45,yscale=0.45]{
      \draw[dashed] (-1,-2.5) to (4.5, -2.5); \draw[dashed] (-1,2.5) to (4.5,2.5);
\draw (0,1) rectangle (4,-1);
  \node at (2,0) {$JW_{s,t}'$};
  \draw[red] (.5,-1) to (.5,-2.5);
%\draw[red] (.5,-1.5) to[out=180,in=0] (0,-1.5) to[out=180,in=-90] (-1,0) to[out=90,in=180] (1.5,2.5) to[out=0,in=90] (3.5,0) to[out=-90,in=90] (3.5,-2.5);
\draw[blue] (1,-1) to (1,-2.5);
  \draw[red] (1.5,-1) to (1.5,-2.5);
\draw[blue] (2,-1) to (2,-2.5);
%  \draw[red] (2.5,-1) to (2.5,-2.5);
%\node[circle,fill,draw,inner sep=0mm,minimum size=1mm,color=blue] at (1,-2) {};
    \node at (2.8,-1.75) {\small $\dots$};
      \draw[red] (3.5,-1) to (3.5,-2.5);
    } \end{array}
  :=
\begin{array}{c}
  \tikz[xscale=0.45,yscale=0.45]{
      \draw[dashed] (-1,-2.5) to (4.5, -2.5); \draw[dashed] (-1,2.5) to (4.5,2.5);
\draw (0,1) rectangle (3,-1);
  \node at (1.5,0) {$JW_{s,t}$};
  \draw[red] (.5,-1) to (.5,-2.5);
\draw[red] (.5,-1.5) to[out=180,in=0] (0,-1.5) to[out=180,in=-90] (-1,0) to[out=90,in=180] (1.5,2) to[out=0,in=90] (3.5,0) to[out=-90,in=90] (3.5,-2.5);
\draw[blue] (1,-1) to (1,-2.5);
  \draw[red] (1.5,-1) to (1.5,-2.5);
\draw[blue] (2,-1) to (2,-2.5);
%  \draw[red] (2.5,-1) to (2.5,-2.5);
%\node[circle,fill,draw,inner sep=0mm,minimum size=1mm,color=blue] at (1,-2) {};
\node at (2.8,-1.75) {\small $\dots$};
  } \end{array}
=
  \begin{array}{c}
  \begin{tikzpicture}[xscale=.5,yscale=.7]
     \draw[dashed] (-1.8,-1.2) to (3.6, -1.2);
     \draw[dashed] (-1.8,2) to (3.6,2);
     \node at (0,-.8) {$\dots$};
          \node at (0,.8) {$\dots$};
          \draw[red] (.9,-.6) to (.9,-.9);
          \node[circle,fill,draw,inner sep=0mm,minimum size=1mm,color=red] at (.9,-.9) {};
          \draw[blue] (.4,-1.2) to (.4,0);
          \draw[red] (-1.2,0) to (-1.2,-1.2);
          \draw[blue] (-.7,0) to (-.7,-1.2);
     \draw[fill=white] (-1.4,-.6) rectangle (1.2,.6);
     \node at (0,0) {$\JWalt_{s,t}$};
     \draw[red] (-1.2,.6) to[out=90,in=180] (1.5,1.7) to[out=0,in=90] (3.3,.6) to (3.3,-1.2);
     \draw[blue] (-.7,.6) to[out=90,in=180] (1.5,1.4) to[out=0,in=90] (2.8,.6) to (2.8,-1.2);
     \draw[red] (.9,.6) to[out=90, in=180] (1.2,.8) to[out=0,in=90] (1.5,.6) to (1.5,-1.2);
     \draw[blue] (.4,.6) to[out=90, in=180] (1.2,1) to[out=0,in=90] (1.9,.6) to (1.9,-1.2);
     \node at (2.4,0) {$\dots$};
     % \draw[red](-1.2,-.6) to[out=-90, in=0] (-1.5,-.8) to[out=180,in=-90] (-1.8,-.6) to (-1.8,1.2); 
     % \draw[blue] (-.7,-.6) to[out=-90, in=0] (-1.5,-1) to[out=180,in=-90] (-2.2,-.6) to (-2.2,1.2);
   \end{tikzpicture}
             \end{array}
           \end{equation}
          (We leave
           it as an exercise to the reader to check the equality of
           the second two diagrams, using \eqref{unitaroundJW}.)
Various features of $JW_{s,t}$ and $JW'_{s,t}$ follow immediately from the properties of $\JWalt_{s,t}$ and $\JWalttwo_{s,t}$ listed above. For example, they are both killed by all pitchforks on bottom, and
\begin{equation} \label{eq:alldots}
\begin{array}{c}
 \tikz[xscale=0.45,yscale=0.45]{
\draw (0,1) rectangle (3,-1);
\node at (1.5,0) {$JW'_{s,t}$};
\draw[red] (.5,-1) to (.5,-2);
\draw[blue] (1,-1) to (1,-2);
\draw[red] (2.5,-1) to (2.5,-2);
\node[circle,fill,draw,inner sep=0mm,minimum size=1mm,color=red] at (.5,-2) {};
\node[circle,fill,draw,inner sep=0mm,minimum size=1mm,color=blue] at (1,-2) {};
\node[circle,fill,draw,inner sep=0mm,minimum size=1mm,color=red] at (2.5,-2) {};
\node at (1.8,-1.75) {$\dots$};
} \end{array}
= \pi_{s,t}.
\end{equation}
As another example, $JW_{s,t}$ is invariant under rotation by two strands.
 \begin{equation} \label{Eq:rotrot}
\begin{array}{c}
  \tikz[xscale=0.45,yscale=0.45]{
      \draw[dashed] (-1,-2.5) to (4.5, -2.5); \draw[dashed] (-1,3.5) to (4.5,3.5);
\draw (0,1) rectangle (3,-1);
\node at (1.5,0) {$JW_{s,t}$};
\draw[red] (.5,-1) to[out=-90,in=0] (0,-1.5) to[out=180,in=-90] (-1,0) to[out=90,in=180] (1.5,2.5) to[out=0,in=90] (3.5,0) to[out=-90,in=90] (3.5,-2.5);
\draw[blue] (1,-1) to[out=-90,in=0] (0,-2) to[out=180,in=-90] (-1.5,0) to[out=90,in=180] (1.5,3) to[out=0,in=90] (4,0) to[out=-90,in=90] (4,-2.5);
  \draw[red] (1.5,-1) to (1.5,-2.5);
\draw[blue] (2,-1) to (2,-2.5);
%  \draw[red] (2.5,-1) to (2.5,-2.5);
%\node[circle,fill,draw,inner sep=0mm,minimum size=1mm,color=blue] at (1,-2) {};
\node at (2.8,-1.75) {\small $\dots$};
  } \end{array}
=
\begin{array}{c}
  \tikz[xscale=0.45,yscale=0.45]{
      \draw[dashed] (-1,-2.5) to (4.5, -2.5); \draw[dashed] (-1,3.5) to (4.5,3.5);
\draw (0,1) rectangle (3,-1);
\node at (1.5,0) {$JW_{s,t}$};
\draw[red] (.5,-1) to (.5,-2.5);
\draw[blue] (1,-1) to (1,-2.5);
  \draw[red] (2,-1) to (2,-2.5);
\draw[blue] (2.5,-1) to (2.5,-2.5);
%  \draw[red] (2.5,-1) to (2.5,-2.5);
%\node[circle,fill,draw,inner sep=0mm,minimum size=1mm,color=blue] at (1,-2) {};
\node at (1.55,-1.75) {\small $\dots$};
  } \end{array}
\end{equation}

\begin{remark} One nice feature of $JW_{s,t}$ and $JW'_{s,t}$ is that their coloration does not depend on the parity of $m_{st}$ (as opposed to $\JWalt_{s,t}$, see Remark \ref{remark:colorparity}). \end{remark}

\subsection{Checking the two-color relations}

There are a number of versions of the Jones-Wenzl relation \eqref{Eq:JWreln} we will use, which are all equivalent only using cyclicity. Rotating one red strand of \eqref{Eq:JWreln} around
  (analogously to  \eqref{Eq:rotrot}) we obtain
\begin{equation} \label{eq:JWlem} 
\begin{array}{c}
  \tikz[xscale=0.45,yscale=0.45]{
        \draw[dashed] (-1,-2.5) to (4.5, -2.5); \draw[dashed] (-1,2) to (4.5,2);
\draw (0,1) rectangle (3.5,-1);
\node at (1.75,0) {$E_{s,t}$};
\draw[red] (.5,-1) to (.5,-2.5);
\draw[blue] (1,-1) to (1,-2.5);
\draw[red] (2.5,-1) to (2.5,-2.5);
\draw[blue] (3,-1) to (3,-2);
\node[circle,fill,draw,inner sep=0mm,minimum size=1mm,color=blue] at (3,-2) {};
\node at (1.8,-1.75) {$\dots$};
} \end{array}
=
\begin{array}{c}
  \tikz[xscale=0.45,yscale=0.45]{
        \draw[dashed] (-1,-2.5) to (4.5, -2.5); \draw[dashed] (-1,2) to (4.5,2);
\draw (0,1) rectangle (3,-1);
\node at (1.5,0) {$JW'_{s,t}$};
\draw[red] (.5,-1) to (.5,-2.5);
\draw[blue] (1,-1) to (1,-2.5);
\draw[red] (2.5,-1) to (2.5,-2.5);
\node at (1.8,-1.75) {$\dots$};
} \end{array},
\end{equation}
which is the version we check in the following lemma.

\begin{lem} \label{lem:JW} $\Lambda$ preserves the relation \eqref{eq:JWlem}. \end{lem}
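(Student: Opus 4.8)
\emph{Setup.} The plan is to compute $\Lambda$ of each side of \eqref{eq:JWlem} and compare. Both sides are morphisms $B_{\un{v}}\to\mathbb{1}$, where $\un{v}=(s,t,s,\dots,s)$ is the alternating expression of length $2m-1$ (obtained from $\un{w}$, the length $2m$ alternating expression of \S\ref{subsec:JWmain}, by deleting the final letter $t$). Hence $\Lambda$ of either side is a row matrix whose columns are indexed by subexpressions $\un{e}\subset\un{v}$ and whose $\un{e}$-entry vanishes unless $\un{v}^{\un{e}}=\id$. Write $I:=\{\un{e}\subset\un{v}:\un{v}^{\un{e}}=\id\}$. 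It suffices to show that both matrices have every $I$-entry equal to $\pi_{s,t}$.

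\emph{Left side.} The left side is $E_{s,t}$ precomposed with the blue dot $\mathbb{1}\to B_t$ in the last tensor slot. By \eqref{eq:ddot} (and since the dot is tensored on the right, so there is no twist), $\Lambda(\id_{B_{\un{v}}}\otimes d)$ carries the summand $r_{\un{v}^{\un{e}}}$ isomorphically onto the summand of $\Lambda(B_{\un{w}})$ indexed by $(\un{e},0)$. Postcomposing with \eqref{eq:E}--\eqref{fdef} and using $\un{w}^{(\un{e},0)}=\un{v}^{\un{e}}$ gives $\Lambda(\text{LHS})_{\un{e}}=\pi_{s,t}$ for $\un{e}\in I$ and $0$ otherwise, as wanted.

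\emph{Right side.} Here I would use only two facts about $JW'_{s,t}$ recalled in \S\ref{subsec:JWmain}: it is killed by all pitchforks on the bottom, and its all-dots evaluation \eqref{eq:alldots} equals $\pi_{s,t}$. The all-dots unit $u\colon\mathbb{1}\to B_{\un{v}}$ has $\Lambda(u)$ equal to the inclusion of $r_{\id}$ as the summand indexed by $0^{2m-1}$ (coefficient $1$, no twisting, since only $r_{\id}$ ever sits to the left of a dot), so \eqref{eq:alldots} gives $\Lambda(JW'_{s,t})_{0^{2m-1}}=\pi_{s,t}$. For $1\le i\le 2m-3$, the pitchfork $P_i\colon B_{s_i}\to B_{s_i}B_{s_{i+1}}B_{s_i}$ — the split \eqref{eq:peace} with a dot \eqref{eq:ddot} inserted in the middle — annihilates $JW'_{s,t}$ when placed in the bottom slots $i,i+1,i+2$. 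Computing $\Lambda(P_i)$ (the only scalar that appears is $1/\alpha_{s_i}$, which is invertible in $Q$ and occurs twisted identically on the two relevant terms), the vanishing of $\Lambda(JW'_{s,t})\circ\Lambda(\id\otimes P_i\otimes\id)$ gives, for all prefixes $\un{a}$ and suffixes $\un{b}$,
\begin{equation*}
\Lambda(JW'_{s,t})_{(\un{a},0,0,0,\un{b})}=\Lambda(JW'_{s,t})_{(\un{a},1,0,1,\un{b})},\qquad \Lambda(JW'_{s,t})_{(\un{a},0,0,1,\un{b})}=\Lambda(JW'_{s,t})_{(\un{a},1,0,0,\un{b})}.
\end{equation*}
(One checks that $\Lambda(\text{LHS})$ also satisfies both identities, since $(\un{a},0,0,0,\un{b})\in I\Leftrightarrow(\un{a},1,0,1,\un{b})\in I$ and similarly for the second pair; so the endgame can alternatively be phrased as a one-dimensionality statement for morphisms killed by all bottom pitchforks.)

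\emph{Propagation, and the main obstacle.} It remains to show that these moves connect every $\un{e}\in I$ to $0^{2m-1}$, so that all $I$-entries of $\Lambda(JW'_{s,t})$ coincide and hence equal $\pi_{s,t}$, matching the left side. Given $\un{e}\in I$ with $\un{e}\neq 0^{2m-1}$, the subword read off its $1$'s equals $\id$ in $\langle s,t\rangle$; since $\un{v}$ has length $2m-1<2m$, this subword cannot be a nonempty alternating word equal to $\id$, so two consecutive chosen positions carry the same color, necessarily an even distance apart. The second identity slides the righthand one of these past the intervening zeros until it is two slots from the lefthand one; the first identity then turns the resulting $1,0,1$ into $0,0,0$, deleting two $1$'s; iterate. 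I expect this combinatorial propagation to be the main obstacle, together with getting the twists right when computing $\Lambda(P_i)$ in a middle tensor slot and being careful about exactly which pitchforks annihilate $JW'_{s,t}$ (here the $2m-3$ non-wrapping ones suffice precisely because $\un{v}$ is shorter than $2m$); the rest is routine bookkeeping with \eqref{eq:E}, \eqref{eq:peace}, \eqref{eq:ddot} and \eqref{eq:alldots}.
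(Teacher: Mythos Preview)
Your proposal is correct and follows essentially the same approach as the paper's proof. Both arguments compute $\Lambda(\text{LHS})_{\un{e}}=\pi_{s,t}$ directly from \eqref{eq:E} and \eqref{eq:ddot}, anchor $\Lambda(\text{RHS})_{0^{2m-1}}=\pi_{s,t}$ via the all-dots evaluation \eqref{eq:alldots}, and then propagate to all of $I$ using death by pitchfork; the key point in both is that \eqref{eq:alldots} and death by pitchfork are one-color consequences, hence already preserved by $\Lambda$. Your write-up supplies an explicit connectivity algorithm where the paper simply says ``one checks easily,'' and conversely the paper is more explicit than you about \emph{why} the one-color identities transfer to $\Lambda$; you should state clearly that $JW'_{s,t}$ is built from univalent and trivalent vertices only, so $\Lambda(JW'_{s,t})$ is unambiguous and the relations \eqref{eq:alldots} and \eqref{eq:pitchforkagain} hold after $\Lambda$ because the one-color relations have already been checked. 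One cosmetic point: your parenthetical ``since the dot is tensored on the right, so there is no twist'' is not quite the reason---there \emph{is} a twist, but it acts on the scalar $1$; your later ``since only $r_{\id}$ ever sits to the left of a dot'' is the correct justification.
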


\begin{proof}
  For any subexpression $\un{e}$ of $(s,t,\dots, s)$, $[e]$ is killed
  by both sides of the equation unless $\un{e}$ is a subexpression for
  the identity. It follows from the definition of $\Lambda$
  (more precisely, from \eqref{eq:ddot} and \eqref{eq:E}) that
  for any such subexpression, under the morphism of the
  left-hand side of the lemma we have
\[
[\un{e}] \mapsto \pi_{s,t}[\emptyset].
\]
So we are done if we can verify the same statement on the right hand
side.

Under the RHS of \eqref{eq:JWlem}, let us write 
\[
[\un{e}] \mapsto g_{\un{e}}[\emptyset]
\]
for some $g_{\un{e}} \in Q$. We wish to show that $g_{\un{e}} = \pi_{s,t}$ for all subexpressions $\un{e}$ for the identity.

First we check this when $\un{e} = (0,0,\dots,0)$. Using \eqref{eq:ddot}, the LHS of \eqref{eq:alldots} will (after applying $\Lambda$) send $[\emptyset]$ to $g_{\un{e}} [\emptyset]$ as well. Clearly the RHS of \eqref{eq:alldots} will send $[\emptyset]$ to $\pi_{s,t} [\emptyset]$. Thus $g_{\un{e}} = \pi_{s,t}$ if \eqref{eq:alldots} is preserved by $\Lambda$. Now $JW'_{s,t}$ is built from univalent and trivalent vertices. Moreover, \eqref{eq:alldots} is the consequence of the one-color relations, see Remark \ref{rmk:onecolorforalldots}. Since we have already checked that $\Lambda$ preserves the one-color relations, \eqref{eq:alldots} is already proven to hold after applying $\Lambda$.

Now, let $P$ (for pitchfork) denote a morphism of the form
\[
  \begin{array}{c}
    \begin{tikzpicture}[scale=0.5]
            \draw[dashed] (1,-1.5) to (5.5, -1.5); \draw[dashed] (1,0) to (5.5,0);
% bottom dots
      \foreach \x in {1.5,5} \node at (\x,-0.75) {$\dots$};
% % pitchfork:
       \def\x{3.25}; \def\y{-1};
       \draw[red!50!white] (2.5,0) to[out=-90,in=180] (\x,\y) to[out=0,in=-90] (4,0);
       \draw[red!50!white] (\x,\y) -- (\x,-1.5);
       \draw[blue] (\x,0) -- (\x,-0.5);
 \node[circle,fill,draw,inner
      sep=0mm,minimum size=1mm,color=blue] at (\x,-0.5) {};
    \end{tikzpicture}
  \end{array}
\text{or}
  \begin{array}{c}
    \begin{tikzpicture}[scale=0.5]
                  \draw[dashed] (1,-1.5) to (5.5, -1.5); \draw[dashed] (1,0) to (5.5,0);
% bottom dots
      \foreach \x in {1.5,5} \node at (\x,-0.75) {$\dots$};
% % pitchfork:
       \def\x{3.25}; \def\y{-1};
       \draw[red!50!white] (2.5,0) to[out=-90,in=180] (\x,\y) to[out=0,in=-90] (4,0);
       \draw[white] (\x,\y) -- (\x,-1.5);
       \draw[blue] (\x,0) -- (\x,-0.5);
 \node[circle,fill,draw,inner
      sep=0mm,minimum size=1mm,color=blue] at (\x,-0.5) {};
    \end{tikzpicture}
  \end{array},
\]
perhaps with colors interchanged, and with additional vertical strands (identity maps) to the left and right. Death by pitchfork implies that 
\begin{equation}
  \label{eq:pitchforkagain}
\begin{array}{c}
 \tikz[xscale=0.45,yscale=0.45]{
\draw (0,1) rectangle (3,-1);
\node at (1.5,0) {$JW'_{s,t}$};
\draw[red] (.5,-1) to (.5,-1.5);
\draw[blue] (1,-1) to (1,-1.5);
\draw[red] (2.5,-1) to (2.5,-1.5);
\node at (1.8,-1.25) {$\dots$};
\draw (0,-1.5) rectangle (3,-3.5);
\node at (1.5,-2.5) {$P$};
} \end{array} = 0.
\end{equation}
Death by pitchfork is also a consequence of the one-color relations,
so we already know that $\Lambda$ kills the LHS of
\eqref{eq:pitchforkagain}.

Let $\un{e} = ( \un{e}_1, 1, 0, 0, \un{e}_2)$ and $\un{e}' = ( \un{e}_1, 0, 0, 1, \un{e}_2)$ be subexpressions for the identity which differ only in the three strands meeting the pitchfork $P$, which is a pitchfork of the first kind pictured above. Let $\un{f} = (\un{e}_1, 1,\un{e}_2)$ be the corresponding subexpression of the source of $P$. Let $w$ be the endpoint of the partial subexpression $\un{e}_1$. Then using \eqref{eq:ddot} and \eqref{eq:peace} we see that \eqref{eq:pitchfork} sends
\[ [\un{f}] \mapsto w(\frac{1}{\alpha_s}) [\un{e}'] - w(\frac{1}{\alpha_s}) [\un{e}] \mapsto \frac{1}{w(\alpha_s)}(g_{\un{e}'} - g_{\un{e}}) [\emptyset]. \]
The first arrow is the action of $P$, and the second arrow the action of $JW_{s,t}$. 
Since the result must be zero, and $R$ is a domain, we deduce that
\[ g_{\un{e}} = g_{\un{e}'}. \]

Using very similar arguments, we deduce
that if two subexpressions $\un{e}$ and $\un{e}'$ are related by one
of the following moves
\begin{gather*}
( \dots, 1, 0, 0, \dots ) \leftrightarrow ( \dots, 0, 0, 1, \dots ) \\
( \dots, 1, 0, 1, \dots ) \leftrightarrow ( \dots, 0, 0, 0, \dots )
\end{gather*}
then their images $g_{\un{e}} [\emptyset]$ and $g_{\un{e}'} [\emptyset]$ under $\Lambda(JW_{s,t})$ agree. One checks easily
that any
subexpression of $(s,t,\dots, s)$ for the identity may be 
transformed via a sequence of these moves to the
subexpression $(0,0,\dots, 0)$, whence the result. \end{proof}

\begin{remark}
The proof above used the ``pre-composition trick'': to understand the images of various subexpressions $\un{e}$, we pre-compose with a map which isolates these subexpressions. For example, the image of $[00\cdots 0]$ can be isolated using precomposition with the all-dots map, as in \eqref{eq:alldots}. Precomposing with pitchforks isolates pairs of subexpressions.  
\end{remark}

The following is 2-color associativity:
\begin{equation} \label{eq:2assocequality}
\begin{array}{c}
  \tikz[scale=0.8]{
              \draw[dashed] (1.5,-2) to (4.5, -2); \draw[dashed] (1.5,1) to (4.5,1);
\draw[color=red] (2.1,-1) to[out=90,in=-150] (3,0) to (3,1); \draw[color=red] (3.9,-2) to[out=90,in=-30] (3,0);
\draw[red] (2.5,-2) to (2.1,-1) to (1.5,-2);
\draw[color=blue] (2.1,1) to[out=-90,in=150] (3,0) to (3,-2);
  \draw[color=blue] (3.9,1) to[out=-90,in=30] (3,0);}
\end{array}
=
\begin{array}{c}
  \tikz[scale=0.7]{
                \draw[dashed] (1,-1) to (4.5, -1); \draw[dashed] (1,2.5) to (4.5, 2.5);
\draw[color=red] (2,-1) to[out=90,in=-150] (3,0);% to (3,1);
\draw[color=red] (3.9,-1) to[out=90,in=-30] (3,0);
\draw[color=blue] (2.1,1.5) to[out=-90,in=150] (3,0) to (3,-1);
  \draw[color=blue] (3.9,1) to[out=-90,in=30] (3,0);
\draw[blue] (3.9,1) to[out=90,in=-30] (3,2);
\draw[blue] (2.1,1.5) to (3,2) to (3,2.5);
\draw[blue] (2.1,1.5) to[out=150,in=-90] (1.2,2.5);
\draw[red] (3,0) to[out=90,in=-30] (2.1,1.5);
\draw[red] (1.5,-1) to[out=90,in=-150] (2.1,1.5);
\draw[red] (2.1,1.5) to (2.1,2.5);
}
\end{array}
\end{equation}
(We draw the relation for $m_{st} = 3$. The following lemma
and its proof holds in general.)

\begin{lem} $\Lambda$ preserves \eqref{eq:2assocequality}. \label{lem:2assoc}
\end{lem}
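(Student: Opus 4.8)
The plan is to adapt the strategy of Lemmas~\ref{lem:cyclicity} and~\ref{lem:JW}. We have already shown that $\Lambda$ preserves cyclicity of the $2m$-valent vertex and all of the one-color relations, so it suffices to verify $\Lambda$ on any relation obtained from \eqref{eq:2assocequality} by rotating arms around the boundary. First I would use cyclicity to rotate every $2m$-valent vertex occurring on either side into the form $E_{s,t}$ (or $E_{t,s}$), all of whose arms point downward; the cost is the introduction of cups, caps, and pitchforks, which are built from the one-color generators. The resulting equivalent relation has both sides equal to a composition of $E$-morphisms, trivalent vertices and dots, and its target can furthermore be taken to be the monoidal unit, so that each side is a single row of matrix entries indexed by subexpressions $\un{e}$ of a fixed alternating expression $\un{w}$, with the $\un{e}$-entry zero unless $\un{w}^{\un{e}}=\id$.

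Writing $g_{\un{e}}[\emptyset]$ and $h_{\un{e}}[\emptyset]$ for the images of $[\un{e}]$ under the two sides (for $\un{e}$ a subexpression for the identity), we must show $g_{\un{e}}=h_{\un{e}}$ for all such $\un{e}$. Because of the shape \eqref{fdef} of $\Lambda(E_{s,t})$, each $g_{\un{e}}$ (resp. $h_{\un{e}}$) is $\pi_{s,t}$ (resp. $\pi_{t,s}$) times a sum of coefficients arising from the trivalent part of the diagram, and $\pi_{s,t}=\pi_{t,s}$ by \eqref{eq:pistispits}. To pin down the coefficient sums I would use the precomposition trick exactly as in Lemma~\ref{lem:JW}: precomposing both sides with the all-dots map and invoking Remark~\ref{rmk:onecolorforalldots} (the all-dotted composite of a $2m$-valent vertex is governed entirely by one-color relations, which $\Lambda$ respects) gives $g_{(0,\dots,0)}=h_{(0,\dots,0)}=\pi_{s,t}$; and precomposing with a pitchfork in each admissible position, together with death by pitchfork (again a consequence of the one-color relations, Remark~\ref{rmk:onecolorforalldots}) and the formulas \eqref{eq:ddot}, \eqref{eq:peace}, shows that $g_{\un{e}}=g_{\un{e}'}$ and $h_{\un{e}}=h_{\un{e}'}$ whenever $\un{e},\un{e}'$ are related by one of the moves
\[ (\dots,1,0,0,\dots)\leftrightarrow(\dots,0,0,1,\dots), \qquad (\dots,1,0,1,\dots)\leftrightarrow(\dots,0,0,0,\dots). \]
Since every subexpression of $\un{w}$ with endpoint $\id$ is connected to $(0,\dots,0)$ by a sequence of such moves, this forces $g_{\un{e}}=h_{\un{e}}$ throughout, proving the lemma.

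The main obstacle is organizational rather than conceptual: one must carry out the rotation of \eqref{eq:2assocequality} into $E$-form carefully, keeping track of the polynomial twists prescribed by \eqref{tensorpolys} whenever a scalar $1/\alpha_s$ is pushed to the left past an intermediate object $r_w$, and one must check that the death-by-pitchfork arguments and the move-connectivity of subexpressions genuinely apply to the particular expression $\un{w}$ appearing here. Once this bookkeeping is in place, the identity collapses, just as for the Jones--Wenzl relation, to $\pi_{s,t}=\pi_{t,s}$ together with elementary arithmetic of roots. As a sanity check (and as we originally did for small $m_{st}$), one can instead simply multiply out the relevant matrices directly using \eqref{subeq:onecolor} and Lemma~\ref{lem:E}.
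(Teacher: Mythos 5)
The core strategy you describe---precomposing with the all-dots map and with pitchforks, invoking Remark~\ref{rmk:onecolorforalldots} and death by pitchfork---is the same precomposition trick the paper uses. But there is a genuine gap in your step that connects subexpressions by moves, and it is exactly the gap that the paper's separate helper lemma (Lemma~\ref{lem:12last}) is there to close.

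Death by pitchfork only kills a diagram when \emph{all three} strands of the pitchfork feed into the $2m$-valent-vertex part (i.e.\ into the $JW'_{s,t}$ or $E_{s,t}$ portion after applying \eqref{Eq:JWreln}). In \eqref{eq:2assocequality} both sides also contain a one-color trivalent vertex sitting directly on the boundary; rotating the top arms down does not remove it. Consequently a pitchfork whose middle strand lands on one of the two strands entering that trivalent vertex (or, on the right-hand side, near the strand entering the second $2m$-valent vertex) does not kill, and the corresponding move $(\dots,1,0,0,\dots)\leftrightarrow(\dots,0,0,1,\dots)$, etc., is simply not available at those positions. Your assertion that ``every subexpression of $\un{w}$ with endpoint $\id$ is connected to $(0,\dots,0)$ by a sequence of such moves'' therefore has no justification once the admissible positions are restricted, and in fact it is not the claim one should be trying to prove. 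The paper's argument instead shows only that every subexpression containing a zero somewhere can be moved (by the \emph{available} pitchforks) to one with a zero in the first, second, or last position, and then invokes Lemma~\ref{lem:12last}---which asserts that precomposing \eqref{eq:2assocequality} with a dot in those three positions yields a relation already implied by the one-color, cyclicity, and Jones--Wenzl relations. That lemma is the missing ingredient in your outline; it is proved using \eqref{mosttermsP}--\eqref{mosttermsPdown}, i.e.\ by resolving a dot on a $2m$-valent vertex into ``the dot pulls through'' plus pitchfork terms, and it requires its own argument. Without an analogue of it, your induction has no way to reach the boundary positions.

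Two smaller points. First, your characterization of the matrix entries as ``$\pi_{s,t}$ (resp.\ $\pi_{t,s}$) times a sum of coefficients arising from the trivalent part'' is not accurate for the right-hand side of \eqref{eq:2assocequality}, which contains \emph{two} $2m$-valent vertices; the relevant products involve the fractional entries $\pi_{s,t}/\zeta(\un{e}')$ from Lemma~\ref{lem:E}, and the appearance of a single overall $\pi_{s,t}$ is the conclusion of the argument, not its starting point. Second, the paper's base case is $\un{e}=(1,1,\dots,1)$ (handled directly by \eqref{eq:111} and \eqref{eq:110}), not the all-zero subexpression; if you rotate so the target is $\mathbb{1}$ you can indeed switch to the $(0,\dots,0)$ base case via \eqref{eq:alldots}, but then you still need to cover the non-identity subexpressions of the \emph{original} source, which your reformulation conflates with the problem of connectivity above. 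In short: identify the positions where pitchforks fail and prove an analogue of Lemma~\ref{lem:12last} for them; without that, the proof does not go through.
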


The proof the lemma will be via the ``precomposition trick''. However
in order to apply it we need the following:

\begin{lem} \label{lem:12last}
  Precomposing \eqref{eq:2assocequality} with a dot on either
  the first, second or last strand yields a relation which is
  already a consequence of the other relations: the one-color,
  cyclicity, and Jones-Wenzl relations.
\end{lem}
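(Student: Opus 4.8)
The plan is to dispose of the three cases --- a dot on the first, on the second, or on the last strand of the source of \eqref{eq:2assocequality} --- one at a time, and in each case to rewrite \emph{both} sides of the precomposed relation using only the one-color relations, cyclicity of the $2m$-valent vertex, and the Jones-Wenzl relation, until the two rewrites visibly coincide. Two moves do essentially all the work. The first is the vertically flipped form of the one-color unit relation \eqref{eq:unit}: a dot on an outer leg of a trivalent vertex turns that vertex into a bare identity strand. The second is the Jones-Wenzl relation in the form \eqref{eq:JWlem} (together with \eqref{Eq:JWreln}), which, after rotating legs around the boundary by cyclicity, rewrites the $2m$-valent vertex carrying a dot on any one of its legs as $JW'_{s,t}$, equivalently as $\JWalt_{s,t}$ glued to trivalent vertices and dots. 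After these moves, the only morphisms left on either side are built from trivalent vertices, dots, and the idempotent $\JWalt_{s,t}$, and the remaining identity is verified from the one-color relations and the standard properties of $\JWalt_{s,t}$ --- idempotency, death by pitchfork, and the all-dots evaluation \eqref{eq:alldots} --- all of which are established (in \S\ref{sec:JW}) without recourse to \eqref{eq:2assocequality}, so the argument is not circular.

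Concretely, for the dot on the first or the second strand: on the left-hand side of \eqref{eq:2assocequality} these are exactly the two outer legs of the trivalent vertex fusing the doubled initial $B_s$, so \eqref{eq:unit} collapses that vertex and the left-hand side becomes the bare $2m$-valent vertex with its source shortened by one. On the right-hand side the same strand instead enters a $2m$-valent vertex directly; we rotate the dotted leg to the distinguished position by cyclicity, apply \eqref{eq:JWlem} to trade it for $\JWalt_{s,t}$ together with trivalent vertices, and then collapse what remains back to the bare $2m$-valent vertex using the one-color relations, idempotency of $\JWalt_{s,t}$, death by pitchfork, and \eqref{eq:alldots}. So in these two cases the precomposed relation is a formal consequence of the one-color, cyclicity, and Jones-Wenzl relations.

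The dot on the last strand is the case I expect to be the main obstacle, and it is where the care goes: there the dotted strand feeds directly into a $2m$-valent vertex on \emph{both} sides of \eqref{eq:2assocequality}, so neither side collapses for free and both must be resolved through (different rotations of) the Jones-Wenzl relation. To match the two resolutions one must use that the special Jones-Wenzl projector is rotation-invariant --- concretely \eqref{eq:5} together with $\lambda_{s,t}\lambda_{t,s}=1$, i.e. \eqref{eq:5.5} --- so that the two rotated copies of $JW'_{s,t}$ produced on the two sides can be identified, after which the computation is closed using \eqref{eq:alldots} and death by pitchfork. The one genuinely delicate point is the bookkeeping: tracking which rotation of \eqref{eq:JWlem} is invoked on each side and checking that the accompanying two-colored quantum-number scalars cancel. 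Once this is done, all three precompositions have been exhibited as consequences of the one-color, cyclicity, and Jones-Wenzl relations, which is the assertion of the lemma.
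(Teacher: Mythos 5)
Your plan is sound and uses essentially the same ingredients as the paper: the one-color unit relation \eqref{eq:unit} to collapse the trivalent vertex when the dot is on the first or second strand, the Jones-Wenzl relation to resolve a dotted $2m$-valent vertex, death by pitchfork, \eqref{eq:alldots}, and cyclicity. The main organizational difference is that the paper crystallizes the key move as the pair of formulas \eqref{mosttermsP} and \eqref{mosttermsPdown} --- a dotted $2m$-valent vertex equals the term where the dot pulls straight through plus a remainder whose every term carries a pitchfork aimed at the other $2m$-valent vertex --- and then applies this uniformly to all three positions of the dot. Resolving only the dotted vertex on the right-hand side, killing the pitchfork remainder against the second $2m$-valent vertex, and comparing the surviving pull-through term with the collapsed left-hand side disposes of all three cases in one stroke. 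In particular the last-strand case is not actually harder in the paper's setup: you do not need the rotation-eigenvalue input \eqref{eq:5}, \eqref{eq:5.5} as a separate ingredient, because the cyclicity of the $2m$-valent vertex (which you have already allowed yourself) is what moves the dot to the position where \eqref{mosttermsP} applies, and it already encapsulates the rotation compatibility. Your approach of resolving \emph{both} sides through rotated Jones-Wenzl relations and then reconciling the two rotated copies of $JW'_{s,t}$ is a valid but more circuitous route; the paper avoids the "delicate bookkeeping" you anticipate by never resolving the left-hand side through the Jones-Wenzl relation at all, instead reading off directly that the pull-through term matches it.
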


\begin{remark}
  In fact, the lemma remains true for any dot placed on the lower
  strands. However the proof is more involved, and we will only need
  the easier version stated here.
\end{remark}

\begin{proof}
First we claim that 
placing a dot below a $2m$-valent vertex gives one term where ``the
dot pulls straight through the $2m$-valent vertex,'' and all the
remaining terms have pitchforks. That is,
\begin{equation} \label{mosttermsP}
  \begin{array}{c}
    \begin{tikzpicture}[scale=0.4]
      \draw[dashed] (-2,-1.5) to (2, -1.5); \draw[dashed] (-2,1.5) to (2,1.5);
      \node[circle,fill,draw,inner sep=0mm,minimum size=1mm,color=red] at (-1,-1.2) {};
  \draw[color=red] (-1,-1.2) to (-1,-.7);
  \draw[color=blue] (0,-1.5) to (0,-.7);
\draw[color=red] (1,-1.5) to (1,-.7);
\draw[color=blue] (-1,1.5) to (-1,.7);
  \draw[color=red] (0,1.5) to (0, .7);
  \draw[color=blue] (1,1.5) to (1,.7);
  \draw (-1.5,-.7) rectangle (1.5,.7);
   \node at (0,0) {$G_{s,t}$};
\end{tikzpicture}
  \end{array}
  =
    \begin{array}{c}
      \begin{tikzpicture}[scale=0.4]
              \draw[dashed] (-2,-1.5) to (2, -1.5); \draw[dashed] (-2,1.5) to (2,1.5);
%   \draw[color=red] (-1,-1.2) to (-1,-.7);
%   \draw[color=blue] (0,-1.5) to (0,-.7);
% \draw[color=red] (1,-1.5) to (1,-.7);
\draw[color=blue] (-1,1.5) to (-1,-1.5);
  \draw[color=red] (0,1.5) to (0, -1.5);
  \draw[color=blue] (1,1.5) to (1,.7);
%  \draw (-1.5,.7) rectangle (1.5,.7);
      \node[circle,fill,draw,inner sep=0mm,minimum size=1mm,color=blue] at (1,.7) {};
    \end{tikzpicture}
  \end{array}
      +
          \begin{array}{c}
            \begin{tikzpicture}[scale=0.4]
                    \draw[dashed] (-2,-1.5) to (2, -1.5); \draw[dashed] (-2,1.5) to (2,1.5);
%   \draw[color=red] (-1,-1.2) to (-1,-.7);
%   \draw[color=blue] (0,-1.5) to (0,-.7);
% \draw[color=red] (1,-1.5) to (1,-.7);
\draw[color=blue] (-1,1.5) to (-1,-1.5);
  \draw[color=red] (0,1.5) to (0, -1.5);
  \draw[color=blue] (1,1.5) to (1,.7);
  \draw[fill=white] (-1.5,-.7) rectangle (1.5,.7);
  \node at (0,0) {$P$};
%      \node[circle,fill,draw,inner sep=0mm,minimum size=1mm,color=blue] at (1,.7) {};
    \end{tikzpicture}
  \end{array}.
\end{equation}
Here $P$ denotes a linear combination of diagrams, and each diagram
has a pitchfork on top. Moreover, if the rightmost blue strand is
twisted down like so,
\begin{equation} \label{mosttermsPdown}
  \begin{array}{c}
    \begin{tikzpicture}[scale=0.4]
      \draw[dashed] (-2,-1.5) to (2.5, -1.5); \draw[dashed] (-2,1.5) to (2.5,1.5);
      \node[circle,fill,draw,inner sep=0mm,minimum size=1mm,color=red] at (-1,-1.2) {};
  \draw[color=red] (-1,-1.2) to (-1,-.7);
  \draw[color=blue] (0,-1.5) to (0,-.7);
\draw[color=red] (1,-1.5) to (1,-.7);
\draw[color=blue] (-1,1.5) to (-1,.7);
  \draw[color=red] (0,1.5) to (0, .7);
  \draw[color=blue] (1,.7) to[out=90,in=180] (1.4,1.2) to[out=0,in=90] (1.8,-1.5);
  \draw (-1.5,-.7) rectangle (1.5,.7);
   \node at (0,0) {$G_{s,t}$};
\end{tikzpicture}
  \end{array}
  =
    \begin{array}{c}
      \begin{tikzpicture}[xscale=0.4,yscale=-.4]
              \draw[dashed] (-2,-1.5) to (2, -1.5); \draw[dashed] (-2,1.5) to (2,1.5);
%   \draw[color=red] (-1,-1.2) to (-1,-.7);
%   \draw[color=blue] (0,-1.5) to (0,-.7);
% \draw[color=red] (1,-1.5) to (1,-.7);
\draw[color=blue] (-1,1.5) to (-1,-1.5);
  \draw[color=red] (0,1.5) to (0, -1.5);
  \draw[color=blue] (1,1.5) to (1,.7);
%  \draw (-1.5,.7) rectangle (1.5,.7);
      \node[circle,fill,draw,inner sep=0mm,minimum size=1mm,color=blue] at (1,.7) {};
    \end{tikzpicture}
  \end{array}
      +
          \begin{array}{c}
            \begin{tikzpicture}[xscale=0.4,yscale=-.4]
                    \draw[dashed] (-2,-1.5) to (2, -1.5); \draw[dashed] (-2,1.5) to (2,1.5);
%   \draw[color=red] (-1,-1.2) to (-1,-.7);
%   \draw[color=blue] (0,-1.5) to (0,-.7);
% \draw[color=red] (1,-1.5) to (1,-.7);
\draw[color=blue] (-1,1.5) to (-1,-1.5);
  \draw[color=red] (0,1.5) to (0, -1.5);
  \draw[color=blue] (1,1.5) to (1,.7);
  \draw[fill=white] (-1.5,-.7) rectangle (1.5,.7);
  \node at (0,0) {$P'$};
%      \node[circle,fill,draw,inner sep=0mm,minimum size=1mm,color=blue] at (1,.7) {};
    \end{tikzpicture}
  \end{array}.
\end{equation}
then each diagram in $P'$ has a pitchfork on the bottom.
The formulas \eqref{mosttermsP} and \eqref{mosttermsPdown} are very
useful in combination with ``death by pitchfork'' for diagrams with
multiple $2m$-valent vertices.

In fact, \eqref{mosttermsP} and \eqref{mosttermsPdown} can both be deduced from \eqref{eq:3}; the term where the dot pulls through
corresponds to the identity diagram in the original Jones-Wenzl
projector. To get from \eqref{eq:3} to \eqref{mosttermsP} and
\eqref{mosttermsPdown} is a relatively straightforward exercise in diagrammatics and adjunction. First, absorb the dot into the $2m$-valent vertex using \eqref{eq:JWlem}, and define $JW'_{s,t}$ using the last diagram in \eqref{eq:JWprimedefn}. We leave the rest to the reader.

Now precompose the RHS of \eqref{eq:2assocequality} with a dot morphism on the first, second, or last strand. Resolve the $2m$-valent vertex which meets this dot by
\eqref{mosttermsP}. Because $2m$-valent vertices satisfy death by pitchfork, every diagram in $P$ is orthogonal to the other $2m$-valent vertex, so only the term where the dot pulls straight through will survive. It is now
relatively easy to verify that precomposing \eqref{eq:2assocequality} with a dot morphism on the first, second, or last strand yields a relation which follows from the previous
relations; we leave this as an exercise to the reader.
\end{proof}

\begin{remark} The coloration in \eqref{mosttermsP} and \eqref{mosttermsPdown} depends on the parity of $m_{st}$, and they have been drawn above as though $m_{st}$ were odd. If $m_{st}$ were even then the dot on the RHS of \eqref{mosttermsP} would be red rather than blue. \end{remark}

\begin{proof}[Proof of Lemma \ref{lem:2assoc}] Suppose that $\un{e}$
  is a subexpression of $(s, s, t, s,
  \dots)$. We check as usual that $[\un{e}]$ has the same image under
  the morphisms presented on the left-hand and right-hand sides, for
  all choices of $\un{e}$.
  
First we check the case $\un{e} = (1,1,\dots,1)$. We will use \eqref{eq:110} and \eqref{eq:111}. On the left hand side its image is
\[
[111 \dots 11] \mapsto [01 \dots 11] \mapsto [11 \dots 10].
\]
On the right hand side its image is
\[
[11 \dots 111] \mapsto [11 \dots 111] \mapsto [11 \dots 111] \mapsto
[111 \dots 10].
\]

We claim that all other cases can be solved using the precomposition
trick, and Lemma \ref{lem:12last}. Indeed, by Lemma \ref{lem:12last}
and the fact that we have already checked that $\Lambda$ satisfies the
one-color, cyclicity, and Jones-Wenzl relations, we deduce that if
$\un{e}$ has a zero in its first, second or last position, then the
images under both sides of \eqref{eq:2assocequality} agree.

Finally, note that both sides of \eqref{eq:2assocequality} are killed
by most pitchforks on bottom. Each pitchfork has a dot morphism in the
middle of its three strands, and this central dot can appear on any
strand but the first, second, or last. As in the proof of Lemma
\ref{lem:JW}, this allows us to equate the images of various
subexpressions under the transformations
\begin{gather*}
( \dots, 1, 0, 0, \dots ) \leftrightarrow ( \dots, 0, 0, 1, \dots ) \\
( \dots, 1, 0, 1, \dots ) \leftrightarrow ( \dots, 0, 0, 0, \dots )
\end{gather*}
of the source, where the central zero is on one of the strands except
the first, second, or last. Using these transformations, we can get
from any subexpression $\un{e}$ with a zero anywhere to some
subexpression with a zero in the first, second, or last position. The
lemma follows.
\end{proof}

\subsection{Zamolodchikov (or 3-color) relations}

The remaining relations in $\HC$ are associated with finitary rank $3$
parabolic subgroups. The possible types of relations are: $I_2(m)
\times A_1$, $A_3$, $B_3$, and (the forbidden) $H_3$. These relations
can be found in \cite[\S 1.4.3]{EW}.

That the relation of type $I_2(m) \times A_1$ is preserved by $\Lambda$ is immediate. Let $\{s,t\}$
be the simple reflections in $I_2(m)$, and $u$ be the commuting simple
reflection in $A_1$. The relation is an equality of two morphisms from
$(u,s,t,s,\ldots)$ to $(t,s,t,\ldots,u)$ which only use vertices
$G_{s,t}$, $G_{u,s}$, and $G_{u,t}$. If $e \in \{0,1\}$ and $\un{f}
\subset (s,t,s,\ldots)$ and $\un{f}' \subset (t,s,t,\ldots)$, then we
claim that after applying $\Lambda$ both sides send
\[ [e \un{f}] \mapsto \sum_{\un{f}'} (G_{s,t})_{\un{f}}^{\un{f}'} [\un{f'} e]. \]
The proof is relatively straightforward using Example \ref{ex:m2} and the fact that $u$ fixes $\pi_{s,t}$ and $\zeta(\un{f}')$ for any $\un{f}'$.

The relations of type $A_3$ and $B_3$ are complicated, but (unlike the
two-color relations) they do not come in families, so one need not
provide a general argument. Both relations 
have been checked by computer. For example, the $B_3$ relation says
that two diagrams are equal; both diagrams have 9 strands on bottom
and top, and consist of thirteen $2m$-valent 
vertices. Checking that this relation is preserved by $\Lambda$
amounts to checking that two different thirteen-fold products of
sparse $2^9 \times 2^9$ matrices are equal.

Computer computation with the formulas in this paper is possible with
the Magma package ASLoc:
\[
\text{https://github.com/joelgibson/ASLoc}
\]
The reader is referred to \cite{GJW} for more detail on the
implementation. The reader wishing to check
the Zamolodchikov relations in types $A_3$ and $B_3$ should go to the above link, install the
package, and read the section on ``Verifying relations''.

\begin{remark} \label{rmk:ZamSubtle} The reader may be consoled that the relations of type $A_3$ and $B_3$ can also be checked with the precomposition trick, but the details are technical and not worth
repeating here. One can prove that placing a dot below these relations yields an equality which already follows from the other relations. Thus the diagrams must be equal on any
subexpression with a zero. Meanwhile, both sides send $[11\ldots 1]\mapsto [11 \ldots 1]$ because each $2m$-valent vertex $G_{s,t}$ does the same.

Note that there is considerable subtlety in the Zamolodchikov relations, and sweeping it under the rug entirely would be a disservice. The fact that precomposition with a dot gives
an equality should be viewed as rather surprising! For example, the Zamolodchikov relation in type $A_3$ or $B_3$ can be stated as follows. There is a particular reduced expression
for the longest element, and there are two different sequences of braid relations one can apply to get an ``antipodal'' reduced expression for the longest element. Turning these
sequences of braid relations into sequences of $2m$-valent vertices, the two resulting diagrams are equal. However, this statement of equality is false if we chose the wrong reduced
expression to begin with, and one can check this by precomposing with a dot and confirming that the diagrams are \emph{not} equal. \end{remark}

\subsection{The status of type $H_3$} \label{subsec:H3} 
This section is an informal discussion of the current status of the
Hecke category in types containing $H_3$. For further discussion see
\cite[\S 1.4.3]{EW}.

Given a reduced expression, and a sequence of braid relations to
another reduced expression, one obtains a diagram by transforming each
braid relation into a $2m$-valent vertex. 
Such a diagram will be called a \emph{rex move}. By construction, rex
moves induce the identity map $[11\ldots 1]\mapsto [11 \ldots 1]$. By
applying the functor to algebraic 
Soergel bimodules, we obtain morphisms between Bott-Samelson bimodules
which we also call rex moves. We assume for this discussion that
Soergel bimodules categorify the Hecke 
algebra and obey the Soergel hom formula, since this is the behavior
that we wish the diagrammatic Hecke category to mimic. 

Let $X$ and $Y$ be two rex moves with the same source and target, both
reduced expressions for an element $w \in W$. Let $P(X,Y)$ be the
property that $X-Y$ is a linear combination 
of double leaves which factor through lower terms. Using the Soergel
Hom formula, one can deduce a priori that the $P(X,Y)$ holds for
morphisms between Soergel bimodules. Given 
$P(X,Y)$ for all comparable $X$ and $Y$, \cite{EW}  gives a diagrammatic proof that double leaves span the diagrammatic
Hecke category. Moreover, all (non-trivial) cycles in reduced expression graphs come
from the longest element in a rank 3 parabolic subgroup (this has been
noticed by several authors, and is the subject of
\cite{EWdiag}). Thanks to this one can prove as in \cite{EW} that
$P(X_0,Y_0)$ for a single pair $X_0$
and $Y_0$ (certain rex moves between reduced expressions for the
longest element, which are not homotopic as paths in the reduced
expression graph) is sufficient to deduce $P(X,Y)$ for all pairs $X$
and $Y$. The goal of the Zamolodchikov relation is to impose $P(X_0,
Y_0)$ for one such pair. 

In types $I_2(m) \times A_1$ and $A_3$ and $B_3$, one is lucky:
there exists a pair $X_0$ and $Y_0$ such that $X_0 - Y_0 = 0$. No
lower terms are needed for this difference of 
rex moves. As noted in Remark \ref{rmk:ZamSubtle}, this requires a
special choice of $X_0$ and $Y_0$, and there is no a priori reason it
should be true. 

In type $H_3$, it was checked by computer that there is no pair $X_0$
and $Y_0$ such that $X_0 - Y_0 = 0$. (It somewhat non-trivial to check that this is the case. Proceeding naively
involves the multiplication of many sparse matrices with $2^{15} =
32768$ rows and columns, which is far beyond our hardware's capabilities. However, for each pair $X_0$ and $Y_0$ we were able to find a
block of this sparse matrix for which $X_0 - Y_0$ was nonzero.)

To define the Hecke category, we
would need to find a linear combination $L$ of double leaves through
lower terms, such that $X_0 - Y_0 = L$, and impose this as the $H_3$
Zamolodchikov relation. Any relation of this form is sufficient to
imply that double leaves span the diagrammatic category. However, to
prove that double leaves are linearly independent one requires a
functor (either the functor to bimodules, or the functor $\Lambda$
defined herein), and for this functor to be well-defined, the relation
$X_0 - Y_0 = L$ must hold. Picking some $X_0$ and $Y_0$,
we were unable to discover the correct linear combination $L$ so that
$X_0 - Y_0 = L$ is preserved by $\Lambda$. This remains an interesting
computational challenge.

Such a relation is all that is missing for a definition of the
diagrammatic Hecke category in types containing $H_3$.

\section{Heuristics for the definition of the localization functor} \label{sec:heuristics}

Let us give some details on where the definition of $\Lambda$ came from, by reminding the reader of some technology developed in \cite[\S 5.4 and following]{EW}.

We continue to use $B_{\reds}$ to refer both to a Bott-Samelson
bimodule, and to a generating object in the diagrammatic Hecke
category. Similarly, we use $R$ to refer both to the regular $R$-bimodule, and the
monoidal unit of the Hecke category.

\subsection{Mixed calculus and the splitting of $B_s$}

Firstly, the category $\Omega_Q W$ (or rather, its monoidal strictification) was given a diagrammatic presentation by generators and relations in \cite{EWdiag}. Instead of having
objects $r_w$ for each $w \in W$, the objects are sequences $\un{w} = (s_1, \ldots, s_d)$ of simple reflections, meant to embody the object $r_{s_1} \ot \cdots \ot r_{s_d}$. Below
we draw the object $r_s$ (or rather, its identity map) as a squiggly line, of the same color as the solid line which represents $B_s$:
\[
  \id_{B_{\color{red} s}} :=
\begin{array}{c}
  \begin{tikzpicture}
        \draw[dashed] (-0.5,1) to (.5,1); \draw[dashed] (-.5,0) to (.5,0);
\draw [red, decorate] %decoration={snake,amplitude=.4mm,segment
  %length=2mm,post length=1mm}]
  (0,0) -- (0,1); 
  % \draw [red] (0,0.5) -- (0,1)
\end{tikzpicture} \end{array}, 
\qquad
\id_{r_{\color{red} s}} :=
\begin{array}{c}
  \begin{tikzpicture}
            \draw[dashed] (-0.5,1) to (.5,1); \draw[dashed] (-.5,0) to (.5,0);
\draw [red, decorate, decoration={snake,amplitude=.4mm,segment
  length=1mm,post length=0.5mm}]
% [red, decorate, decoration={snake,amplitude=.4mm,segment
%  length=2mm,post length=1mm}]
(0,0) -- (0,1); 
%\draw [red] (0,0.5) -- (0,1);
\end{tikzpicture}
  \end{array}.
\]
Morphisms in the category are linear combinations of diagrams built
out of squiggly cups and caps, and squiggly $2m$-valent
vertices. These diagrams are subject to the relations spelled out in
\cite[\S 4.1]{EW} (see also \cite[\S 6]{EWdiag}). In particular, squiggly cups
and caps define isomorphisms between the unit and $r_s^{\otimes 2}$.

Then in \cite[\S 5.4]{EW} we developed a diagrammatic calculus which mixed the objects $B_s$ and $r_s$. For sake of discussion we call this the \emph{mixed calculus}. This is a model for the monoidal subcategory of $R$-bimodules containing both the standard bimodules $R_s$ and the Bott-Samelson bimodules $B_s$. In addition to the diagrams appearing in $\HC$ and in $\Omega_Q W$ individually, we also have new generating morphisms
\begin{equation} \label{eq:newmixedmaps}
\begin{array}{c}
  \begin{tikzpicture}
            \draw[dashed] (-0.5,1) to (.5,1); \draw[dashed] (-.5,0) to (.5,0);
\draw [red, decorate, decoration={snake,amplitude=.4mm,segment
  length=1mm,post length=0.5mm}] (0,0) -- (0,0.5); 
\draw [red] (0,0.5) -- (0,1);
\end{tikzpicture} \end{array}
\quad \text{and} \quad
\begin{array}{c}
  \begin{tikzpicture}
            \draw[dashed] (-0.5,1) to (.5,1); \draw[dashed] (-.5,0) to (.5,0);
\draw [red] (0,0) -- (0,0.5); 
\draw [red, decorate, decoration={snake,amplitude=.4mm,segment
  length=1mm,post length=0.5mm}] (0,0.5) -- (0,1);
\end{tikzpicture}
\end{array},
\end{equation}
both of degree $+1$.

\begin{remark} In the category of $R$-bimodules there are important non-split short exact sequences
	\[ 0 \to R_s(-1) \to B_s \to R_{\id}(+1) \to 0 \]
and
	\[ 0 \to R_{\id}(-1) \to B_s \to R_s(+1) \to 0 \]
which describe how $B_s$ is filtered by standard/costandard bimodules. The maps between $B_s$ and $R_{\id}$ are maps between Bott-Samelson bimodules, so they appear already in $\HC$ as the two dot maps. The maps between $B_s$ and $R_s$ are new in this mixed calculus, and they are the new maps of \eqref{eq:newmixedmaps}. After localization to $Q$, these two short exact sequences  ``split each other'' up to the invertible element $\alpha_s$ (see \cite[\S 3.6]{EW}).  \end{remark}

Inside the mixed calculus we have the following relations (these are
not all of them, just enough to illustrate a point):
\begin{equation} \label{eq:locrel1}
\begin{array}{c}
  \begin{tikzpicture}[scale=0.6]
    \draw[dashed] (-0.5,1.5) to (.5,1.5); \draw[dashed] (-.5,0) to (.5,0);
\draw [red, decorate, decoration={snake,amplitude=.4mm,segment
  length=1mm,post length=.5mm}]  (0,0) -- (0,0.6); 
\draw [red] (0,0.6) -- (0,0.9);
\draw [red, decorate, decoration={snake,amplitude=.4mm,segment
  length=1mm,post length=.5mm}]  (0,0.9) -- (0,1.5); 
\end{tikzpicture}
\end{array}
 =
{\color{red}\a}
 \begin{array}{c}
\begin{tikzpicture}[scale=0.6]
    \draw[dashed] (-0.5,1.5) to (.5,1.5); \draw[dashed] (-.5,0) to
    (.5,0);
  \draw [red, decorate, decoration={snake,amplitude=.4mm,segment
  length=1mm,post length=.5mm}]  (0,0) -- (0,1.5); 
\end{tikzpicture}
\end{array},
\qquad
\begin{array}{c}
\tikz[xscale=0.35,yscale=0.35]{
\draw[dashed] (2,1.5) to (4,1.5); \draw[dashed] (2,-1) to (4,-1);
\draw[color=red] (3,-.5) to (3,1);
\node[circle,fill,draw,inner sep=0mm,minimum size=1mm,color=red] at (3,-.5) {};
\node[circle,fill,draw,inner sep=0mm,minimum size=1mm,color=red] at (3,1) {};
}
\end{array}
=
\begin{array}{c}
\tikz[xscale=0.35,yscale=0.35]{
\draw[dashed] (2,1.5) to (4,1.5); \draw[dashed] (2,-1) to (4,-1);
\node[color=red] at (3,0.25) {$\a$};
}
\end{array},
\end{equation}
% SECOND EQUATION
\begin{equation}
\label{eq:locreldot}
\begin{array}{c}
  \begin{tikzpicture}[scale=0.6]
    \draw[dashed] (-.5,1.5) to (.5,1.5); \draw[dashed] (-.5,0) to (.5,0);
\draw [red, decorate, decoration={snake,amplitude=.4mm,segment
  length=1mm,post length=.5mm}]  (0,0) -- (0,0.7); 
\draw [red] (0,0.7) -- (0,1);
\node [circle,fill=red,inner sep=0pt,minimum size=2mm] at (0,1) {};
\end{tikzpicture}
\end{array}
= 0 =
\begin{array}{c}
  \begin{tikzpicture}[scale=0.6]
    \draw[dashed] (-.5,1) to (.5,1); \draw[dashed]  (-.5,-.5) to (.5,-.5);
\draw [red] (0,0) -- (0,0.3);
\draw [red, decorate, decoration={snake,amplitude=.4mm,segment
  length=1mm,post length=.5mm}] (0,0.3) -- (0,1); 
\node [circle,fill=red,inner sep=0pt,minimum size=2mm] at (0,0) {};
\end{tikzpicture}
\end{array},
\end{equation}
% THIRD EQUATION
\begin{equation} \label{eq:locidempotenttimesalpha}
{\color{red} \a}
\begin{array}{c}
  \begin{tikzpicture}[yscale=0.6,xscale=0.6]
    \draw[dashed] (-.5,2) to (.5,2); \draw[dashed] (-.5,0) to (.5,0);
    \draw[red] (0,0) to (0,2);
  \end{tikzpicture}
\end{array}
=
\begin{array}{c}
  \begin{tikzpicture}[yscale=0.6,xscale=0.6]
    \draw[dashed] (-.5,2) to (.5,2); \draw[dashed] (-.5,0) to (.5,0);
    \draw[red] (0,0) to (0,0.6);
\node [circle,fill=red,inner sep=0pt,minimum size=2mm] at (0,0.6) {};
    \draw[red] (0,1.4) to (0,2);
\node [circle,fill=red,inner sep=0pt,minimum size=2mm] at (0,1.4) {};
  \end{tikzpicture}
\end{array}
+ 
\begin{array}{c}
  \begin{tikzpicture}[yscale=0.6,xscale=0.6]
    \draw[dashed] (-.5,2) to (.5,2); \draw[dashed] (-.5,0) to (.5,0);
\draw[red] (0,0) to (0,0.6);
\draw [red, decorate, decoration={snake,amplitude=.4mm,segment
  length=2mm,post length=1mm}] (0,0.6) -- (0,1.4); 
 \draw[red] (0,1.4) to (0,2);
  \end{tikzpicture}
\end{array}.
\end{equation}
\begin{equation} \label{eq:pullf}
 \begin{array}{c}
\begin{tikzpicture}[scale=0.6]
    \draw[dashed] (-0.8,1.5) to (.8,1.5); \draw[dashed] (-.8,0) to
    (.8,0);
  \draw [red, decorate, decoration={snake,amplitude=.4mm,segment
    length=1mm,post length=.5mm}]  (0,0) -- (0,1.5);
  \node at (-.6,0.75) {$f$};
\end{tikzpicture}
 \end{array}
 =
 \begin{array}{c}
\begin{tikzpicture}[scale=0.6]
    \draw[dashed] (-0.8,1.5) to (.8,1.5); \draw[dashed] (-.8,0) to
    (.8,0);
  \draw [red, decorate, decoration={snake,amplitude=.4mm,segment
    length=1mm,post length=.5mm}]  (0,0) -- (0,1.5);
  \node at (.6,0.75) {$sf$};
\end{tikzpicture}
 \end{array}
 \end{equation}
 
Now pass to the localization $Q$ where $\frac{1}{\alpha}$ is defined. Combining the relations above we see that
\begin{equation} \label{eq:locidempotent}
\begin{array}{c}
  \begin{tikzpicture}[yscale=0.6,xscale=0.6]
        \draw[dashed] (-.5,2) to (.5,2); \draw[dashed] (-.5,0) to (.5,0);
    \draw[red] (0,0) to (0,2);
  \end{tikzpicture}
\end{array}
=
\frac{1}{\color{red} \a}
\begin{array}{c}
  \begin{tikzpicture}[yscale=0.6,xscale=0.6]
        \draw[dashed] (-.5,2) to (.5,2); \draw[dashed] (-.5,0) to (.5,0);
    \draw[red] (0,0) to (0,0.6);
\node [circle,fill=red,inner sep=0pt,minimum size=2mm] at (0,0.6) {};
    \draw[red] (0,1.4) to (0,2);
\node [circle,fill=red,inner sep=0pt,minimum size=2mm] at (0,1.4) {};
  \end{tikzpicture}
\end{array}
+ \frac{1}{\color{red} \a}
\begin{array}{c}
  \begin{tikzpicture}[yscale=0.6,xscale=0.6]
        \draw[dashed] (-.5,2) to (.5,2); \draw[dashed] (-.5,0) to (.5,0);
\draw[red] (0,0) to (0,0.6);
\draw [red, decorate, decoration={snake,amplitude=.4mm,segment
  length=2mm,post length=1mm}] (0,0.6) -- (0,1.4); 
 \draw[red] (0,1.4) to (0,2);
  \end{tikzpicture}
\end{array}
\end{equation}
is actually a decomposition of $\id_{B_s}$ into the sum of two orthogonal idempotents. More precisely, set
\begin{equation} \label{eq:incprojdefined}
i_s = \begin{array}{c}
        \begin{tikzpicture}[scale=0.7]
                  \draw[dashed] (-.5,1) to (.5,1); \draw[dashed] (-.5,0) to (.5,0);
\draw [red, decorate, decoration={snake,amplitude=.4mm,segment
  length=1mm,post length=0.5mm}] (0,0) -- (0,0.5); 
\draw [red] (0,0.5) -- (0,1);
\end{tikzpicture} \end{array}, \quad
p_2 = \frac{1}{{\color{red} \a}}
\begin{array}{c}
  \begin{tikzpicture}[scale=0.7]
            \draw[dashed] (-.5,1) to (.5,1); \draw[dashed] (-.5,0) to (.5,0);
\draw [red] (0,0) -- (0,0.5); 
\draw [red, decorate, decoration={snake,amplitude=.4mm,segment
  length=1mm,post length=0.5mm}] (0,0.5) -- (0,1);
\end{tikzpicture}
\end{array}, \quad i_{\id} = \begin{array}{c}
\tikz[xscale=0.35,yscale=-0.35]{
%\draw[dashed] (3,0) circle (1cm);
\draw[dashed] (2,-1) to (4,-1); \draw[dashed] (2,1) to (4,1);
\draw[color=red] (3,-1) to (3,0);
\node[circle,fill,draw,inner sep=0mm,minimum size=1mm,color=red] at (3,0) {};}
\end{array}, \quad
p_{\id} = \frac{1}{{\color{red} \a}}
\begin{array}{c}
\tikz[scale=0.35]{
%\draw[dashed] (3,0) circle (1cm);
\draw[dashed] (2,-1) to (4,-1); \draw[dashed] (2,1) to (4,1);
\draw[color=red] (3,-1) to (3,0);
\node[circle,fill,draw,inner sep=0mm,minimum size=1mm,color=red] at (3,0) {};}
\end{array}.
\end{equation}
Then these are the inclusion and projection maps of complementary direct summands:
\[ p_s i_s = \id_{r_s}, \quad  p_{\id} i_{\id} = \id_{r_{\id}}, \]
\[ p_s i_{\id} = 0, \quad p_{\id} r_s = 0, \]
\[ \id_{B_s} = i_s p_s + i_{\id} p_{\id}. \] 
This makes explicit the direct sum decomposition $B_s \ot_R Q \cong r_s \oplus r_{\id}$, and we will use these projection and inclusion maps below. 

\begin{remark} While we will not recall all the relations in the mixed calculus, we wish to warn the reader of an important technicality which appears in the mixed calculus but not in $\HC$ or in $\Omega_Q W$ itself. Namely, the new morphisms of \eqref{eq:newmixedmaps} are \emph{not} mates under adjunction. Instead, this is only true up to a sign. We have relations without signs
\begin{gather}
  \label{eq:loccap3}
\begin{array}{c}
  \begin{tikzpicture}[scale=0.6]
                      \draw[dashed] (-1.5,-1) to (.5,-1); \draw[dashed] (-1.5,1) to (.5,1);
  \draw [red, decorate, decoration={snake,amplitude=.4mm,segment
  length=1mm,post length=0.5mm}] (0,1) -- (0,0); 
\draw[red] (0,0) .. controls (0,-1) and (-1,-1) ..  (-1,0) to (-1,1);
\end{tikzpicture}
  \end{array}
=
\begin{array}{c}
  \begin{tikzpicture}[scale=0.6]
                          \draw[dashed] (-1.5,-1) to (.5,-1); \draw[dashed] (-1.5,1) to (.5,1);
 \draw [red, decorate, decoration={snake,amplitude=.4mm,segment
  length=1mm,post length=.5mm}] (0,1) to (0,0.3) .. controls (0,-1) and (-1,-1) ..
(-1,0.3); 
\draw [red] (-1,0) -- (-1,1);
\end{tikzpicture}
  \end{array} \quad \text{and} \quad
% \label{eq:loccap4}
\begin{array}{c}
  \begin{tikzpicture}[scale=0.6]
                          \draw[dashed] (-1.5,-1) to (.5,-1); \draw[dashed] (-1.5,1) to (.5,1);
  \draw [red, decorate, decoration={snake,amplitude=.4mm,segment
  length=1mm,post length=0.5mm}] (0,-1) -- (0,0); 
\draw[red] (0,0) .. controls (0,1) and (-1,1) ..  (-1,0) to (-1,-1);
\end{tikzpicture}
  \end{array}
=
\begin{array}{c}
  \begin{tikzpicture}[scale=0.6]
                          \draw[dashed] (-1.5,-1) to (.5,-1); \draw[dashed] (-1.5,1) to (.5,1);
 \draw [red, decorate, decoration={snake,amplitude=.4mm,segment
  length=1mm,post length=.5mm}] (0,-1) to (0,-0.3) .. controls (0,1) and (-1,1) ..
(-1,-0.3); 
\draw [red] (-1,0) -- (-1,-1);
\end{tikzpicture}
  \end{array} .
\end{gather}
%\draw [red, decorate, decoration={snake,amplitude=.4mm,segment
%  length=2mm,post length=1mm}] (0,0.6) -- (0,1.4); 
and \emph{with} signs
\begin{gather}
  \label{eq:loccap1}
\begin{array}{c}
  \begin{tikzpicture}[scale=0.6]
                          \draw[dashed] (-.5,-1) to (1.5,-1); \draw[dashed] (-.5,1) to (1.5,1);
  \draw [red, decorate, decoration={snake,amplitude=.4mm,segment
  length=1mm,post length=0.5mm}] (0,1) -- (0,0); 
\draw[red] (0,0) .. controls (0,-1) and (1,-1) ..  (1,0) to (1,1);
\end{tikzpicture}
  \end{array}
=
- \begin{array}{c}
    \begin{tikzpicture}[scale=0.6]
                            \draw[dashed] (-.5,-1) to (1.5,-1);  \draw[dashed] (-.5,1) to (1.5,1);
 \draw [red, decorate, decoration={snake,amplitude=.4mm,segment
  length=1mm,post length=.5mm}] (0,1) to (0,0.3) .. controls (0,-1) and (1,-1) ..
(1,0.3); 
\draw [red] (1,0) -- (1,1);
\end{tikzpicture}
  \end{array} \quad \text{and} \quad
\begin{array}{c}
  \begin{tikzpicture}[scale=0.6]
                          \draw[dashed] (-.5,-1) to (1.5,-1); \draw[dashed] (-.5,1) to (1.5,1);
  \draw [red, decorate, decoration={snake,amplitude=.4mm,segment
  length=1mm,post length=0.5mm}] (0,-1) -- (0,0); 
\draw[red] (0,0) .. controls (0,1) and (1,1) ..  (1,0) to (1,-1);
\end{tikzpicture}
  \end{array}
=
-\begin{array}{c}
   \begin{tikzpicture}[scale=0.6]
                           \draw[dashed] (-.5,-1) to (1.5,-1);  \draw[dashed] (-.5,1) to (1.5,1);
 \draw [red, decorate, decoration={snake,amplitude=.4mm,segment
  length=1mm,post length=.5mm}] (0,-1) to (0,-0.3) .. controls (0,1) and (1,1) ..
(1,-0.3); 
\draw [red] (1,0) -- (1,-1);
\end{tikzpicture}
 \end{array}.
\end{gather}
(Note that both pairs of relations have a vertical symmetry.)

As a consequence the new morphisms \eqref{eq:newmixedmaps} are mates under adjunction only up to a sign:
\begin{equation} \label{eq:locmates}
\begin{array}{c}
  \begin{tikzpicture}[scale=0.7]
    \draw[dashed] (-.5,0) to (1.5,0);     \draw[dashed] (-.5,2.2) to (1.5,2.2); 
    \draw [red, decorate, decoration={snake,amplitude=.4mm,segment
  length=1mm,post length=.5mm}] (0,0) .. controls (0,2.5) and (0.6,2) .. (0.6,1);
    \draw [red] (0.6,1) .. controls (0.6,0) and (1.2,0) .. (1.2,2.2);
  \end{tikzpicture}
\end{array}
= -
\begin{array}{c}
  \begin{tikzpicture}[scale=0.7]
    \draw[dashed] (-.5,0) to (.5,0);     \draw[dashed] (-.5,2.2) to (.5,2.2); 
\draw [red, decorate, decoration={snake,amplitude=.4mm,segment
  length=1mm,post length=.5mm}] (0,0) -- (0,1.2); 
\draw [red] (0,1.2) -- (0,2.2);
\end{tikzpicture}
\end{array}
\quad \text{and} \quad
\begin{array}{c}
  \begin{tikzpicture}[scale=0.7]
    \draw[dashed] (-.5,0) to (1.5,0);     \draw[dashed] (-.5,2.2) to (1.5,2.2); 
    \draw [red] (0,0) .. controls (0,2.5) and (0.6,2) .. (0.6,1);
    \draw [red, decorate, decoration={snake,amplitude=.4mm,segment
  length=1mm,post length=.5mm}]   (0.6,1) .. controls (0.6,0) and (1.2,0) .. (1.2,2.2);
  \end{tikzpicture}
\end{array}
= 
\begin{array}{c}
  \begin{tikzpicture}[scale=0.7]
        \draw[dashed] (-.5,0) to (.5,0);     \draw[dashed] (-.5,2.2) to (.5,2.2); 
\draw [red] (0,0) -- (0,0.8); 
\draw [red, decorate, decoration={snake,amplitude=.4mm,segment
  length=1mm,post length=.5mm}]  (0,0.8) -- (0,2.2);
\end{tikzpicture}
\end{array}
\end{equation}
(Another pair of relations is obtained by performing a vertical flip.)
\end{remark}

\subsection{Localizing morphisms}

Here we elaborate on the content of \cite[\S 5.5]{EW}.

For a word $\un{w} = (s_1, \ldots, s_d)$ in $S$ and a subexpression $\un{e} \subset \un{w}$, we take the tensor product of the inclusion and projection maps of
\eqref{eq:incprojdefined} to produce the inclusion and projection maps for the summand $r_{\un{w}^{\un{e}}} \subset B_{\un{w}}$ (see \eqref{eq:can_decomp}). These were denoted
$i(\un{e})$ and $p(\un{e})$ in \S\ref{subsec:defnobjects}.

\begin{ex}
  For example, if $\un{w} = ( { \color{red} s }, { \color{blue} t }, {
  \color{red} s }, { \color{green} u })$ and $\un{e} = (0,1,1,0)$ then
\[ \begin{array}{c}
\begin{tikzpicture}[scale=0.7]
\draw rectangle (0,0) rectangle (4,2);
\node [rectangle, fill=white] at (2,0) {$\un{e}$};
\node [rectangle, fill=white] at (2,2) {$\un{w}$};
\node at (2,1) {$i(\un{e})$};
\end{tikzpicture}
\end{array}
  = 
\begin{array}{c}
  \begin{tikzpicture}[scale=0.7]
        \draw[dashed] (-.5,0) to (3.5,0);     \draw[dashed] (-.5,-2) to (3.5,-2); 
\draw [red] (0,0) -- (0,-1);
\node [circle,fill=red,inner sep=0pt,minimum size=2mm] at (0,-1) {};
\draw [blue] (1,0) -- (1,-1);
\draw [blue, decorate, decoration={snake,amplitude=.4mm,segment
  length=2mm,post length=1mm}] (1,-1) -- (1,-2); 
\draw [color=red] (2,0) -- (2,-1);
\draw [red, decorate, decoration={snake,amplitude=.4mm,segment
  length=2mm,post length=1mm}] (2,-1) -- (2,-2); 
\draw [color=green] (3,0) -- (3,-1);
\node [circle,fill=green,inner sep=0pt,minimum size=2mm] at (3,-1) {};
\end{tikzpicture}
\end{array}
\]
and 
\[ \begin{array}{c}
\begin{tikzpicture}[scale=0.7]
\draw rectangle (0,0) rectangle (4,2);
\node [rectangle, fill=white] at (2,0) {$\un{w}$};
\node [rectangle, fill=white] at (2,2) {$\un{e}$};
\node at (2,1) {$p(\un{e})$};
\end{tikzpicture}
\end{array}
  = 
\begin{array}{c}
  \begin{tikzpicture}[scale=0.7]
            \draw[dashed] (-.5,0) to (3.5,0);     \draw[dashed] (-.5,2) to (3.5,2); 
\draw [red] (0,0) -- (0,1);
\node [circle,fill=red,inner sep=0pt,minimum size=2mm] at (0,1) {};
\node [red] at (0,1.5) {$1/\a_s$};
\draw [blue] (1,0) -- (1,1);
\draw [blue, decorate, decoration={snake,amplitude=.4mm,segment
  length=2mm,post length=1mm}] (1,1) -- (1,2); 
\draw [color=red] (2,0) -- (2,1);
\draw [red, decorate, decoration={snake,amplitude=.4mm,segment
  length=2mm,post length=1mm}] (2,1) -- (2,2); 
\draw [color=green] (3,0) -- (3,1);
\node [circle,fill=green,inner sep=0pt,minimum size=2mm] at (3,1) {};
\node [green] at (3,1.5) {$1/\a_u$};
\end{tikzpicture}
\end{array}
\]
\end{ex}
In \cite{EW} and \cite{EWdiag} we have proven that every morphism in the mixed calculus whose boundary consists only of squiggly lines is itself a $Q$-linear combination of
squiggly diagrams (i.e. one does not need the new morphisms from \eqref{eq:newmixedmaps} or any diagrams from $\HC$, but only squiggly cups, caps, and $2m$-valent vertices.)
Moreover, any two squiggly diagrams with the same boundary are equal. Squiggly diagrams are isomorphisms, and any squiggly diagram represents the same isomorphism; we refer to any morphism represented by a squiggly diagram as the canonical isomorphism from its source to its target.

If $\psi \colon B_{\un{w}} \to B_{\un{x}}$ is a morphism between Bott-Samelson objects, and $\un{e} \subset \un{w}$ and $\un{f} \subset \un{x}$, then the entry in column $\un{e}$
and row $\un{f}$ of $\Lambda(\psi)$ should be the coefficient of the canonical isomorphism in the composition $p(\un{f}) \circ \psi \circ i(\un{e})$:
\[
\begin{array}{c}
\begin{tikzpicture}[xscale=0.7, yscale=1]
\draw (0,-1) rectangle (4,-2);
\node [rectangle, fill=white] at (2,-2) {$\un{e}$};
\node at (2,-1.5) {$i(\un{e})$};
\draw (0,1) rectangle (4,-1);
\node at (2,0) {$\psi$};
\draw (0,1) rectangle (4,2);
\node [rectangle, fill=white] at (2, 1) {$\underline{x}$};
\node [rectangle, fill=white] at (2,2) {$\un{f}$};
\node at (2,1.5) {$p(\un{f})$};
\node [rectangle, fill=white] at (2, -1) {$\underline{w}$};
\end{tikzpicture}
\end{array} = \Lambda(\psi)_{\un{e}}^{\un{f}} \begin{array}{c}
\begin{tikzpicture}[scale=0.7]
\draw rectangle (0,0) rectangle (4,2);
\node [rectangle, fill=white] at (2,0) {$\un{e}$};
\node [rectangle, fill=white] at (2,2) {$\un{f}$};
\node at (2,1) {$\can$};
\end{tikzpicture}
\end{array}
\]
Here $\can$ denotes the canonical
isomorphism.

\begin{remark}
If the reader desires a mnemonic, they should remember that all the complicated business happens on top of morphisms! This is because we have included the factors $\frac{1}{\alpha}$ into the projection maps, not the inclusion maps.
\end{remark}

We can compute these matrix entries explicitly using the mixed calculus, as we do in several examples below.

\begin{ex} The dot map $B_s \to R = r_{\id}$ from \eqref{eq:ddot} should be precomposed with the two inclusion maps $i_{\id}$ and $i_s$ to determine its matrix entries after applying $\Lambda$. We have
\[ \begin{array}{c}
\tikz[xscale=0.4,yscale=0.6]{
\draw[dashed] (2,1.5) to (4,1.5); \draw[dashed] (2,-1) to (4,-1);
\draw[color=red] (3,-.5) to (3,1);
\node[circle,fill,draw,inner sep=0mm,minimum size=1mm,color=red] at (3,-.5) {};
\node[circle,fill,draw,inner sep=0mm,minimum size=1mm,color=red] at (3,1) {};
}
\end{array}
=
\begin{array}{c}
\tikz[xscale=0.4,yscale=0.6]{
\draw[dashed] (2,1.5) to (4,1.5); \draw[dashed] (2,-1) to (4,-1);
\node[color=red] at (3,0.25) {$\a$};
}
\end{array}\qquad \text{and} \quad \begin{array}{c}
                                     \begin{tikzpicture}[yscale=.6]
\draw[dashed] (-.5,-1) to (.5,-1); \draw[dashed] (-.5,1.5) to (.5,1.5);
\draw [red, decorate, decoration={snake,amplitude=.4mm,segment
  length=1mm,post length=.5mm}]  (0,-1) -- (0,-.3); 
\draw [red] (0,-.3) -- (0,.5);
\node [circle,fill=red,inner sep=0pt,minimum size=2mm] at (0,.5) {};
\end{tikzpicture}
\end{array}
= 0. \]
This gives us the matrix $\left( \begin{array}{cc} {\color{red} \a} & 0 \end{array} \right)$ from \eqref{eq:ddot}. \end{ex}

\begin{ex} The dot map $R \to B_s$ from \eqref{eq:idot} should be postcomposed with the two projection maps $p_{\id}$ and $p_s$ to determine its matrix entries after applying
$\Lambda$. The calculation is almost identical to the previous example (upside-down) except that $p_{\id}$ and $p_s$ have an additional factor of $\frac{1}{{\color{red} \a}}$,
whence the matrix in \eqref{eq:idot}. \end{ex}
	
For the trivalent vertices, the reader is encouraged to use relation \eqref{eq:locidempotenttimesalpha} to prove that
\begin{equation}
\begin{array}{c}
\tikz[xscale=0.35,yscale=0.6]{
\draw[dashed] (1.5,1) to (4.5,1); \draw[dashed] (1.5,-1.5) to (4.5,-1.5);
  \draw[color=red] (3,-1.5) to (3,-.5);
  \draw[color=red] (3,-.5) to[out=170,in=-90] (2,0);
  \draw[color=red] (3,-.5) to[out=10,in=-90] (4,0);
  \draw[color=red] (3,-1.5) to (3,-.5);
%\node[circle,fill,draw,inner sep=0mm,minimum size=1mm,color=red] at (3,-.5) {};
%\node[circle,fill,draw,inner sep=0mm,minimum size=1mm,color=red] at
%(3,1) {};
  \draw [red, decorate, decoration={snake,amplitude=.4mm,segment
  length=1mm,post length=.5mm}]  (2,0) to[out=90,in=90] (4,0); 
}
\end{array}
=
\begin{array}{c}
\tikz[xscale=0.35,yscale=0.6]{
\draw[dashed] (1.5,1) to (4.5,1); \draw[dashed] (1.5,-1.5) to (4.5,-1.5);
  \draw[color=red] (3,-1.5) to (3,-.5);
  % \draw[color=red] (3,-.5) to[out=170,in=-90] (2,0);
  % \draw[color=red] (3,-.5) to[out=10,in=-90] (4,0);
  \draw[color=red] (3,-1.5) to (3,-.5);
\node[circle,fill,draw,inner sep=0mm,minimum size=1mm,color=red] at (3,-.5) {};
%\node[circle,fill,draw,inner sep=0mm,minimum size=1mm,color=red] at
%(3,1) {};
  % \draw [red, decorate, decoration={snake,amplitude=.4mm,segment
  % length=1mm,post length=.5mm}]  (2,0) to[out=90,in=90] (4,0); 
}
\end{array}.
\end{equation}
From this it becomes easy to compute the matrices associated to trivalent vertices, see e.g. the proof of \cite[(5.29)]{EW}.

When a $2m$-valent vertex is precomposed with $i(\un{e})$ and $\un{e}$ contains a zero somewhere, then the $2m$-valent vertex meets a dot, and we can apply the Jones-Wenzl relation \eqref{Eq:JWreln}. Thus we can continue our computation using only the one-color relations. If instead it is precomposed with $i(11\cdots 1)$ then we can use \cite[(5.28)]{EW}, which gives the relationship between solid and squiggly $2m$-valent vertices. In formulas \cite[(5.28)]{EW} says that
\[ G_{s,t} \circ i(\un{11\cdots 1}) = i(\un{11 \cdots 1}) \circ \can, \]
where in this case $\can$ is the squiggly version of $G_{s,t}$. Consequently
\[ p(\un{11 \cdots 1}) \circ G_{s,t} \circ i(\un{11 \cdots 1}) = \can \]
and $(G_{s,t})_{(11 \cdots 1)}^{(11 \cdots 1)} = 1$, agreeing with \eqref{eq:111}.

%\comm{Got to here. 31/5}

\section{On the error(s) in our previous work} \label{sec:error}

Our intention in \cite{EW} was to define the diagrammatic category
$\HC$ and prove that it behaved nicely without needing to use the
functor $\FC$ to Soergel bimodules or any theorems about Soergel
bimodules.

Suppose however that there is a functor $\FC$ from $\HC$ to Soergel bimodules. Then one should have the following commutative diagram:
\begin{equation}
\begin{array}{c}
  \begin{tikzpicture}[scale=1.2,descr/.style={fill=white}]
\node (HC) at (0,2) {$\HC$};
\node (OmQ) at (0,0) {$\Omega_Q W$};
\node (SBim) at (2,2) {$\SBim$};
\node (StdBim) at (2,0) {$\StdBim_Q$};

\path [->,>=angle 90]
	(HC) edge node[right,descr] {$\Lambda$} (OmQ)
	(SBim) edge node[right,descr] {$(-) \otimes_R Q$} (StdBim)
	(HC) edge node[above, descr] {$\FC$} (SBim)
	(OmQ) edge node[above,descr] {$\FC_{\std}$} (StdBim);
      \end{tikzpicture}
\end{array}
  \end{equation}
The functor $\FC_{\std}$ was discussed in the introduction; it is
always a well-defined faithful functor, and it is an equivalence when
the realization is faithful. The functor $(-)\otimes_R Q$ is also
always well-defined, and it is faithful when hom spaces between
Soergel bimodules are free\footnote{ In fact, for faithfulness of
  $(-)\otimes_R Q$, we only need that hom spaces are torsion free. Torsion freeness is often much easier to
  establish. However for applications an essential role
  is played by Soergel's hom formula, which only makes sense
  when hom spaces are free.}
as right $R$-modules. One could use this
commutative diagram to attempt to define either $\FC$ or $\Lambda$ in
the presence of the other. For example, if $\Lambda$ is well-defined,
then $\FC_{\std} \circ \Lambda$ is a functor $\HC \to \StdBim$. If hom
spaces between Soergel bimodules are free then $(-)\otimes_R Q$ is
faithful, and one can lift $\FC_{\std} \circ \Lambda$ to a functor
$\FC$ if and only if the image of $\FC_{\std} \circ \Lambda$ lies
inside the image of $(-) \otimes_R Q$, i.e. if the map between
$Q$-bimodules can be defined without the use of fractions. This lift
is unique if it exists.

\begin{remark} 
  In \cite{SB} Soergel proves that hom spaces between
    Soergel bimodules are free when the realization is reflection faithful
  and the base ring $\Bbbk$ is an infinite field for which $2 \ne 0$.
 The assumption that the field is infinite can easily be removed (it is only
 used to identify functions with elements in the symmetric algebra). The other
 assumptions, particularly reflection faithfulness, appear more
 serious, and we do not know in what generality to expect them to be
 true. This is one of the motivations for studying the diagrammatic
 incarnation of the Hecke category, rather than Soergel bimodules. For
 an exciting recent approach which is more bimodule theoretic, see \cite{Abe}.
\end{remark}

In \cite{EW} we did not define the functor $\Lambda$ explicitly as we have done in this paper. Instead we defined the mixed calculus, and used it to argue that $\Lambda$ could
be defined explicitly (as in \S\ref{sec:heuristics}) using the mixed calculus. In our own unpublished computations we had computed the matrices associated to the univalent,
trivalent, and $2m$-valent vertices for $m \le 5$, and we checked all the relations involving these generators; this much we asserted in \cite{EW}. At that time, we did not
know the formula \eqref{eq:2} for the $2m$-valent vertex in general, and as such, we could not have checked the two color relations explicitly for all $m$. 

Instead, we argued in \cite{EW} that the two color relations were
satisfied by referencing the earlier work \cite{EDC}, which defined
and studied $\HC$ for dihedral groups. In doing so, we implicitly (and unwittingly) defined $\Lambda$ as the unique lift of $(-)\otimes_R Q \circ \FC$, where the functor $\FC$ was also
defined on the $2m$-valent vertex in \cite{EDC}. The reason this argument is faulty is that $\FC$ was only defined under certain assumptions on the realization (which we recall
below), while we claimed that $\Lambda$ was defined in greater generality. We also made an error in \cite[\S 5.3]{EW} in that we only assumed Demazure Surjectivity when we claimed
that $\FC$ was well-defined, forgetting that additional assumptions were used in \cite{EDC}.

To set the record straight, here is the list of assumptions that should be made about realizations in order to use various theorems, now that this paper has constructed $\Lambda$
in generality.

\subsection{Definition of $\HC$}

Which assumptions do we need for $\HC$ to be well defined? All the generators and relations in the category make sense without any assumptions, with the exception of the
Jones-Wenzl relation. For this relation to make sense we need the existence of a particular rotatable Jones-Wenzl projector in the two-colored Temperley-Lieb category, one for each finite dihedral parabolic subgroup. If these exist, then $\HC$ is well defined.

The existence and rotatability of Jones-Wenzl projectors is a relatively mild constraint on a realization, and is primarily a condition on the base ring $\Bbbk$. For example,
suppose that the restriction of the realization to a finite dihedral parabolic subgroup generated by $\{\reds,\bluet\}$ (with $m = m_{st} < \infty$) is a faithful representation of
that dihedral subgroup. Then $JW_{(m-1)_{\reds}}$ and $JW_{(m-1)_{\bluet}}$ are defined and rotatable so long as $\Bbbk$ is a field, as we prove in Theorem \ref{thm:rotatableoverfield}. Thus if a realization over a field is \emph{dihedrally faithful} (meaning that it is faithful after restriction to
any dihedral parabolic subgroup), then $\HC$ is well defined.

However, giving precise algebraic conditions for the existence of Jones-Wenzl projectors, and for their rotatability, is extremely subtle! Conditions have been
put forth in previous works \cite{EDC, ELib}, with some handwaving arguments that we no longer believe are justified (though many of these arguments suffice in all but the most
degenerate realizations). In \S\ref{sec:JW} we do the best we can at the moment, providing necessary conditions on $\Bbbk$. In particular, in \S\ref{subsec:rotinvce} we prove
(generalizing the argument of \cite{EDC}) that only even-balanced realizations can admit rotatable Jones-Wenzl projectors. Thus even-balancedness is a prerequisite to the
well-definedness of $\HC$.

\begin{remark} The condition that a Jones-Wenzl projector is negligible is much more common in the literature. If a projector is rotatable it is also negligible, but not vice
versa, see Remark \ref{rmk:negligiblevs}. The paper \cite{EDC} often
works under the assumption that the realization is dihedrally faithful
(as defined above). Under this assumption, when the base ring is a domain, the Jones-Wenzl projector in question is both
rotatable and negligible, and the realization is even-balanced (we show this in \S\ref{sec:JW}). In papers that followed, which did not make the dihedrally faithful assumption,
sometimes the rotatability assumption was erroneously replaced with weaker assumptions like negligibility or even-balancedness. \end{remark}

\begin{remark} In this paper, we give examples of realizations, all of which are not dihedrally faithful, and all of which have negligible Jones-Wenzl projectors $JW_{m-1}$, where: \begin{itemize}
\item The realization is not even-balanced, and the Jones-Wenzl projector is not rotatable (Example \ref{ex:funky3redux}).
\item The realization is even-balanced, and the Jones-Wenzl projector is not rotatable (Examples \ref{evenbalancedbad} and \ref{ex:bad6}). 
\item The realization is even-balanced, and the Jones-Wenzl projector is rotatable (Example  \ref{ex:notdihedralfaithfulstillrotatable}).
\end{itemize}
Because non-dihedrally faithful realizations with rotatable Jones-Wenzl projectors do exist, we make no assumptions of dihedral faithfulness in this paper.	\end{remark}

\begin{remark} Although one might attempt to define $\HC$ even in the
  absence of a rotatable Jones-Wenzl projector, it fails to satisfy
  enough crucial properties that we do not believe it deserves the
  name ``Hecke category.'' For example, its Grothendieck group is not
  isomorphic to the the Hecke algebra. \end{remark}
%See Remark \ref{dontcallmethat} for more discussion. 

\begin{remark} Note that Demazure surjectivity is not a necessary requirement for the well-definedness of $\HC$, or for the construction of the functor $\Lambda$ in this paper.
\end{remark}

\subsection{The Soergel categorification theorem}
Which assumptions on the realization do we need to assert that double leaves form a basis for $\HC$, and hence that the Soergel Categorification Theorem holds for $\HC$? The
argument that double leaves are linearly independent (because they remain so after applying $\Lambda$) requires no additional assumptions. For our diagrammatic arguments that
double leaves span, we use polynomial forcing extensively, and this requires that $R$ is a Frobenius extension over $R^s$ (with Frobenius trace $\partial_s$) for each simple
reflection $s$. This is equivalent to the assumption of Demazure
Surjectivity.

\subsection{Existence of $\FC$} \label{sec:existence}
Which assumptions on the realization do we need to define $\FC$? We
certainly need Demazure surjectivity, because the image of one of the
dots under $\FC$ requires the existence of 
dual bases for $R$ over $R^s$. Beyond this, the essential difficulty
is to provide an explicit formula for the image of the $2m$-valent
vertex directly.

Recent progress on this issue has been made by Abe \cite{Abe3}. Rather
than try to define an explicit formula for the morphism between Bott-Samelson
bimodules (which seems very hard), he instead checks that a morphism
given by the explicit formulas of Lemma \ref{lem:E} preserves the
Bott-Samelson bimodules. (In other words, he checks that a morphism
between localizations arises as the localization of a morphism.) His
proof requires \cite[Assumption 1.1]{Abe3}, which in the language of
two coloured quantum numbers (see \S\ref{subsec:2quantum}) is the
requirement that
\[
 {m_{st} \brack k}_s =  {m_{st} \brack k}_t = 0
\]
$k = 1, 2, \dots, m_{st}-1$ (and all $s, t \in S$). This assumption is
natural (and holds in all Kac-Moody realizations, as is easily
checked).

The original approach to defining the image of the $2m$-valent was
found in \cite{EDC}. There, the $2m$-valent vertex is described as an
explicit composition of more elementary 
morphisms between singular Soergel bimodules. This description has
many uses, but minimizing assumptions on the realization is not one of
them. These elementary morphisms are built 
using various Frobenius extension structures, and require that the
ring extensions $R^{s,t} \subset R^s, R^t \subset R$ are Frobenius
extensions. The underlying assumption\footnote{In \cite{EDC} one also
  makes an assumption called $(\star)$ from \cite[Definition
  1.7]{ESW}. This assumption is used to ensure that the diagrammatics
  which describe singular Soergel bimodules is well-behaved, but is
  not strictly necessary for defining the image of the $2m$-valent
  vertex.} is \emph{dihedral Demazure surjectivity}, which states that
$R$ is a Frobenius extension over $R^{s,t}$ with Frobenius trace
$\partial_{s,t}$. Dihedral Demazure surjectivity implies 
 that the action of the finite dihedral group generated by $\{s,t\}$
 is faithful, because if it factors through a smaller dihedral group
 then $\partial_{s,t} = 0$.

\section{On two-colored Jones-Wenzl projectors} \label{sec:JW}

In this chapter we develop some of the facts we need about Jones-Wenzl projectors; in particular, the two-colored Jones-Wenzl projectors which appear in the two-colored
Temperley-Lieb category $\TTL$. The literature on Jones-Wenzl projectors is quite rich, but less so in the two-colored case. Many of the facts in this chapter were stated in \cite{EDC} but without proof, by analogy with the uncolored case. We will prove the facts we need, and clearly
state some additional properties as conjectures\footnote{Most of these conjectures are within reach, but we did not feel it was appropriate for this paper.}. Starting in this chapter, we develop the needed technology to address the unbalanced case. We hope that this
chapter fills some gaps in the literature.

\subsection{Two-colored quantum numbers} \label{subsec:2quantum}

Let $A = \ZM[x_s,x_t]$ be the integral polynomial ring in two variables. We define the \emph{two-colored quantum numbers}, which are elements of $A$, as follows. First set $[2]_s = x_s$ and $[2]_t = x_t$ and $[1]_s = [1]_t = 1$. Now define the remaining quantum numbers inductively by the formulas
\begin{equation} \label{eq:recursivedefnquantumnumber} [n+1]_t = [n]_s [2]_t - [n-1]_t, \qquad [n+1]_s = [n]_t [2]_s - [n-1]_s. \end{equation}
For example, $[3]_s = [3]_t = [2]_s [2]_t - 1$, and $[4]_s = [2]_s^2 [2]_t - 2[2]_s$. One can also use these same formulas to define $[n]_s$ and $[n]_t$ for $n \le 0$, and one can easily confirm that
\begin{equation} [0]_s = [0]_t = 0, \end{equation}
\begin{equation} \label{eq:negnumbers} [-n]_s = -[n]_s, \qquad [-n]_t = -[n]_t. \end{equation}
Indeed, these formulas hold for $n = 0$ and $n=1$, and follow by induction since both $[n]$ and $-[-n]$ satisfy the recursive formulas \eqref{eq:recursivedefnquantumnumber}.

A straightforward induction proves that
\begin{equation} [k]_s = [k]_t \quad \text{whenever $k$ is odd}, \end{equation}
so we may merely write it as $[k]$ rather than $[k]_s$ or $[k]_t$.

Many facts about ordinary quantum numbers will generalize easily to the two-colored case. For example, if $k$ and $n$ are odd one can prove that
\begin{subequations} \label{subeq:ktimesn}
\begin{equation} [k][n] = [k+n-1] + [k+n-3] + \cdots + [k-n+3] + [k-n+1]. \end{equation}
Meanwhile, if $k$ is even and $n$ is odd then
\begin{equation} [k]_s [n] = [k+n-1]_s + [k+n-3]_s + \cdots + [k-n+3]_s + [k-n+1]_s. \end{equation}
Finally, if both $k$ and $n$ are even then
\begin{equation} [k]_s [n]_t = [k+n-1] + [k+n-3] + \cdots + [k-n+3] + [k-n+1]. \end{equation}
\end{subequations}
All these results can be proven by induction en masse. An additional consequence is that $[k]_s$ divides $[n]_s$ whenever $k$ divides $n$. For example, if $k$ is even then $[2k]_s = [k]_s ([k+1] - [k-1])$, and if $k$ is odd then $[2k]_s = [k]([k+1]_s - [k-1]_s)$. We leave these elementary properties as exercises for the reader.

Because of the divisibility of quantum numbers, fractions like $\frac{[8]_s}{[2]_s}$ can be viewed as polynomials in $A$ as well. In fact, if $k$ is even then
\begin{equation} \label{kover2} \frac{[k]_s}{[2]_s} = \frac{[k]_t}{[2]_t} = [k-1]-[k-3]+[k-5]-\cdots \pm 1. \end{equation}
Thus the dependence of an even quantum number on the coloring is no more than a factor of either $[2]_s$ or $[2]_t$.

\begin{defn} Define the \emph{two-colored quantum binomial coefficients} as
\begin{equation} {n \brack k}_s := \frac{[n]_s [n-1]_s \cdots [n-k+1]_s}{[1]_s [2]_s \cdots [k]_s} \in A. \end{equation}
\end{defn}

Because $A$ is a UFD, one can use standard arguments to prove that binomial coefficients are indeed elements of $A$.

\subsection{Two-colored quantum numbers modulo $[m]$} \label{subsec:2quantummodulo}

Let $\Bbbk$ be an $A$-algebra. The \emph{two-colored quantum numbers} in $\Bbbk$ are the specializations of the two-colored quantum numbers in $A$. For any realization of a Coxeter
group with base ring $\Bbbk$, and any pair of simple reflections $s
\ne t \in S$, we obtain an $A$-algebra structure on $\Bbbk$ by
specializing \begin{equation} \label{eq:realizationspecialization} x_s
  \mapsto -a_{st} = - \a_s^\vee(\a_t),
\qquad x_t \mapsto -a_{ts} = -\a_t^\vee(\a_s). \end{equation} One of the requirements of a realization is that both $[m]_s$ and $[m]_t$ specialize to $0$ when $m = m_{st} <
\infty$.

Let us discuss the consequences of imposing the relation $[m]_s = [m]_t = 0$ on the generic ring $A$. Let $A_m = A / ([m]_s,[m]_t)$. Note that $A_m$ is not a domain and not local when $m$ is not prime. For example, $[10]_s$ has three irreducible factors: $[2]_s$, $[5]$, and $\frac{[10]_s}{[2]_s [5]} = [5] - 2[3] + 2$. In particular, we can specialize further from $A_m$ to $A_k$ whenver $k$ divides $m$.

In the same way one proves \eqref{eq:negnumbers} one can deduce that
\begin{equation} \label{eq:cancelthem} [m+k]_s = -[m-k]_s, \qquad [m+k]_t = -[m-k]_t. \end{equation}
Using the inductive definition of quantum numbers, but working down from $m$ rather than up from $0$, one can prove that
\begin{equation} \label{eq:m-k} [m-1]_s [k] = [m-k]_s \text{ for } k \text{ odd,} \quad [m-1]_s [k]_t = [m-k]_t \text{ for } k \text { even.} \end{equation}
One can also prove this by applying \eqref{subeq:ktimesn} and cancelling terms with \eqref{eq:cancelthem}.
In particular (taking $k = m-1$) we have
\begin{equation} \label{eq:lambdainverse} [m] = 0, m \text{ odd} \implies [m-1]_s [m-1]_t = 1, \end{equation}
\begin{equation} \label{eq:lambdainverseeven} [m]_s = [m]_t = 0, m \text{ even} \implies [m-1]^2 = 1. \end{equation}
Either way, $[m-1]_s$ and $[m-1]_t$ are invertible.

\begin{remark} If $m$ is odd then $[2]_s$ and $[2]_t$ are invertible, since $[2]_s$ divides $[m-1]_s$. Whenever $[2]_s$ and $[2]_t$ are invertible, $\frac{[2]_s}{[2]_t}$ is an invertible scalar factor which gives the ``ratio'' between $[2k]_s$ and $[2k]_t$ for any $k \ge 0$, in the sense that $[2k]_t \frac{[2]_s}{[2]_t} = [2k]_s$, see \eqref{kover2}. \end{remark}
	
\subsection{Faithful and unbalanced realizations} \label{subsec:unbalanced}

\begin{defn} A realization is called \emph{dihedrally faithful} if, for every $s \ne t \in S$ with $m_{st} \ne \infty$, the action on $\hg^*$ of the finite dihedral group generated by $\{s,t\}$ is faithful. \end{defn}

\begin{defn} \label{defn:balanced} A realization is called \emph{balanced for the pair
    $\{s,t\}$} if $[m-1]_s = [m-1]_t = 1$. The realization is called
  \emph{balanced} if it is balanced for every pair which generates a
  finite dihedral subgroup. It is \emph{even-balanced} if it is balanced
  for every pair with $m_{st}$ even, and \emph{odd-balanced} if it is balanced for every pair with $m_{st}$ odd.
\end{defn}

\begin{remark} \label{rmk:funkydihedral} Note that a balanced realization is not necessarily dihedrally faithful. For example, if $a_{st} = a_{ts} = -1$ then $[2] = 1$ and $[3] =
0$, so one has a balanced faithful realization if $m=3$. Continuing, $[4] = [5] = -1$ and $[6]=0$ so one has an unbalanced non-faithful realization if $m=6$. Continuing, $[7] = [8]
= 1$ and $[9]=0$ so one has a balanced non-faithful realization if $m=9$. \end{remark}

It was proven in \cite[Claim 3.5, and page 63]{EDC} that $[k]_s = [k]_t = 0$ implies that $(st)^k$ acts by the identity on the span of $\alpha_s$ and $\alpha_t$. In particular, if a realization is dihedrally faithful then either $[k]_s \ne 0$ or $[k]_t \ne 0$ for any $k < m_{st}$. (Of course $[m]_s = [m]_t = 0$ for $m=m_{st}$, by the definition of a realization). It was also claimed in \cite{EDC} that the converse holds when $\Bbbk$ is a domain. This result has a slight error, because it neglects to carefully study the case when $[2]_s = 0$ but $[2]_t \ne 0$. For completeness, we prove the correct result here, after a lemma.

\begin{lem} \label{lem:weirdcasequantums} Suppose that $[2]_s = 0$. Then for all integers $k$ we have
\begin{equation} \label{weirdcasequantums} [2k+1] = (-1)^k, \qquad [2k]_s = 0, \qquad [2k]_t = (-1)^{k-1} k [2]_t. \end{equation}
Note the appearance of the ordinary integer $k$ as a factor of $[2k]_t$.
\end{lem}

\begin{proof} These equations are easily shown to satisfy the recursion \eqref{eq:recursivedefnquantumnumber}. \end{proof}
	
\begin{remark} One is often interested in the case when $[2]_s = 0$ but $[2]_t \ne 0$, as happens for example in type $B_2$ in characteristic $2$, or in type $G_2$ in characteristic $3$. Thus Lemma \ref{lem:weirdcasequantums} is relevant in small characteristic Kac-Moody realizations. \end{remark}

\begin{thm} \label{thm:whenfaithful} Consider a realization of a dihedral group over the base ring $\Bbbk$. If $[k]_s = [k]_t = 0$ then $(st)^k$ acts as the identity on the span of $\alpha_s$ and $\alpha_t$. If $\Bbbk$ is a domain, and $(st)^k$ acts as the identity on the span of $\alpha_s$ and $\alpha_t$, then exactly one of the following options holds \begin{itemize}
	\item $[k]_s = [k]_t = 0$, or
	\item the characteristic of $\Bbbk$ is $2$, $[2]_s = [2]_t = 0$, and $k$ is odd,
	\item the characteristic of $\Bbbk$ is $2$, exactly one of $[2]_s$ and $[2]_t$ is zero, and $k$ is equivalent to $2$ modulo $4$.
\end{itemize}
\end{thm}

\begin{proof} We recall a computation from \cite[Claim 3.5 and page 63]{EDC}. The action of $(st)^k$ on the span of
$\a_s$ and $\a_t$ is given by the matrix
\begin{equation} \label{actionofstk} (st)^k = \left( \begin{array}{cc} [2k+1] & -[2k]_s \\ \left[2k\right]_t & -[2k-1] \end{array} \right). \end{equation}
If $[k]_s = [k]_t = 0$ then one can use \eqref{eq:cancelthem} to deduce that $[2k]_s = [2k]_t = 0$, and $[2k+1] = 1 = -[2k-1]$. Hence the matrix above is the identity matrix.

Conversely, suppose that the matrix above is the identity, and that $\Bbbk$ is a domain. Then $[2k]_s = [2k]_t = 0$, and $[2k-1] = -1$. Using \eqref{eq:m-k} with $2k$ replacing $m$, we deduce (regardless of whether $k$ is even or odd, c.f. \eqref{eq:m-keven}) that
\begin{equation} [k]_s = [2k-1][k]_s = -[k]_s, \qquad [k]_t = [2k-1][k]_t = -[k]_t, \end{equation}
and
\begin{gather} [k+1]_t = [2k-1][k-1]_t = -[k-1]_t, \\ [k+1]_s = [2k-1] [k-1]_s = -[k-1]_s. \end{gather}
In particular, if $2 \ne 0$ then $[k]_s = [k]_t = 0$.

For the rest of the proof we assume $2 = 0$. We also have
\begin{equation} [2]_s [k]_t = [k+1]_s + [k-1]_s = 0, \qquad [2]_t [k]_s = [k+1]_t + [k-1]_t = 0. \end{equation}
If $[2]_s \ne 0$ and $[2]_t \ne 0$ then $[k]_s = [k]_t = 0$.

Suppose that $[2]_s = [2]_t = 0$. If $k$ is even then $[k]_s = [k]_t = 0$. If $k$ is odd, then $[k]_s = [k]_t = 1 \ne 0$ by \eqref{weirdcasequantums} in characteristic $2$, but nonetheless the matrix from \eqref{actionofstk} is the identity. 

The only remaining case is that exactly one of $[2]_s$ and $[2]_t$ is zero. Without loss of generality, $[2]_s = 0$ and $[2]_t \ne 0$. By \eqref{weirdcasequantums} in characteristic $2$, $[2k]_t = k [2]_t = 0$, so $k = 2 \ell$ is even. Then $[k]_t = (-1)^{\ell - 1} \ell [2]_t$, which (in characteristic $2$) is zero if and only if $\ell$ is even. Thus $[k]_t$ is nonzero if and only if $k$ is equivalent to $2$ modulo $4$.
\end{proof}

\subsection{Two-colored Temperley-Lieb}

Let $\TTL = \TTL_A$ denote the two-colored Temperley-Lieb category with base ring $A$. We remind the reader of the definition.

\begin{defn} The objects of of $\TTL$ are associated with finite alternating sequences $(..., \reds, \bluet, \reds, \bluet, \ldots)$. For an integer $n \ge 0$ we let ${}_{\reds} n$ denote the sequence of length $n+1$ which begins with $\reds$, and $n_{\reds}$ the sequence of length $n+1$ which ends in $\reds$, and similarly for $\bluet$. Thus 
\[ {}_{\reds} 2 = (\reds,\bluet,\reds) = 2_{\reds} \quad \text{ and } \quad {}_{\reds} 3 = (\reds, \bluet, \reds, \bluet) = 3_{\bluet}. \]
We visualize this object as a sequence of $n$ dots on a line; removing these points from the line we have $n+1$ connected components or regions, which are colored with alternating colors. 

There are no morphisms between $n_{\reds}$ and $n'_{\bluet}$ in either direction, or between ${}_{\reds} n$ and ${}_{\bluet} n'$, for any $n, n'$. If $n$ and $m$ have the same parity then
$\Hom_{\TTL}(n_{\reds},m_{\reds})$ has an $A$-basis given by two-colored $(n,m)$-crossingless matchings which have the color $\reds$ on the far right. Recall that a two-colored
crossingless matching is a crossingless matching where the regions are colored either $\reds$ or $\bluet$, and adjacent regions have different colors. By convention, an $(n,m)$-crossingless matching has $n$ boundary points on bottom and $m$ boundary points on top.

Composition of morphisms is as in the Temperley-Lieb category. A blue circle inside a red region evaluates to $-x_{\reds}$ while a red circle inside a blue region evaluates to
$-x_{\bluet}$. \end{defn}

For any commutative $A$-algebra $\base$ we define $\TTL_{\base}$ to be the base change of $\TTL_A$ from $A$ to $\base$. Morphism spaces still have a basis given by crossingless
matchings.

\begin{defn} We let $\TTL_{\base}^\Kar$ denote the Karoubi envelope of the additive envelope of $\TTL_{\base}$. \end{defn}	

For any $\base$, the category $\TTL_{\base}$ is an \emph{object adapted cellular category}, as originally defined in \cite{ELauda}. See \cite[Chapter 11]{SBook} for background reading on
object adapted cellular categories (the definition is found in \cite[11.3.1]{SBook}). The object adapted structure on the Temperley-Lieb category is discussed in \cite[Example
11.31]{SBook}. We assume the reader is familiar with this background material below. We use the following general result about object adapted cellular categories, whose proof
morally derives from \cite[Theorem 6.25]{EW}, and was generalized in \cite[Proposition 2.24]{ELauda} and \cite[Chapter 11.3.4]{SBook}.

\begin{thm} \label{thm:TTLOACC} Suppose that $\base$ is an $A$-algebra, a commutative domain, and a Henselian local ring. \begin{enumerate}
\item $\TTL_{\base}$ is an object-adapted cellular category with its cellular basis of crossingless matchings.
\item In any decomposition of the identity of $n_{\reds}$ (resp. $n_{\bluet}$) into indecomposable orthogonal idempotents
\[ 1_{n_{\reds}} = e_1 + \cdots + e_d, \]
there is a unique idempotent in the sum for which the coefficient of the identity (in the basis of crossingless matchings) is nonzero. This coefficient is $1$.

For each $n \ge 0$ we pick such an indecomposable idempotent and denote it $e_{n_{\reds}}^{\base}$ (resp. $e_{n_{\bluet}}^{\base}$).
\item Let $V_{n_{\reds}}^{\base}$ denote the image of $e_{n_{\reds}}^{\base}$, an object in $\TTL_{\base}^\Kar$, and similarly for $V_{n_{\bluet}}^{\base}$. Up to isomorphism, the indecomposable objects in $\TTL_{\base}^\Kar$ are precisely enumerated by $\{V_{n_{\reds}}^{\base},V_{n_{\bluet}}^{\base}\}_{n \ge 0}$.
\item For any idempotent $e \in \End(n_{\reds})$, its image $V$ in $\TTL_{\base}^\Kar$ decomposes as
\begin{equation} \label{eq:pluslowerterms} V \cong (V_{n_{\reds}}^{\base})^{\oplus \delta} \oplus
    \bigoplus_{0 \le k = n - 2l} (V_{k_{\reds}}^{\base})^{\oplus \mu_{k}} \end{equation}
for some multiplicites $\mu_k$. The multiplicity $\delta$ of $V_{n_{\reds}}^{\base}$ is equal to zero if $e$ has zero coefficient of the identity, and is equal to $1$ if $e$ has nonzero coefficient of the identity.
\end{enumerate}
\end{thm}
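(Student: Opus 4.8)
The plan is to present the object-adapted cellular structure on $\TTL_{\base}$ explicitly, so that part (1) reduces to an axiom check essentially identical to the monochrome case, and then to obtain parts (2)--(4) by quoting the general structure theory of object-adapted cellular categories over a Henselian local domain (\cite[Proposition 2.24]{ELauda}, \cite[Chapter 11.3.4]{SBook}, ultimately resting on \cite[Theorem 6.25]{EW}).

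For part (1), I would take the cell poset to be $\mathbb{Z}_{\ge 0}$, recording the number of through-strands of a crossingless matching, with the usual convention under which matchings of through-degree strictly less than $k$ span a two-sided ideal (composition of matchings can only decrease or preserve the through-degree). The cell objects are the objects $k_{\reds}$ and $k_{\bluet}$ for $k\ge 0$; a two-colored crossingless matching from $n_c$ to $m_{c'}$ (for $n,m$ of the same parity and colours matching appropriately) factors canonically as a cap half-diagram composed with a cup half-diagram through a unique cell object of through-degree $k$, whose rightmost colour is determined by the matching. The required anti-involution is the vertical flip of diagrams, which interchanges cups and caps, preserves through-degree, and fixes each cell object; with this datum the axioms of \cite[11.3.1]{SBook} hold exactly as in \cite[Example 11.31]{SBook}, and no hypothesis on $\base$ beyond being a commutative $A$-algebra is used. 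It is worth noting that $\TTL_{\base}$ is block-decomposed --- there are no morphisms between $n_{\reds}$ and $n_{\bluet}$, nor between objects of differing parity --- and each block is separately cellular; the circle-evaluation scalars $-x_{\reds},-x_{\bluet}$ play no role in the cellular structure.

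For parts (2)--(4), these are then instances of the cited general machinery, and the only task is to identify the relevant invariants in this example. The Henselian local hypothesis yields idempotent lifting, so $\dot{\TTL_{\base}}$ is Krull--Schmidt with local endomorphism rings, while the domain hypothesis forces $\base$ to have no idempotents other than $0$ and $1$. For (2): the ideal $I_{<n}\subset\End(n_{\reds})$ spanned by matchings of through-degree $<n$ satisfies $\End(n_{\reds})/I_{<n}\cong\base$, with the coset of $1_{n_{\reds}}$ as unit; reducing a decomposition $1_{n_{\reds}}=e_1+\cdots+e_d$ into orthogonal idempotents modulo $I_{<n}$ gives a decomposition of $1\in\base$ into orthogonal idempotents, so exactly one $\overline{e_i}$ equals $1$ and the rest vanish --- equivalently exactly one $e_i$ has coefficient $1$ of the identity diagram and the others have coefficient $0$. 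For (3): the top through-degree cell module of $k_c$ is free of rank one over $\base$ with cellular form taking the value $1$ on the identity half-diagram, hence nonzero, so by the general theory every cell object $k_c$ contributes an indecomposable $V_{k_c}^{\base}$; these are pairwise non-isomorphic (through-degree is an additive invariant, and $\reds$- and $\bluet$-blocks are separate), and they exhaust the indecomposables, giving the enumeration $\{V_{n_{\reds}}^{\base},V_{n_{\bluet}}^{\base}\}_{n\ge0}$. For (4): for $e\in\End(n_c)$ the multiplicity $\delta$ of $V_{n_c}^{\base}$ in the image of $e$ equals the multiplicity of the top cell module, which by the cellular filtration is $0$ or $1$ according as $\overline{e}=0$ or $1$ in $\End(n_c)/I_{<n}\cong\base$, i.e.\ according to the coefficient of the identity in $e$; the remaining indecomposable summands necessarily have strictly smaller through-degree, which must have the form $k=n-2l$, forcing the stated shape of the decomposition.

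The main obstacle is entirely in part (1): carefully pinning down which cell object a given two-colored crossingless matching factors through --- the colour of the intermediate region, and the parity/colour constraints that make Hom-spaces between mismatched objects vanish --- and verifying that the vertical-flip anti-involution is compatible with the object-adapted cellular axioms once colours are present. Once this is settled, parts (2)--(4) are formal; the only further thing to confirm is that \cite{ELauda, SBook} are stated at the level of a Henselian local domain rather than merely over a field or a complete local ring, which they are.
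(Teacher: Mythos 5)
Your proposal is correct and follows exactly the same route as the paper: the paper's entire proof consists of the one-line citation ``The theorem is a straightforward adaption of [Proposition 2.24]{ELauda} and [Example 11.31]{SBook}'', which is precisely the machinery you invoke. What you have written out (the through-strand cell poset, cell objects $k_{\reds}$/$k_{\bluet}$, vertical-flip anti-involution, idempotent lifting via the Henselian hypothesis, and the quotient $\End(n_{\reds})/I_{<n}\cong\base$) is simply the content of those citations made explicit.
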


The summand $V_{n_{\reds}}^{\base}$ inside $n_{\reds}$ is called the \emph{top summand}, and $e_{n_{\reds}}^{\base}$ the \emph{top idempotent}, because all other summands in $n_{\reds}$ are appear as summands of $k_{\reds}$ for $k < n$. While the top idempotent itself may depend on the choice of decomposition of $1_{n_{\reds}}$, its image $V_{n_{\reds}}^{\base}$ is independent of this choice up to isomorphism.

\begin{proof} The theorem is a straightforward adaption of \cite[Proposition 2.24]{ELauda} and \cite[Example 11.31]{SBook}. \end{proof}

Note that the behavior of $e_{n_{\reds}}^{\base}$ and $V_{n_{\reds}}^{\base}$ is quite dependent on the choice of the base ring $\base$. For example, $2_{\reds}$ has a nontrivial top summand (the complement of a summand isomorphic to $0_{\reds}$) when $[2]_s$ is invertible, and is indecomposable otherwise.

\subsection{Two-colored Jones-Wenzl projectors}

Until otherwise stated we fix an $A$-algebra $\base$ as in Theorem \ref{thm:TTLOACC}, and omit $\base$ from the notation for idempotents and summands.

\begin{defn} Fix $n \ge 0$. If $e_{n_{\reds}}$ is a top idempotent which satisfies the following property, then we call it a \emph{Jones-Wenzl projector} and denote it $JW_{n_{\reds}}$:
\begin{equation} \label{eq:orthokprecomp} e_{n_{\reds}} \circ
  \Hom(k_{\reds},n_{\reds}) = 0, \quad \text{ for all } 0 \le k <
  n. \end{equation}
\end{defn}

We now state some properties of Jones-Wenzl projectors. To be walked through the proofs of these facts, see \cite[Exercise 9.25]{SBook}.

Whether $JW_{n_{\reds}}$ exists depends on the choice of $\base$. If it exists, then it is unique. Note that \eqref{eq:orthokprecomp}, which states that the image of
$JW_{n_{\reds}}$ is orthogonal to all the objects $k_{\reds}$ for $k < n$, is equivalent to the condition that $JW_{n_{\reds}}$ is orthogonal to the objects $V_{k_{\reds}}$ for $k
< n$, using \eqref{eq:pluslowerterms}.

Every $(n,n)$-crossingless matching except for the identity has a cap on bottom and a cup on top. Similarly, every $(k,n)$-crossingless matching for $k < n$ has a cup on
top. For an endomorphism $f \in \End(n_{\reds})$, being orthogonal to $k_{\reds}$ for $k < n$ as in \eqref{eq:orthokprecomp} is equivalent to being orthogonal to all cups which
could be placed below, i.e. \[ f \circ \text{cup} = 0,\] which we call \emph{death by cups}. So $e_{n_{\reds}}$ is a Jones-Wenzl projector if and only if it is satisfies death by
cups. Conversely, any endomorphism which satisfies death by cups and whose coefficient of the identity equals $1$ is an idempotent, and hence is a Jones-Wenzl projector.

Jones-Wenzl projectors are also killed by all caps on top, and death by caps can replace death by cups as an equivalent definition of a Jones-Wenzl projector. In other words,
\begin{equation} \label{eq:orthokpostcomp} \Hom(n_{\reds},k_{\reds}) \circ e_{n_{\reds}} = 0, \quad \text{ for all } 0 \le k < n. \end{equation} When $JW_{n_{\reds}}$
exists, then any endomorphism of $n_{\reds}$ which satisfies death by caps or death by cups is a scalar multiple of $JW_{n_{\reds}}$, and this scalar is determined by the
coefficient of the identity.

Using death by caps and/or cups, it is easy to prove that the endomorphism ring of the image of $JW_{n_{\reds}}$ (namely $V_{n_{\reds}}$) is precisely $\base$. All non-identity
crossingless matchings in $\End(n_{\reds})$ are killed when composed with $JW_{n_{\reds}}$.

By swapping the roles of $\reds$ and $\bluet$, the same things can be said about the morphism $JW_{n_{\bluet}}$. Of course, $JW_{{}_{\reds} n}$ is equal to $JW_{n_{\reds}}$ or
$JW_{n_{\bluet}}$, depending on the parity of $n$.

The horizontal reflection of a Jones-Wenzl projector still satisfies the same conditions. Thus the horizontal reflection of $JW_{n_{\reds}}$ is $JW_{{}_{\reds} n}$. Similarly,
the vertical reflection still satisfies death by cups instead of death by caps, so it is still a Jones-Wenzl projector. Thus the vertical reflection of $JW_{n_{\reds}}$ is
$JW_{n_{\reds}}$. The rotation of a Jones-Wenzl projector by 180 degrees is thus also a Jones-Wenzl projector (composing these two reflections).

We say that \emph{$JW_n$ exists} when both $JW_{n_{\reds}}$ and $JW_{n_{\bluet}}$ exist. It is possible for $JW_n$ to exist even when $JW_k$ does not exist for various $k < n$.
Clearly $JW_0$ and $JW_1$ exist. It is easy to confirm by hand that $JW_2$ exists if and only if $[2]_s$ and $[2]_t$ are invertible, and a reasonable exercise to confirm that
$JW_3$ exists if and only if $[3]$ is invertible. It might be that $[2]_s = [2]_t = 0$ and thus $[3] = -1$, in which case $JW_3$ exists but $JW_2$ does not.

\subsection{Two-colored Jones-Wenzl projectors: the generic case}

When all quantum numbers are invertible then all Jones-Wenzl projectors exist. This is a familiar story, which we elaborate on here mostly to pin down the colorings on quantum
numbers precisely.

\begin{defn}
Let $n \ge 1$. The \emph{partial trace} $\ptr(f)$ of an endomorphism
$f$ of $n_{\reds}$ is an endomorphism of $(n-1)_{\bluet}$ defined as
follows.
\begin{equation}
  \label{eq:ptrace}
  \ptr(f) = 
    \begin{array}{c}
      \begin{tikzpicture}[xscale=.6,yscale=1]
        \begin{scope}
          \clip (-1.8,1) rectangle (2.5,-1);
  \draw[fill=blue!20!white] (.5,2) rectangle (3,-2);
  \draw[fill=red!20!white] (1,.6) to[out=90,in=180] (1.5,.8)
  to[out=0,in=90] (2,0) to[out=-90,in=0] (1.5,-.8) to[out=180,in=-90]
  (1,-.6) to (1,.6);
  \draw[fill=purple!20!white] (-1,2) rectangle (-2,-2);        
  \draw[fill=white] (-1.4,.6) rectangle (1.4,-.6);
  \node at (0,0) {$f$};
  \node at (-.25,.8) {$\dots$};
  \node at (-.25,-.8) {$\dots$};
  \end{scope}
  \draw[dashed] (-1.8,1) to (2.5,1);
  \draw[dashed] (-1.8,-1) to (2.5,-1);
\end{tikzpicture}
      \end{array}
\end{equation}
(The purplish color on the left is meant to represent either red or blue, depending on parity.) Let $\ptr_1(f)$ denote the coefficient of the identity when $\ptr(f)$ is expanded in the basis of
crossingless matchings.
\end{defn}

Now suppose that $JW_n$ exists, and consider $\ptr(JW_{n_{\reds}})$. This morphism is clearly orthogonal to all cups and caps. If $JW_{n-1}$ exists then \begin{equation}
\label{ptrvsptr1} \ptr(JW_{n_{\reds}}) = \ptr_1(JW_{n_{\reds}}) \cdot JW_{(n-1)_{\bluet}}. \end{equation} Conversely, if $\ptr_1(JW_{n_{\reds}})$ is invertible then
$JW_{(n-1)_{\bluet}}$ exists, by this same formula. On the other hand, if $JW_{n-1}$ exists and $\ptr_1(JW_{n_{\reds}}) = 0$ then
$\ptr(JW_{n_{\reds}}) = 0$ by \eqref{ptrvsptr1}. If $JW_{n-1}$ does not exist, it is possible that $\ptr_1(JW_{n_{\reds}}) = 0$ but $\ptr(JW_{n_{\reds}}) \ne 0$, see Remark
\ref{rmk:ptrptr1notequiv}.

Assuming that $JW_n$ exists and $\ptr_1(JW_{n_{\reds}})$ is invertible,
$JW_{(n+1)_{\bluet}}$ also exists, and can be defined by the recursive
formula:
\begin{equation}
  \label{JWrecursion}
                  \begin{array}{c}
        \begin{tikzpicture}[xscale=.3,yscale=0.25]
          \begin{scope}
            \clip (-3,3.5) rectangle (3,-3.5);
            \draw[fill=blue!20!white] (0,4) rectangle (3,-4);
            \draw[fill=red!20!white] (1,4) rectangle (2,-4);
         %    \draw[fill=red!20!white] (1,4) to (1,1) to[out=-90,in=180] (1.5,0.5) to[out=0,in=-90] (2,1) to (2,4) to (1,4);
         % \draw[fill=red!20!white] (1,-4) to (1,-1) to[out=90,in=180] (1.5,-0.5) to[out=0,in=90] (2,-1) to (2,-4) to (1,-4);            
         \draw[fill=purple!20!white] (-4,4) rectangle (-2,-4);
         \draw[fill=white] (-2.5,2) rectangle (2.5,-2);
         \node at (0,0) {$JW_{(n+1)_{\color{blue}t}}$};
%         \node at (-1,0) {$\dots$};
         \node at (-1,2.7) {$\dots$};
         \node at (-1,-2.7) {$\dots$};
       \end{scope}
       \draw[dashed] (-3,3.5) to (3,3.5);
       \draw[dashed] (-3,-3.5) to (3,-3.5);
     \end{tikzpicture}
                  \end{array}
                  =
            \begin{array}{c}
        \begin{tikzpicture}[xscale=.3,yscale=0.25]
          \begin{scope}
            \clip (-3,3.5) rectangle (2.5,-3.5);
            \draw[fill=blue!20!white] (0,4) rectangle (3,-4);
            \draw[fill=red!20!white] (1,4) rectangle (2,-4);
         %    \draw[fill=red!20!white] (1,4) to (1,1) to[out=-90,in=180] (1.5,0.5) to[out=0,in=-90] (2,1) to (2,4) to (1,4);
         % \draw[fill=red!20!white] (1,-4) to (1,-1) to[out=90,in=180] (1.5,-0.5) to[out=0,in=90] (2,-1) to (2,-4) to (1,-4);            
         \draw[fill=purple!20!white] (-4,4) rectangle (-2,-4);
         \draw[fill=white] (-2.5,2) rectangle (1.5,-2);
         \node at (-.5,0) {$JW_{n_{\color{red}s}}$};
%         \node at (-1,0) {$\dots$};
         \node at (-1,2.7) {$\dots$};
         \node at (-1,-2.7) {$\dots$};
       \end{scope}
       \draw[dashed] (-3,3.5) to (2.5,3.5);
       \draw[dashed] (-3,-3.5) to (2.5,-3.5);
     \end{tikzpicture}
      \end{array}
        - \ptr_1(JW_{n_{\color{red}s}})^{-1}
      \begin{array}{c}
        \begin{tikzpicture}[xscale=.3,yscale=0.25]
          \begin{scope}
            \clip (-3,3.5) rectangle (2.5,-3.5);
            \draw[fill=blue!20!white] (0,4) rectangle (3,-4);
                     \draw[fill=red!20!white] (1,4) to (1,1) to[out=-90,in=180] (1.5,0.5) to[out=0,in=-90] (2,1) to (2,4) to (1,4);
         \draw[fill=red!20!white] (1,-4) to (1,-1) to[out=90,in=180] (1.5,-0.5) to[out=0,in=90] (2,-1) to (2,-4) to (1,-4);            
         \draw[fill=purple!20!white] (-4,4) rectangle (-2,-4);
         \draw[fill=white] (-2.5,3) rectangle (1.5,1);
         \draw[fill=white] (-2.5,-3) rectangle (1.5,-1);
         \node at (-.5,2) {$JW_{n_{\color{red}s}}$};
         \node at (-.5,-2) {$JW_{n_{\color{red}s}}$};
         \node at (-1,0) {$\dots$};
         \node at (-1,3.25) {$\dots$};
         \node at (-1,-3.25) {$\dots$};
       \end{scope}
       \draw[dashed] (-3,3.5) to (2.5,3.5);
       \draw[dashed] (-3,-3.5) to (2.5,-3.5);
     \end{tikzpicture}
      \end{array}
\end{equation}
Meanwhile, if $JW_n$ exists but $\ptr_1(JW_{n_{\reds}})$ is not invertible, one can deduce
that $JW_{(n+1)_{\bluet}}$ can not exist. It would have to be defined by a formula just like \eqref{JWrecursion}, but there is no choice of coefficients which would work.

The standard way which one computes $\ptr(JW_n)$ is to assume that $JW_{n-1}$ exists, and that $JW_n$ is constructed by \eqref{JWrecursion} for $n-1$. This allows one to deduce that
\begin{equation} \ptr(JW_{n_{\reds}}) = (-[2]_{\bluet} + \ptr_1(JW_{(n-1)_{\bluet}})^{-1}) \cdot JW_{(n-1)_{\bluet}} \end{equation}
giving the recursive formula
\begin{equation} \label{ptr1recursive} \ptr_1(JW_{n_{\reds}}) = (-[2]_{\bluet} + \ptr_1(JW_{(n-1)_{\bluet}})^{-1}). \end{equation}
Thus assuming that all $JW_k$ are defined for $0 \le k<n$, one can solve the recursion and compute that
\begin{equation} \label{ptr1solved} \ptr_1(JW_{n_{\reds}}) = \frac{-[n+1]_t}{[n]_s}. \end{equation}

Let us summarize.

\begin{thm} \label{thm:JWgeneric} Suppose that $\base$ is an commutative $A$-algebra for which $[k]_s$ and $[k]_t$ are invertible for all $k \ge 1$. Then $JW_n$ exists for all $n$, and
  \begin{equation} \label{JWrecursionredux}
%  \label{JWrecursion}
                  \begin{array}{c}
        \begin{tikzpicture}[xscale=.3,yscale=0.25]
          \begin{scope}
            \clip (-3,3.5) rectangle (3,-3.5);
            \draw[fill=blue!20!white] (0,4) rectangle (3,-4);
            \draw[fill=red!20!white] (1,4) rectangle (2,-4);
         %    \draw[fill=red!20!white] (1,4) to (1,1) to[out=-90,in=180] (1.5,0.5) to[out=0,in=-90] (2,1) to (2,4) to (1,4);
         % \draw[fill=red!20!white] (1,-4) to (1,-1) to[out=90,in=180] (1.5,-0.5) to[out=0,in=90] (2,-1) to (2,-4) to (1,-4);            
         \draw[fill=purple!20!white] (-4,4) rectangle (-2,-4);
         \draw[fill=white] (-2.5,2) rectangle (2.5,-2);
         \node at (0,0) {$JW_{(n+1)_{\color{blue}t}}$};
%         \node at (-1,0) {$\dots$};
         \node at (-1,2.7) {$\dots$};
         \node at (-1,-2.7) {$\dots$};
       \end{scope}
       \draw[dashed] (-3,3.5) to (3,3.5);
       \draw[dashed] (-3,-3.5) to (3,-3.5);
     \end{tikzpicture}
                  \end{array}
                  =
            \begin{array}{c}
        \begin{tikzpicture}[xscale=.3,yscale=0.25]
          \begin{scope}
            \clip (-3,3.5) rectangle (2.5,-3.5);
            \draw[fill=blue!20!white] (0,4) rectangle (3,-4);
            \draw[fill=red!20!white] (1,4) rectangle (2,-4);
         %    \draw[fill=red!20!white] (1,4) to (1,1) to[out=-90,in=180] (1.5,0.5) to[out=0,in=-90] (2,1) to (2,4) to (1,4);
         % \draw[fill=red!20!white] (1,-4) to (1,-1) to[out=90,in=180] (1.5,-0.5) to[out=0,in=90] (2,-1) to (2,-4) to (1,-4);            
         \draw[fill=purple!20!white] (-4,4) rectangle (-2,-4);
         \draw[fill=white] (-2.5,2) rectangle (1.5,-2);
         \node at (-.5,0) {$JW_{n_{\color{red}s}}$};
%         \node at (-1,0) {$\dots$};
         \node at (-1,2.7) {$\dots$};
         \node at (-1,-2.7) {$\dots$};
       \end{scope}
       \draw[dashed] (-3,3.5) to (2.5,3.5);
       \draw[dashed] (-3,-3.5) to (2.5,-3.5);
     \end{tikzpicture}
      \end{array}
+        \frac{[n+1]_t}{[n]_s}
      \begin{array}{c}
        \begin{tikzpicture}[xscale=.3,yscale=0.25]
          \begin{scope}
            \clip (-3,3.5) rectangle (2.5,-3.5);
            \draw[fill=blue!20!white] (0,4) rectangle (3,-4);
                     \draw[fill=red!20!white] (1,4) to (1,1) to[out=-90,in=180] (1.5,0.5) to[out=0,in=-90] (2,1) to (2,4) to (1,4);
         \draw[fill=red!20!white] (1,-4) to (1,-1) to[out=90,in=180] (1.5,-0.5) to[out=0,in=90] (2,-1) to (2,-4) to (1,-4);            
         \draw[fill=purple!20!white] (-4,4) rectangle (-2,-4);
         \draw[fill=white] (-2.5,3) rectangle (1.5,1);
         \draw[fill=white] (-2.5,-3) rectangle (1.5,-1);
         \node at (-.5,2) {$JW_{n_{\color{red}s}}$};
         \node at (-.5,-2) {$JW_{n_{\color{red}s}}$};
         \node at (-1,0) {$\dots$};
         \node at (-1,3.25) {$\dots$};
         \node at (-1,-3.25) {$\dots$};
       \end{scope}
       \draw[dashed] (-3,3.5) to (2.5,3.5);
       \draw[dashed] (-3,-3.5) to (2.5,-3.5);
     \end{tikzpicture}
      \end{array}
  \end{equation} and \eqref{ptr1solved} holds.
\end{thm}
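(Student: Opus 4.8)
The plan is to obtain all three assertions — existence of $JW_n$, the recursion \eqref{JWrecursionredux}, and the evaluation \eqref{ptr1solved} of $\ptr_1$ — by a single induction on $n$. Almost all of the work has already been carried out in the discussion preceding the theorem, where each needed implication appears in conditional form; the one new ingredient is that the hypothesis that $[k]_s$ and $[k]_t$ are invertible for all $k\ge 1$ turns the scalars entering those implications into units, so that the induction never stalls and no Jones--Wenzl projector can fail to exist.

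For the base of the induction, $JW_0$ and $JW_1$ are the evident identity morphisms and exist over any base ring, so existence is automatic; and \eqref{ptr1solved} at $n=1$ follows from a one-strand computation: closing the unique strand of $JW_{1_{\reds}}$ produces a single $\reds$-colored circle sitting inside the $\bluet$-colored region, which evaluates to $-[2]_t=\tfrac{-[2]_t}{[1]_s}$, and the color-swapped statement is identical. For the inductive step, fix $n\ge 1$ and suppose $JW_k$ exists for $0\le k\le n$ with $\ptr_1(JW_{n_{\reds}})=\tfrac{-[n+1]_t}{[n]_s}$ together with its color-swap. Since every $[k]_s,[k]_t$ is a unit, $\ptr_1(JW_{n_{\reds}})$ is a unit; by the converse half of \eqref{ptrvsptr1} and the analysis surrounding \eqref{JWrecursion}, invertibility of $\ptr_1(JW_{n_{\reds}})$ forces $JW_{(n+1)_{\bluet}}$ to exist and to be given by that recursion, which becomes \eqref{JWrecursionredux} once the inductive value of $\ptr_1(JW_{n_{\reds}})$ is substituted for the coefficient $-\ptr_1(JW_{n_{\reds}})^{-1}$. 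The mirror-image argument yields $JW_{(n+1)_{\reds}}$, so $JW_{n+1}$ exists.

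It remains to propagate \eqref{ptr1solved}. Apply $\ptr$ to \eqref{JWrecursionredux}: the first summand contributes a loop evaluation $-[2]_s\cdot JW_{n_{\reds}}$ (a $\bluet$-circle inside the adjacent $\reds$-region), and partial-tracing the second summand collapses — by the usual Temperley--Lieb manipulation, together with idempotency of $JW_{n_{\reds}}$ and its being killed by caps, and the fact that the encircling arc bounds no region — to a scalar multiple of $JW_{n_{\reds}}$; reading off $\ptr_1(JW_{(n+1)_{\bluet}})$ and simplifying by means of the defining relation $[n+2]_s=[2]_s[n+1]_t-[n]_s$ of the two-colored quantum numbers gives \eqref{ptr1solved} for $n+1$, and the color-swap follows by symmetry. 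Alternatively, since the text preceding the theorem already derives the recursion $\ptr_1(JW_{n_{\reds}})=-[2]_t+\ptr_1(JW_{(n-1)_{\bluet}})^{-1}$ and records its closed-form solution \eqref{ptr1solved}, one may simply invoke that computation. The argument is soft throughout; the only point that genuinely requires care — and the only real departure from the classical one-colored Jones--Wenzl recursion — is the two-colored bookkeeping: tracking which of $x_s$, $x_t$ is produced at each circle and partial trace, and checking that the resulting recursion for $\ptr_1$ telescopes correctly against the inductively-defined $[n]_s$, $[n]_t$.
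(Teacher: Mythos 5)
Your argument matches the paper's implicit proof exactly: the theorem is stated after ``Let us summarize,'' so the intended proof is precisely the induction you lay out, using the recursive machinery built up in \eqref{ptrvsptr1}--\eqref{ptr1solved} together with the hypothesis that every $[k]_s,[k]_t$ is a unit so that $\ptr_1(JW_{n_{\reds}})$ is always invertible and the induction never stalls. The base case, the existence step via \eqref{JWrecursion}, and the two ways to propagate \eqref{ptr1solved} are all correct in spirit.

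One point deserves care, though, because at face value your claim that the substitution ``becomes \eqref{JWrecursionredux}'' is not what the computation gives. Plugging $\ptr_1(JW_{n_{\reds}})=\tfrac{-[n+1]_t}{[n]_s}$ into the coefficient $-\ptr_1(JW_{n_{\reds}})^{-1}$ from \eqref{JWrecursion} yields $\tfrac{[n]_s}{[n+1]_t}$, which is the \emph{reciprocal} of the $\tfrac{[n+1]_t}{[n]_s}$ printed in \eqref{JWrecursionredux}. A direct $n=1$ check confirms your substitution is the correct one: $JW_{2_{\bluet}}=1+\tfrac{1}{[2]_t}U$ (kill the cap using circle value $-[2]_t$), and $\tfrac{1}{[2]_t}=\tfrac{[1]_s}{[2]_t}$, not $[2]_t$. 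So \eqref{JWrecursionredux} as displayed carries a typo. Relatedly, the $\ptr_1$ recursion you quote from the text, $\ptr_1(JW_{n_{\reds}})=-[2]_t+\ptr_1(JW_{(n-1)_{\bluet}})^{-1}$, also has a wrong sign: pushing $\ptr$ through \eqref{JWrecursion} gives circle value $-[2]_t$ for the first term and, by idempotency, coefficient $-\ptr_1(JW_{(n-1)_{\bluet}})^{-1}$ for the second, i.e.\ $\ptr_1(JW_{n_{\reds}})=-[2]_t-\ptr_1(JW_{(n-1)_{\bluet}})^{-1}$; only this version telescopes to \eqref{ptr1solved} (for $n=2$: $-[2]_t-(-[2]_s)^{-1}=\tfrac{-[3]}{[2]_s}$, whereas the printed $+$ sign gives $\tfrac{-[2]_s[2]_t-1}{[2]_s}\neq\tfrac{-[3]}{[2]_s}$). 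Your primary route --- computing $\ptr_1(JW_{(n+1)_{\bluet}})=-[2]_s+(\text{coefficient})$ and simplifying against $[n+2]_s=[2]_s[n+1]_t-[n]_s$ --- is internally consistent and gets the signs right, but it only reproduces \eqref{ptr1solved} with the corrected coefficient $\tfrac{[n]_s}{[n+1]_t}$. You should note the discrepancy rather than assert the substitution literally yields the displayed formula; as written, the ``alternative'' route that simply invokes the printed recursion does not actually close.
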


\begin{remark} Note that there are two-colored analogs of other
  formulas for the Jones-Wenzl formula as well. For example, there is
  a two-colored single clasp expansion:  
      \begin{equation} \label{eq:singleclasp}
%  \label{JWrecursion}
                  \begin{array}{c}
        \begin{tikzpicture}[xscale=.45,yscale=.4]
          \begin{scope}
            \clip (-3,3.5) rectangle (3,-3.5);
            \draw[fill=blue!20!white] (0,4) rectangle (3,-4);
            \draw[fill=red!20!white] (1,4) rectangle (2,-4);
         %    \draw[fill=red!20!white] (1,4) to (1,1) to[out=-90,in=180] (1.5,0.5) to[out=0,in=-90] (2,1) to (2,4) to (1,4);
         % \draw[fill=red!20!white] (1,-4) to (1,-1) to[out=90,in=180] (1.5,-0.5) to[out=0,in=90] (2,-1) to (2,-4) to (1,-4);            
         \draw[fill=purple!20!white] (-4,4) rectangle (-2,-4);
         \draw[fill=white] (-2.5,2) rectangle (2.5,-2);
         \node at (0,0) {$JW_{(n+1)_{\color{blue}t}}$};
%         \node at (-1,0) {$\dots$};
         \node at (-1,2.7) {$\dots$};
         \node at (-1,-2.7) {$\dots$};
       \end{scope}
       \draw[dashed] (-3,3.5) to (3,3.5);
       \draw[dashed] (-3,-3.5) to (3,-3.5);
     \end{tikzpicture}
                  \end{array}
                  =
            \begin{array}{c}
        \begin{tikzpicture}[xscale=.45,yscale=.4]
          \begin{scope}
            \clip (-3,3.5) rectangle (2.5,-3.5);
            \draw[fill=blue!20!white] (0,4) rectangle (3,-4);
            \draw[fill=red!20!white] (1,4) rectangle (2,-4);
         %    \draw[fill=red!20!white] (1,4) to (1,1) to[out=-90,in=180] (1.5,0.5) to[out=0,in=-90] (2,1) to (2,4) to (1,4);
         % \draw[fill=red!20!white] (1,-4) to (1,-1) to[out=90,in=180] (1.5,-0.5) to[out=0,in=90] (2,-1) to (2,-4) to (1,-4);            
         \draw[fill=purple!20!white] (-4,4) rectangle (-2,-4);
         \draw[fill=white] (-2.5,2) rectangle (1.5,-2);
         \node at (-.5,0) {$JW_{n_{\color{red}s}}$};
%         \node at (-1,0) {$\dots$};
         \node at (-1,2.7) {$\dots$};
         \node at (-1,-2.7) {$\dots$};
       \end{scope}
       \draw[dashed] (-3,3.5) to (2.5,3.5);
       \draw[dashed] (-3,-3.5) to (2.5,-3.5);
     \end{tikzpicture}
      \end{array}
+        \sum_{a = 1}^{n}\frac{[a]_{\color{purple} u}}{[n+1]_{\bluet}}
      \begin{array}{c}
        \begin{tikzpicture}[xscale=.6,yscale=.55]
          \begin{scope}
            \clip (-3.5,1.5) rectangle (2.5,-3.5);
            % right blue square:
            \draw[fill=blue!20!white] (.5,4) rectangle (3,-4);
             % \draw[fill=red!20!white] (1,4) to (1,1)
             %         to[out=-90,in=180] (1.5,0.5) to[out=0,in=-90]
             %         (2,1) to (2,4) to (1,4);
          % purple cup on top labelled by a
          \draw[fill=purple!20!white] (-1.5,2) to (-1.5,1.5)
          to[out=-90,in=180] (-1,.5) to[out=0,in=-90] (-.5,1.5) to
          (-.5,2);
          % bottom red cup:
         \draw[fill=red!20!white] (1,-4) to (1,-1) to[out=90,in=180] (1.5,-0.5) to[out=0,in=90] (2,-1) to (2,-4) to (1,-4);            
         \draw[fill=purple!20!white] (-4,4) rectangle (-2.5,-4);
%         \draw[fill=white] (-2.5,3) rectangle (1.5,1);
         \draw[fill=white] (-3,-3) rectangle (1.5,-1);
%         \node at (-.5,2) {$JW_{n_{\color{red}s}}$};
         \node at (-.5,-2) {$JW_{n_{\color{red}s}}$};
         \node at (-1.87,1.25) {$\dots$};
          \node at (.12,1.25) {$\dots$};
         \node at (-1,-.25) {$\dots$};
%         \node at (-1,3.25) {$\dots$};
         \node at (-1,-3.25) {$\dots$};
       \end{scope}
       \node at (-1.5,2.05) {$a$};
       \node[color=white] at (-1.5,-3.72) {$a$};
       \draw[dashed] (-3.5,1.5) to (2.5,1.5);
       \draw[dashed] (-3.5,-3.5) to (2.5,-3.5);
     \end{tikzpicture}
      \end{array}
    \end{equation}
In the $a$-th diagram in the sum there is a cup connecting strands $a$ and $a+1$ on top, and the coefficient $[a]$ in the numerator has the same color as the interior of this cup, which alternates between blue and red. We expect there is a two-colored analog of Morrison's closed formula \cite{MorrisonJW} as well. \end{remark}

\subsection{Two-colored Jones-Wenzl projectors: from generic to special}

There are a number of special properties of Jones-Wenzl projectors that one can deduce from \eqref{JWrecursionredux} and \eqref{ptr1solved}. Unfortunately, for a base ring $\base$
where some quantum numbers are not invertible, one can no longer crank the induction to define $JW_n$, and these properties become difficult to prove. What if $[113] = 0$ so that
$JW_{113}$ is not defined; then $[226]_s = 0$ and $[225]$ is invertible and we might hope that $JW_{225}$ exists; do our computations using \eqref{JWrecursionredux} in the generic
case still tell us anything about $JW_{225}$? The goal of this section is to deduce properties of any existant Jones-Wenzl projector from the generic case.

Let $\Bbbk$ be an $A$-algebra which is a commutative domain and a local Henselian ring. In particular, it contains elements $[2]_s$ and $[2]_t$ which are the images of $x_s$ and $x_t$. Let $A_{\Bbbk} = \Bbbk[x_s,x_t]$, made an $A$-algebra via $x_s \mapsto x_s$ and $x_t \mapsto x_t$. Now let $\widehat{A}$ denote the completion of $A_{\Bbbk}$ at the ideal $(x_s - [2]_s,x_t - [2]_t)$.  If we let $z_u = x_u - [2]_u$ for $u = s, t$ then we
have an isomorphism
\begin{equation}
  \label{eq:7}
\widehat{A} = \Bbbk[[z_s, z_t]].  
\end{equation}
The ring $\widehat{A}$ is also a commutative domain, and is also local
and Henselian \cite{MathOverflowHensel, HenselRef}. Now let $Q$ denote the field of fractions of $\widehat{A}$. 

It might be the case that some quantum numbers vanish in $\Bbbk$. Regardless, we claim that all two-colored quantum numbers are invertible in $Q$. After all, the natural maps
$A_{\Bbbk}$ to $\widehat{A}$ and finally to $Q$ are all injective. The map $A \to A_{\Bbbk}$ is not necessarily injective, as $\Bbbk$ might have finite characteristic. But the
two-colored quantum numbers are all monic polynomials (with distinct leading monomials), and they are sent to (distinct) nonzero elements of $A_{\Bbbk}$. Since $Q$ is a field, all
nonzero elements are invertible.

Thus we can apply Theorem \ref{thm:TTLOACC} for $\base \in \{\Bbbk, \widehat{A}, Q\}$, and we can apply Theorem \ref{thm:JWgeneric} for $\base = Q$.

We have functors of extension of scalars
\begin{equation}
  \label{eq:ext}
  \begin{array}{c}
  \begin{tikzpicture}
    \node (l) at (-3,0) {$\TTL_\Bbbk$};
    \node (m) at (0,0) {$\TTL_{\widehat{A}}$};
    \node (r) at (3,0) {$\TTL_{Q}$};
    \draw[->] (m) to node[above] {$(-)\otimes_{\widehat{A}} \Bbbk$} (l);
    \draw[->] (m) to node[above] {$(-)\otimes_{\widehat{A}}Q$} (r);
  \end{tikzpicture}
    \end{array}.
  \end{equation}
These extend (by the universal property) to functors between the corresponding Karoubi envelopes.
  
\begin{prop} \label{prop:lift and split}
  We have isomorphisms:
  \begin{enumerate}
  \item $V_{n_{\reds}}^{\widehat{A}} \otimes \Bbbk \cong V_{n_{\reds}}^\Bbbk$;
  \item $V_{n_{\reds}}^{\widehat{A}} \otimes Q \cong V_{n_{\reds}}^{Q} \oplus
    \bigoplus_{0 \le k = n - 2l} (V_{k_{\reds}}^{Q})^{\oplus \mu_{k}}$.
  \end{enumerate}
  Similar statements hold for $n_{\bluet}$. In other words, the idempotent $e_{n_{\reds}}^{\widehat{A}}$ specializes over $\Bbbk$ to be $e_{n_{\reds}}^{\Bbbk}$, and specializes over $Q$ to be $e_{n_{\reds}}^Q$ plus idempotents corresponding to lower cells.
\end{prop}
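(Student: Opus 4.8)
The plan is to deduce both isomorphisms from Theorem \ref{thm:TTLOACC}, in particular its part (4), applied over the three base rings $\Bbbk$, $\widehat{A}$, and $Q$. Set $E := \End_{\TTL_{\widehat{A}}}(n_{\reds})$, a free $\widehat{A}$-module of finite rank with basis the crossingless matchings, so that $E \otimes_{\widehat{A}} \Bbbk = \End_{\TTL_\Bbbk}(n_{\reds})$ and $E \otimes_{\widehat{A}} Q = \End_{\TTL_Q}(n_{\reds})$. The idempotent $e^{\widehat{A}}_{n_{\reds}}$ of Theorem \ref{thm:TTLOACC}(2) has coefficient $1$ of the identity crossingless matching, and this property survives any base change, since $1$ is the image of $1 \in \ZM$. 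Hence $e^{\widehat{A}}_{n_{\reds}} \otimes \Bbbk$ and $e^{\widehat{A}}_{n_{\reds}} \otimes Q$ are idempotents, each with coefficient $1$ of the identity.

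Part (2) is then immediate: applying Theorem \ref{thm:TTLOACC}(4) to the idempotent $e^{\widehat{A}}_{n_{\reds}} \otimes Q$ of $\End_{\TTL_Q}(n_{\reds})$ shows that its image, which is exactly $V^{\widehat{A}}_{n_{\reds}} \otimes Q$, decomposes as $(V^Q_{n_{\reds}})^{\oplus \delta} \oplus \bigoplus_{0 \le k = n - 2l} (V^Q_{k_{\reds}})^{\oplus \mu_k}$, with $\delta = 1$ because the coefficient of the identity is nonzero; this is the asserted formula, and the $\mu_k$ are simply whatever the theorem produces (they are not claimed to be computed here).

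Part (1) needs one further observation. Since $\widehat{A} = \Bbbk[[z_s,z_t]]$, the kernel $(z_s,z_t)$ of $\widehat{A} \twoheadrightarrow \Bbbk$ is contained in the maximal ideal of $\widehat{A}$, hence $(z_s,z_t)E$ is contained in the Jacobson radical of the finite $\widehat{A}$-algebra $E$. Because $\widehat{A}$ is Henselian local and $e^{\widehat{A}}_{n_{\reds}}$ is a \emph{primitive} idempotent of $E$ (one of the indecomposable orthogonal summands chosen in Theorem \ref{thm:TTLOACC}(2)), the standard idempotent-lifting theory for module-finite algebras over Henselian local rings shows that a primitive idempotent remains primitive after reduction modulo an ideal contained in the radical; equivalently, the local ring $e^{\widehat{A}}_{n_{\reds}} E\, e^{\widehat{A}}_{n_{\reds}}$ stays local after reducing modulo $(z_s,z_t)$. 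Thus $e^{\widehat{A}}_{n_{\reds}} \otimes \Bbbk$ is a primitive idempotent of $\End_{\TTL_\Bbbk}(n_{\reds})$ still having coefficient $1$ of the identity, so Theorem \ref{thm:TTLOACC}(4) over $\Bbbk$ (which now forces $\delta = 1$ and all $\mu_k = 0$, the image being indecomposable) identifies $V^{\widehat{A}}_{n_{\reds}} \otimes \Bbbk$ with $V^\Bbbk_{n_{\reds}}$. The statements for $n_{\bluet}$ follow by swapping the two colors. The one genuinely delicate point is this idempotent-lifting step over $\widehat{A}$, which I would spell out carefully; it is also exactly what accounts for the asymmetry between (1) and (2), since the map $\widehat{A} \to \Bbbk$ kills an ideal inside the maximal ideal and so preserves primitivity, whereas $\widehat{A} \to Q$ is a localization that destroys indecomposability and thereby introduces the lower-cell summands.
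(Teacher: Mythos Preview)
Your argument is correct and essentially matches the paper's. Part (2) is handled identically in both: apply Theorem \ref{thm:TTLOACC}(4) to the base change of $e_{n_{\reds}}^{\widehat{A}}$ over $Q$. For part (1) there is a small cosmetic difference: the paper starts from $e_{n_{\reds}}^{\Bbbk}$ and \emph{lifts} it to an indecomposable idempotent over $\widehat{A}$ (which then must be a valid choice of $e_{n_{\reds}}^{\widehat{A}}$), whereas you start from $e_{n_{\reds}}^{\widehat{A}}$ and \emph{reduce} modulo $(z_s,z_t)$, arguing that primitivity is preserved. These are the two directions of the same Henselian idempotent-lifting correspondence for module-finite algebras, so the content is the same; your version has the mild advantage of making explicit why the asymmetry between (1) and (2) arises.
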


\begin{proof} Property (1) is a consequence of idempotent lifting: every idempotent in $\End(n_{\reds})$ over $\Bbbk$ lifts to an idempotent over $\widehat{A}$. Hence indecomposable idempotents lift to indecomposable idempotents. The idempotent $e_{n_{\reds}}^{\Bbbk}$ thus lifts to an idempotent over $\widehat{A}$ which satisfies the conditions to be $e_{n_{\reds}}^{\widehat{A}}$. Property (2) follows directly from the last part of Theorem \ref{thm:TTLOACC}. \end{proof}
	
We will not use Property (2), we merely wish to emphasize that the specialization of $e_{n_{\reds}}^{\widehat{A}}$ to $Q$ is not necessarily $e_{n_{\reds}}^{Q}$, because it need
not be indecomposable.

\begin{thm} \label{thm:generaltospecial}
  Suppose that $JW^\Bbbk_{n_{\reds}}$ exists. Then it lifts to $JW^{\widehat{A}}_{n_{\reds}}$, which specializes to $JW^Q_{n_{\reds}}$.
\end{thm}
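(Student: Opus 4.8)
\emph{Proof sketch / plan.} The plan is to produce $JW^{\widehat{A}}_{n_{\reds}}$ as a lift of $JW^{\Bbbk}_{n_{\reds}}$, and then to verify death by cups for this lift by passing to $Q$, where Jones--Wenzl projectors are completely under control by Theorem~\ref{thm:JWgeneric}. First I would invoke idempotent lifting: since $\widehat{A}$ is local with $\widehat{A}/(z_s,z_t)=\Bbbk$, the idempotent $e^{\Bbbk}_{n_{\reds}}=JW^{\Bbbk}_{n_{\reds}}$ lifts to an idempotent over $\widehat{A}$, and, exactly as in the proof of Proposition~\ref{prop:lift and split}(1), this lift can be taken to be the top idempotent $e^{\widehat{A}}_{n_{\reds}}$, so that $e^{\widehat{A}}_{n_{\reds}}\otimes_{\widehat{A}}\Bbbk=JW^{\Bbbk}_{n_{\reds}}$. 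By Theorem~\ref{thm:TTLOACC}(2) the idempotent $e^{\widehat{A}}_{n_{\reds}}$ has coefficient $1$ of the identity crossingless matching, a property preserved under any base change. Thus it remains only to show that $e^{\widehat{A}}_{n_{\reds}}$ is orthogonal to $k_{\reds}$ for $k<n$; it will then be the Jones--Wenzl projector $JW^{\widehat{A}}_{n_{\reds}}$ by the uniqueness discussion in \S\ref{sec:JW}.

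The key step is to pin down $V^{\widehat{A}}_{n_{\reds}}\otimes_{\widehat{A}}Q$. Because all morphism spaces in $\TTL_{\widehat{A}}$ are free $\widehat{A}$-modules (with basis the crossingless matchings), the Peirce summand $\End_{\widehat{A}}(V^{\widehat{A}}_{n_{\reds}})=e^{\widehat{A}}_{n_{\reds}}\,\Hom_{\widehat{A}}(n_{\reds},n_{\reds})\,e^{\widehat{A}}_{n_{\reds}}$ is a finitely generated projective, hence free, $\widehat{A}$-module, and its formation commutes with base change. Its reduction modulo $(z_s,z_t)$ is $\End_{\Bbbk}(V^{\Bbbk}_{n_{\reds}})$, which equals $\Bbbk$ precisely because $JW^{\Bbbk}_{n_{\reds}}$ exists. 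Hence $\End_{\widehat{A}}(V^{\widehat{A}}_{n_{\reds}})$ is free of rank $1$, and so $\End_Q(V^{\widehat{A}}_{n_{\reds}}\otimes Q)$ is $1$-dimensional over $Q$. On the other hand, Proposition~\ref{prop:lift and split}(2) gives $V^{\widehat{A}}_{n_{\reds}}\otimes Q\cong V^{Q}_{n_{\reds}}\oplus\bigoplus_{k}(V^{Q}_{k_{\reds}})^{\oplus\mu_{k}}$, and over $Q$ all quantum numbers are invertible, so by Theorem~\ref{thm:JWgeneric} every $V^{Q}_{j_{\reds}}$ is indecomposable with endomorphism ring $Q$. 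Computing the $Q$-dimension of the endomorphism ring of the right-hand side via its diagonal blocks gives $1+\sum_k\mu_k^2$, which must equal $1$; therefore all $\mu_k=0$ and $V^{\widehat{A}}_{n_{\reds}}\otimes Q\cong V^{Q}_{n_{\reds}}$.

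To finish, note that the image of $e^{\widehat{A}}_{n_{\reds}}\otimes Q$ is isomorphic to $V^{Q}_{n_{\reds}}$, which (since $JW^{Q}_{n_{\reds}}$ exists) is orthogonal to every $V^{Q}_{k_{\reds}}$ with $k<n$, hence to every $k_{\reds}$ with $k<n$; so $e^{\widehat{A}}_{n_{\reds}}\otimes Q$ is killed by cups, and being an idempotent with identity-coefficient $1$ it equals $JW^{Q}_{n_{\reds}}$. Death by cups then descends from $Q$ to $\widehat{A}$: for any $c\in\Hom_{\widehat{A}}(k_{\reds},n_{\reds})$ the element $e^{\widehat{A}}_{n_{\reds}}\circ c$ lies in a free, hence torsion-free, $\widehat{A}$-module and becomes $0$ after $\otimes_{\widehat{A}}Q$, so it is already $0$. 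Thus $e^{\widehat{A}}_{n_{\reds}}=JW^{\widehat{A}}_{n_{\reds}}$, which by construction specializes to $JW^{\Bbbk}_{n_{\reds}}$ over $\Bbbk$ and, as just shown, to $JW^{Q}_{n_{\reds}}$ over $Q$; the same argument applies with the colors interchanged. The main obstacle here is the rank computation of $\End_{\widehat{A}}(V^{\widehat{A}}_{n_{\reds}})$ together with its use to kill the lower-cell summands over $Q$ --- i.e. recognizing that the hypothesis ``$JW^{\Bbbk}_{n_{\reds}}$ exists'' is exactly what forces this endomorphism ring to be of rank one, and that this rank-one condition propagates through $\widehat{A}$ to $Q$.
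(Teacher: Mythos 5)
Your proposal is correct, and the opening move (lifting $JW^{\Bbbk}_{n_{\reds}}$ to the top idempotent $e^{\widehat{A}}_{n_{\reds}}$ via Proposition~\ref{prop:lift and split}(1)) coincides with the paper's first step. After that, however, you and the paper diverge. The paper applies Nakayama's lemma directly to the finitely generated $\widehat{A}$-modules $\Hom_{\TTL_{\widehat{A}}}(V^{\widehat{A}}_{k_{\reds}},V^{\widehat{A}}_{n_{\reds}})$: these reduce to zero mod $(z_s,z_t)$ precisely because $JW^{\Bbbk}_{n_{\reds}}$ exists, hence they are zero over $\widehat{A}$, so $e^{\widehat{A}}_{n_{\reds}}$ satisfies death by cups and is $JW^{\widehat{A}}_{n_{\reds}}$. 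Specializing to $Q$ then preserves death by cups, forces the endomorphism ring of the image to be one-dimensional, and identifies the specialization with $JW^{Q}_{n_{\reds}}$.

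Your route instead goes through a rank computation for the single module $\End_{\widehat{A}}(V^{\widehat{A}}_{n_{\reds}})$: it is a Peirce summand of the free module $\End_{\widehat{A}}(n_{\reds})$, hence free over the local ring $\widehat{A}$, and reducing mod $(z_s,z_t)$ shows it has rank $1$. Passing to $Q$ and using $\dim_Q \End = 1 \geq 1 + \sum_k \mu_k^2$ kills the lower-cell multiplicities in Proposition~\ref{prop:lift and split}(2), giving $V^{\widehat{A}}_{n_{\reds}}\otimes Q\cong V^{Q}_{n_{\reds}}$, so that $e^{\widehat{A}}_{n_{\reds}}\otimes Q$ is $JW^{Q}_{n_{\reds}}$; death by cups then descends from $Q$ to $\widehat{A}$ by torsion-freeness. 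This works, and it makes pleasantly explicit where the hypothesis ``$JW^{\Bbbk}_{n_{\reds}}$ exists'' is consumed. But it is longer and goes around through $Q$ and back, whereas the paper's Nakayama argument applied to each $\Hom(V^{\widehat{A}}_{k_{\reds}},V^{\widehat{A}}_{n_{\reds}})$ establishes death by cups over $\widehat{A}$ in one stroke and gets the $Q$-statement for free. Your rank computation is really a Nakayama argument in disguise (finite projective plus local gives free; rank is detected at the closed point), so the two proofs are cousins, but the paper's version is the more economical instance of the same principle.
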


\begin{proof}
  Suppose that $JW^\Bbbk_{n_{\reds}}$ exists. By Proposition \ref{prop:lift and split}, it lifts to an idempotent $e_{n_{\reds}}^{\widehat{A}}$, and
\begin{equation}
  V_{n_{\reds}}^{\Bbbk}  = V_{n_{\reds}}^{\widehat{A}}
  \otimes_{\widehat{A}} \Bbbk.\label{eq:lift1}  
\end{equation}
For any $0 \le k < n$ we have 
  \[
\Hom_{\TTL_{\widehat{A}}}(V^{\widehat{A}}_{k_{\reds}},
V^{\widehat{A}}_{n_{\reds}}) \otimes_{\widehat{A}} \Bbbk =
\Hom_{\TTL_\Bbbk}(V^\Bbbk_{k_{\reds}},
V_{n_{\reds}}^\Bbbk) = 0
\]
where the the first equality uses (1) in Proposition \ref{prop:lift
  and split} and second equality again uses \eqref{eq:orthokprecomp}. As $\Hom_{\TTL_{\widehat{A}}}(V^{\widehat{A}}_{k_{\reds}},
V^{\widehat{A}}_{n_{\reds}})$ is certainly finitely-generated over
$\widehat{A}$, we conclude that
\begin{equation}
  \label{eq:JW hom vanishing R}
  \Hom_{\TTL_{\widehat{A}}}(V^{\widehat{A}}_{k_{\reds}},
V^{\widehat{A}}_{n_{\reds}})  = 0 \qquad \text{for any $0 \le k < n$}
\end{equation}
by Nakayama's lemma. Hence $e_{n_{\reds}}^{\widehat{A}}$ satisfies \eqref{eq:orthokprecomp} and is a Jones-Wenzl projector.

Now let $f$ be the specialization of $e_{n_{\reds}}^{\widehat{A}}$ to $Q$. Property \eqref{eq:orthokprecomp} (i.e. death by cups) for $e_{n_{\reds}}^{\widehat{A}}$ specializes to
\eqref{eq:orthokprecomp} for $f$. This in turn implies that the endomorphism ring of the image of $f$ is one-dimensional over $Q$ (spanned by the identity map), so $f$ is an
indecomposable idempotent. Hence $f = e_{n_{\reds}}^Q$.
\end{proof}

The upshot of Theorem \ref{thm:generaltospecial} is that it implies that, whenever $JW_n^{\Bbbk}$ exists, it has the same ``coefficients'' as $JW_n^Q$, in the sense that both come
from the same idempotent $JW_n^{\widehat{A}}$.

\begin{ex} \label{ex:funky3conclusion} Using \eqref{JWrecursionredux}, we can compute that $JW_{3_{\bluet}}^Q$ is a linear combination of five crossingless matchings: the identity with coefficient $1$, two
others with coefficient $\frac{1}{[3]}$, one with coefficient $\frac{[2]_s}{[3]}$, and one with coefficient $\frac{[2]_t}{[3]}$. More precisely, all these coefficients should be
interpreted as rational functions in $x_s$ and $x_t$. (See \cite[equation (A.3)]{EDC}, matching notation via $x = x_s$ and $y = x_t$.) Now suppose that
$JW_{3_{\reds}}^{\Bbbk}$ exists. Since the idempotent $JW_{3_{\reds}}^Q$ lifts to $JW_{3_{\reds}}^{\widehat{A}}$, the ring $\widehat{A}$ must have some element which specializes to
$\frac{1}{[3]}$, so that $[3] = x_s x_t - 1$ must be invertible in $\widehat{A}$ (by the injectivity of $\widehat{A} \to Q$). In turn, this implies that its specialization $[3]$ is
invertible in $\Bbbk$. The exact same formula for the coefficients of the Jones-Wenzl projector must hold over $\Bbbk$ as well. This is true even in the unusual situations where
$[2]_s = 0$ and/or $[2]_t = 0$ in $\Bbbk$, where $JW_2$ need not exist, so we can not deduce the coefficients immediately from \eqref{JWrecursionredux}. Consequently,
\eqref{ptr1solved} still holds over $\Bbbk$, telling us that
$\ptr_1(JW_{3_{\bluet}}) = -\frac{[4]_s}{[3]}$. In particular,
$\ptr_1(JW_{3_{\bluet}}) = 0$ when $[2]_s = 0$. \end{ex}

\subsection{Rotatable Jones-Wenzl projectors} \label{subsec:rotinvce}

In this section, we fix an $A$-algebra $\Bbbk$ which is a commutative domain and a local Henselian ring. When omitted, the base ring of any idempotents is $\Bbbk$.

Let us assume that $JW_n$ exists. We now ask whether it is possible
that the (counterclockwise) rotation of $JW_{n_{\reds}}$ by one strand
is equal to a scalar multiple of $JW_{n_{\bluet}}$:
\begin{equation}
  \label{eq:JWrot}
  \begin{array}{c}
  \begin{tikzpicture}[scale=.5]
    \begin{scope}
         \clip (-3,2) rectangle (3,-2);
    \draw[fill=blue!20!white] (-3.3,3) rectangle (3.3,-3);
             \draw[fill=red!20!white] (-.9,3) rectangle (.9,-3);
             \draw[fill=white] (-.3,3) rectangle (.3,-3);
             \draw[fill=red!20!white] (1.5,3) to (1.5,-1)
         to[out=-90,in=180] (2,-1.5) to[out=0,in=-90] (2.5,-1) to
         (2.5,3) to (1.5,3);
         \draw[fill=red!20!white] (-1.5,-3) to (-1.5,1)
         to[out=90,in=0] (-2,1.5) to[out=180,in=90] (-2.5,1) to
         (-2.5,-3) to (-1.5,-3);
         \draw[fill=white] (-1.9,1) rectangle (1.9,-1);
         \node at (0,0) {$JW_{n_{\color{red} s}}$};
         \node at (0,1.4) {$\dots$}; 
         \node at (0,-1.4) {$\dots$};
         \end{scope}
         \draw[dashed] (-3,2) to (3,2);
         \draw[dashed] (-3,-2) to (3,-2);
       \end{tikzpicture}
     \end{array}
     =
     ?
     \begin{array}{c}
  \begin{tikzpicture}[scale=.5]
    \begin{scope}
         \clip (-2.3,2) rectangle (2.3,-2);
    \draw[fill=blue!20!white] (-3.3,3) rectangle (3.3,-3);
             \draw[fill=red!20!white] (-1.5,3) rectangle (1.5,-3);
             \draw[fill=blue!20!white] (-.9,3) rectangle (.9,-3);
             \draw[fill=white] (-.3,3) rectangle (.3,-3);
         \draw[fill=white] (-1.9,1) rectangle (1.9,-1);
         \node at (0,0) {$JW_{n_{\color{blue} t}}$};
         \node at (0,1.4) {$\dots$}; 
         \node at (0,-1.4) {$\dots$};
         \end{scope}
         \draw[dashed] (-2.3,2) to (2.3,2);
         \draw[dashed] (-2.3,-2) to (2.3,-2);
       \end{tikzpicture}
       \end{array}
\end{equation}

\begin{lem} \label{lem:rotatable} Suppose that $JW_n$ exists. The rotation of $JW_{n_{\reds}}$ by one strand, as pictured in \eqref{eq:JWrot}, is a scalar multiple of $JW_{n_{\bluet}}$ if and only if $\ptr(JW_{n_{\reds}}) = 0$. \end{lem}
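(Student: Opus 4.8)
The plan is to prove the equivalence by exploiting the characterization of Jones-Wenzl projectors via ``death by cups'' together with the fact (established just above in this section) that $\End(V_{n_{\bluet}}) = \Bbbk$. Write $J := JW_{n_{\reds}}$ and let $J^{\rot}$ denote the rotation of $J$ by one strand, as pictured in \eqref{eq:JWrot}; this is an endomorphism of $n_{\bluet}$. First I would observe that $J^{\rot}$ is automatically killed by all cups on the bottom \emph{except possibly the rightmost one}: placing a cup on any non-rightmost pair of bottom strands of $J^{\rot}$, and then undoing the rotation, produces $J$ with a cap on top (on a non-rightmost pair), which vanishes by death by caps for $J$. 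Similarly $J^{\rot}$ is killed by all caps on top except possibly the rightmost one. So the only obstruction to $J^{\rot}$ being a (multiple of a) Jones-Wenzl projector is the rightmost cup/cap.

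Next I would make precise the relationship between the rightmost cup applied to $J^{\rot}$ and the partial trace $\ptr(J)$. Capping off the rightmost two bottom strands of $J^{\rot}$ with a cup, and undoing the rotation, is exactly the diagram obtained from $J$ by connecting its rightmost bottom strand to its rightmost top strand around the right side --- that is, it \emph{is} $\ptr(J)$ up to the evident identification of boundaries (an endomorphism of $(n-1)_{\reds}$ versus how it sits inside the rotated picture). The same computation on the top with a cap gives (the vertical flip of) $\ptr(J)$, which equals $\ptr(J)$ since $J$ is fixed by vertical flip. Thus: $J^{\rot}$ is killed by \emph{all} cups on bottom and all caps on top if and only if $\ptr(J) = 0$.

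For the ($\Leftarrow$) direction: if $\ptr(J) = 0$ then $J^{\rot}$ satisfies death by cups, and its coefficient of the identity crossingless matching is the coefficient of the identity in $J$, namely $1$ (rotation carries the identity matching to the identity matching). Hence $J^{\rot}$ is an idempotent satisfying death by cups with identity-coefficient $1$, so $J^{\rot} = JW_{n_{\bluet}}$ by the uniqueness statement recalled in \S\ref{subsec:JWmain} --- in particular it is a scalar (namely $1$) multiple of $JW_{n_{\bluet}}$. For the ($\Rightarrow$) direction: suppose $J^{\rot} = c \cdot JW_{n_{\bluet}}$ for some scalar $c \in \Bbbk$. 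Then $J^{\rot}$ is killed by all cups on the bottom, in particular by the rightmost one; by the identification of the previous paragraph this says precisely that $\ptr(J) = 0$ (here one uses that $\Bbbk$ is a domain, or simply that the map ``apply rightmost cup'' is injective enough on the relevant span --- more cleanly, $\ptr(J)$ is literally equal to that composite, so it is zero on the nose).

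The main obstacle I expect is purely bookkeeping: carefully matching up the diagrammatic pictures so that ``apply the rightmost cup to the rotated projector'' is identified with ``$\ptr$ of the original projector'' without sign or coloring errors, and tracking which boundary string is ``rightmost'' after a counterclockwise rotation when $n$ is even versus odd (the ambient region color and the parity of the final strand both flip, cf. Remark \ref{remark:colorparity}). Once that identification is pinned down, the rest is a direct application of the uniqueness of Jones-Wenzl projectors and of death by cups/caps, both already available. A secondary point to handle with a sentence is why the coefficient of the identity is preserved under rotation: rotation by one strand is a bijection on crossingless matchings that fixes the identity matching and sends every other matching to a non-identity matching, so the identity-coefficient is unchanged.
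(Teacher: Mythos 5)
Your overall approach matches the paper's: characterize ``scalar multiple of $JW_{n_{\bluet}}$'' as ``death by caps'', observe that the rotation is automatically killed by all cups/caps \emph{except} the rightmost one (by undoing the rotation and invoking death by caps/cups for $JW_{n_{\reds}}$), and identify ``killed by the rightmost cup'' with ``$\ptr(JW_{n_{\reds}}) = 0$''. The paper's own proof is essentially this, stated very tersely.

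However, there is a concrete false claim in your $(\Leftarrow)$ direction, which you also flag as a ``secondary point to handle'': you assert that ``rotation carries the identity matching to the identity matching,'' and conclude that the scalar must be $1$. This is wrong. Rotation by one strand does \emph{not} fix the identity crossingless matching: the preimage of the identity under rotation is the matching with a cup on top connecting the leftmost two points and a cap on bottom connecting the rightmost two points (the diagram whose coefficient is denoted $c_{n-1}$ in the proof of Lemma~\ref{lem:rotinvce}). Consequently the coefficient of the identity in $J^{\rot}$ is $c_{n-1} = \frac{1}{[n]_{\reds}} = [n]_{\bluet}$, not $1$, and indeed Lemma~\ref{lem:rotinvce} — proved immediately after — shows the rotational scalar is $[n]_{\bluet}$, which is generically $\neq 1$ (it equals $1$ precisely in the balanced case). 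The error is incidental to the present lemma, whose conclusion is only that $J^{\rot}$ is \emph{some} scalar multiple of $JW_{n_{\bluet}}$: once you have death by cups for $J^{\rot}$, it is automatically a scalar multiple of $JW_{n_{\bluet}}$ (by the uniqueness/existence discussion recalled at the start of the section), and there is no need to identify the scalar at all. So drop the claim that the scalar is $1$ and the proof is correct; but be aware that the claimed mechanism (identity matching fixed by rotation) is false and would lead you astray if you tried to use it to compute the scalar.
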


\begin{proof} Being equal to a scalar multiple of a Jones-Wenzl projector is equivalent to death by caps. The rotation of a Jones-Wenzl projector automatically killed by all but
the rightmost cap. It is killed by the rightmost cap if and only if the original diagram has zero partial trace. \end{proof}

\begin{defn} When $\ptr(JW_{n_{\reds}}) = 0$ and $\ptr(JW_{n_{\bluet}}) = 0$ we say that $JW_n$ is \emph{rotatable}. \end{defn}
	
The following result implies that the appropriate Jones-Wenzl projector is rotatable for most dihedrally-faithful realizations. In most cases this is proven in \cite{EDC}, but the edge cases are treated carefully here.

\begin{thm} \label{thm:rotatableoverfield} Suppose that $m = m_{st} < \infty$, that $(st)$ has order exactly $m$ on the span of $\a_s$ and $\a_t$, and that the base ring is a field. Then $JW_{m-1}$ exists and is
rotatable. \end{thm}

\begin{proof} By Theorem \ref{thm:whenfaithful}, either $[k]_s \ne 0$ or $[k]_t \ne 0$ for all $k < m$. In particular, $m$ is the first positive integer such that $[m]_s = [m]_t = 0$.

Suppose that $[k]_s, [k]_t \ne 0$ for all $k < m$. Since the base ring is a field, $[k]_s$ and $[k]_t$ are invertible for all $k<m$. Hence we can apply the inductive formulas to construct all $JW_k$ for $k < m$, and we can use \eqref{ptr1solved}. By definition of a realization, $[m]_s = [m]_t = 0$, so $\ptr_1(JW_{m-1}) = 0$ regardless of coloring. Because $JW_{m-2}$ exists, \eqref{ptrvsptr1} implies that $\ptr(JW_{m-1}) = 0$, thus $JW_{m-1}$ is rotatable by definition. This handles the case when $m=2$, so we assume $m > 2$.

In the remaining case, $[k]_s = 0$ but $[k]_t \ne 0$, or vice versa, for some $k < m$. Choose $d$ minimal such that this situation arises for $k=d$. Clearly $d$ is even, because if $d$ is odd then $[d]_s = [d]_t$. If $d > 2$ then $[2]_s$ and $[2]_t$ are nonzero, hence invertible. But then \eqref{kover2} implies that $[d]_s$ and $[d]_t$ are related by a unit, a contradiction since one is zero and the other isn't. The only remaining possibility is that $d=2$.

Now we treat the case when $[2]_s = 0$ and $[2]_t \ne 0$. All quantum numbers were computed in \eqref{weirdcasequantums}. If the base field has characteristic zero, then $[k]_t \ne 0$ for all $k$, a contradiction since $[m]_t = 0$. If the base ring has characteristic $p$, then $[2p]_s = [2p]_t = 0$, and $2p$ is the first positive integer for which both quantum numbers are zero. Consequently, $m = 2p$. It remains to construct $JW_{m-1}$ in this case, and prove it is rotatable, which is the content of Theorem \ref{thm:charpcase}. \end{proof}

We are still interested in realizations which are not dihedrally faithful, so that the above theorem does not apply.

\begin{ex} \label{ex:funky3redux} Suppose that $[2]_s = [2]_t =
  0$. This arises for a non-faithful realization when $m=4$. The
  Jones-Wenzl projector $JW_{3_{\reds}}$ exists and is given by
\begin{equation}
  \label{eq:JW3ex}
  \begin{array}{c}
  \begin{tikzpicture}[xscale=-.8]
    \begin{scope}
      \clip (-1.3,.8) rectangle (1.3,-.8);
      \draw[fill=blue!20!white] (-2,1) rectangle (2,-1);
      \draw[fill=red!20!white] (-2,-1) rectangle (-.8,1);
      \draw[fill=red!20!white] (0,-1) rectangle (.8,1);
      \draw[fill=white] (-1,.5) rectangle (1,-.5);
      \node at (0,0) {$JW_{3_{\color{red}s}}$};
    \end{scope}
    \draw[dashed] (-1.3,.8) to (1.3,.8);
    \draw[dashed] (-1.3,-.8) to (1.3,-.8);
  \end{tikzpicture}
  \end{array}
   =
  \begin{array}{c}
  \begin{tikzpicture}[xscale=-.8]
    \begin{scope}
      \clip (-1.3,.8) rectangle (1.3,-.8);
      \draw[fill=blue!20!white] (-2,1) rectangle (2,-1);
      \draw[fill=red!20!white] (-2,-1) rectangle (-.8,1);
            \draw[fill=red!20!white] (0,-1) rectangle (.8,1);
    \end{scope}
    \draw[dashed] (-1.3,.8) to (1.3,.8);
    \draw[dashed] (-1.3,-.8) to (1.3,-.8);
  \end{tikzpicture}
  \end{array}
  -
  \begin{array}{c}
  \begin{tikzpicture}[xscale=-.8]
    \begin{scope}
      \clip (-1.3,.8) rectangle (1.3,-.8);
      \draw[fill=blue!20!white] (-2,1) rectangle (2,-1);
      \draw[fill=red!20!white] (-2,-1) to (-1,-.8) to[out=90,in=-90] 
      (1,.8) to (1.5,1) to (0,.8) to[out=-90,in=0] (-.5,.4) to[out=-180,in=-90] (-1,.8) to
      (-1,1) to (-2,1) to (-2,--1);
      \draw[fill=red!20!white] (0,-.8) to[out=90,in=180] (.5,-.4) to[out=0,in=90] (1,-.8) to
      (1,-1) to (0,-1) to (0,-.8);
    \end{scope}
    \draw[dashed] (-1.3,.8) to (1.3,.8);
    \draw[dashed] (-1.3,-.8) to (1.3,-.8);
  \end{tikzpicture}
  \end{array}
 -
  \begin{array}{c}
  \begin{tikzpicture}[xscale=.8]
    \begin{scope}
      \clip (-1.3,.8) rectangle (1.3,-.8);
      \draw[fill=red!20!white] (-2,1) rectangle (2,-1);
      \draw[fill=blue!20!white] (-2,-1) to (-1,-.8) to[out=90,in=-90] 
      (1,.8) to (1.5,1) to (0,.8) to[out=-90,in=0] (-.5,.4) to[out=-180,in=-90] (-1,.8) to
      (-1,1) to (-2,1) to (-2,--1);
      \draw[fill=blue!20!white] (0,-.8) to[out=90,in=180] (.5,-.4) to[out=0,in=90] (1,-.8) to
      (1,-1) to (0,-1) to (0,-.8);
    \end{scope}
    \draw[dashed] (-1.3,.8) to (1.3,.8);
    \draw[dashed] (-1.3,-.8) to (1.3,-.8);
  \end{tikzpicture}
  \end{array}.
\end{equation}
(The reader can either verify death by caps directly, or follow the discussion of Example \ref{ex:funky3conclusion}.)
One can define $JW_{3_{\bluet}}$ by reversing the roles of the two
colors. In particular, the rotation of $JW_{3_{\reds}}$ is not a
scalar multiple of $JW_{3_{\bluet}}$. The partial trace is
\begin{equation}
  \label{eq:JW3extrace}
  \begin{array}{c}
  \begin{tikzpicture}[xscale=.6]
    \begin{scope}
      \clip (-1.3,.8) rectangle (1.5,-.8);
      \draw[fill=blue!20!white] (2,1) rectangle (-2,-1);
%      \draw[fill=red!20!white] (2,-1) rectangle (.8,1);
      \draw[fill=red!20!white] (0,-1) rectangle (-.8,1);

      \draw[fill=red!20!white] (.6,.35) to[out=90,in=180] (1,.6)
      to[out=0,in=90] (1.4,0) to[out=-90,in=0] (1,-.6) %to (.8,.35);
      to[out=180,in=-90] (.6,-.35) to (.6,.35);

       \draw[fill=white] (-1,.35) rectangle (1,-.35);
      \node at (0,0) {$JW_{3_{\color{blue}s}}$};
    \end{scope}
    \draw[dashed] (-1.3,.8) to (1.5,.8);
    \draw[dashed] (-1.3,-.8) to (1.5,-.8);
  \end{tikzpicture}
  \end{array}
=  -2 
  \begin{array}{c}
  \begin{tikzpicture}[xscale=-.6]
    \begin{scope}
      \clip (-.3,.8) rectangle (1.3,-.8);
      \draw[fill=blue!20!white] (-2,1) rectangle (2,-1);
            \draw[fill=red!20!white] (0,.8) to[out=-90,in=180] (.5,.4) to[out=0,in=-90] (1,.8) to
      (1,1) to (0,1) to (0,.8);
      \draw[fill=red!20!white] (0,-.8) to[out=90,in=180] (.5,-.4) to[out=0,in=90] (1,-.8) to
      (1,-1) to (0,-1) to (0,-.8);
    \end{scope}
    \draw[dashed] (-.3,.8) to (1.3,.8);
    \draw[dashed] (-.3,-.8) to (1.3,-.8);
  \end{tikzpicture}
  \end{array},
\end{equation}
which is nonzero when the characteristic of the base ring is not $2$. Note that $\ptr_1(JW_{3_{\reds}}) = 0$ as predicted by \eqref{ptr1solved}. \end{ex}

\begin{remark} \label{rmk:ptrptr1notequiv} When $JW_{n-1}$ is defined then \eqref{ptrvsptr1} holds, so that $\ptr(JW_n) = 0$ if and only if $\ptr_1(JW_n)=0$. The previous example indicates that $\ptr(JW_n) =
0$ is not equivalent to $\ptr_1(JW_n) = 0$ when $JW_{n-1}$ does not exist. \end{remark}

\begin{remark} \label{rmk:negligiblevs} A common concept is that of the \emph{negligible} Jones-Wenzl projector. Negligibility is the property that, no matter how the morphism is glued into a larger closed diagram (i.e. an endomorphism of $0_{\reds}$ or $0_{\bluet}$), the result is zero. Rotatable Jones-Wenzl projectors are negligible, but the converse is false. In Example \ref{ex:funky3redux}, $JW_{3_{\reds}}$ is negligible but not rotatable. \end{remark}

Here is the main result of this section, which was stated without proof in \cite{EDC}.

\begin{lem} \label{lem:rotinvce} Suppose that $JW_n$ exists and is rotatable. Then the rotation of $JW_{n_{\reds}}$ by one strand (clockwise or
counterclockwise) is equal to $[n]_{\bluet} JW_{n_{\bluet}}$, and vice versa. Moreover, $[n]_{\reds} [n]_{\bluet} = 1$, implying that the rotation of $JW_{n_{\reds}}$ by two strands is again $JW_{n_{\reds}}$. If $n$ is odd then $[n]_{\reds} = [n]_{\bluet} = 1$.
\end{lem}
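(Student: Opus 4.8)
The plan is to compute the partial trace of $JW_{n_{\reds}}$ explicitly enough to pin down the rotation scalar, and then to chain several partial traces together to get the product formula $[n]_{\reds}[n]_{\bluet} = 1$.

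First I would recall from Lemma \ref{lem:rotatable} that since $JW_n$ is rotatable, the rotation of $JW_{n_{\reds}}$ by one strand is already killed by all caps, hence is a scalar multiple of $JW_{n_{\bluet}}$; write it as $c_{n} JW_{n_{\bluet}}$ for some $c_n \in \Bbbk$. The scalar $c_n$ is detected by the coefficient of the identity: rotating the identity crossingless matching of $n_{\reds}$ by one strand produces (up to the rightmost turnback which dies against $JW_{n_{\bluet}}$, but actually) the identity matching of $n_{\bluet}$, so $c_n$ equals the coefficient of the identity in the rotated projector. The cleanest route is to invoke Theorem \ref{thm:generaltospecial}: whenever $JW_n^{\Bbbk}$ exists it has the same coefficients as $JW_n^{Q}$, obtained from the common lift $JW_n^{\widehat A}$. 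So it suffices to compute $c_n$ over the generic field $Q$, where all quantum numbers are invertible and the recursion \eqref{JWrecursionredux} applies. There, a standard (one-colored-style) computation using the single-clasp expansion \eqref{eq:singleclasp} or directly the recursion shows that rotating $JW_{n_{\reds}}$ by one strand multiplies it by the two-colored quantum number $[n]_{\bluet}$; tracking the colors carefully via the conventions of \S\ref{subsec:2quantum} gives exactly $[n]_{\bluet}$ (and by the color-swapped statement, rotating $JW_{n_{\bluet}}$ gives $[n]_{\reds}$). Since rotating by one strand in the other direction is the vertical-flip of rotating by one strand in the first direction, and $JW$ is flip-invariant, the clockwise rotation gives the same scalar.

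Next, composing: rotating $JW_{n_{\reds}}$ by one strand counterclockwise then by one strand clockwise returns $JW_{n_{\reds}}$ itself (rotation by two strands, then back — more simply, one full ``out and back''). Applying the scalar computation twice, once in each coloring, yields $[n]_{\reds}[n]_{\bluet} JW_{n_{\reds}} = JW_{n_{\reds}}$, and since the endomorphism ring of the image of $JW_{n_{\reds}}$ is exactly $\Bbbk$ (as noted after \eqref{eq:orthokpostcomp}) with $JW_{n_{\reds}}$ the identity there, we get $[n]_{\reds}[n]_{\bluet} = 1$ in $\Bbbk$. Alternatively, and perhaps more robustly, this identity can be read off directly over $Q$ from \eqref{eq:lambdainverse}/\eqref{eq:lambdainverseeven} applied with $m$ replaced by $n+1$ — but one must be careful that $[n+1]_s = [n+1]_t = 0$ need not hold over $\Bbbk$ (only over $\Bbbk$ when $n+1 = m_{st}$), so the argument via composing rotations inside $\End(\mathrm{im}(JW_{n_{\reds}})) = \Bbbk$ is the one I would actually write down, as it uses only the existence and rotatability of $JW_n$. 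Rotation by two strands being $JW_{n_{\reds}}$ is then immediate. Finally, if $n$ is odd, then $[n]_{\reds} = [n]_{\bluet} = [n]$ by the parity fact $[k]_s = [k]_t$ for $k$ odd, so $[n]^2 = 1$; to conclude $[n] = 1$ rather than $[n] = -1$, I would trace through the rotation computation over $Q$ once more keeping signs, or invoke the identification in \eqref{eq:m-k}–\eqref{eq:lambdainverse} in the relevant specialization; in the balanced-realization application this is $[n] = 1$ on the nose.

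The main obstacle I anticipate is the bookkeeping of colors and signs in the generic rotation computation — getting the two-colored quantum number to come out as precisely $[n]_{\bluet}$ (with the correct color and the correct sign), rather than $\pm [n]_s$ or a ratio involving $[2]_s/[2]_t$, requires care with the conventions in \S\ref{subsec:2quantum} and with how the turnbacks in \eqref{eq:singleclasp} interact with region colors. Everything else — reducing to the generic case via Theorem \ref{thm:generaltospecial}, and deducing $[n]_{\reds}[n]_{\bluet}=1$ from composing two rotations inside the one-dimensional endomorphism ring — is formal.
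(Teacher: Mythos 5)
Your argument through $[n]_{\reds}[n]_{\bluet}=1$ is sound and runs essentially parallel to the paper's: identify the rotation scalar with a specific coefficient ($c_{n-1}$ in the paper's notation), compute it over $Q$ via \eqref{JWrecursionredux}/\eqref{eq:singleclasp} and transport back by Theorem \ref{thm:generaltospecial}, then observe that rotating out and back gives $ab=1$ (the paper phrases this as adjunction, but your ``counterclockwise then clockwise'' composition is the same thing). One small imprecision: the generic coefficient computation directly yields $c_{n-1}=1/[n]_{\reds}$, not $[n]_{\bluet}$; these are equal only \emph{after} you know $[n]_{\reds}[n]_{\bluet}=1$, so as written the paragraph mildly begs the question, but the logic is salvageable by reading $a=1/[n]_{\reds}$, $b=1/[n]_{\bluet}$, and then $ab=1$ gives the product formula. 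Rotation by two strands being the identity then follows, as you say.

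The genuine gap is the case $n$ odd. You correctly reduce to $[n]^2=1$, but none of your proposed routes to pin down $[n]=1$ (rather than $-1$) actually works: over $Q$ the quantity $[n]$ is a nontrivial polynomial, so there are no signs to ``trace through''; the identities \eqref{eq:m-k}--\eqref{eq:lambdainverseeven} require $[n+1]=0$, which is not a hypothesis of the lemma and in any case only reproduce $[n]^2=1$; and ``in the balanced application it is $1$ on the nose'' assumes the conclusion, since for $n$ odd the lemma is precisely what forces balancedness. The missing idea in the paper's proof is the $180$-degree rotation constraint: rotation by $n$ strands (half a full turn) sends $JW_{n_{\reds}}$ to $JW_{{}_{\reds}n}$, because a $180$-degree rotation is a horizontal flip composed with a vertical flip and Jones--Wenzl projectors are invariant under both. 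For $n$ odd, ${}_{\reds}n = n_{\bluet}$, so the accumulated scalar is exactly $1$; writing that accumulated scalar as $abab\cdots a$ (an alternating word of length $n$) and using $ab=1$ forces $a=1$, hence $b=1$, i.e.\ $[n]_{\reds}=[n]_{\bluet}=1$. Without this word-length-$n$ constraint, $[n]=-1$ cannot be excluded.
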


\begin{proof} 
Suppose that (counterclockwise) rotation of $JW_{n_{\reds}}$ by one strand is equal to $a$ times $JW_{n_{\bluet}}$, and that
rotation of $JW_{n_{\bluet}}$ by one strand is equal to $b$ times
$JW_{n_{\reds}}$. That is:
\begin{gather}
  \label{eq:JWrota}
  \begin{array}{c}
  \begin{tikzpicture}[scale=.4]
    \begin{scope}
         \clip (-3,2) rectangle (3,-2);
    \draw[fill=blue!20!white] (-3.3,3) rectangle (3.3,-3);
             \draw[fill=red!20!white] (-.9,3) rectangle (.9,-3);
             \draw[fill=white] (-.3,3) rectangle (.3,-3);
             \draw[fill=red!20!white] (1.5,3) to (1.5,-1)
         to[out=-90,in=180] (2,-1.5) to[out=0,in=-90] (2.5,-1) to
         (2.5,3) to (1.5,3);
         \draw[fill=red!20!white] (-1.5,-3) to (-1.5,1)
         to[out=90,in=0] (-2,1.5) to[out=180,in=90] (-2.5,1) to
         (-2.5,-3) to (-1.5,-3);
         \draw[fill=white] (-1.9,1) rectangle (1.9,-1);
         \node at (0,0) {$JW_{n_{\color{red} s}}$};
         \node at (0,1.4) {$\dots$}; 
         \node at (0,-1.4) {$\dots$};
         \end{scope}
         \draw[dashed] (-3,2) to (3,2);
         \draw[dashed] (-3,-2) to (3,-2);
       \end{tikzpicture}
     \end{array}
     =
     a
     \begin{array}{c}
  \begin{tikzpicture}[scale=.4]
    \begin{scope}
         \clip (-2.3,2) rectangle (2.3,-2);
    \draw[fill=blue!20!white] (-3.3,3) rectangle (3.3,-3);
             \draw[fill=red!20!white] (-1.5,3) rectangle (1.5,-3);
             \draw[fill=blue!20!white] (-.9,3) rectangle (.9,-3);
             \draw[fill=white] (-.3,3) rectangle (.3,-3);
         \draw[fill=white] (-1.9,1) rectangle (1.9,-1);
         \node at (0,0) {$JW_{n_{\color{blue} t}}$};
         \node at (0,1.4) {$\dots$}; 
         \node at (0,-1.4) {$\dots$};
         \end{scope}
         \draw[dashed] (-2.3,2) to (2.3,2);
         \draw[dashed] (-2.3,-2) to (2.3,-2);
       \end{tikzpicture}
     \end{array}, \\
  \label{eq:JWrotb}
  \begin{array}{c}
  \begin{tikzpicture}[scale=.4]
    \begin{scope}
         \clip (-3,2) rectangle (3,-2);
    \draw[fill=red!20!white] (-3.3,3) rectangle (3.3,-3);
             \draw[fill=blue!20!white] (-.9,3) rectangle (.9,-3);
             \draw[fill=white] (-.3,3) rectangle (.3,-3);
             \draw[fill=blue!20!white] (1.5,3) to (1.5,-1)
         to[out=-90,in=180] (2,-1.5) to[out=0,in=-90] (2.5,-1) to
         (2.5,3) to (1.5,3);
         \draw[fill=blue!20!white] (-1.5,-3) to (-1.5,1)
         to[out=90,in=0] (-2,1.5) to[out=180,in=90] (-2.5,1) to
         (-2.5,-3) to (-1.5,-3);
         \draw[fill=white] (-1.9,1) rectangle (1.9,-1);
         \node at (0,0) {$JW_{n_{\color{blue} t}}$};
         \node at (0,1.4) {$\dots$}; 
         \node at (0,-1.4) {$\dots$};
         \end{scope}
         \draw[dashed] (-3,2) to (3,2);
         \draw[dashed] (-3,-2) to (3,-2);
       \end{tikzpicture}
     \end{array}
     =
     b
     \begin{array}{c}
  \begin{tikzpicture}[scale=.4]
    \begin{scope}
         \clip (-2.3,2) rectangle (2.3,-2);
    \draw[fill=red!20!white] (-3.3,3) rectangle (3.3,-3);
             \draw[fill=blue!20!white] (-1.5,3) rectangle (1.5,-3);
             \draw[fill=red!20!white] (-.9,3) rectangle (.9,-3);
             \draw[fill=white] (-.3,3) rectangle (.3,-3);
         \draw[fill=white] (-1.9,1) rectangle (1.9,-1);
         \node at (0,0) {$JW_{n_{\color{red} s}}$};
         \node at (0,1.4) {$\dots$}; 
         \node at (0,-1.4) {$\dots$};
         \end{scope}
         \draw[dashed] (-2.3,2) to (2.3,2);
         \draw[dashed] (-2.3,-2) to (2.3,-2);
       \end{tikzpicture}
       \end{array}.
\end{gather}
     Using invariance of Jones-Wenzl projectors under vertical flips, we see that clockwise rotation
of $JW_{n_{\reds}}$ is also equal to $a$ times $JW_{n_{\bluet}}$. Now using adjunction, one can argue that $ab = 1$. This also implies that rotation of a rotatable Jones-Wenzl
projector by two strands will recover the Jones-Wenzl projector on the nose. This isn't surprising, owing to the existence of crossingless matchings which are invariant under
counterclockwise rotation by two strands\footnote{Generically, all crossingless matchings have nonzero coefficients in the Jones-Wenzl projector.}.

Rotation by $n$ strands will send $JW_{n_{\reds}}$ to $JW_{{}_{\reds} n}$, implying that $abab\cdots = 1$ where this word has length $n$. If particular, if $n$ is odd then $a =
b = 1$.

In fact, the rotational scalars $a$ and $b$ can be determined
explicitly by the following argument. Let $c_i$ denote the coefficient
inside $JW_{n_{\reds}}$ of the following diagram, which we denote $y_i$:
\begin{equation}
  \label{eq:cidef} y_i = 
  \begin{array}{c}
    \begin{tikzpicture}[xscale=.6,yscale=1]
    \begin{scope}
      \clip (-3.5,1) rectangle (3.5,-1);
      \draw[fill=red!20!white] (-4,2) rectangle (-3,-2);
      \draw[fill=blue!20!white] (-3,2) rectangle (-2,-2);

      \draw[fill=blue!20!white] (0,-1) to[out=90,in=-90] (2,1) to
      (2,2) to (-1,2) to (-1,-2) to (0,-1);

      \draw[fill=red!20!white] (1,-1) to[out=90,in=-90] (3,1) to
      (3,2) to (4,2) to (4,-2) to (0,-1);

      \draw[fill=red!20!white] (0,1) to[out=-90,in=-90] (1,1) to
      (1,2) to (0,2) to ((0,1);

      \draw[fill=blue!20!white] (2,-1) to[out=90,in=90] (3,-1) to
      (3,-2) to (1,-2) to (2,-1);

      \node at (-1.5,0) {$\dots$};
      \node at (.5,-.8) {$\dots$};
      \node at (2.5,.8) {$\dots$};
      
    \end{scope}
    \draw[dashed] (-3.5,1) to (3.5,1);
    \draw[dashed] (-3.5,-1) to (3.5,-1);

          \draw [decorate,decoration={brace,amplitude=10pt},yshift=4pt]
          (1,1) -- (3,1);
          \node at (2,1.7) {$i$};
          % for a balanced picture
          \node[white] at (2,-1.7) {$.$};
  \end{tikzpicture}
  \end{array}
\end{equation}
(There are $i$ strands to the right of
the cup on top. For example, $y_1$ is invariant under a vertical
flip.) After counterclockwise rotation by one strand, $y_{n-1}$
becomes the identity, so $a = c_{n-1}$. We claim
that $a = c_{n-1} = \frac{1}{[n]_{\reds}}$, which exists in
$\Bbbk$. Similarly, $b = \frac{1}{[n]_{\bluet}}$. Since $ab = 1$ we
deduce that $[n]_{\reds} [n]_{\bluet} = 1$, and that $a =
[n]_{\bluet}$ and $b = [n]_{\reds}$. Thus this claim finishes the proof of
the lemma.

One way to verify that $c_{n-1} = \frac{1}{[n]_{\reds}}$ is to verify this fact in the generic Jones-Wenzl projector $JW_{n_{\reds}}^Q$. This can be done most easily using
\eqref{eq:singleclasp}. Using Theorem \ref{thm:generaltospecial} we deduce that $[n]_{\reds}$ is invertible in $\widehat{A}$ and hence in $\Bbbk$, and that $\frac{1}{[n]_{\reds}}$
is also the coefficient inside $JW_{n_{\reds}}^{\Bbbk}$.

There is also an elementary proof that $c_{n-1} = \frac{1}{[n]_{\reds}}$, which we briefly outline. Set $y_0$ to be the identity diagram, and $c_0 = 1$ (the coefficient of the identity diagram), and $c_n = 0$. For $1 \le i \le n-1$, adding a $i$-th cap from the right to $y_i$ (right on top of the cup in $y_i$) yields $-[2]_i$ times a particular diagram $Y$. Here $[2]_i$ is $[2]_{\reds}$ for $i$ odd and $[2]_{\bluet}$
for $i$ even. Meanwhile, adding this $i$-th cap to $y_{i \pm 1}$ yields $Y$ with coefficient $1$. 
No other diagram in $JW_n$ will yield $Y$ after the placement of any cap. By isolating the coefficient of $Y$ in a death by caps relation, we deduce that $[2]_i c_i = c_{i+1} + c_{i-1}$. Now induction gives $c_{n-i} = [n-i]_ic_{n-1}$ where $[n-i]_i$ is either $[n-i]_{\reds}$ or $[n-i]_{\bluet}$. In conclusion, $c_0 = [n]_{\reds} c_{n-1}$. \end{proof}

\subsection{Evaluating Jones-Wenzl projectors} \label{subsec:evaluation}

Consider a realization of $(W,S)$ with base ring $\Bbbk$, pick a pair $s \ne t \in S$, and make $\Bbbk$ an $A$-algebra via \eqref{eq:realizationspecialization}. Given a two-colored
crossingless matching we can associate a monomial in $R = \Bbbk[\a_s, \a_t]$, namely $\a_{\reds}^c \a_{\bluet}^d$, where $c$ is the number of $\reds$-colored regions and $d$ is the
number of $\bluet$-colored regions. To a morphism $f$ in $\TTL$, thought of as a linear combination of crossingless matchings, we can associate the corresponding linear combination
of monomials, which we denote $\poly(f)$.

\begin{lem}\label{lem:evaluation} Suppose that $JW_{m-1}$ exists. We have $\poly(JW_{(m-1)_{\reds}}) = \pi_{\reds,\bluet}$ and $\poly(JW_{(m-1)_{\bluet}}) = \pi_{\bluet,\reds}$. \end{lem}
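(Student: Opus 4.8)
The plan is to compute $\poly(JW_{(m-1)_{\reds}})$ by reducing to the generic case via Theorem~\ref{thm:generaltospecial}, exactly in the spirit of \S\ref{subsec:evaluation}. First I would observe that $\poly$ is a $\Bbbk$-linear functional on $\End_{\TTL}((m-1)_{\reds})$ which only depends on the coefficients of $JW_{(m-1)_{\reds}}$ in the crossingless-matching basis. By Theorem~\ref{thm:generaltospecial}, whenever $JW^{\Bbbk}_{(m-1)_{\reds}}$ exists it lifts to $JW^{\widehat A}_{(m-1)_{\reds}}$ and specializes to $JW^Q_{(m-1)_{\reds}}$; thus the coefficients of the $\Bbbk$-projector are the specializations of the coefficients of the $Q$-projector, which are rational functions in $x_s,x_t$ lying in $\widehat A$ (hence specializable to $\Bbbk$). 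Consequently $\poly(JW^{\Bbbk}_{(m-1)_{\reds}})$ is the specialization of $\poly(JW^Q_{(m-1)_{\reds}})$ under $x_u \mapsto -\a_u^\vee(\a_{u'})$, where on the right $\poly$ is computed with monomials in $R=\Bbbk[\a_s,\a_t]$ — but note we must be a little careful, since over $Q$ the coefficients are rational in $x_s,x_t$, whereas $\poly$ records them as coefficients against $\a$-monomials. The cleanest framing is: work in $\TTL_{\widehat A}$ over $\widehat A = \Bbbk[[z_s,z_t]]$, compute $\poly$ there landing in $\widehat A[\a_s,\a_t]$ with the identification $x_u \leftrightarrow -\a_u^\vee(\a_{u'})$ understood only at the end, and then specialize $z_u \mapsto 0$.

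Next, the heart of the argument is the generic computation: I claim $\poly(JW_{(m-1)_{\reds}}^Q) = \pi_{\reds,\bluet}$, i.e. the product of the positive roots of the dihedral group $W_{s,t}$. Here I would use the single-clasp expansion \eqref{eq:singleclasp} (or the recursion \eqref{JWrecursionredux}) inductively: when one applies $\poly$ to both sides, each crossingless matching contributes $\a_{\reds}^c\a_{\bluet}^d$ for $(c,d)$ the number of red/blue regions, and the coefficients involve the quantum numbers $[k]_s,[k]_t$. The key identity to exploit is \eqref{eq:m-k}: modulo $[m]_s=[m]_t=0$ one has $[m-1]_s[k] = [m-k]_s$ for $k$ odd and $[m-1]_s[k]_t = [m-k]_t$ for $k$ even; combined with the specialization $x_u = -\a_u^\vee(\a_{u'})$ one checks that an even two-colored quantum number $[2k]$, once one substitutes and multiplies appropriately, reproduces the product of a suitable run of positive roots. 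The two enumerations of $\Phi^+_{s,t}$ from \eqref{eq:twoenumerations}, \eqref{eq:pist}, \eqref{eq:pits} give the flexibility to match whichever factorization of $\pi_{s,t}$ falls out of the recursion. One bookkeeping identity worth isolating: the recursion step from $JW_{n_{\reds}}$ to $JW_{(n+1)_{\bluet}}$ multiplies the second term's $\poly$-contribution by a factor involving $\ptr_1 = -[n+1]_t/[n]_s$ and by $-x$ (a circle evaluation), and $-x_u$ under the realization equals $\a_u^\vee(\a_{u'})$; tracking how these combine into one more linear factor of $\pi_{s,t}$ at each stage is the engine of the induction.

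I would then note that the $\bluet$ statement follows by swapping the two colors throughout, using $\pi_{\bluet,\reds}$ in place of $\pi_{\reds,\bluet}$; and by \eqref{eq:pistispits} these are equal anyway in the balanced case, so there is genuinely one computation. I should also remark that the existence hypothesis ``$JW_{m-1}$ exists'' is exactly what is needed to invoke Theorem~\ref{thm:generaltospecial}; without it the statement is vacuous. To reduce to the Henselian-local-domain setting required by Theorems~\ref{thm:TTLOACC} and~\ref{thm:generaltospecial}, I would first pass from the given $\Bbbk$ to a suitable localization/completion or a residue field — but since $\poly$ is computed from basis coefficients and these are preserved under ring maps, it suffices to verify the polynomial identity after any injective base change, and one can always arrange a map to a Henselian local domain (e.g. complete a localization) without losing injectivity on the relevant finitely many coefficients; alternatively one invokes that the coefficients already lie in the subring generated by the (inverted) quantum numbers, where the identity is a statement about rational functions in $x_s,x_t$ modulo $([m]_s,[m]_t)$.

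The main obstacle I anticipate is the explicit matching of quantum-number coefficients against products of positive roots — i.e. proving that the specialization of $\poly(JW^Q_{(m-1)_{\reds}})$ is literally $\prod_{\un{w}\in X_s}\beta_{\un w}$ and not merely something of the same degree up to a scalar. The degree count is trivial (both sides are degree $2m-2$... wait, degree $2(m-1)$ since $JW_{m-1}$ has $m-1$ through-strands and $m$ regions — each region contributes one $\a$, so $\poly$ has ordinary degree $m$, matching $\pi_{s,t}$ which is a product of $m$ linear terms). The content is the scalar, and for that one must run the recursion carefully and repeatedly apply \eqref{eq:m-k} and \eqref{subeq:ktimesn}; this is where the two-colored bookkeeping (which quantum number carries which subscript at each step, governed by parity) is genuinely delicate and is presumably why \cite{EDC}'s argument was ``insufficiently general.'' A secondary subtlety is that over $Q$ the projector has \emph{all} crossingless matchings appearing with nonzero coefficient, so $\poly$ is a sum over exponentially many terms; the single-clasp expansion \eqref{eq:singleclasp} is what organizes this sum into a tractable recursion, and verifying that the $\poly$-image of \eqref{eq:singleclasp} telescopes correctly is the crux.
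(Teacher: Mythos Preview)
Your strategy is the same as the paper's: reduce to the generic field $Q$ via Theorem~\ref{thm:generaltospecial}, then verify the identity there using the recursion \eqref{JWrecursionredux}. The paper simply cites \cite[Proposition~4.14 and Appendix~A.6]{EDC} for the generic computation, where it is shown more strongly that for \emph{every} $k$, $\poly(JW_{k_{\reds}})$ equals the product of the first $k+1$ roots $\a_s \cdot s(\a_t) \cdot st(\a_s) \cdots$ in the sequence defining $\pi_{\reds,\bluet}$.

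One point of confusion in your sketch: you invoke \eqref{eq:m-k}, which holds only modulo $[m]_s=[m]_t=0$, as the ``key identity'' for the generic computation. But over $Q$ no quantum number vanishes, so \eqref{eq:m-k} is unavailable there, and in fact it is not needed. The generic identity $\poly(JW_{k_{\reds}}) = \prod_{i=1}^{k+1}\beta_i$ holds for all $k$ as a formal identity in $\a_s,\a_t$ with coefficients in $Q$; it is proved by a direct induction on $k$ using \eqref{JWrecursionredux}, where the inductive step amounts to checking that the new linear factor appearing is precisely the next root $\beta_{k+2} = [k+2]_?\,\a_s + [k+1]_?\,\a_t$ (cf.\ \eqref{eq:betax}). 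No relation among the quantum numbers is imposed. Once this is established over $Q$, specialization via $\widehat A \to \Bbbk$ gives the result for $k=m-1$; the vanishing $[m]=0$ in $\Bbbk$ plays no role in the proof of this lemma (though it is of course what makes $\pi_{s,t}$ anti-invariant, etc., elsewhere).

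Your careful discussion of how to reduce a general base ring $\Bbbk$ to the Henselian-local-domain hypotheses of Theorem~\ref{thm:generaltospecial} is a genuine addition; the paper tacitly assumes those hypotheses are in force.
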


The polynomials $\pi_{\reds,\bluet}$ and $\pi_{\bluet,\reds}$ were defined in \eqref{eq:pist} and \eqref{eq:pits} respectively. We make no balanced-ness assumption on the
realization, so it is possible that $\pi_{\reds,\bluet} \ne \pi_{\bluet,\reds}$; further discussion of these polynomials is found in \S\ref{subsec:rootspolys}.

\begin{proof} Note that we can also define realizations of $(W,S)$ with base rings $\widehat{A}$ and $Q$ respectively, where $\a_s^\vee(\a_t) := -x_s$ and $\a_t^\vee(\a_s) :=
-x_t$. We need only prove the result when the base ring is $Q$; then the formula will lift to $\widehat{A}$ and specialize to $\Bbbk$ by Theorem \ref{thm:generaltospecial}.

Over $Q$ we can use \eqref{JWrecursionredux}. In \cite{EDC}, it is inductively proven that $\poly(JW_{k_{\reds}})$ is the product of the first $k+1$
roots in the formula for $\pi_{\reds,\bluet}$. More precisely, this is proven when $[2]_s = [2]_t$ and $[m-1]_s = [m-1]_t = 1$ in \cite[Proposition 4.14]{EDC}, and adapted to
the general case in \cite[Chapter A.6]{EDC}. The proof in \cite{EDC} relies upon the recursive formula \eqref{JWrecursionredux}, although that paper fails to mention
this assumption at times. \end{proof}

Lemma \ref{lem:evaluation} is critical to our proof of the well-definedness of $\Lambda$. Note the compatibility of Lemma \ref{lem:evaluation} with Lemma \ref{lem:rotinvce} and
\eqref{eq:pistvspits}.

\subsection{The existence of Jones-Wenzl projectors} \label{subsec:whenexist}

So when does $JW_n^{\Bbbk}$ exist? Is there a precise criterion? When precisely is $JW_n$ rotatable? We have given necessary conditions above, but no sufficient conditions.

Let us first ask the analogous question for the Jones-Wenzl projector in the ordinary (uncolored) Temperley-Lieb algebra, where the circle has value $-[2]_q = -(q+q^{-1})$. In
\cite[Theorem A.2]{ELib}, Ben Webster proved that the Jones-Wenzl projector $JW_n$ exists so long as the quantum binomial coefficients ${n \brack k}_q$ are invertible for all $1
\le k \le n$. The proof uses representation theory, so it does not immediately adapt to the two-colored case. However, it suggests that the appropriate expectation is that the
two-colored Jones-Wenzl projectors $JW_n$ exist so long as the two-colored quantum binomial coefficients ${n \brack k}_s$ and ${n \brack k}_t$ are invertible. This was stated as a
theorem in \cite{ELib} without proof, supposedly as a consequence of \cite[Theorem A.2]{ELib}. Let us revise its status to a conjecture now.

\begin{conj} The Jones-Wenzl projectors $JW_{n_{\reds}}$ and $JW_{n_{\bluet}}$ exist over $\Bbbk$ if and only if ${n \brack k}_s$ and ${n \brack k}_t$ are invertible in $\Bbbk$.
\end{conj}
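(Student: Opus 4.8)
The plan is to prove both implications using the generic-to-special machinery of Theorem~\ref{thm:generaltospecial} together with an explicit description of the coefficients of the generic two-colored Jones--Wenzl projector $JW^Q_n$. The key input, which must be established first, is a two-colored analogue of Morrison's closed formula for $JW^Q_n$ (anticipated in the remark following Theorem~\ref{thm:JWgeneric}): for each crossingless matching $D$ occurring in the expansion of $JW^Q_{n_{\reds}}$, its coefficient is an explicit signed product of ratios of two-colored quantum numbers, the colorings being dictated by the colors of the regions of $D$. From such a formula one reads off that, after reduction to lowest terms in the UFD $A$, the only irreducible factors that can appear in a denominator are those of the binomials ${n \brack k}_s$ and ${n \brack k}_t$ for $1 \le k \le n$, and that for each such $k$ there is a distinguished matching (the ``maximally nested'' one, with $k$ nested cups and $k$ nested caps) whose coefficient is $\pm {n \brack k}_s^{-1}$ or $\pm {n \brack k}_t^{-1}$, with numerator a unit; the recursions \eqref{JWrecursionredux} and \eqref{eq:singleclasp} are the tools for verifying this.

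\emph{Necessity.} Suppose $JW^\Bbbk_{n_{\reds}}$ exists. By Theorem~\ref{thm:generaltospecial} it lifts to $JW^{\widehat{A}}_{n_{\reds}}$, whose image in $\TTL_Q$ is $JW^Q_{n_{\reds}}$; hence every coefficient of $JW^Q_{n_{\reds}}$ lies in the subring $\widehat{A} \subseteq Q$. Applied to the distinguished matchings above, each $\pm {n \brack k}_s^{-1}$ (resp.\ $\pm {n \brack k}_t^{-1}$) lies in $\widehat{A}$, so ${n \brack k}_s$ (resp.\ ${n \brack k}_t$) is invertible in $\widehat{A}$; pushing forward along the quotient $\widehat{A} = \Bbbk[[z_s,z_t]] \twoheadrightarrow \Bbbk$ shows it is invertible in $\Bbbk$. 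Swapping colors and using the existence of $JW^\Bbbk_{n_{\bluet}}$ gives invertibility of all $2n$ binomials.

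\emph{Sufficiency.} Conversely, assume all ${n \brack k}_s, {n \brack k}_t$ are invertible in $\Bbbk$. Take the two-colored Morrison formula as the definition of an element $e \in \End_{\TTL_\Bbbk}(n_{\reds})$: by the discussion above its coefficients are polynomials in the realization's quantum numbers and in the inverses ${n \brack k}_s^{-1}, {n \brack k}_t^{-1}$, so $e$ is well defined over $\Bbbk$. It then suffices to verify that $e$ has coefficient $1$ of the identity (immediate) and satisfies death by cups; since $e$ is obtained by base change from the corresponding element over $A[\,{n \brack k}_s^{-1}, {n \brack k}_t^{-1} : 1 \le k \le n\,]$, and this ring embeds into the Laurent ring $A[\text{all } [j]^{-1}]$ where death by cups is the classical statement, the identity propagates. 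Hence $e = JW^\Bbbk_{n_{\reds}}$, and symmetrically $JW^\Bbbk_{n_{\bluet}}$ exists. An alternative route to sufficiency is to adapt Webster's representation-theoretic argument \cite[Theorem A.2]{ELib}: using the object-adapted cellular structure on $\TTL_\Bbbk$ (Theorem~\ref{thm:TTLOACC}), show that the obstruction to the existence of the top idempotent at $n_{\reds}$ is the invertibility of a total Gram-type determinant which, up to units, factors as $\prod_{k} {n \brack k}_s \cdot \prod_{k} {n \brack k}_t$.

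The main obstacle is establishing the two-colored Morrison formula with the required control on denominators; even in the uncolored case this is a nontrivial theorem, and the two-colored refinement demands careful tracking of which color decorates each nested turnback --- precisely the data distinguishing ${n \brack k}_s$ from ${n \brack k}_t$. A secondary subtlety is coprimality: the distinguished-matching coefficient must be shown to equal $\pm {n \brack k}_s^{-1}$ \emph{on the nose} (numerator a unit), since an arbitrary reduced fraction over $A$ need not remain reduced after the non-injective base change $A \to A_\Bbbk$. Finally, one should check whether the conjecture holds without the standing local-Henselian hypothesis on $\Bbbk$: the necessity argument only uses that $\Bbbk$ is a domain (via $\widehat{A} = \Bbbk[[z_s,z_t]]$), and the sufficiency argument via the explicit formula uses nothing at all.
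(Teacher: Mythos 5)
You should note at the outset that the paper does not prove this statement at all: it is deliberately stated as a Conjecture (the authors explicitly downgrade the corresponding claim of \cite{ELib} from theorem to conjecture), and the only thing the paper offers is a remark sketching exactly the strategy you propose, namely a two-colored analogue of Morrison's closed formula over $Q$ combined with Theorem \ref{thm:generaltospecial}, with the comment that ``this too has not been done.'' So your proposal is an elaboration of the paper's suggested route rather than a different one, and it is not yet a proof, because its entire weight rests on an unproven key lemma that you yourself flag: the existence of a two-colored Morrison formula in which, after reduction in the UFD $A$, the denominators are exactly products of the binomials ${n \brack k}_s$, ${n \brack k}_t$, and in which some distinguished matching has coefficient precisely $\pm{n \brack k}_s^{-1}$ with unit numerator. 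Neither the formula nor this denominator control exists in the literature (even the colored bookkeeping of which color decorates each turnback, which is what separates ${n \brack k}_s$ from ${n \brack k}_t$, has not been worked out), so both directions of your argument are conditional.

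Granting that lemma, your necessity argument is sound in outline: existence over $\Bbbk$ lifts to $\widehat{A}$ and specializes to $JW^Q_n$ by Theorem \ref{thm:generaltospecial}, so the distinguished coefficients lie in $\widehat{A}$ and push forward to units of $\Bbbk$ --- but note that this step needs exactly the ``unit numerator'' refinement, and your own remark about the non-injectivity of $A \to A_\Bbbk$ shows why a reduced fraction with a non-unit numerator would not suffice; this coprimality point is raised but not resolved. The sufficiency direction is weaker still: defining $e$ over $\Bbbk$ by the (hypothetical) formula requires its coefficients to lie in $A\bigl[{n \brack k}_s^{-1}, {n \brack k}_t^{-1}\bigr]$, which is again the unproven denominator control, and the proposed ``propagation'' of death by cups from the generic ring is fine only once that integrality is known. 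The alternative route via a Gram-type determinant factoring as $\prod_k {n \brack k}_s \prod_k {n \brack k}_t$ is likewise an unproven two-colored statement (this is essentially the representation-theoretic argument of Webster in the uncolored case that the paper says does not immediately adapt). In short: the architecture matches what the authors envision, but the statement remains a conjecture because the central combinatorial input is missing, and your write-up does not supply it.
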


Just because $[m-1]_s$ is invertible does not imply that ${m-1 \brack k}_s$ is invertible. We do not know a precise criterion for when ${m-1 \brack k}_s$ and ${m-1 \brack k}_t$ are
invertible for all $0 \le k \le m-1$.

\begin{ex} \label{ex:bad6} Suppose that $[2]_{\reds} = [2]_{\bluet}=0$ (let's ignore the colors then), so that $[3] = -1$, $[4] = 0$, $[5] = 1$, and $[6]=0$. Note that
$\frac{[4]}{[2]} = [3] - 1 = -2$, and hence ${5 \brack 2} = -2$. Let $m = 6$. Note that $[m-1]$ is invertible but ${m-1 \brack 2}$ is not unless $2$ is invertible. One can verify
that $JW_5$ is defined in this case, so long as $2$ is invertible. It fails to be rotatable. \end{ex}

\begin{remark} One possible way to prove this conjecture would be to reproduce Morrison's closed formula and its proof from \cite{MorrisonJW} in the two-colored case over $Q$.
Then one could use Theorem \ref{thm:generaltospecial} to deduce that the same formula holds over $\Bbbk$ for any $\Bbbk$. In particular, the well-definedness of the coefficients in
that formula would be equivalent to the existence of the Jones-Wenzl projector. It seems likely that Morrison's formula could be rephrased so that the only denominators required
are quantum binomial coefficients, but this too has not been done. \end{remark}

Above we argued that the rotatability of $JW_n$ implies that $[n]_{\reds} [n]_{\bluet} = 1$, and that $[n]_{\reds} = 1$ when $n$ is odd. We did this by examining very particular
coefficients in the Jones-Wenzl projector. We noted that generically the partial trace of $JW_{n_{\reds}}^Q$ is $\frac{-[n+1]_{\bluet}}{[n]_{\reds}} JW_{(n-1)_{\bluet}}^Q$, so that
one can prove that $JW_n$ is rotatable when $[n+1]_{\reds} = [n+1]_{\bluet} = 0$ and $JW_{n-1}$ exists. However, when $JW_{n-1}$ does not exist, the problem is quite subtle,
Example \ref{ex:bad6} demonstrates that the conditions $[n+1] = 0$ and $[n] = 1$ are not sufficient to imply the rotatability of $JW_n$.

\begin{ex} \label{ex:notdihedralfaithfulstillrotatable} A realization where $[2]_s = [2]_t = 0$ can be thought of as a faithful realization of the dihedral group $A_1 \times A_1$ with $m=2$, or a non-faithful realization for $m=4$ (or any higher even value of $m$). When $[2]_s = [2]_t = 0$, the explicit formula from Example \ref{ex:funky3redux} shows that $JW_3$ is rotatable if and only if the base ring has characteristic $2$. In characteristic $2$, this gives an example of a rotatable Jones-Wenzl projector even when the realization (for $m=4$) is not dihedrally faithful. \end{ex}

\begin{ex} \label{evenbalancedbad} Similarly, suppose that $[2]_s = [2]_t = 1$ and hence $[3] = 0$, and suppose that $2$ is invertible. One can compute that $JW_8$ exists, that $\ptr_1(JW_8) = 0$, but that $\ptr(JW_8)$ is $\frac{3}{2}$ times some particular nonzero morphism. We have omitted the lengthy computation. In characteristic $3$ this gives a rotatable Jones-Wenzl projector, but outside of characteristic $3$ this gives a non-rotatable Jones-Wenzl projector. Note that $[8] = 1$, so either way this is a balanced non-faithful dihedral realization when $m = 9$.\end{ex}

\subsection{Jones-Wenzl projectors in special cases} \label{subsec:specialcases}
In this section we describe Jones-Wenzl projectors $JW_{m-1}$, and prove that they are rotatable, in two special cases of interest: \begin{itemize}
\item When $[2]_s = 0$ but $[2]_t \ne 0$ (or vice versa), and $m = 2p$ in characteristic $p$. This handles the Kac-Moody realizations of type $B_2$ in characteristic $2$, and $G_2$ in characteristic $3$.
\item When $[2]_s = [2]_t = 1$ in characteristic $2$, and $m=6$. This
  handles the Kac-Moody realization of type $G_2$ in characteristic
  $2$.
\end{itemize} 

We begin with type $G_2$ in characteristic $2$.

\begin{thm} Suppose that $[2]_s = [2]_t = 1$ and $\Bbbk$ has characteristic $2$. Then $JW_5$ exists over $\Bbbk$, and is rotatable. \end{thm}

\begin{proof} Because $[2]_s = [2]_t$ we will ignore the colors (as $JW_{n_{\reds}}$ becomes $JW_{n_{\bluet}}$ after applying color swap). Note that if $JW_5$ is rotatable, it must be invariant
  under rotation by Lemma \ref{lem:rotinvce}. It must also be
  invariant under the vertical and horizontal flip. The 42
  crossingless $(5,5)$ matchings split into 6 orbits under these flips
  and rotations. Here are three representatives of orbits, together
  with the sizes of the orbits under flips and rotations:
      \[
      \begin{array}{c}
    \begin{tikzpicture}[scale=.5]
      \draw[dashed] (0,0) circle (2);
      % 18, 54, 90, 126, 162
      \draw (18:2) to[out=-162,in=162] (-18:2);
      \draw (54:2) to[out=-126,in=126] (-54:2);
      \draw (90:2) to (-90:2);
      \draw (126:2) to[out=-54,in=54] (-126:2);
      \draw (162:2) to[out=-18,in=18] (-162:2);
    \end{tikzpicture}\\
        \text{(size 5)}
      \end{array} \qquad
            \begin{array}{c}
    \begin{tikzpicture}[scale=.5]
      \draw[dashed] (0,0) circle (2);
      % 18, 54, 90, 126, 162
      \draw (18:2) to[out=-162,in=162] (-18:2);
      \draw (54:2) to[out=-126,in=126] (-54:2);
      \draw (90:2) to (-90:2);
      % \draw (126:2) to[out=-54,in=54] (-126:2);
      % \draw (162:2) to[out=-18,in=18] (-162:2);
      \draw (162:2) to[out=-18,in=-54] (126:2);
      \draw (-162:2) to[out=18,in=54] (-126:2);
    \end{tikzpicture}\\
        \text{(size 10)}
            \end{array}
            \qquad
            \begin{array}{c}
    \begin{tikzpicture}[scale=.5]
      \draw[dashed] (0,0) circle (2);
      % 18, 54, 90, 126, 162
      \draw (18:2) to[out=-162,in=-126] (54:2);
      \draw (-18:2) to[out=162,in=126] (-54:2);
      \draw (90:2) to (-90:2);
      % \draw (126:2) to[out=-54,in=54] (-126:2);
      % \draw (162:2) to[out=-18,in=18] (-162:2);
      \draw (162:2) to[out=-18,in=-54] (126:2);
      \draw (-162:2) to[out=18,in=54] (-126:2);
    \end{tikzpicture}\\
        \text{(size 5)}
      \end{array}
      \]
The reader can check that there are three remaining orbits (of sizes
10, 10 and 2 respectively). We claim that $JW_5$ is the sum over the
three orbits displayed above (with coefficient $1$); the remaining
orbits (not displayed) have
coefficient zero. It is an exercise to show that this sum satisfies
death by cap. The coefficient of the identity is $1$, so it is the
Jones-Wenzl projector, and it is rotation invariant by
construction. \end{proof}

Now we wish to treat the case when $[2]_s = 0$ and $[2]_t \ne 0$. In such circumstances one expects that $JW_n$ does not exist when $n$ is even. For example, $JW_{2_\reds}$ does not exist since $[2]_s$ is not invertible. However, generically one expects that $JW_n$ exists when $n$ is odd. In order to study odd Jones-Wenzl projectors in the absence of even Jones-Wenzl projectors, we prove a two-step recursion formula.

In this section, when $JW_{n_\reds}$ exists, we write
\begin{equation}
  \label{eq:p1def}
  p_1^{(n)} = \text{coefficient of the identity in }
    \begin{array}{c}
      \begin{tikzpicture}[xscale=.6,yscale=1]
        \begin{scope}
          \clip (-1.8,1) rectangle (2.5,-1);
  \draw[fill=red!20!white] (.5,2) rectangle (3,-2);
  \draw[fill=blue!20!white] (1,.6) to[out=90,in=180] (1.5,.8)
  to[out=0,in=90] (2,0) to[out=-90,in=0] (1.5,-.8) to[out=180,in=-90]
  (1,-.6) to (1,.6);
  \draw[fill=purple!20!white] (-1,2) rectangle (-2,-2);        
  \draw[fill=white] (-1.4,.6) rectangle (1.4,-.6);
  \node at (0,0) {$JW_{n_{\color{red}s}}$};
  \node at (-.25,.8) {$\dots$};
  \node at (-.25,-.8) {$\dots$};
  \end{scope}
  \draw[dashed] (-1.8,1) to (2.5,1);
  \draw[dashed] (-1.8,-1) to (2.5,-1);
\end{tikzpicture}
      \end{array}, 
\end{equation}
\begin{equation}
  \label{eq:p2def}
  p_2^{(n)} = \text{coefficient of the identity in }
    \begin{array}{c}
      \begin{tikzpicture}[xscale=.6,yscale=1]
        \begin{scope}
          \clip (-1.8,1) rectangle (2.5,-1);
  \draw[fill=red!20!white] (.5,2) rectangle (3,-2);
  \draw[fill=blue!20!white] (.8,.6) to[out=90,in=180] (1.5,.9)
  to[out=0,in=90] (2.3,0) to[out=-90,in=0] (1.5,-.9) to[out=180,in=-90]
  (.8,-.6) to (.8,.6);
  \draw[fill=red!20!white] (1.2,.6) to[out=90,in=180] (1.5,.72)
  to[out=0,in=90] (1.8,0) to[out=-90,in=0] (1.5,-.72) to[out=180,in=-90]
  (1.2,-.6) to (1.2,.6);
  \draw[fill=purple!20!white] (-1,2) rectangle (-2,-2);        
  \draw[fill=white] (-1.4,.6) rectangle (1.4,-.6);
  \node at (0,0) {$JW_{n_{\color{red}s}}$};
  \node at (-.25,.8) {$\dots$};
  \node at (-.25,-.8) {$\dots$};
  \end{scope}
  \draw[dashed] (-1.8,1) to (2.5,1);
  \draw[dashed] (-1.8,-1) to (2.5,-1);
\end{tikzpicture}
      \end{array}.
\end{equation}

Just as the diagram in \eqref{eq:p1def} is called the \emph{partial trace} of $JW_{n_{\reds}}$, the diagram in \eqref{eq:p2def} will be called the \emph{double (partial) trace} of $JW_{n_{\reds}}$. Note that this double trace satisfies death by caps. Thus if $JW_{(n-2)_\reds}$ exists, then
\begin{equation} \label{eq:doubletraceJW}
	    \begin{array}{c}
	      \begin{tikzpicture}[xscale=.6,yscale=1]
	        \begin{scope}
	          \clip (-1.8,1) rectangle (2.5,-1);
	  \draw[fill=red!20!white] (.5,2) rectangle (3,-2);
	  \draw[fill=blue!20!white] (.8,.6) to[out=90,in=180] (1.5,.9)
	  to[out=0,in=90] (2.3,0) to[out=-90,in=0] (1.5,-.9) to[out=180,in=-90]
	  (.8,-.6) to (.8,.6);
	  \draw[fill=red!20!white] (1.2,.6) to[out=90,in=180] (1.5,.72)
	  to[out=0,in=90] (1.8,0) to[out=-90,in=0] (1.5,-.72) to[out=180,in=-90]
	  (1.2,-.6) to (1.2,.6);
	  \draw[fill=purple!20!white] (-1,2) rectangle (-2,-2);        
	  \draw[fill=white] (-1.4,.6) rectangle (1.4,-.6);
	  \node at (0,0) {$JW_{n_{\color{red}s}}$};
	  \node at (-.25,.8) {$\dots$};
	  \node at (-.25,-.8) {$\dots$};
	  \end{scope}
	  \draw[dashed] (-1.8,1) to (2.5,1);
	  \draw[dashed] (-1.8,-1) to (2.5,-1);
	\end{tikzpicture}
	      \end{array}
		= p_2^{(n)}
		                  \begin{array}{c}
		        \begin{tikzpicture}[xscale=.3,yscale=0.25]
		          \begin{scope}
		            \clip (-3,3.5) rectangle (3,-3.5);
		            \draw[fill=red!20!white] (0,4) rectangle (3,-4);
		            \draw[fill=blue!20!white] (1,4) rectangle (2,-4);
		         %    \draw[fill=red!20!white] (1,4) to (1,1) to[out=-90,in=180] (1.5,0.5) to[out=0,in=-90] (2,1) to (2,4) to (1,4);
		         % \draw[fill=red!20!white] (1,-4) to (1,-1) to[out=90,in=180] (1.5,-0.5) to[out=0,in=90] (2,-1) to (2,-4) to (1,-4);            
		         \draw[fill=purple!20!white] (-4,4) rectangle (-2,-4);
		         \draw[fill=white] (-2.5,2) rectangle (2.5,-2);
		         \node at (0,0) {$JW_{(n-2)_{\color{red}s}}$};
		%         \node at (-1,0) {$\dots$};
		         \node at (-1,2.7) {$\dots$};
		         \node at (-1,-2.7) {$\dots$};
		       \end{scope}
		       \draw[dashed] (-3,3.5) to (3,3.5);
		       \draw[dashed] (-3,-3.5) to (3,-3.5);
		     \end{tikzpicture}
		                  \end{array}.
\end{equation}
Conversely, if $p_2^{(n)}$ is invertible, then \eqref{eq:doubletraceJW} can be used to define $JW_{(n-2)_\reds}$ from $JW_{n_\reds}$.

\begin{thm} \label{thm:twosteprecursion} Suppose that $JW_{n_{\reds}}$ exists, and consider the morphism $J$ defined below.
\begin{gather*} J = 
            \begin{array}{c}
        \begin{tikzpicture}[xscale=.3,yscale=0.25]
          \begin{scope}
            \clip (-3,3.5) rectangle (2.5,-3.5);
            \draw[fill=red!20!white] (0,4) rectangle (3,-4);
            \draw[fill=blue!20!white] (1,4) rectangle (2,-4);
         \draw[fill=purple!20!white] (-4,4) rectangle (-2,-4);
         \draw[fill=white] (-2.5,2) rectangle (0.5,-2);
         \node at (-.9,0) {$JW_{n_{\color{red}s}}$};
%         \node at (-1,0) {$\dots$};
         \node at (-1,2.7) {$\dots$};
         \node at (-1,-2.7) {$\dots$};
       \end{scope}
       \draw[dashed] (-3,3.5) to (2.5,3.5);
       \draw[dashed] (-3,-3.5) to (2.5,-3.5);
     \end{tikzpicture}
      \end{array}
+        a_{11}^{(n+2)}
      \begin{array}{c}
        \begin{tikzpicture}[xscale=.3,yscale=0.25]
          \begin{scope}
            \clip (-3,3.5) rectangle (3.5,-3.5);
            \draw[fill=blue!20!white] (0,4) rectangle (3,-4);
          \draw[fill=red!20!white] (3,4) rectangle (4,-4);
         \draw[fill=red!20!white] (1,4) to (1,1) to[out=-90,in=180] (1.5,0.5) to[out=0,in=-90] (2,1) to (2,4) to (1,4);
         \draw[fill=red!20!white] (1,-4) to (1,-1) to[out=90,in=180] (1.5,-0.5) to[out=0,in=90] (2,-1) to (2,-4) to (1,-4);            
         \draw[fill=purple!20!white] (-4,4) rectangle (-2,-4);
         \draw[fill=white] (-2.5,3) rectangle (1.5,1);
         \draw[fill=white] (-2.5,-3) rectangle (1.5,-1);
         \node at (-.5,2) {$JW_{n_{\color{red}s}}$};
         \node at (-.5,-2) {$JW_{n_{\color{red}s}}$};
         \node at (-1,0) {$\dots$};
         \node at (-1,3.25) {$\dots$};
         \node at (-1,-3.25) {$\dots$};
       \end{scope}
       \draw[dashed] (-3,3.5) to (3.5,3.5);
       \draw[dashed] (-3,-3.5) to (3.5,-3.5);
     \end{tikzpicture}
      \end{array}
      +        a_{12}^{(n+2)}
      \begin{array}{c}
        \begin{tikzpicture}[xscale=.3,yscale=0.25]
          \begin{scope}
            \clip (-3,3.5) rectangle (3.5,-3.5);
            \draw[fill=blue!20!white] (0,4) rectangle (4,-4);
%          \draw[fill=red!20!white] (3,4) rectangle (4,-4);
         \draw[fill=red!20!white] (1,4) to (1,1) to[out=-90,in=180] (1.5,0.5) to[out=0,in=-90] (2,1) to (2,4) to (1,4);
         \draw[fill=red!20!white] (1,-4) to (1,-1) to[out=90,in=180]
         (1.5,-0.5) to[out=0,in=-90] (3,4) to (4,4) to (4,-4) to (3,-4)
         to[out=90,in=0] (2.5,-2) to[out=180,in=90] (2,-4) to (1,-4); 
         \draw[fill=purple!20!white] (-4,4) rectangle (-2,-4);
         \draw[fill=white] (-2.5,3) rectangle (1.5,1);
         \draw[fill=white] (-2.5,-3) rectangle (1.5,-1);
         \node at (-.5,2) {$JW_{n_{\color{red}s}}$};
         \node at (-.5,-2) {$JW_{n_{\color{red}s}}$};
         \node at (-1,0) {$\dots$};
         \node at (-1,3.25) {$\dots$};
         \node at (-1,-3.25) {$\dots$};
       \end{scope}
       \draw[dashed] (-3,3.5) to (3.5,3.5);
       \draw[dashed] (-3,-3.5) to (3.5,-3.5);
     \end{tikzpicture}
      \end{array}
            +    \\    + a_{21}^{(n+2)}
      \begin{array}{c}
        \begin{tikzpicture}[xscale=.3,yscale=-0.25]
          \begin{scope}
            \clip (-3,3.5) rectangle (3.5,-3.5);
            \draw[fill=blue!20!white] (0,4) rectangle (4,-4);
%          \draw[fill=red!20!white] (3,4) rectangle (4,-4);
         \draw[fill=red!20!white] (1,4) to (1,1) to[out=-90,in=180] (1.5,0.5) to[out=0,in=-90] (2,1) to (2,4) to (1,4);
         \draw[fill=red!20!white] (1,-4) to (1,-1) to[out=90,in=180]
         (1.5,-0.5) to[out=0,in=-90] (3,4) to (4,4) to (4,-4) to (3,-4)
         to[out=90,in=0] (2.5,-2) to[out=180,in=90] (2,-4) to (1,-4); 
         \draw[fill=purple!20!white] (-4,4) rectangle (-2,-4);
         \draw[fill=white] (-2.5,3) rectangle (1.5,1);
         \draw[fill=white] (-2.5,-3) rectangle (1.5,-1);
         \node at (-.5,2) {$JW_{n_{\color{red}s}}$};
         \node at (-.5,-2) {$JW_{n_{\color{red}s}}$};
         \node at (-1,0) {$\dots$};
         \node at (-1,3.25) {$\dots$};
         \node at (-1,-3.25) {$\dots$};
       \end{scope}
       \draw[dashed] (-3,3.5) to (3.5,3.5);
       \draw[dashed] (-3,-3.5) to (3.5,-3.5);
     \end{tikzpicture}
      \end{array} +
       a_{22}^{(n+2)}
      \begin{array}{c}
        \begin{tikzpicture}[xscale=.3,yscale=-0.25]
          \begin{scope}
            \clip (-3,3.5) rectangle (3.5,-3.5);
            \draw[fill=blue!20!white] (0,4) rectangle (4,-4);
%          \draw[fill=red!20!white] (3,4) rectangle (4,-4);
%         \draw[fill=red!20!white] (1,4) to (1,1) to[out=-90,in=180] (1.5,0.5) to[out=0,in=-90] (2,1) to (2,4) to (1,4);
         \draw[fill=red!20!white] (1,-4) to (1,4) to (2,4)
         to[out=-90,in=180] (2.5,2) to[out=0,in=-90] (3,4) to (4,4) to (4,-4) to (3,-4)
         to[out=90,in=0] (2.5,-2) to[out=180,in=90] (2,-4) to (1,-4);
         \draw[fill=purple!20!white] (-4,4) rectangle (-2,-4);
         \draw[fill=white] (-2.5,3) rectangle (1.5,1);
         \draw[fill=white] (-2.5,-3) rectangle (1.5,-1);
         \node at (-.5,2) {$JW_{n_{\color{red}s}}$};
         \node at (-.5,-2) {$JW_{n_{\color{red}s}}$};
         \node at (-1,0) {$\dots$};
         \node at (-1,3.25) {$\dots$};
         \node at (-1,-3.25) {$\dots$};
       \end{scope}
       \draw[dashed] (-3,3.5) to (3.5,3.5);
       \draw[dashed] (-3,-3.5) to (3.5,-3.5);
     \end{tikzpicture}
      \end{array}+
       b^{(n+2)}      \begin{array}{c}
        \begin{tikzpicture}[xscale=.3,yscale=0.25]
          \begin{scope}
            \clip (-3,3.5) rectangle (3.5,-3.5);
            \draw[white, fill=red!20!white] (-.5,4) rectangle (4,-4);
            \draw[fill=blue!20!white] (0,4) to (0,1) to[out=-90,in=180] (1.5,.25) to[out=0,in=-90] (3,1) to (3,4) to (0,4);
            \draw[fill=blue!20!white] (0,-4) to (0,-1) to[out=90,in=180] (1.5,-.25) to[out=0,in=90] (3,-1) to (3,-4) to (0,-4);
          \draw[fill=red!20!white] (1,4) to (1,1) to[out=-90,in=180] (1.5,0.5) to[out=0,in=-90] (2,1) to (2,4) to (1,4);
          \draw[fill=red!20!white] (1,-4) to (1,-1) to[out=90,in=180] (1.5,-0.5) to[out=0,in=90] (2,-1) to (2,-4) to (1,-4);            
         \draw[fill=purple!20!white] (-4,4) rectangle (-2,-4);
         \draw[fill=white] (-2.5,3) rectangle (1.5,1);
         \draw[fill=white] (-2.5,-3) rectangle (1.5,-1);
         \node at (-.5,2) {$JW_{n_{\color{red}s}}$};
         \node at (-.5,-2) {$JW_{n_{\color{red}s}}$};
         \node at (-1,0) {$\dots$};
         \node at (-1,3.25) {$\dots$};
         \node at (-1,-3.25) {$\dots$};
       \end{scope}
       \draw[dashed] (-3,3.5) to (3.5,3.5);
       \draw[dashed] (-3,-3.5) to (3.5,-3.5);
     \end{tikzpicture}
      \end{array}
  \end{gather*}
Then $J$ satisfies death by cap (and is therefore equal to $JW_{(n+2)_\reds}$) if and only if the following equations hold.
\begin{subequations} \label{deathbycapconditionstwostep} 
\begin{equation} 1 + a_{12}^{(n+2)} - [2]_s a_{22}^{(n+2)} = 0, \qquad 1 + a_{21}^{(n+2)} + p_1^{(n)} a_{11}^{(n+2)} = 0, \end{equation}
\begin{equation} a_{22}^{(n+2)} + p_1^{(n)} a_{12}^{(n+2)} = 0, \qquad a_{11}^{(n+2)} - [2]_s a_{21}^{(n+2)} = 0, \end{equation}
\begin{equation} 1 + p_1^{(n)} a_{11}^{(n+2)} + p_2^{(n)} b^{(n+2)} = 0. \end{equation}
\end{subequations}
These equations have a solution if and only if $1 + [2]_s p_1^{(n)}$ is invertible and $p_2^{(n)}$ is invertible. In this case, letting 
\begin{equation} \label{kappadefn} \kappa^{(n+2)} = \kappa := \frac{-1}{1 + [2]_s p_1^{(n)}}, \end{equation} the unique solution is
\begin{gather*} \label{JWrecursiontwostep}
	\begin{array}{c}
        \begin{tikzpicture}[xscale=.3,yscale=0.25]
          \begin{scope}
            \clip (-3,3.5) rectangle (3,-3.5);
            \draw[fill=red!20!white] (0,4) rectangle (3,-4);
            \draw[fill=blue!20!white] (1,4) rectangle (2,-4);
         %    \draw[fill=red!20!white] (1,4) to (1,1) to[out=-90,in=180] (1.5,0.5) to[out=0,in=-90] (2,1) to (2,4) to (1,4);
         % \draw[fill=red!20!white] (1,-4) to (1,-1) to[out=90,in=180] (1.5,-0.5) to[out=0,in=90] (2,-1) to (2,-4) to (1,-4);            
         \draw[fill=purple!20!white] (-4,4) rectangle (-2,-4);
         \draw[fill=white] (-2.5,2) rectangle (2.5,-2);
         \node at (0,0) {$JW_{(n+2)_{\color{red}s}}$};
%         \node at (-1,0) {$\dots$};
         \node at (-1,2.7) {$\dots$};
         \node at (-1,-2.7) {$\dots$};
       \end{scope}
       \draw[dashed] (-3,3.5) to (3,3.5);
       \draw[dashed] (-3,-3.5) to (3,-3.5);
     \end{tikzpicture}
                  \end{array} = 
            \begin{array}{c}
        \begin{tikzpicture}[xscale=.3,yscale=0.25]
          \begin{scope}
            \clip (-3,3.5) rectangle (2.5,-3.5);
            \draw[fill=red!20!white] (0,4) rectangle (3,-4);
            \draw[fill=blue!20!white] (1,4) rectangle (2,-4);
         \draw[fill=purple!20!white] (-4,4) rectangle (-2,-4);
         \draw[fill=white] (-2.5,2) rectangle (0.5,-2);
         \node at (-.9,0) {$JW_{n_{\color{red}s}}$};
%         \node at (-1,0) {$\dots$};
         \node at (-1,2.7) {$\dots$};
         \node at (-1,-2.7) {$\dots$};
       \end{scope}
       \draw[dashed] (-3,3.5) to (2.5,3.5);
       \draw[dashed] (-3,-3.5) to (2.5,-3.5);
     \end{tikzpicture}
      \end{array}
+        [2]_s \kappa
      \begin{array}{c}
        \begin{tikzpicture}[xscale=.3,yscale=0.25]
          \begin{scope}
            \clip (-3,3.5) rectangle (3.5,-3.5);
            \draw[fill=blue!20!white] (0,4) rectangle (3,-4);
          \draw[fill=red!20!white] (3,4) rectangle (4,-4);
         \draw[fill=red!20!white] (1,4) to (1,1) to[out=-90,in=180] (1.5,0.5) to[out=0,in=-90] (2,1) to (2,4) to (1,4);
         \draw[fill=red!20!white] (1,-4) to (1,-1) to[out=90,in=180] (1.5,-0.5) to[out=0,in=90] (2,-1) to (2,-4) to (1,-4);            
         \draw[fill=purple!20!white] (-4,4) rectangle (-2,-4);
         \draw[fill=white] (-2.5,3) rectangle (1.5,1);
         \draw[fill=white] (-2.5,-3) rectangle (1.5,-1);
         \node at (-.5,2) {$JW_{n_{\color{red}s}}$};
         \node at (-.5,-2) {$JW_{n_{\color{red}s}}$};
         \node at (-1,0) {$\dots$};
         \node at (-1,3.25) {$\dots$};
         \node at (-1,-3.25) {$\dots$};
       \end{scope}
       \draw[dashed] (-3,3.5) to (3.5,3.5);
       \draw[dashed] (-3,-3.5) to (3.5,-3.5);
     \end{tikzpicture}
      \end{array}
      +        \kappa
      \begin{array}{c}
        \begin{tikzpicture}[xscale=.3,yscale=0.25]
          \begin{scope}
            \clip (-3,3.5) rectangle (3.5,-3.5);
            \draw[fill=blue!20!white] (0,4) rectangle (4,-4);
%          \draw[fill=red!20!white] (3,4) rectangle (4,-4);
         \draw[fill=red!20!white] (1,4) to (1,1) to[out=-90,in=180] (1.5,0.5) to[out=0,in=-90] (2,1) to (2,4) to (1,4);
         \draw[fill=red!20!white] (1,-4) to (1,-1) to[out=90,in=180]
         (1.5,-0.5) to[out=0,in=-90] (3,4) to (4,4) to (4,-4) to (3,-4)
         to[out=90,in=0] (2.5,-2) to[out=180,in=90] (2,-4) to (1,-4); 
         \draw[fill=purple!20!white] (-4,4) rectangle (-2,-4);
         \draw[fill=white] (-2.5,3) rectangle (1.5,1);
         \draw[fill=white] (-2.5,-3) rectangle (1.5,-1);
         \node at (-.5,2) {$JW_{n_{\color{red}s}}$};
         \node at (-.5,-2) {$JW_{n_{\color{red}s}}$};
         \node at (-1,0) {$\dots$};
         \node at (-1,3.25) {$\dots$};
         \node at (-1,-3.25) {$\dots$};
       \end{scope}
       \draw[dashed] (-3,3.5) to (3.5,3.5);
       \draw[dashed] (-3,-3.5) to (3.5,-3.5);
     \end{tikzpicture}
      \end{array}
            +    \\    + \kappa
      \begin{array}{c}
        \begin{tikzpicture}[xscale=.3,yscale=-0.25]
          \begin{scope}
            \clip (-3,3.5) rectangle (3.5,-3.5);
            \draw[fill=blue!20!white] (0,4) rectangle (4,-4);
%          \draw[fill=red!20!white] (3,4) rectangle (4,-4);
         \draw[fill=red!20!white] (1,4) to (1,1) to[out=-90,in=180] (1.5,0.5) to[out=0,in=-90] (2,1) to (2,4) to (1,4);
         \draw[fill=red!20!white] (1,-4) to (1,-1) to[out=90,in=180]
         (1.5,-0.5) to[out=0,in=-90] (3,4) to (4,4) to (4,-4) to (3,-4)
         to[out=90,in=0] (2.5,-2) to[out=180,in=90] (2,-4) to (1,-4); 
         \draw[fill=purple!20!white] (-4,4) rectangle (-2,-4);
         \draw[fill=white] (-2.5,3) rectangle (1.5,1);
         \draw[fill=white] (-2.5,-3) rectangle (1.5,-1);
         \node at (-.5,2) {$JW_{n_{\color{red}s}}$};
         \node at (-.5,-2) {$JW_{n_{\color{red}s}}$};
         \node at (-1,0) {$\dots$};
         \node at (-1,3.25) {$\dots$};
         \node at (-1,-3.25) {$\dots$};
       \end{scope}
       \draw[dashed] (-3,3.5) to (3.5,3.5);
       \draw[dashed] (-3,-3.5) to (3.5,-3.5);
     \end{tikzpicture}
      \end{array} -
       p_1^{(n)} \kappa
      \begin{array}{c}
        \begin{tikzpicture}[xscale=.3,yscale=-0.25]
          \begin{scope}
            \clip (-3,3.5) rectangle (3.5,-3.5);
            \draw[fill=blue!20!white] (0,4) rectangle (4,-4);
%          \draw[fill=red!20!white] (3,4) rectangle (4,-4);
%         \draw[fill=red!20!white] (1,4) to (1,1) to[out=-90,in=180] (1.5,0.5) to[out=0,in=-90] (2,1) to (2,4) to (1,4);
         \draw[fill=red!20!white] (1,-4) to (1,4) to (2,4)
         to[out=-90,in=180] (2.5,2) to[out=0,in=-90] (3,4) to (4,4) to (4,-4) to (3,-4)
         to[out=90,in=0] (2.5,-2) to[out=180,in=90] (2,-4) to (1,-4);
         \draw[fill=purple!20!white] (-4,4) rectangle (-2,-4);
         \draw[fill=white] (-2.5,3) rectangle (1.5,1);
         \draw[fill=white] (-2.5,-3) rectangle (1.5,-1);
         \node at (-.5,2) {$JW_{n_{\color{red}s}}$};
         \node at (-.5,-2) {$JW_{n_{\color{red}s}}$};
         \node at (-1,0) {$\dots$};
         \node at (-1,3.25) {$\dots$};
         \node at (-1,-3.25) {$\dots$};
       \end{scope}
       \draw[dashed] (-3,3.5) to (3.5,3.5);
       \draw[dashed] (-3,-3.5) to (3.5,-3.5);
     \end{tikzpicture}
      \end{array}+
       \frac{\kappa}{p_2^{(n)}}      \begin{array}{c}
        \begin{tikzpicture}[xscale=.3,yscale=0.25]
          \begin{scope}
            \clip (-3,3.5) rectangle (3.5,-3.5);
            \draw[white, fill=red!20!white] (-.5,4) rectangle (4,-4);
            \draw[fill=blue!20!white] (0,4) to (0,1) to[out=-90,in=180] (1.5,.25) to[out=0,in=-90] (3,1) to (3,4) to (0,4);
            \draw[fill=blue!20!white] (0,-4) to (0,-1) to[out=90,in=180] (1.5,-.25) to[out=0,in=90] (3,-1) to (3,-4) to (0,-4);
          \draw[fill=red!20!white] (1,4) to (1,1) to[out=-90,in=180] (1.5,0.5) to[out=0,in=-90] (2,1) to (2,4) to (1,4);
          \draw[fill=red!20!white] (1,-4) to (1,-1) to[out=90,in=180] (1.5,-0.5) to[out=0,in=90] (2,-1) to (2,-4) to (1,-4);            
         \draw[fill=purple!20!white] (-4,4) rectangle (-2,-4);
         \draw[fill=white] (-2.5,3) rectangle (1.5,1);
         \draw[fill=white] (-2.5,-3) rectangle (1.5,-1);
         \node at (-.5,2) {$JW_{n_{\color{red}s}}$};
         \node at (-.5,-2) {$JW_{n_{\color{red}s}}$};
         \node at (-1,0) {$\dots$};
         \node at (-1,3.25) {$\dots$};
         \node at (-1,-3.25) {$\dots$};
       \end{scope}
       \draw[dashed] (-3,3.5) to (3.5,3.5);
       \draw[dashed] (-3,-3.5) to (3.5,-3.5);
     \end{tikzpicture}
      \end{array}
  \end{gather*}
Moreover, we have
\begin{equation} \label{eq:p1p2recursion} p_1^{(n+2)} = -[2]_t - p_1^{(n)} \kappa, \qquad p_2^{(n+2)} = [3] + \kappa(-[3] + \frac{1}{p_2^{(n)}}). \end{equation}  
\end{thm}

\begin{proof} Let us ignore the superscripts $(n+2)$ in the proof. If $J$ satisfies death by cap, then it is killed by composition with any of the three diagrams with coefficients $a_{21}$, $a_{22}$, and $b$. Relatively straightforward computations (exercises for the reader) show that these three compositions are zero if and only if \eqref{deathbycapconditionstwostep} holds.

Let us compute the solution to \eqref{deathbycapconditionstwostep}. Using two equations we get $a_{11} = [2]_s a_{21}$ and $a_{22} = -p_1^{(n)} a_{12}$. Plugging these back in to the first equation we get
\[ a_{12}(1 + [2]_s p_1^{(n)}) = -1, \]
from which we deduce that $a_{12} = \kappa$. Similarly, $a_{21} = \kappa$. 
The last equation becomes
\[ p_2^{(n)} b = -1-[2]_s p_1^{(n)} a_{21} = \frac{-1-[2]_s p_1^{(n)} + [2]_s p_1^{(n)}}{1 + [2]_s p_1^{(n)}} = \kappa. \]
Thus $b = \frac{\kappa}{p_2^{(n)}}$.

Computing $p_1^{(n+2)}$ from \eqref{JWrecursiontwostep} is easy. The only diagrams which could possibly yield a multiple of the identity after taking the partial trace are the first diagram (which produces $-[2]_t$), and the diagram with coefficient $a_{22}$.

Computing $p_2^{(n+2)}$ from \eqref{JWrecursiontwostep} is relatively straightforward. Once is computing the coefficient of $JW_{n_{\reds}}$ in the double trace of $JW_{(n+2)_{\reds}}$. The first diagram yields $[2]_s [2]_t$, the second diagram yields $-[2]_t a_{11}$, the third diagram yields $a_{12}$ and the fourth diagram yields $a_{21}$, the fifth diagram yields $-[2]_s a_{22}$, and the final diagram yields $b$. Adding these together and performing easy simplifications, one obtains the desired formula \eqref{eq:p1p2recursion}. \end{proof}

Now we solve the recursion set up in Theorem \ref{thm:twosteprecursion}, to deduce the recursive formula for odd Jones-Wenzl projectors.

\begin{thm} \label{thm:twostepsolved} Suppose that $n$ is odd. Suppose that either $[k]_t$ or $[k]_s$ is invertible for all $1 \le k \le n$. For $\ell \ge 3$ odd, set $\frac{[\ell-3]}{[\ell-1]}$ to be either $\frac{[\ell-3]_s}{[\ell-1]_s}$ or $\frac{[\ell-3]_t}{[\ell-1]_t}$, whichever formula has an invertible denominator. Note that if both denominators are invertible then the two formulas agree by \eqref{kover2}. Then $JW_{n_{\reds}}$ exists, and 
\begin{equation} \label{eq:recursionsolved} \kappa^{(n)} = \frac{[n-2]}{[n]}, \qquad b^{(n)} = \frac{[n-2][n-3]}{[n][n-1]}, \qquad p_1^{(n)} = \frac{-[n+1]_t}{[n]}, \qquad p_2^{(n)} = \frac{[n+1]}{[n-1]}. \end{equation}
By swapping the roles of the two colors, $JW_{n_{\bluet}}$ also exists.

If $[n+1]_s = [n+1]_t = 0$, then $JW_{n_{\reds}}$ and $JW_{n_{\bluet}}$ are rotatable.
\end{thm}

\begin{proof} We prove that $JW_{n_{\reds}}$ exists by induction on $n$ odd, using Theorem \ref{thm:twosteprecursion}. Note that $JW_1$ exists, and we set $a_{ij}^{(1)} = 0$ and $b^{(1)} = 0$. It is easy to verify that \eqref{eq:recursionsolved} solves the recursive formulas \eqref{kappadefn} and \eqref{JWrecursiontwostep} and \eqref{eq:p1p2recursion}.

To prove the rotatability statement, we need to prove that the rotation of $JW_n$ satisfies death by caps. Without loss of generality, we assume that $[n-1]_t$ is invertible. We know that $JW_{n-2}$ exists, and $p_1^{(n-2)} = \frac{-[n-1]_t}{[n-2]}$, which is invertible. Thus we can use \eqref{JWrecursion} to prove that $JW_{(n-1)_\bluet}$ exists (although $JW_{(n-1)_{\reds}}$ need not)! Now $p_1^{(n)} = 0$, which combined with \eqref{ptrvsptr1} implies that the partial trace of $JW_{n_{\reds}}$ is the zero morphism. Thus the rotation of $JW_{n_{\reds}}$ by one strand satisfies death by caps, and is a multiple of $JW_{n_{\bluet}}$. Note that the rotation of $JW_{n_{\reds}}$ by two strands also must satisfy death by caps, and thus is a multiple of $JW_{n_{\reds}}$. This implies that the rotation of $JW_{n_{\bluet}}$ by one strand is a multiple of $JW_{n_{\reds}}$ (though we could not use the same argument with colors swapped to deduce this). In conclusion, the Jones-Wenzl projectors $JW_n$ are rotatable. \end{proof}

Now we apply Theorem \ref{thm:twostepsolved} in the special case where $[2]_s = 0$.

\begin{thm} \label{thm:charpcase} Suppose that $[2]_s = 0$ and $[2]_t \ne 0$, and $\Bbbk$ is a field. If the characteristic of $\Bbbk$ is zero, then all odd Jones-Wenzl projectors exist. If the characteristic of $\Bbbk$ is $p$, then $JW_{2k+1}$ exists for all $0 \le k < p$, and $JW_{2p-1}$ is rotatable. \end{thm}

\begin{proof} By \eqref{weirdcasequantums}, all odd quantum numbers are invertible, and $[2k]_t$ is invertible for $k$ less than the characteristic. In characteristic $p$, $[2p]_t = [2p]_s = 0$. Now the result is an immediate consequence of Theorem \ref{thm:twostepsolved}. \end{proof}
	
\subsection{Jones-Wenzl projectors in crystallographic
  type} \label{subsec:ABG} Because they appear most frequently, we wish to summarize the conditions needed for $JW_{m-1}$ to exist and be rotatable, when given a realization of $I_2(m)$ over a domain, with $m
\in \{2,3,4,6\}$. Thanks to Simon Riche for inspiring and assisting with this section.

\begin{lem} In type $A_1 \times A_1$, $JW_1$ always exists and is always rotatable and balanced. \end{lem}
	
\begin{proof} This is obvious.  Note that $\ptr(JW_{1_{\reds}}) = -[2]_s$ and $\ptr(JW_{1_{\bluet}}) = -[2]_t$, and these are both zero by the
definition of a realization. \end{proof}

\begin{lem} In type $A_2$, $JW_2$ always exists and is rotatable. \end{lem}
	
\begin{proof} Note in general that $JW_2$ exists when $[2]_s$ and $[2]_t$ are invertible. Since $JW_1$ always exists, $JW_2$ is rotatable if $\ptr_1(JW_2) = 0$, which holds when $[3]=0$. By the definition of a realization, $[3] = 0$, and consequently $[2]_s [2]_t = 1$, so both are invertible.  \end{proof}
	
Note however when $m=3$ that the realization need not be balanced! There is no requirement that $[2]_s = [2]_t = 1$.

\begin{lem} In type $B_2$, $JW_3$ always exists, and it is rotatable if and only if it is balanced if and only if $[3] = 1$. Moreover, any realization over a domain $\Bbbk$ where $JW_3$ is not rotatable satisfies: \begin{itemize} \item $[2]_s = [2]_t = 0$, and $2 \ne 0$. \end{itemize}  \end{lem}
	
\begin{proof} In general, $JW_3$ exists when $[3]$ is invertible. One can directly compute that $\ptr(JW_{3_{\reds}})$ is equal to the identity diagram with coefficient
$\frac{[4]_t}{[3]}$, and the cup-cap with coefficient $\frac{1 - [3]}{[3]}$. For a realization with $m=4$, we have $[4]_s = [4]_t = 0$ and $[3]^2 = 1$, so $JW_3$ exists, and it is
rotatable if $[3]=1$, which is the definition of balanced.

Since $[3]^2 = 1$ and $\Bbbk$ is a domain, $[3] = \pm 1$. In characteristic $2$, this implies $[3] = 1$ and the realization is balanced. Otherwise, suppose that $[3] = -1 \ne 1$. Then $[2]_s [2]_t = 0$, so at least one of them is zero. If $[2]_s = 0$ and $[2]_t \ne 0$ then by \eqref{weirdcasequantums}, $[4]_t = -2 [2]_t \ne 0$, a contradiction. Thus $[2]_s = [2]_t = 0$. \end{proof}

\begin{lem} In type $G_2$, $JW_5$ exists if and only if $[3]-1$ is invertible. Moreover, for any realization over a domain $\Bbbk$ where $JW_5$ exists, it is rotatable if and only if $[3] = 2$ (which implies that $[5] = 1$, so the realization is balanced). Moreover, any realization over a domain for which $JW_5$ does not exist or is not rotatable satisfies one of the following:
\begin{itemize}
\item $[2]_s = [2]_t = 0$ and $2$ is not invertible ($JW_5$ does not exist).
\item $[2]_s = [2]_t = 0$ and $2$ is invertible but $3 \ne 0$ ($JW_5$ exists but is not rotatable).
\item $[3]=0$ and $2 \ne 0$ ($JW_5$ exists but is not rotatable).
\end{itemize}
 \end{lem}

\begin{proof} It is straightforward if time-consuming to compute $JW_5$ generically. An explicit formula for $JW_{5_{\bluet}}$ can be found in \cite[Figure 2.1]{AMRWmonodromic} One deduces that $JW_5$ exists if and only if $[5]$ and
$\frac{[4]_t}{[2]_t}$ are invertible. They must be invertible for existence, since two of the coefficients appearing are $\mu = \frac{1}{[5]}$ and $\nu = \frac{[2]_t}{[4]_t}$. The remaining
coefficients live in $\ZM[[2]_s, [2]_t] \cdot \mu \nu$. Note that $\frac{[4]_t}{[2]_t} = [3]-1 = \frac{[4]_s}{[2]_s}$, so by symmetry $JW_{5_{\reds}}$ exists if and only if $JW_{5_{\bluet}}$ exists.

When $[6]_s = [6]_t = 0$ (as in any realization of type $G_2$), $[5]$ is invertible since $[5]^2 = 1$. So $JW_5$ exists if and only if $[3]-1$ is invertible.

Note that $[6]_t = [2]_t \cdot [3] \cdot ([3] - 2)$. Over a domain, at least one of these factors must be zero. If $[3] = 0$ or $[3] = 2$ then $[3]-1$ is invertible and $JW_5$ exists. Otherwise, we must have $[2]_s = [2]_t = 0$, in which case $[3] = -1$. Then $[3]-1 = -2$ is invertible if and only if $2$ is invertible. Note that $[2]_s = [2]_t = 0$ in characteristic $2$ yields a well-defined, balanced realization in type $G_2$, but one where $JW_5$ does not exist.

Assume now that $[3]-1$ is invertible. One can explicitly compute $\ptr(JW_{5_{\reds}})$. Every coefficient appearing is either a multiple of $[6]_t$, a
multiple of $\frac{[6]_t}{[2]_t}$, or a multiple of $\frac{[6]_t}{[3]}$. Therefore, $JW_{5_{\reds}}$ is rotatable if and only if $\frac{[6]_t}{[2]_t}$ and $\frac{[6]_t}{[3]}$ are both zero. (Note that this should also be equivalent to the condition that $JW_{5_{\bluet}}$ is rotatable, which requires that $\frac{[6]_s}{[3]} = 0$ as well. We will see below that this is a consequence.)

Both $\frac{[6]_t}{[2]_t}$ and $\frac{[6]_t}{[3]}$ (and $\frac{[6]_s}{[3]}$) have a common factor $[3] - 2$, so if $[3] = 2$ then $JW_5$ is rotatable. Also, if $[3] = 2$ then $[3]^2 = 1 + [3] + [5]$ implies that $[5] = 1$.

Over a domain, if $[3] \ne 2$, then $\frac{[6]_t}{[3]} = 0$ implies $[2]_t = 0$. In this case, using \eqref{weirdcasequantums}, $\frac{[6]_t}{[2]_t} = [5] - [3] + 1 = 3$, so $3 =
0$. But then $[3] = -1 = 2$. So, however you slice it, $[3] = 2$ is required for rotatability. Note that we get a rotatable Jones-Wenzl projector in characteristic $3$ if $[2]_t =
0$ or $[2]_s = 0$ or both, a result slightly more general than Theorem \ref{thm:charpcase}.

Finally, if $[6]_t = [6]_s = 0$ over a domain, but $[3] \ne 2$, then either $[3] = 0 \ne 2$, or $[2]_s = [2]_t = 0$. In the latter case, $[3] = -1 \ne 2$, so $3 \ne 0$.
 \end{proof}

\section{The unbalanced case} \label{sec-unbalanced}

The definition of a balanced and unbalanced realization was given above in \S\ref{subsec:unbalanced}. Let us describe the modifications to the construction and proofs above
required for the unbalanced case. For more details on everything in this section, see the appendix of \cite{EDC}.

\subsection{Positive roots} \label{subsec:rootspolys}

It is more convenient to write \eqref{eq:m-k} in a fashion which depends on the parity of $m$ rather than that of $k$. One has
\begin{equation} \label{eq:m-keven} [m-1][k]_s = [m-k]_s, \quad [m-1][k]_t = [m-k]_t, \quad \text{ if $m$ is even,} \end{equation}
\begin{equation} \label{eq:m-kodd} [m-1]_s [k]_t = [m-k]_s, \quad [m-1]_t [k]_s = [m-k]_t, \quad \text{ if $m$ is odd.} \end{equation}	

Recall from \S\ref{subsec-pos-roots} the definition of $X_s$ and $X_t$. Let us compare the sets
\[ \{ \beta_{\un{w}} \mid \un{w} \in X_s \} \quad \text{and} \quad \{ \beta_{\un{w}} \mid \un{w} \in X_t\}, \]
which (morally speaking) represent two different ways one might attempt to enumerate the ``positive roots.''

Let $\un{x} = (s,t,s,\ldots)$ be an alternating expression which starts in $s$, and has length $k > 0$. One can verify that (see \S\ref{subsec-pos-roots} for the definition of $\beta_{\un{x}}$)
\begin{equation} \label{eq:betax} \beta_{\un{x}} = [k]_s \alpha_s + [k-1]_t \alpha_t. \end{equation}
In particular, if $k=m$ so that $\un{x}$ is a reduced expression for the longest element, then
\begin{equation} \beta_{\un{x}} = [m-1]_t \alpha_t. \end{equation}
This is an indication of the importance of the quantum number $[m-1]$.

Similarly, let $\un{y} = (t, s, t,\ldots)$ be an alternating expression which starts in $t$, and has length $m+1-k > 0$. Then by reversing the roles of $s$ and $t$ we have
\begin{equation} \label{eq:betay} \beta_{\un{y}} = [m-k]_s \alpha_s + [m+1-k]_t \alpha_t. \end{equation}

Now let $1 \le k \le m$ so that both \eqref{eq:betax} and \eqref{eq:betay} hold. The comparison between $\beta_{\un{x}}$ and $\beta_{\un{y}}$ is different based on whether $m$ is even or odd. If $m$ is even then
\begin{equation} \beta_{\un{y}} = [m-1] \cdot ([k]_s \alpha_s + [k-1]_t \alpha_t) = [m-1] \beta_{\un{x}}. \end{equation}
Meanwhile, if $m$ is odd then the comparison depends on whether $k$ is even or odd. If $m$ is odd and $k$ is odd then
\begin{equation} \beta_{\un{y}} = [m-k]_s \alpha_s + [m+1-k] \alpha_t = [m-1]_s [k] \alpha_s + [m-1]_s [k-1]_t \alpha_t = [m-1]_s \cdot \beta_{\un{x}}. \end{equation}
If $m$ is odd and $k$ is even then
\begin{equation} \beta_{\un{y}} = [m-k] \alpha_s + [m+1-k]_t \alpha_t = [m-1]_t [k]_s \alpha_s + [m-1]_t [k-1] \alpha_t = [m-1]_t \cdot \beta_{\un{x}}. \end{equation}
In a balanced realization, $\beta_{\un{y}} = \beta_{\un{x}}$.

Define $\pi_{s,t}$ and $\pi_{t,s}$ as in \eqref{eq:pist} and \eqref{eq:pits}. Thus $\pi_{s,t}$ is the product of terms like $\beta_{\un{x}}$ above, and $\pi_{t,s}$ is the product of terms like $\beta_{\un{y}}$ above. Consequently,
\begin{equation} \pi_{t,s} = [m-1]^m \pi_{s,t} = \pi_{s,t}, \quad \text{ if $m$ is even}, \end{equation}
where we used that $[m-1]^2 = 1$. Similarly, 
\begin{equation} \pi_{t,s} = [m-1]_s [m-1]_t \cdots [m-1]_s \pi_{s,t} = [m-1]_s \pi_{s,t}, \quad \text{ if $m$ is odd}, \end{equation}
where we used that $[m-1]_s [m-1]_t = 1$. Equivalently, 
\begin{equation} \label{eq:pistvspits} \pi_{s,t} = [m-1]_t \pi_{t,s} \quad \text{ if $m$ is odd}. \end{equation}
	
\begin{ex} Suppose that $m_{st} = 3$ and $[2]_t = q^{-1}$ and $[2]_s = q$. Then $s(\a_t) = \a_t + q \a_s$ and $t(\a_s) = \a_s + q^{-1} \a_t$, which differ by a factor of $q$. We
have $\pi_{s,t} = q^{-1} \a_s \a_t s(\a_t)$ and $\pi_{t,s} = \a_s \a_t s(\a_t)$. \end{ex}

\begin{ex} Suppose that $m_{st} = 4$ and $[2]_s = [2]_t = 0$, so that $[3] = -1$. Then $s(\a_t) = \a_t$ and $t(\a_s) = \a_s$. Both $\pi_{s,t}$ and $\pi_{t,s}$ are equal to $\a_s
\a_t (-\a_s) (-\a_t) = \a_s^2 \a_t^2$. \end{ex}

\begin{lem} \label{lem:antiinvtiffevenbalanced} The polynomial $\pi_{s,t}$ (resp. $\pi_{t,s}$) is anti-invariant, meaning that
\begin{equation} s(\pi_{s,t}) = t(\pi_{s,t}) = - \pi_{s,t}, \end{equation}
if and only if the realization is even-balanced. \end{lem}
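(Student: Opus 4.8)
The plan is to reduce the claim to a computation about the action of $s$ and $t$ on the individual root factors appearing in $\pi_{s,t}$, and then to recognize exactly when that computation forces even-balancedness. Recall that $\pi_{s,t} = \prod_{\un{w} \in X_s} \beta_{\un{w}}$, where $X_s = \{(s), (s,t), (s,t,s), \ldots\}$ lists the leading subexpressions of the alternating word of length $m$. First I would use \eqref{eq:betax}: for the leading expression $\un{x}$ of length $k$ we have $\beta_{\un{x}} = [k]_s \alpha_s + [k-1]_t \alpha_t$. The key observation is that $s$ fixes the span of $\alpha_s, \alpha_t$ and acts by a reflection, so $s$ permutes the lines $\Bbbk\cdot\beta_{\un{w}}$; and by the discussion just before the statement of Lemma~\ref{lem:antiinvtiffevenbalanced} (i.e.\ using the two enumerations $X_s$, $X_t$ and the colinearity relations $\beta_{\un{y}} = c\,\beta_{\un{x}}$ for the appropriate scalar $c \in \{[m-1], [m-1]_s, [m-1]_t\}$), $s$ sends each $\beta_{\un{w}}$ for $\un{w}\in X_s$ to a scalar multiple of another element $\beta_{\un{w}'}$ with $\un{w}'$ corresponding (under $s\leftrightarrow t$, $k \leftrightarrow m-k$) to a leading expression in $X_t$. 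More precisely, $s(\beta_{\un{x}}) = \beta_{\un{x}} - \beta_{\un{x}}^\vee(\cdots)$, but cleaner: $s$ applied to the length-$k$ root $\beta_{\un{x}}$, where $\un{x} = (s,t,s,\ldots)$, equals $\beta_{\un{x}'}$ up to sign, where $\un{x}' = (s, s, t, s, \ldots)$ of length $k+1$ which reduces; equivalently $s\beta_{\un{w}}$ runs over the roots of the $X_t$-enumeration as $\un{w}$ runs over $X_s$.

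The heart of the argument is then this. Applying $s$ to $\pi_{s,t} = \prod_{\un{w}\in X_s}\beta_{\un{w}}$ gives $s(\pi_{s,t}) = \prod_{\un{w}\in X_s} s(\beta_{\un{w}})$, and by the permutation-up-to-scalar property this equals $\epsilon \cdot \prod_{\un{y}\in X_t}\beta_{\un{y}} = \epsilon \cdot \pi_{t,s}$ for some sign-times-scalar $\epsilon$; one must track $\epsilon$ carefully. One of the $m$ factors, namely $\beta_{(s)} = \alpha_s$, is sent to $-\alpha_s$ by $s$, contributing a $-1$; the remaining factors are sent to positive multiples of the corresponding $X_t$-roots (with scalar $1$ in the even case and with the scalars $[m-1]_s$ or $[m-1]_t$ interleaving in the odd case — exactly as in the colinearity formulas displayed before the lemma). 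So in the even case $s(\pi_{s,t}) = -[m-1]^{m-1}\pi_{t,s}$; combining with $[m-1]^2 = 1$ (from \eqref{eq:lambdainverseeven}) and $\pi_{t,s} = \pi_{s,t}$ (which holds in the even case from $\pi_{t,s} = [m-1]^m\pi_{s,t}$), we get $s(\pi_{s,t}) = -[m-1]^{m-1}\pi_{s,t}$. Since $m$ is even, $m-1$ is odd, so $[m-1]^{m-1} = [m-1]$, and hence $s(\pi_{s,t}) = -\pi_{s,t}$ \emph{if and only if} $[m-1] = 1$, i.e.\ the pair is balanced. In the odd case, an analogous bookkeeping shows $s(\pi_{s,t})$ is a scalar multiple of $\pi_{s,t}$ by $-[m-1]_t$ (using \eqref{eq:pistvspits}), which equals $-\pi_{s,t}$ iff $[m-1]_t = 1$; but $m$ odd is not an ``even'' pair so even-balancedness imposes no constraint there, and one must check this against the correct formulation — more on this below. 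The same computation with $t$ in place of $s$ gives the symmetric conclusion, and $t(\pi_{s,t}) = -\pi_{s,t}$ iff $[m-1]_s = 1$.

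The main obstacle I anticipate is getting the odd-$m$ case to say the right thing and reconciling it with the definition of ``even-balanced,'' which only demands $[m_{st}-1]_s = [m_{st}-1]_t = 1$ for pairs with $m_{st}$ \emph{even}. The resolution is presumably that when $m$ is odd, $\pi_{s,t}$ is \emph{never} anti-invariant unless $[m-1]_s = [m-1]_t = 1$, but since $[m-1]_s[m-1]_t = 1$ (from \eqref{eq:lambdainverse}), this forces $[m-1]_s = [m-1]_t = \pm 1$; one then argues the sign is $+1$ (e.g.\ because a realization requires $\alpha_s$ itself to appear, and the leading term of $[m-1]_s$ as a polynomial in the $x$'s is monic, so over the generic ring it cannot be $-1$), hence anti-invariance for odd $m$ would force \emph{balancedness} for that pair, which is a strictly stronger condition not implied by even-balancedness. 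But Lemma~\ref{lem:antiinvtiffevenbalanced} as stated fixes a single pair $\{s,t\}$ with $m = m_{st}$, so the cleanest reading is: for a fixed pair, $\pi_{s,t}$ is anti-invariant iff ($m$ even and $[m-1]=1$) or ($m$ odd and $[m-1]_s = [m-1]_t = 1$), and the paper's phrase ``even-balanced'' in the lemma should be understood pairwise, agreeing with balancedness-of-the-pair when $m$ is odd automatically, or — more likely — the lemma is really asserting the global statement and the odd case is vacuous/automatic because for odd $m$ anti-invariance of $s(\pi_{s,t})$ and $t(\pi_{s,t})$ together with $[m-1]_s[m-1]_t = 1$ already pins things down. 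I would sort out this bookkeeping first (it is the only subtle point), then the even case is a two-line consequence of \eqref{eq:lambdainverseeven} and the colinearity formulas, and the ``only if'' and ``if'' directions both fall out of the single displayed identity $s(\pi_{s,t}) = -[m-1]^{m-1}\pi_{s,t}$.
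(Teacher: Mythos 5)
Your overall strategy is the same as the paper's (apply $s$ to $\pi_{s,t}$ factor by factor and track what happens), but the central computational step is wrong, and the error propagates into your treatment of the odd-$m$ case.

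The key fact you need is \emph{not} the colinearity relation between the two enumerations. You have conflated two genuinely different correspondences between $X_s$-roots and $X_t$-roots. The colinearity relation (discussed just before the lemma) pairs the $X_s$-root of length $k$ with the $X_t$-root of length $m+1-k$, up to a scalar involving $[m-1]$. The $s$-action does something else entirely: for $\un{x} \in X_s$ of length $k \ge 2$, write $\un{x} = (s) \cdot \un{y}$ where $\un{y} \in X_t$ has length $k-1$; then directly from the definition $\beta_{\un{x}} = s(\beta_{\un{y}})$, so $s(\beta_{\un{x}}) = \beta_{\un{y}}$ \emph{on the nose}, with no scalar at all, for every $k \ge 2$ and every parity of $m$. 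Together with $s(\alpha_s) = -\alpha_s$, this gives
\[
s(\pi_{s,t}) = (-\alpha_s) \cdot \prod_{\substack{\un{y} \in X_t \\ |\un{y}| \le m-1}} \beta_{\un{y}}
= \frac{-\alpha_s}{\beta_{\un{y}_m}} \, \pi_{t,s}
= \frac{-1}{[m-1]_s} \, \pi_{t,s},
\]
where $\beta_{\un{y}_m} = [m-1]_s\alpha_s$ is the one missing factor of $\pi_{t,s}$. There are no interleaving scalars $[m-1]_s$, $[m-1]_t$ coming from the action on individual roots; the \emph{only} scalar comes from the single missing factor.

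Because your bookkeeping is wrong, your odd case comes out wrong: you conclude that anti-invariance for odd $m$ requires $[m-1]_t = 1$, and then spend a paragraph trying to reconcile this with the definition of ``even-balanced.'' There is nothing to reconcile. Plugging $\pi_{t,s} = [m-1]_s\pi_{s,t}$ (valid for odd $m$) into the displayed formula gives $s(\pi_{s,t}) = -\pi_{s,t}$ \emph{unconditionally} when $m$ is odd; the odd case imposes no constraint, which is exactly why the lemma is phrased in terms of even-balancedness. Your even case happens to land on the right answer $s(\pi_{s,t}) = -[m-1]\pi_{s,t}$ after you invoke $[m-1]^2 = 1$, but the intermediate step $s(\pi_{s,t}) = -[m-1]^{m-1}\pi_{t,s}$ is not what the computation produces, and I cannot reconstruct a correct argument from the reasoning you give (you assert all scalars are $1$ in the even case, which would yield $-\pi_{t,s}$, not $-[m-1]^{m-1}\pi_{t,s}$). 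Fix the root-by-root $s$-action as above and both cases fall out immediately and uniformly.
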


\begin{proof} Since $\pi_{s,t}$ and $\pi_{t,s}$ are colinear, we need only investigate $\pi_{s,t}$. Apply $s$ to
\[ \pi_{s,t} = \prod_{\un{x} \in X_s} \beta_{\un{x}}. \]
The first term in the product (when $X_s$ is sorted by length) is $\alpha_s$, which goes to $-\alpha_s$. The remaining $m-1$ terms in the product go to the first $m-1$ terms in $\pi_{t,s}$, by the definition of these terms. The final missing term in $\pi_{t,s}$ is $[m-1]_s \alpha_s$. Consequently, 
\begin{equation} s(\pi_{s,t}) = \frac{-\a_s}{[m-1]_s \a_s} \pi_{t,s} = \frac{-1}{[m-1]_s} \pi_{t,s}. \end{equation}
This leads to a major difference depending on the parity of $m$. If $m$ is odd then $\pi_{t,s} = [m-1]_s \pi_{s,t}$ so
\begin{equation} s(\pi_{s,t}) = -\pi_{s,t}, \quad \text{ if $m$ is odd}. \end{equation}
Meanwhile, if $m$ is even then $\pi_{s,t} = \pi_{t,s}$ so
\begin{equation} s(\pi_{s,t}) = -[m-1] \pi_{s,t}, \quad \text{ if $m$ is even}. \end{equation}
So when $m$ is even, $\pi_{s,t}$ is $s$-antiinvariant if and only if the realization is balanced for $\{s,t\}$. This can be witnessed in the examples for $m=3$ and $m=4$ above.
\end{proof}

\subsection{The definition of the category}

Now we remind the reader about some features of $2m$-valent vertices, and how the definition of the category $\HC$ changes in the unbalanced case. To motivate
this story, we hold as self-evident that the Soergel Hom formula should describe the dimension of the morphism spaces in $\HC$.

Pick a pair $\reds \ne \bluet \in S$ with $m = m_{st} < \infty$. Let $w_0$ denote the longest element of this dihedral parabolic subgroup. By the Soergel Hom formula, the space of
degree zero morphisms between any two of these three objects should be one-dimensional: $B_s B_t \cdots$, $B_t B_s \cdots$, and $B_{w_0}$ (both expressions have length $m$). If the
composition map $B_s B_t \cdots \to B_{w_0} \to B_t B_s \cdots$ were zero then the Soergel categorification theorem would fail, as these two expressions would not have a common top
summand. By construction, the $2m$-valent vertex will span the one-dimensional space of maps $B_s B_t \cdots \to B_t B_s \cdots$, so it agrees with the projection to the common summand $B_{w_0}$ up to scalar. Similarly, there is another $2m$-valent vertex spanning the degree zero morphisms from $B_t B_s \cdots$ to $B_s B_t \cdots$.

Here are four features of the $2m$-valent vertices which must be true if the Soergel Categorification Theorem is to hold. \begin{itemize}
\item Composing the $2m$-valent vertices should yield, up to invertible scalar, the idempotent projecting to the common summand $B_{w_0}$.
\item Rotating the $2m$-valent vertex by two strands will produce a scalar multiple of itself. Rotating by one strand will produce a scalar multiple of the other $2m$-valent vertex.
\item The $2m$-valent vertex satisfies death by pitchfork.
\item Placing $2$ adjacent dots on the $2m$-valent vertex will yield a degree $+2$ morphism built from univalent and trivalent vertices, which must be a deformation retract of a linear combination of crossingless matchings (see \cite[\S 5.3.3]{EDC}). So there is some relation like \eqref{Eq:JWreln}. \end{itemize}
What is this linear combination of crossingless matchings appearing in
\eqref{Eq:JWreln}? It must satisfy death by cap, in order for the
$2m$-valent vertex to satisfy death by pitchfork. It must have a
rotational eigenvalue, in order for the $2m$-valent vertex to have one
as well. It must have an invertible coefficient of the identity map,
in order for the composition of the $2m$-valent vertices to be an
idempotent up to invertible scalar. With these constraints, it must be
a scalar multiple of a rotatable Jones-Wenzl projector $JW_{m-1}$. If
there is no rotatable Jones-Wenzl projector, then there is no way to
define $\HC$ such that the properties listed above hold.

Henceforth we assume that $JW_{m-1}$ exists and is rotatable. By Lemma \ref{lem:rotinvce}, this implies that our realization is even-balanced. There is now nothing to discuss for dihedral groups with $m$ even, so we focus on the case where $m$ is odd.

The source of the difficulty in the unbalanced case is that rotating $JW_{(m-1)_{\reds}}$ by one strand yields $JW_{(m-1)_{\bluet}}$ (and vice versa) only up to the scalar factor
$[m-1]_{\reds}^{\pm 1}$. The appendix of \cite{EDC} suggests two diagrammatic options for keeping track of this discrepancy. The first option involves fixing once and for all a
scalar multiple of the Jones-Wenzl projector, and adding scalars to various relations in $\HC$ to make them match this choice. This makes the presentation much less intuitive and
easy to remember (although this option is to be preferred for singular Soergel calculus). The second option involves keeping track of two different versions of the Jones-Wenzl
projector, and two different versions of the $2m$-valent vertex, which agree with each other up to a scalar. This second option is our preference here. The details of how $\HC$
should be constructed in this second option can be found in \cite[\S 5.6]{EW}, having extrapolated the answer from \cite[Appendix]{EDC}. Because some statements (e.g. the analog of
\eqref{eq:alldots}) are not stated explicitly anywhere and would require extrapolation from multiple sources to piece together, we will recall everything in one place here. We use a different convention than \cite{EW} for shading the vertices.

In the definition of $\HC$ by generators and relations, there are now two different versions of the $2m$-valent vertex as morphism generators, one shaded $\reds$ and one shaded
$\bluet$. Both are rotation-invariant. We have
\begin{equation}
  \label{eq:rescale}
  \begin{array}{c}
    \begin{tikzpicture}[scale=0.7]
                    \draw[dashed] (-1.5,-1) to (1.5, -1); \draw[dashed] (-1.5,1) to (1.5, 1);
%\draw (0,1) rectangle (3.5,-1);
\draw[red] (-1,-1) to[out=90,in=-150] (0,0);
\draw[blue] (0,-1) to (0,0);
\draw[red] (1,-1) to[out=90,in=-30] (0,0);
\draw[blue] (-1,1) to[out=-90,in=150] (0,0);
\draw[red] (0,1) to (0,0);
\draw[blue] (1,1) to[out=-90,in=30] (0,0);
\draw[fill=blue,blue] (0,0) circle (3pt);
\end{tikzpicture} \end{array}
    =
    [m-1]_{\reds}
    \begin{array}{c}
        \begin{tikzpicture}[scale=0.7]
                    \draw[dashed] (-1.5,-1) to (1.5, -1); \draw[dashed] (-1.5,1) to (1.5, 1);
%\draw (0,1) rectangle (3.5,-1);
\draw[red] (-1,-1) to[out=90,in=-150] (0,0);
\draw[blue] (0,-1) to (0,0);
\draw[red] (1,-1) to[out=90,in=-30] (0,0);
\draw[blue] (-1,1) to[out=-90,in=150] (0,0);
\draw[red] (0,1) to (0,0);
\draw[blue] (1,1) to[out=-90,in=30] (0,0);
\draw[fill=red,red] (0,0) circle (3pt);
\end{tikzpicture}
    \end{array},
  \end{equation}
so these differently-shaded vertices are equal up to an invertible scalar. Because of this relation any diagram with $\reds$-shaded vertices can be replaced with a diagram with only $\bluet$-shaded vertices by rescaling. It helps to have both shadings in practice, and the relations of the category will be preserved by the symmetry that swaps $\reds$ with $\bluet$ (in diagrams and in coefficients, so $[m-1]_{\reds}$ is swapped with $[m-1]_{\bluet}$).

\begin{remark} The normalization on these vertices designed as follows. When the $\bluet$-shaded one is rotated so that it gives a morphism from $B_\bluet B_\reds \cdots$ to
$B_\reds B_\bluet \cdots$, then after applying $\FC$ it should send the 1-tensor to the 1-tensor. The same is true switching $\reds$ and $\bluet$. This is opposite to the convention used in \cite{EW}. \end{remark}

Now we explain how to modify the relations of $\HC$ accordingly. The
cyclicity relation now states that the $\bluet$-shaded $2m$-valent
vertex rotates to itself:
    \begin{equation}
  \label{eq: shadedcyclicity}
  \begin{array}{c}
    \begin{tikzpicture}[scale=0.7]
                    \draw[dashed] (-2.2,-1) to (2.2, -1); \draw[dashed] (-2.2,1) to (2.2, 1);
%\draw (0,1) rectangle (3.5,-1);
\draw[red] (-2,1) to[out=-90,in=180] (-1,-.5) to[out=0,in=-150] (0,0);
\draw[blue] (0,-1) to (0,0);
\draw[red] (1,-1) to[out=90,in=-30] (0,0);
\draw[blue] (-1,1) to[out=-90,in=150] (0,0);
\draw[red] (0,1) to (0,0);
\draw[blue] (2,-1) to[out=90,in=0] (1,.5) to[out=-180,in=30] (0,0);
\draw[fill=blue,blue] (0,0) circle (3pt);
\end{tikzpicture} \end{array}
=
  \begin{array}{c}
    \begin{tikzpicture}[scale=0.7]
                    \draw[dashed] (-1.5,-1) to (1.5, -1); \draw[dashed] (-1.5,1) to (1.5, 1);
%\draw (0,1) rectangle (3.5,-1);
\draw[blue] (-1,-1) to[out=90,in=-150] (0,0);
\draw[red] (0,-1) to (0,0);
\draw[blue] (1,-1) to[out=90,in=-30] (0,0);
\draw[red] (-1,1) to[out=-90,in=150] (0,0);
\draw[blue] (0,1) to (0,0);
\draw[red] (1,1) to[out=-90,in=30] (0,0);
\draw[fill=blue,blue] (0,0) circle (3pt);
\end{tikzpicture} \end{array}
  \end{equation}
After rescaling this implies that the $\reds$-shaded $2m$-valent vertex also rotates to itself.

The two-colored associativity relation now states the following:
  \begin{equation} \label{shadedassoc}
\begin{array}{c}
  \tikz[scale=0.8]{
              \draw[dashed] (1.5,-2) to (4.5, -2); \draw[dashed] (1.5,1) to (4.5,1);
\draw[color=red] (2.1,-1) to[out=90,in=-150] (3,0) to (3,1); \draw[color=red] (3.9,-2) to[out=90,in=-30] (3,0);
\draw[red] (2.5,-2) to (2.1,-1) to (1.5,-2);
\draw[color=blue] (2.1,1) to[out=-90,in=150] (3,0) to (3,-2);
  \draw[color=blue] (3.9,1) to[out=-90,in=30] (3,0);
\draw[fill=blue,blue] (3,0) circle (3pt);
  }
\end{array}
=
\begin{array}{c}
  \tikz[scale=0.7]{
                \draw[dashed] (1,-1) to (4.5, -1); \draw[dashed] (1,2.5) to (4.5, 2.5);
\draw[color=red] (2,-1) to[out=90,in=-150] (3,0);% to (3,1);
\draw[color=red] (3.9,-1) to[out=90,in=-30] (3,0);
\draw[color=blue] (2.1,1.5) to[out=-90,in=150] (3,0) to (3,-1);
  \draw[color=blue] (3.9,1) to[out=-90,in=30] (3,0);
\draw[blue] (3.9,1) to[out=90,in=-30] (3,2);
\draw[blue] (2.1,1.5) to (3,2) to (3,2.5);
  \draw[blue] (2.1,1.5) to[out=150,in=-90] (1.2,2.5);
  \draw[fill=blue,blue] (3,0) circle (3pt);
\draw[red] (3,0) to[out=90,in=-30] (2.1,1.5);
\draw[red] (1.5,-1) to[out=90,in=-150] (2.1,1.5);
  \draw[red] (2.1,1.5) to (2.1,2.5);
  \draw[fill=blue,blue] (2.1,1.5) circle (3pt);
}
\end{array}
\end{equation}

To state the analog of \eqref{Eq:JWreln}, we set 
\begin{equation} \JWalt_{\reds,\bluet} := \Sigma(JW_{(m-1)_{\reds}}), \qquad \JWalt_{\bluet,\reds} := \Sigma(JW_{(m-1)_{\bluet}}). \end{equation}
Note that $(m-1)_{\reds} = {}_{\reds} (m-1)$, because $m$ is odd. So $\JWalt_{\reds,\bluet}$ is an endomorphism of the alternating Bott-Samelson object $B_{\reds} B_{\bluet} \cdots$ of length $m$, and the coefficient of the identity is $1$. Just as in \S\ref{subsec:JWmain}, we can define morphisms $\JWalttwo_{\reds,\bluet}$, $JW_{\reds, \bluet}$, and $JW'_{\reds, \bluet}$. 

For example, $JW_{\reds,\bluet}$ is a degree $+2$ morphism from $B_{\reds} B_{\bluet} \cdots$ (alternating of length $2(m-1)$) to the monoidal identity. The fact that $JW_{(m-1)_{\reds}}$ has coefficient $+1$ of the identity implies that $JW_{\reds, \bluet}$ has coefficient $+1$ of the diagram
\begin{equation} \label{eq:coeffis1}
    \begin{array}{c}
        \begin{tikzpicture}[yscale=0.5,xscale=-0.7]
                    \draw[dashed] (-2,-1) to (2.5, -1); \draw[dashed] (-2,2) to (2.5, 2);
%\draw (0,1) rectangle (3.5,-1);
\draw[blue] (-1.5,-1) to[out=90,in=180] (0,1.5) to[out=0,in=90] (1.5,-1);
\draw[red] (-1,-1) to[out=90,in=180] (0,1) to[out=0,in=90] (1,-1);
\draw[blue] (-.5,-1) to[out=90,in=180] (0,.5) to[out=0,in=90] (.5,-1);
\draw[red] (0,-1) to[out=90,in=-90] (0,-.2);
\draw[fill=red,red] (0,-.2) circle (3pt);
\draw[red] (2,-1) to[out=90,in=-90] (2,-.2);
\draw[fill=red,red] (2,-.2) circle (3pt);
\end{tikzpicture}
    \end{array}
\end{equation}
Meanwhile, the coefficient in $JW_{\reds, \bluet}$ of the diagram
\begin{equation} \label{eq:coeffisnt1}
    \begin{array}{c}
        \begin{tikzpicture}[yscale=0.5,xscale=0.7]
                    \draw[dashed] (-2,-1) to (2.5, -1); \draw[dashed] (-2,2) to (2.5, 2);
%\draw (0,1) rectangle (3.5,-1);
\draw[red] (-1.5,-1) to[out=90,in=180] (0,1.5) to[out=0,in=90] (1.5,-1);
\draw[blue] (-1,-1) to[out=90,in=180] (0,1) to[out=0,in=90] (1,-1);
\draw[red] (-.5,-1) to[out=90,in=180] (0,.5) to[out=0,in=90] (.5,-1);
\draw[blue] (0,-1) to[out=90,in=-90] (0,-.2);
\draw[fill=blue,blue] (0,-.2) circle (3pt);
\draw[blue] (2,-1) to[out=90,in=-90] (2,-.2);
\draw[fill=blue,blue] (2,-.2) circle (3pt);
\end{tikzpicture}
    \end{array}
\end{equation}
is exactly the coefficient $c_{n-1}$ computed in the proof of Lemma \ref{lem:rotinvce}, namely $[m-1]_{\bluet}$.

Lemma \ref{lem:rotinvce} implies that
 \begin{equation} \label{Eq:whattocallit}
\begin{array}{c}
  \tikz[xscale=0.45,yscale=0.45]{
      \draw[dashed] (-1,-2.5) to (4.5, -2.5); \draw[dashed] (-1,3) to (4.5,3);
\draw (0,1) rectangle (3,-1);
\node at (1.5,0) {$JW_{{\color{red}s},{\color{blue}t}}$};
\draw[red] (.5,-1) to[out=-90,in=0] (0,-1.5) to[out=180,in=-90] (-1,0) to[out=90,in=180] (1.5,2.5) to[out=0,in=90] (3.5,0) to[out=-90,in=90] (3.5,-2.5);
  \draw[blue] (1,-1) to (1,-2.5);
  % to[out=-90,in=0] (0,-2) to[out=180,in=-90] (-1.5,0) to[out=90,in=180] (1.5,3) to[out=0,in=90] (4,0) to[out=-90,in=90] (4,-2.5);
  \draw[red] (1.5,-1) to (1.5,-2.5);
\draw[blue] (2,-1) to (2,-2.5);
%  \draw[red] (2.5,-1) to (2.5,-2.5);
%\node[circle,fill,draw,inner sep=0mm,minimum size=1mm,color=blue] at (1,-2) {};
\node at (2.8,-1.75) {\small $\dots$};
  } \end{array}
=
[m-1]_{\color{blue}t}
\begin{array}{c}
  \tikz[xscale=0.45,yscale=0.45]{
      \draw[dashed] (-1,-2.5) to (4.5, -2.5); \draw[dashed] (-1,3) to
  (4.5,3);
\draw (0,1) rectangle (3,-1);
\node at (1.5,0) {$JW_{{\color{blue}t},{\color{red}s}}$};
\draw[red] (.5,-1) to (.5,-2.5);
\draw[blue] (1,-1) to (1,-2.5);
  \draw[blue] (2,-1) to (2,-2.5);
\draw[red] (2.5,-1) to (2.5,-2.5);
%  \draw[red] (2.5,-1) to (2.5,-2.5);
%\node[circle,fill,draw,inner sep=0mm,minimum size=1mm,color=blue] at (1,-2) {};
\node at (1.55,-1.75) {\small $\dots$};
  } \end{array}
\end{equation}

Lemma \ref{lem:evaluation} implies that   \begin{equation}
    \label{eq:6again}
        \begin{array}{c}
  \begin{tikzpicture}[xscale=.5,yscale=1]
     \draw[dashed] (-1.6,-1) to (1.6, -1);
     \draw[dashed] (-1.6, 1) to (1.6, 1);
     \draw[red] (-1.2,-.8) to (-1.2,.8);
     \draw[blue] (-.7,-.8) to (-.7, .8);
     \node at (0,-.7) {$\dots$};
          \node at (0,.7) {$\dots$};
     \draw[blue] (.7,-.8) to (.7, .8);
     \draw[red] (1.2,-.8) to (1.2, .8);
     \draw[fill=white] (-1.4,-.6) rectangle (1.4,.6);
     \node at (0,0) {$\JWalt_{\reds,\bluet}$};
      \node[circle,fill,draw,inner
      sep=0mm,minimum size=1mm,color=red] at (-1.2,.8) {};
      \node[circle,fill,draw,inner
      sep=0mm,minimum size=1mm,color=red] at (1.2,.8) {};
      \node[circle,fill,draw,inner
      sep=0mm,minimum size=1mm,color=blue] at (-.7,.8) {};
            \node[circle,fill,draw,inner
            sep=0mm,minimum size=1mm,color=blue] at (.7,.8) {};
                  \node[circle,fill,draw,inner
      sep=0mm,minimum size=1mm,color=blue] at (-.7,-.8) {};
            \node[circle,fill,draw,inner
      sep=0mm,minimum size=1mm,color=blue] at (.7,-.8) {};
      \node[circle,fill,draw,inner
      sep=0mm,minimum size=1mm,color=red] at (1.2,-.8) {};
            \node[circle,fill,draw,inner
      sep=0mm,minimum size=1mm,color=red] at (-1.2,-.8) {};
    \end{tikzpicture}
        \end{array}
        =
        \begin{array}{c}
  \begin{tikzpicture}[xscale=.5,yscale=1]
     \draw[dashed] (-1.6,-1) to (1.6, -1);
     \draw[dashed] (-1.6,1) to (1.6,1);
     \draw[red] (-1.2,-.8) to (-1.2,0);
     \draw[blue] (-.7,-.8) to (-.7, .8);
     \node at (0,-.7) {$\dots$};
          \node at (0,.7) {$\dots$};
     \draw[blue] (.7,-.8) to (.7, .8);
     \draw[red] (1.2,0) to (1.2, .8);
     \draw[fill=white] (-1.4,-.6) rectangle (1.4,.6);
     \node at (0,0) {$\JWalttwo_{\reds,\bluet}$};
                       \node[circle,fill,draw,inner
      sep=0mm,minimum size=1mm,color=blue] at (-.7,-.8) {};
            \node[circle,fill,draw,inner
      sep=0mm,minimum size=1mm,color=blue] at (.7,-.8) {};
      \node[circle,fill,draw,inner
      sep=0mm,minimum size=1mm,color=red] at (-1.2,-.8) {};
                             \node[circle,fill,draw,inner
      sep=0mm,minimum size=1mm,color=blue] at (-.7,.8) {};
            \node[circle,fill,draw,inner
      sep=0mm,minimum size=1mm,color=blue] at (.7,.8) {};
      \node[circle,fill,draw,inner
      sep=0mm,minimum size=1mm,color=red] at (1.2,.8) {};
\end{tikzpicture}
        \end{array}
        = \pi_{\reds,\bluet},
      \end{equation}
exactly as for \eqref{eq:6}.

Now we can finally state the relation which resolves what happens when
a dot is placed on a $2m$-valent vertex. Analogous to \eqref{eq:JWlem}
we have
\begin{equation} \label{twocolordotvertex1}
    \begin{array}{c}
    \tikz[xscale=0.45,yscale=0.45]{
    
    \draw[dashed] (-1,-2.5) to (4.5, -2.5);
    \draw[dashed] (-1,2.5) to (4.5,2.5);
    \draw[red] (1.5,0) to[out=150,in=90] (-.5,-2.5);
    \draw[blue] (1.5,0) to[out=-150,in=90] (.25,-2.5);
    \draw[red] (1.5,0) to[out=-90,in=90] (1,-2.5);
    \draw[red] (1.5,0) to[out=-30,in=90] (3,-2.5);
    \draw[blue] (1.5,0) to[out=30,in=-150] (2.5,.5);
          \node[circle,fill,draw,inner sep=0mm,minimum
    size=2mm,color=red] at (1.5,0) {};
              \node[circle,fill,draw,inner sep=0mm,minimum size=1mm,color=blue] at (2.5,.5) {};
    % \draw (0,1) rectangle (3,-1);
%   \node at (1.5,0) {$JW_{{\color{red}s},{\color{blue}t}}$};
%   \draw[red] (.5,-1) to (.5,-2.5);
% \draw[red] (.5,-1.5) to[out=180,in=0] (0,-1.5) to[out=180,in=-90] (-1,0) to[out=90,in=180] (1.5,2) to[out=0,in=90] (3.5,0) to[out=-90,in=90] (3.5,-2.5);
% \draw[blue] (1,-1) to (1,-2.5);
%   \draw[red] (1.5,-1) to (1.5,-2.5);
% \draw[blue] (2,-1) to (2,-2.5);
% %  \draw[red] (2.5,-1) to (2.5,-2.5);
% %\node[circle,fill,draw,inner sep=0mm,minimum size=1mm,color=blue] at (1,-2) {};
\node at (2,-1.75) {\small $\dots$};
    } \end{array}
  =
\begin{array}{c}
  \tikz[xscale=0.45,yscale=0.45]{
      \draw[dashed] (-1,-2.5) to (4.5, -2.5); \draw[dashed] (-1,2.5) to (4.5,2.5);
\draw (0,1) rectangle (3,-1);
  \node at (1.5,0) {$JW_{{\color{red}s},{\color{blue}t}}$};
  \draw[red] (.5,-1) to (.5,-2.5);
\draw[red] (.5,-1.5) to[out=180,in=0] (0,-1.5) to[out=180,in=-90] (-1,0) to[out=90,in=180] (1.5,2) to[out=0,in=90] (3.5,0) to[out=-90,in=90] (3.5,-2.5);
\draw[blue] (1,-1) to (1,-2.5);
  \draw[red] (1.5,-1) to (1.5,-2.5);
\draw[blue] (2,-1) to (2,-2.5);
%  \draw[red] (2.5,-1) to (2.5,-2.5);
%\node[circle,fill,draw,inner sep=0mm,minimum size=1mm,color=blue] at (1,-2) {};
\node at (2.8,-1.75) {\small $\dots$};
  } \end{array}
\end{equation}

We also have
\begin{equation} \label{twocolordotvertex2}
  \begin{array}{c}
    \tikz[xscale=-0.45,yscale=0.45]{
    
    \draw[dashed] (-1,-2.5) to (4.5, -2.5);
    \draw[dashed] (-1,2.5) to (4.5,2.5);
    \draw[blue] (1.5,0) to[out=150,in=90] (-.5,-2.5);
    \draw[red] (1.5,0) to[out=-150,in=90] (.25,-2.5);
    \draw[blue] (1.5,0) to[out=-90,in=90] (1,-2.5);
    \draw[blue] (1.5,0) to[out=-30,in=90] (3,-2.5);
    \draw[red] (1.5,0) to[out=30,in=-150] (2.5,.5);
          \node[circle,fill,draw,inner sep=0mm,minimum
    size=2mm,color=red] at (1.5,0) {};
              \node[circle,fill,draw,inner sep=0mm,minimum size=1mm,color=red] at (2.5,.5) {};
    % \draw (0,1) rectangle (3,-1);
%   \node at (1.5,0) {$JW_{{\color{red}s},{\color{blue}t}}$};
%   \draw[red] (.5,-1) to (.5,-2.5);
% \draw[red] (.5,-1.5) to[out=180,in=0] (0,-1.5) to[out=180,in=-90] (-1,0) to[out=90,in=180] (1.5,2) to[out=0,in=90] (3.5,0) to[out=-90,in=90] (3.5,-2.5);
% \draw[blue] (1,-1) to (1,-2.5);
%   \draw[red] (1.5,-1) to (1.5,-2.5);
% \draw[blue] (2,-1) to (2,-2.5);
% %  \draw[red] (2.5,-1) to (2.5,-2.5);
% %\node[circle,fill,draw,inner sep=0mm,minimum size=1mm,color=blue] at (1,-2) {};
\node at (2,-1.75) {\small $\dots$};
    } \end{array}
  =
\begin{array}{c}
  \tikz[xscale=-0.45,yscale=0.45]{
      \draw[dashed] (-1,-2.5) to (4.5, -2.5); \draw[dashed] (-1,2.5) to (4.5,2.5);
\draw (0,1) rectangle (3,-1);
  \node at (1.5,0) {$JW_{{\color{red}s},{\color{blue}t}}$};
  \draw[blue] (.5,-1) to (.5,-2.5);
\draw[blue] (.5,-1.5) to[out=180,in=0] (0,-1.5) to[out=180,in=-90] (-1,0) to[out=90,in=180] (1.5,2) to[out=0,in=90] (3.5,0) to[out=-90,in=90] (3.5,-2.5);
\draw[red] (1,-1) to (1,-2.5);
  \draw[blue] (1.5,-1) to (1.5,-2.5);
\draw[red] (2,-1) to (2,-2.5);
%  \draw[red] (2.5,-1) to (2.5,-2.5);
%\node[circle,fill,draw,inner sep=0mm,minimum size=1mm,color=blue] at (1,-2) {};
\node at (2.8,-1.75) {\small $\dots$};
  } \end{array}
\end{equation}
Note that the color swap of \eqref{twocolordotvertex1} gives the
relation for a red dot on a blue vertex, which can be rescaled to give
the relation for a red dot on a red vertex, yielding
\eqref{twocolordotvertex2}. We need only check
\eqref{twocolordotvertex1} below, and the other relation will follow
by symmetry.

The analog of \eqref{mosttermsP} is
\begin{equation} \label{mosttermsPshaded}
    \begin{array}{c}
      \begin{tikzpicture}[scale=0.4]
                    \draw[dashed] (-1.5,1.5) to (1.5,1.5);
      \draw[dashed] (-1.5,-1.5) to (1.5,-1.5);
      \node[circle,fill,draw,inner sep=0mm,minimum size=1mm,color=red] at (-1,-1.2) {};
  \draw[color=red] (-1,-1.2) to[out=90,in=-150] (0,0); % (-1,-.7);
  \draw[color=blue] (0,-1.5) to (0,0);
\draw[color=red] (1,-1.5) to[out=90,in=-30] (0,0);
\draw[color=blue] (-1,1.5) to[out=-90,in=150] (0,0);
  \draw[color=red] (0,1.5) to (0,0);
  \draw[color=blue] (1,1.5) to[out=-90,in=30] (0,0);
%  \draw (-1.5,-.7) rectangle (1.5,.7);
%  \node at (0,0) {$G_{s,t}$};
        \node[circle,fill,draw,inner sep=0mm,minimum size=2mm,color=blue] at (0,0) {};
\end{tikzpicture}
  \end{array}
  =
    \begin{array}{c}
      \begin{tikzpicture}[scale=0.4]
                    \draw[dashed] (-1.5,1.5) to (1.5,1.5);
      \draw[dashed] (-1.5,-1.5) to (1.5,-1.5);
%   \draw[color=red] (-1,-1.2) to (-1,-.7);
%   \draw[color=blue] (0,-1.5) to (0,-.7);
% \draw[color=red] (1,-1.5) to (1,-.7);
\draw[color=blue] (-1,1.5) to (-1,-1.5);
  \draw[color=red] (0,1.5) to (0, -1.5);
  \draw[color=blue] (1,1.5) to (1,.7);
%  \draw (-1.5,.7) rectangle (1.5,.7);
      \node[circle,fill,draw,inner sep=0mm,minimum size=1mm,color=blue] at (1,.7) {};
    \end{tikzpicture}
  \end{array}
      +
          \begin{array}{c}
            \begin{tikzpicture}[scale=0.4]
                          \draw[dashed] (-1.5,1.5) to (2,1.5);
      \draw[dashed] (-1.5,-1.5) to (2,-1.5);
%   \draw[color=red] (-1,-1.2) to (-1,-.7);
%   \draw[color=blue] (0,-1.5) to (0,-.7);
% \draw[color=red] (1,-1.5) to (1,-.7);
\draw[color=blue] (-1,1.5) to (-1,-1.5);
  \draw[color=red] (0,1.5) to (0, -1.5);
  \draw[color=blue] (1,1.5) to (1,.7);
  \draw[fill=white] (-1.5,-.7) rectangle (1.5,.7);
  \node at (0,0) {$P$};
%      \node[circle,fill,draw,inner sep=0mm,minimum size=1mm,color=blue] at (1,.7) {};
    \end{tikzpicture}
  \end{array}.
\end{equation}
Meanwhile,
\begin{equation}
    \begin{array}{c}
    \begin{tikzpicture}[scale=0.4]
      \node[circle,fill,draw,inner sep=0mm,minimum size=1mm,color=red]
      at (-1,-1.2) {};
            \draw[dashed] (-1.5,1.5) to (1.5,1.5);
      \draw[dashed] (-1.5,-1.5) to (1.5,-1.5);
  \draw[color=red] (-1,-1.2) to[out=90,in=-150] (0,0); % (-1,-.7);
  \draw[color=blue] (0,-1.5) to (0,0);
\draw[color=red] (1,-1.5) to[out=90,in=-30] (0,0);
\draw[color=blue] (-1,1.5) to[out=-90,in=150] (0,0);
  \draw[color=red] (0,1.5) to (0,0);
  \draw[color=blue] (1,1.5) to[out=-90,in=30] (0,0);
%  \draw (-1.5,-.7) rectangle (1.5,.7);
%  \node at (0,0) {$G_{s,t}$};
        \node[circle,fill,draw,inner sep=0mm,minimum size=2mm,color=red] at (0,0) {};
\end{tikzpicture}
  \end{array}
  = [m-1]_{\bluet}
    \begin{array}{c}
      \begin{tikzpicture}[scale=0.4]
                    \draw[dashed] (-1.5,1.5) to (1.5,1.5);
      \draw[dashed] (-1.5,-1.5) to (1.5,-1.5);
%   \draw[color=red] (-1,-1.2) to (-1,-.7);
%   \draw[color=blue] (0,-1.5) to (0,-.7);
% \draw[color=red] (1,-1.5) to (1,-.7);
\draw[color=blue] (-1,1.5) to (-1,-1.5);
  \draw[color=red] (0,1.5) to (0, -1.5);
  \draw[color=blue] (1,1.5) to (1,.7);
%  \draw (-1.5,.7) rectangle (1.5,.7);
      \node[circle,fill,draw,inner sep=0mm,minimum size=1mm,color=blue] at (1,.7) {};
    \end{tikzpicture}
  \end{array}
      +
          \begin{array}{c}
            \begin{tikzpicture}[scale=0.4]
                          \draw[dashed] (-1.5,1.5) to (2,1.5);
      \draw[dashed] (-1.5,-1.5) to (2,-1.5);
%   \draw[color=red] (-1,-1.2) to (-1,-.7);
%   \draw[color=blue] (0,-1.5) to (0,-.7);
% \draw[color=red] (1,-1.5) to (1,-.7);
\draw[color=blue] (-1,1.5) to (-1,-1.5);
  \draw[color=red] (0,1.5) to (0, -1.5);
  \draw[color=blue] (1,1.5) to (1,.7);
  \draw[fill=white] (-1.5,-.7) rectangle (1.5,.7);
  \node at (0,0) {$P$};
%      \node[circle,fill,draw,inner sep=0mm,minimum size=1mm,color=blue] at (1,.7) {};
    \end{tikzpicture}
  \end{array}.
\end{equation}

Finally, one also has the 3-color relations, which are shaded so that
the 1-tensor is sent to the 1-tensor. For example, here is the $A_3$
Zamolodchikov relation (again, note that we use the opposite
convention from \cite{EW} for the shadings).
\begin{equation*}
\begin{array}{l}
\begin{tikzpicture}[scale=0.7]

\node[color=blue] (b1) at (0,0) {$3$};
\node[color=red] (b2) at (1,0) {$2$};
\node (b3) at (2,0) {$1$};
\node[color=blue] (b4) at (3,0) {$3$};
\node[color=red] (b5) at (4,0) {$2$};
\node[color=blue] (b6) at (5,0) {$3$};

\node (t1) at (0,7) {$1$};
\node[color=red] (t2) at (1,7) {$2$};
\node (t3) at (2,7) {$1$};
\node[color=blue] (t4) at (3,7) {$3$};
\node[color=red] (t5) at (4,7) {$2$};
\node (t6) at (5,7) {$1$};

\node[circle,draw,fill,red] (c1) at (1,5.5) {};
\node[circle,draw,fill,blue] (c2) at (3,4.5) {};
\node[circle,draw,fill,red] (c3) at (3,2.5) {};
\node[circle,draw,fill,blue] (c4) at (1,1.5) {};

\draw[red] (t2) -- (c1) ;
\draw[red] (c1) -- (c2) ;
\draw[red] (c2) -- (c3) ;
\draw[red] (c3) -- (c4) ;
\draw[red] (c4) -- (b2) ;
\draw[red] (t5) to [out=-90,in=30] (c2);
\draw[red] (c3) to [out=-30,in=90] (b5);
\draw[red] (c1) to [out=-150,in=150] (c4);

\draw[blue] (t4) to [out=-90,in=90] (c2);
\draw[blue] (c2) to [out=-30,in=90] (b6);
\draw[blue] (c2) to [out=-150,in=90] (c4);
\draw[blue] (c4) to [out=-150,in=90] (b1);
\draw[blue] (c4) to [out=-30,in=90] (b4);

\draw (t3) to [out=-90,in=30] (c1) ;
\draw (t6) to [out=-90,in=30] (c3) ;
\draw (t1) to [out=-90,in=150] (c1);
\draw (c1) to [out=-90,in=150] (c3);
\draw (c3) to [out=-90,in=90] (b3) ;

\draw[dashed] (-.5,6.65) to (5.5,6.65);
\draw[dashed] (-.5,.35) to (5.5,.35);
\end{tikzpicture}
\end{array} = 
\begin{array}{c}
\begin{tikzpicture}[scale=0.7]

\node[color=blue] (b1) at (0,0) {$3$};
\node[color=red] (b2) at (1,0) {$2$};
\node (b3) at (2,0) {$1$};
\node[color=blue] (b4) at (3,0) {$3$};
\node[color=red] (b5) at (4,0) {$2$};
\node[color=blue] (b6) at (5,0) {$3$};

\node (t1) at (0,7) {$1$};
\node[color=red] (t2) at (1,7) {$2$};
\node (t3) at (2,7) {$1$};
\node[color=blue] (t4) at (3,7) {$3$};
\node[color=red] (t5) at (4,7) {$2$};
\node (t6) at (5,7) {$1$};

\node[circle,draw,fill,red] (c1) at (4,5.5) {};
\node[circle,draw,fill,blue] (c2) at (2,4.5) {};
\node[circle,draw,fill,red] (c3) at (2,2.5) {};
\node[circle,draw,fill,blue] (c4) at (4,1.5) {};

\draw[red] (t5) -- (c1) ;
\draw[red] (c1) -- (c2) ;
\draw[red] (c2) -- (c3) ;
\draw[red] (c3) -- (c4) ;
\draw[red] (c4) -- (b5) ;
\draw[red] (t2) to [out=270,in=150] (c2);
\draw[red] (c3) to [out=-150,in=90] (b2);
\draw[red] (c1) to [out=-30,in=30] (c4);

\draw[blue] (t4) to [out=-90,in=90] (c2);
\draw[blue] (c2) to [out=-30,in=90] (c4);
\draw[blue] (c2) to [out=-150,in=90] (b1);
\draw[blue] (c4) to [out=-150,in=90] (b4);
\draw[blue] (c4) to [out=-30,in=90] (b6);

\draw (t3) to [out=-90,in=150] (c1) ;
\draw (t6) to [out=-90,in=30] (c1) ;
\draw (t1) to [out=-90,in=150] (c3);
\draw (c1) to [out=-90,in=30] (c3);
\draw (c3) to [out=-90,in=90] (b3) ;

\draw[dashed] (-.5,6.65) to (5.5,6.65);
\draw[dashed] (-.5,.35) to (5.5,.35);
\end{tikzpicture}
\end{array}
\end{equation*}

\subsection{The definition of the functor $\Lambda$}

We have two versions of $E_{s,t}$: the version $E^{\reds}_{s,t}$ which uses the $\reds$-shaded $2m$-valent vertex, and the version $E^{\bluet}_{s,t}$ which uses the $\bluet$-shaded
$2m$-valent vertex (the source of the morphism $E_{s,t}$ has not changed, only the shading of the vertex). Define $\Lambda(E^{\reds}_{s,t})$ exactly as in \eqref{fdef}, using the
polynomial $\pi_{\reds,\bluet}$. Define $\Lambda(E^{\bluet}_{s,t})$ in the analogous way using the polynomial $\pi_{\bluet,\reds}$.

Note that the definition is symmetric under swapping $\reds$ and $\bluet$, so we only need check one version of each relation and the other follows by symmetry.

\subsection{Checking the relations}

The proof of Lemma \ref{lem:cyclicity} found in \S\ref{subsec:cyclicityof2m} goes through with only one major change. The morphisms $E^{\reds}_{s,t}$ and $E^{\reds}_{t,s}$ being
compared have the same shading, and lead to the same polynomial $\pi_{s,t}$, so every instance of $\pi_{t,s}$ in that proof should be replaced with $\pi_{s,t}$. The proof of Lemma
\ref{lem:cyclicity} splits the problem into two cases. The second case uses the fact that $s(\pi_{s,t}) = - \pi_{s,t}$, which we proved for the unbalanced case in Lemma
\ref{lem:antiinvtiffevenbalanced}. The first case uses the equality\footnote{The proof found in \S\ref{subsec:cyclicityof2m} appears to use the equality $\pi_{s,t} = \pi_{t,s}$, an equality which holds in the balanced case and fails in the unbalanced case. However, this is a {\color{red} red}
herring.  When using shadings appropriately, both sides should be the same polynomial $\pi_{s,t}$. } $\pi_{s,t} = \pi_{s,t}$. 

The proof of Lemma \ref{lem:JW} goes through verbatim.

The proof of Lemma \ref{lem:2assoc} also goes through verbatim, thanks to \eqref{mosttermsPshaded}. 

The analysis of the 3-color relations goes through verbatim.

\def\cprime{$'$} \def\cprime{$'$} \def\cprime{$'$}

% \bibliographystyle{myalpha}
% \bibliography{gen}

\end{document}